\theoremstyle{plain}
\newtheorem{The}{Theorem}[section]
\newtheorem*{TheP}{Prototype Result}
\newtheorem*{TheM}{Main Theorem}
\newtheorem{Pro}[The]{Proposition}
\newtheorem{Lem}[The]{Lemma}
\newtheorem{Cor}[The]{Corollary}
\newtheorem*{Cor*}{Corollary}
\theoremstyle{definition}
\theoremstyle{remark}
\newtheorem*{Rem*}{Remark}
\numberwithin{equation}{section}
\DeclareMathOperator{\End}{End}
\DeclareMathOperator{\Hom}{Hom}
\DeclareMathOperator{\GL}{GL}
\DeclareMathOperator{\SL}{SL}
\DeclareMathOperator{\SU}{SU}
\DeclareMathOperator{\Tr}{tr}             
\DeclareMathOperator{\Span}{Span}
\DeclareMathOperator{\Id}{Id}
\DeclareMathOperator{\Ad}{Ad}
\newcommand{\<}{<}
\renewcommand{\>}{>}
\DeclareMathOperator{\sff}{I\!I}
\newcommand{\HS}{H^{\S^3}}
\renewcommand{\Im}{\operatorname{Im}}
\renewcommand{\Re}{\operatorname{Re}}
\renewcommand{\ker}{\operatorname{ker}}
\newcommand{\im}{\operatorname{im}}
\newcommand{\image}{\operatorname{im}}
\DeclareMathOperator{\dbar}{\bar\partial}
\DeclareMathOperator{\del}{\partial}
\newcommand{\R}{\mathbb{R}}
\newcommand{\C}{\mathbb{C}}
\newcommand{\Z}{\mathbb{Z}}
\renewcommand{\H}{\mathbb{H}}  
\newcommand{\CP}{\mathbb{CP}}
\newcommand{\HP}{\mathbb{HP}}
\renewcommand{\S}{S}
\newcommand{\eprint}[1]{e--print: \href{http://#1}{\nolinkurl{#1}}}
\DeclareMathOperator{\Harm}{Harm}
\begin{document}

\title{Constrained Willmore Tori in the 4--Sphere}

\author{Christoph Bohle}

\address{Christoph Bohle\\
  Institut f\"ur Mathematik\\
  Technische Universit{\"a}t Berlin\\
  Stra{\ss}e des 17.\ Juni 136\\
  10623 Berlin\\
  Germany}

\email{bohle@math.tu-berlin.de}


\subjclass{53C42,53A30,53A05}

\date{\today}

\thanks{Author supported by DFG SPP 1154 ''Global Differential Geometry''.}

\begin{abstract} 
  We prove that a constrained Willmore immersion of a 2--torus into the
  conformal 4--sphere~$S^4$ is either of ``finite type'', that is, has a
  spectral curve of finite genus, or is of ``holomorphic type'' which means
  that it is super conformal or Euclidean minimal with planar ends in
  $\R^4\cong S^4\backslash \{ \infty \}$ for some point $\infty\in S^4$ at
  infinity. This implies that all constrained Willmore tori in $S^4$ can be
  constructed rather explicitly by methods of complex algebraic geometry.  The
  proof uses quaternionic holomorphic geometry in combination with integrable
  systems methods similar to those of Hitchin's approach~\cite{Hi} to the
  study of harmonic tori in~$S^3$.
\end{abstract}

\maketitle


\section{Introduction}
\label{sec:intro}

A conformal immersion of a Riemann surface is called a \emph{constrained
  Willmore surface} if it is a critical point of the Willmore functional
$\mathcal{W}=\int_M |\mathring\sff|^2 dA$ (with $\mathring \sff$ denoting the
trace free second fundamental form) under compactly supported infinitesimal
conformal variations, see~\cite{PS87,W,BPP02,BPP1}.  The notion of constrained
Willmore surfaces generalizes that of Willmore surfaces which are the critical
points of $\mathcal{W}$ under all compactly supported variations.  Because
both the functional and the constraint of the above variational problem are
conformally invariant, the property of being constrained Willmore depends only
on the conformal class of the metric on the target space. This suggests an
investigation within a M\"obius geometric framework like the quaternionic
projective model of the conformal 4--sphere used throughout the paper.

The space form geometries of dimension 3 and 4 occur in our setting as
subgeometries of 4--dimensional M\"obius geometry and provide several classes
of examples of constrained Willmore surfaces, including constant mean
curvature (CMC) surfaces in 3--dimensional space forms and minimal surfaces in
4--dimensional space forms.  See \cite{BPP1} for an introduction to
constrained Willmore surfaces including a derivation of the Euler--Lagrange
equation for compact constrained Willmore surfaces.

A prototype for our main theorem is the following result on harmonic maps, the
solutions to another variational problem on Riemann surfaces.
\begin{TheP}
  Let $f\colon T^2 \rightarrow S^2$ be a harmonic map from a 2--torus $T^2$ to
  the metrical 2--sphere $S^2$.  Then either
  \begin{itemize}
  \item $\deg(f)= 0$ and $f$ is of ``finite type'', i.e., has a spectral curve
    of finite genus, or
  \item $\deg(f)\neq 0$ and $f$ is conformal.
  \end{itemize}
\end{TheP}
That $\deg(f)\neq 0$ implies ``holomorphic type'' follows from a more general
result by Eells and Wood \cite{EW83}. That $\deg(f)= 0$ implies ``finite
type'' has been proven by Pinkall and Sterling \cite{PS89} and Hitchin
\cite{Hi}.  In contrast to the conformal case when $f$ itself is
(anti--)holomorphic, parametrizing a harmonic map $f$ of finite type involves
holomorphic functions on a higher dimensional torus, the Jacobian of the
\emph{spectral curve}, an auxiliary compact Riemann surface attached to $f$.

The main theorem of the paper shows that the same dichotomy of ``finite type''
versus ``holomorphic type'' can be observed in case of constrained Willmore
tori $f\colon T^2 \rightarrow S^4$ in the conformal 4--sphere $S^4$.

\begin{TheM}
  Let $f\colon T^2 \rightarrow S^4$ be a constrained Willmore immersion that
  is not Euclidean minimal with planar ends in $\R^4\cong S^4\backslash \{
  \infty \}$ for some point at infinity $\infty\in S^4$. Then either
  \begin{itemize}
  \item $\deg(\perp_f)= 0$ and $f$ is of ``finite type'', i.e., has a spectral
    curve of finite genus, or
  \item $\deg(\perp_f)\neq 0$ and $f$ is super conformal,
  \end{itemize}
  where $\deg(\perp_f)$ is the degree of the normal bundle $\perp_f$ of $f$
  seen as a complex line bundle.
\end{TheM}

Euclidean minimal tori with planar ends play a special role here since they
can have both topologically trivial and non--trivial normal bundle.
Constrained Willmore tori in the conformal 3--sphere~$S^3$ occur in our
setting as the special case of constrained Willmore immersions into~$S^4$ that
take values in a totally umbilic 3--sphere and, in particular, have trivial
normal bundle.

The main theorem implies that every constrained Willmore torus $f\colon T^2
\rightarrow S^4$ can be parametrized quite explicitly by methods of complex
algebraic geometry. If $f$ is of ``holomorphic type'', that is, if $f$ is
super conformal or Euclidean minimal with planar ends, then $f$ or its
differential is given in terms of meromorphic functions on the torus itself: a
super conformal torus is the twistor projection $\CP^3\rightarrow \HP^1$ of an
elliptic curve in $\CP^3$ and for a Euclidean minimal tori with planar ends
there is a point $\infty\in S^4$ such that the differential of $f\colon
T^2\backslash\{p_1,...,p_n\} \rightarrow \mathbb{R}^4=S^4\backslash\{\infty\}$
is the real part of a meromorphic 1--form with $2^{nd}$--order poles and no
residues at the ends $p_1$,...,$p_n$.  It should be noted that both super
conformal and Euclidean minimal tori with planar ends are Willmore and, by the
quaternionic Pl\"ucker formula \cite{FLPP01}, have Willmore energy
$\mathcal{W}=4\pi n$ for some integer~$n\geq 2$.

If $f$ is of ``finite type'', the algebraic geometry needed to parametrize the
immersion is more involved: the immersion is then not given by holomorphic
data on the torus itself, but can be interpreted as a periodic orbit of an
algebraically completely integrable system whose phase space contains as an
energy level the (generalized) Jacobian of a Riemann surface of finite genus,
the spectral curve.  This makes available the methods of ``algebraic
geometric'' or ``finite gap'' integration from integrable systems theory and
implies the existence of explicit parametrizations in terms of theta functions
as in the special case of CMC tori in space forms \cite{Bob}.  The algebraic
geometric reconstruction of general conformally immersed tori with finite
spectral genus from their spectral data will be addressed in a forthcoming
paper.

Our main theorem generalizes the following previous results:
\begin{itemize}
\item CMC tori in 3--dimensional space forms are finite type (Pinkall,
  Sterling~1989,~\cite{PS89}).
\item Constrained Willmore in $S^3$ are of finite type (Schmidt 2002,
  \cite{S02}).
\item Willmore tori in $S^4$ with topologically non--trivial normal bundle are
  super conformal or Euclidean minimal with planar ends (Leschke, Pedit,
  Pinkall 2003, \cite{LPP05}).
\end{itemize}
The fact that CMC tori are of finite type is closely related to the above
prototype result on harmonic tori in $S^2$, because CMC surfaces are
characterized by the harmonicity of their Gauss map $N\colon T^2 \rightarrow
S^2$.  In the early nineties this prototype result had been generalized to
harmonic maps from $T^2$ into various other symmetric target spaces
\cite{Hi,FPPS,BFPP} which led to the conjecture that Willmore tori as well
should be of finite type, because they are characterized by the harmonicity of
their conformal Gauss map or mean curvature sphere congruence.  This
conjecture remained open for more than a decade until Martin Schmidt, on the
last of over 200 pages of \cite{S02}, gave a proof that constrained Willmore
tori in $S^3$ are of finite type.

We investigate constrained Willmore tori by integrable systems methods similar
to those in Hitchin's study \cite{Hi} of harmonic tori in $S^3$.  These
provide a uniform, geometric approach to proving and generalizing the previous
results mentioned above.  The proof roughly consists of the following steps:
\begin{itemize}
\item Reformulation of the Euler--Lagrange equation describing constrained
  Willmore surface as a zero--curvature equation with spectral parameter.
  This zero--curvature formulation arises in the form of an associated family
  $\nabla^\mu$ of flat connections on a trivial complex rank 4 bundle which
  depends on a spectral parameter $\mu\in \mathbb{C}_*$.
\item Investigation of the holonomy representations $H^\mu\colon \Gamma
  \rightarrow \mathrm{SL}_4(\mathbb{C})$ that arise for the associated family
  $\nabla^\mu$ of flat connections of constrained Willmore tori.
\item Proof of the existence of a polynomial Killing field in case the
  holonomy $H^\mu$ is non--trivial.  This implies that a constrained Willmore
  torus with non--trivial holonomy representation has a spectral curve of
  finite genus and hence is of ``finite type''.
\item Proof that a constrained Willmore torus $f$ is of ``holomorphic type''
  if the family of holonomy representations $H^\mu$ of $\nabla^\mu$ is
  trivial.
\end{itemize}
In order to make the strategy of \cite{Hi} work for constrained Willmore tori
in $S^4$ we apply quaternionic holomorphic geometry \cite{FLPP01}, in
particular the geometric approach~\cite{BLPP} to the spectral curve based on
the Darboux transformation for conformal immersions into $S^4$. The main
application of quaternionic methods is in the investigation of which holonomy
representations $H^\mu\colon \Gamma \rightarrow \mathrm{SL}_4(\mathbb{C})$ are
possible for the associated family~$\nabla^\mu$ of constrained Willmore tori.
Understanding the possible holonomies is one of the mayor difficulties in
adapting Hitchin's method to the study of constrained Willmore tori.  The
reason is that, compared to the $\mathrm{SL}_2(\mathbb{C})$--holonomies
arising in the study of harmonic tori in~$S^3$, in case of holomorphic
families of $\mathrm{SL}_4(\mathbb{C})$--representations one has to cope with
a variety of degenerate cases of collapsing eigenvalues.

This difficulty can be handled by applying two analytic results of
quaternionic holomorphic geometry: the quaternionic Pl\"ucker formula
\cite{FLPP01} and the 1--dimensionality \cite{BPP2} of the spaces of
holomorphic sections with monodromy corresponding to generic points of the
spectral curve of a conformally immersed torus $f\colon T^2\rightarrow S^4$
with topologically trivial normal bundle.  The spectral curve as an invariant
of conformally immersed tori was first introduced, for immersions into
3--space, by Taimanov \cite{Ta98} and Grinevich, Schmidt~\cite{GS98}. It is
defined as the Riemann surface normalizing the Floquet--multipliers of a
periodic differential operator attached to the immersion $f$.  Geometrically
this Riemann surface can be interpreted~\cite{BLPP} as a space parameterizing
generic Darboux transforms of $f$.  In the following, the spectral curve will
be referred to as the \emph{multiplier spectral curve} $\Sigma_{mult}$ of $f$
in order to distinguish it from another Riemann surface that arises in our
investigation of constrained Willmore tori.

In Section~\ref{sec:associated} of the paper we review the quaternionic
projective approach to conformal surface theory in $S^4$ and introduce the
associated family $\nabla^\mu$ of flat connections of constrained Willmore
immersions into $S^4=\HP^1$.  This holomorphic family~$\nabla^\mu$ of flat
connections allows to study spectral curves of constrained Willmore tori by
investigating a holomorphic family of ordinary differential operators instead
of the holomorphic family of elliptic partial differential operators needed to
define the spectral curve of a general conformal immersion $f\colon
T^2\rightarrow S^4$ with trivial normal bundle.

In Section~\ref{sec:holonomy} we determine the types of holonomy
representations $H^\mu$ that are possible for the associated
family~$\nabla^\mu$ of constrained Willmore immersions $f\colon T^2\rightarrow
S^4$.  It turns out that there are two essentially different cases: either all
holonomies $H^\mu$, $\mu\in \C_*$ have~$1$ as an eigenvalue of multiplicity 4
or, for generic $\mu\in \C_*$, the holonomy $H^\mu$ has non--trivial, simple
eigenvalues.  The latter occurs only if the normal bundle is trivial and
allows to build a Riemann surface parametrizing the non--trivial eigenlines of
the holonomy.  In the following we call this Riemann surface the
\emph{holonomy spectral curve} $\Sigma_{hol}$ of $f$.

In Section~\ref{sec:asymptotics} we investigate the asymptotics of parallel
sections for $\mu\rightarrow 0$ and $\infty$. This will be essential for
proving the main theorem of the paper.  The asymptotics shows that the
holonomy spectral curve $\Sigma_{hol}$, whenever defined, essentially
coincides with the multiplier spectral curve $\Sigma_{mult}$. In particular,
Darboux transforms corresponding to points of the spectral curve are again
constrained Willmore and Willmore if $f$ itself is Willmore.

In Section~\ref{sec:proof}, the main theorem is proven by separately
discussing all possible cases of holonomy representations that occur for
constrained Willmore tori. For constrained Willmore tori with non--trivial
holonomy we prove the existence of a polynomial Killing field which implies
that $\Sigma_{hol}$ and hence $\Sigma_{mult}$ can be compactified by adding
points at infinity.  The proof of the theorem is completed by showing that
constrained Willmore tori with trivial holonomy are either super conformal or
Euclidean minimal with planar ends.

In Section~\ref{sec:harmonic} we discuss a special class of constrained
Willmore tori which is related to harmonic maps into $S^2$ and for which the
holonomies of the constrained Willmore associated family reduce to
$\SL(2,\C)$--representations.  This class includes CMC tori in $\R^3$ and
$S^3$, Hamiltonian stationary Lagrangian tori in $\C^2\cong \H$, and
Lagrangian tori with conformal Maslov form in $\C^2\cong \H$.  In case the
harmonic map $N\colon T^2\rightarrow S^2$ related to such constrained Willmore
torus~$f\colon T^2\rightarrow S^4$ is non--conformal, the above prototype
result implies that the map $N$ admits a spectral curve of finite genus.  We
show that this harmonic map spectral curve of $N$ coincides with the spectral
curve of the constrained Willmore immersion~$f$.

\section{Constrained Willmore Tori in $S^4$ and Their Associated Family}
\label{sec:associated}

A characteristic property of constrained Willmore surfaces in $S^4=\HP^1$ is
the existence of an associated family of flat connections depending on a
spectral parameter. This associated family of constrained Willmore surfaces is
an essential ingredient in the proof of the main theorem.  It is a direct
generalization of the associated family \cite{FLPP01,LPP05} of Willmore
surface in~$S^4$.  We show that parallel section of the associated family
$\nabla^\mu$ of flat connections of a constrained Willmore immersion $f$ give
rise to Darboux transforms of $f$ that are again constrained Willmore.

\subsection{M\"obius geometry of surfaces in the 4--sphere}
Throughout the paper we model 4--dimensional M{\"o}bius geometry as the
geometry of the quaternionic projective line $\HP^1$, see \cite{BFLPP02} for a
detailed introduction to the quaternionic approach to surface theory.  In
particular, we identify maps $f\colon M \rightarrow S^4$ from a Riemann
surface $M$ into the conformal 4--sphere with line subbundles $L\subset V$ of
a trivial quaternionic rank~2 vector bundle $V$ over $M$ equipped with a
trivial connection $\nabla$.
A map $f$ is a \emph{conformal immersion} if and only if its derivative
$\delta=\pi \nabla_{|L}\in\Omega^1\Hom(L,V/L)$, where $\pi\colon V \to V/L$
denotes the canonical projection, is nowhere vanishing and admits
$J\in\Gamma(\End(L))$ and $\tilde J\in\Gamma(\End(V/L))$ with $J^2=-\Id$ and
$\tilde J^2=-\Id$ such that
\begin{equation}\label{eq:holomorphic_curve_condition}
  *\delta=\delta J = \tilde J \delta
\end{equation}
with $*$ denoting the complex structure of $T^*M$, see Section~4.2 of
\cite{BFLPP02} for details.

A fundamental object of surfaces theory in the conformal 4--sphere $S^4$ is
the \emph{mean curvature sphere congruence} (or conformal Gauss map) of a
conformal immersion $f$. It is the unique congruence $S$ of oriented
2--spheres in $S^4$ that pointwise touches $f$ with the right orientation such
that the mean curvature of each sphere $S(p)$, with respect to any compatible
space form geometry, coincides with the mean curvature of the immersion $f$ at
the point $f(p)$ of contact.

In the quaternionic language, an oriented 2--sphere congruence is represented
by a complex structure on $V$, that is, a section $S\in \Gamma(\End(V))$
satisfying $S^2=-\Id$, with the 2--sphere at a point $p\in M$ corresponding to
the eigenlines of $S_p$.  Such a complex structure $S$ on $V$ gives rise to a
decomposition $\nabla=\partial+\dbar+A+Q$ of the trivial connection $\nabla$,
where $\partial$ and $\dbar$ are $S$--complex linear holomorphic and
anti--holomorphic structures and
\[ A=\tfrac14(S\nabla S+*\nabla S) \qquad \textrm{ and } \qquad
Q=\tfrac14(S\nabla S-*\nabla S).\] The so called \emph{Hopf fields} $A$ and
$Q$ of $S$ are tensor fields $A\in \Gamma(K\End_-(V))$ and $Q\in \Gamma(\bar K
\End_-(V))$, where $\End_-(V)$ denotes the bundle of endomorphisms of $V$ that
anti--commute with~$S$ and where we use the convention that a 1--form $\omega$
taking values in a quaternionic vector bundle (or its endomorphism bundle)
equipped with a complex structure $S$ is called of type $K$ or $\bar K$ if
$*\omega=S\omega$ or $*\omega = -S\omega$.

The mean curvature sphere congruence is characterized as the unique section
$S\in \Gamma(\End(V))$ with $S^2=-\Id$ that satisfies
\begin{equation}\label{eq:mcs_condition}
  SL=L,\qquad *\delta=S\delta=\delta S,\qquad \textrm{and}\qquad Q_{|L}=0,
\end{equation}
see Section~5.2 of \cite{BFLPP02}.  The first two conditions express that, for
every $p\in M$, the sphere $S_p$ touches the immersion $f$ at $f(p)$ with the
right orientation. This is equivalent to the property that $S$ induces the
complex structures $J$ and $\tilde J$ from
\eqref{eq:holomorphic_curve_condition} on the bundles $L$ and $V/L$. The third
condition singles out the mean curvature sphere congruence among all
congruences of touching spheres.  Given the first two conditions, the third
one is equivalent to $\image(A)\subset L$.

The Hopf fields $A$ and $Q$ of the mean curvature sphere congruence $S$
measure the local ``defect'' of the 2--sphere congruence $S$ from being
constant, that is, the defect of the immersion from being totally umbilic. A
conformal immersion $f$ is totally umbilic if both $A$ and $Q$ vanish
identically. In case only one of the Hopf fields vanishes identically the
immersion is called \emph{super conformal} and is the twistor projection of a
holomorphic curve in $\CP^3$, see Chapter~8 of~\cite{BFLPP02}.

The M\"obius invariant quantity measuring the global ``defect'' of $S$ from
being constant is the Willmore functional $\mathcal{W}$ which, for a conformal
immersion $f$ of a compact surface $M$, can be expressed in terms of the Hopf
fields by the formula
\begin{align}
  \label{eq:willmore_functional}
  \mathcal{W} = 2\int_M \< A\wedge * A \> - 2\pi \deg(\perp_f) = 2\int_M \<
  Q\wedge *Q\> + 2\pi \deg(\perp_f),
\end{align}
where $\< \>$ denotes $1/4$ of the real trace and $\deg(\perp_f)$ is the
degree of the normal bundle of the immersion~$f$.

\subsection{Euler--Lagrange equation of constrained Willmore surfaces}\label{sec:el_equation}
The following proposition shows how the \emph{Euler--Lagrange equation}
describing compact constrained Willmore surfaces can be expressed in terms of
the Hopf fields $A$ and $Q$ of the mean curvature sphere congruence~$S$.

\begin{Pro}
  A conformal immersion $f\colon M \rightarrow S^4$ of a compact Riemann
  surface $M$ is constrained Willmore if and only if there exists a 1--form
  $\eta\in \Omega^1(\mathcal{R})$ such that
  \begin{equation}
    \label{eq:def_constrained-willmore}
    d^{\nabla} (2{*}A + \eta ) =0,
  \end{equation}
  where $\mathcal{R}= \{ B\in \End(V) \mid \im(B) \subset L \subset
  \ker(B)\}$.
\end{Pro}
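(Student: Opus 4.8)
The plan is to obtain the stated equation as the Euler--Lagrange equation of a constrained variational problem via the Lagrange multiplier principle. First I would compute the first variation of $\mathcal{W}$ along an arbitrary smooth variation of the conformal immersion, that is, along a variation $L_t\subset V$ of the line subbundle with $L_0=L$, whose infinitesimal generator is a section $\dot L\in\Gamma(\Hom(L,V/L))$. Because $\deg(\perp_f)$ is a topological invariant, the formula $\mathcal{W}=2\int_M\langle A\wedge *A\rangle-2\pi\deg(\perp_f)$ shows that only the term $2\int_M\langle A\wedge *A\rangle$ contributes. Differentiating this term, using the structure equations of the decomposition $\nabla=\partial+\dbar+A+Q$ and integrating by parts on the compact surface $M$, the first variation should take the form $\delta\mathcal{W}(\dot L)=\int_M\langle\, d^\nabla(2{*}A)\,,\,\dot L\,\rangle$ for the induced pairing between the \emph{Willmore gradient} $d^\nabla(2{*}A)\in\Omega^2(\End(V))$ and the variation $\dot L$. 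Setting this gradient to zero recovers the unconstrained Euler--Lagrange equation, the harmonicity $d^\nabla({*}A)=0$ of the mean curvature sphere congruence.

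Next I would analyze the conformal constraint. Since $f$ is constrained Willmore when it is critical among conformal immersions of the \emph{fixed} Riemann surface $M$, an admissible variation $\dot L$ is one that preserves the induced conformal structure to first order. The obstruction to a general variation being conformal is the first-order change of the induced conformal structure, a trace-free symmetric $2$--tensor; modulo reparametrizations, and using the reparametrization invariance of $\mathcal{W}$, this change pairs nondegenerately with the finite-dimensional space $H^0(M,K^2)$ of holomorphic quadratic differentials. The admissible (conformal) variations are therefore the annihilator of $H^0(M,K^2)$ under this pairing, so the effective Lagrange multipliers live in $H^0(M,K^2)$. The task is then to realize these multipliers inside the quaternionic formalism, and the claim is that they are represented exactly by the exact terms $d^\nabla\eta$ with $\eta\in\Omega^1(\mathcal{R})$. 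Here one uses $\mathcal{R}\cong\Hom(V/L,L)$ together with $\delta\in\Omega^1\Hom(L,V/L)$, so that an $\mathcal{R}$--valued $1$--form $\eta$ encodes a quadratic differential by composition with the derivative $\delta$; integration by parts transfers $d^\nabla$ from $\eta$ onto $\dot L$, turning $\int_M\langle d^\nabla\eta,\dot L\rangle$ into a pairing of $\eta$ with the variation of $\delta$, which extracts precisely the conformal-structure-change of $\dot L$. In particular $d^\nabla\eta$ pairs trivially with every conformal variation.

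With these two ingredients the proposition follows from finite-dimensional duality. The immersion $f$ is constrained Willmore if and only if $\delta\mathcal{W}(\dot L)=0$ for all conformal $\dot L$, that is, if and only if the gradient $d^\nabla(2{*}A)$ annihilates the annihilator of $H^0(M,K^2)$. By the double-annihilator identity this holds if and only if $d^\nabla(2{*}A)$ lies in the span of the constraint forces, which by the previous paragraph is exactly $\{\,d^\nabla\eta:\eta\in\Omega^1(\mathcal{R})\,\}$. Hence $f$ is constrained Willmore if and only if there exists $\eta\in\Omega^1(\mathcal{R})$ with $d^\nabla(2{*}A)=-d^\nabla\eta$, equivalently $d^\nabla(2{*}A+\eta)=0$. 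For the forward implication the multiplier $\eta$ is produced from the holomorphic quadratic differential annihilating the gradient on conformal directions; for the converse, the trivial-pairing property forces $\delta\mathcal{W}$ to vanish on all conformal variations.

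The hard part will be the precise matching between $\Omega^1(\mathcal{R})$ and the Lagrange multipliers of the conformal constraint. Concretely, one must verify that the linear map $\eta\mapsto d^\nabla\eta$, viewed as a functional on variations, both pairs trivially with every conformal $\dot L$ and surjects onto the full space of constraint forces dual to $H^0(M,K^2)$. This is where the quaternionic linear algebra of $\mathcal{R}$, the decomposition of forms into types $K$ and $\bar K$, and the interplay with the complex structures $J,\tilde J$ and the mean curvature sphere $S$ must be handled carefully, since the reality and type constraints imposed on $\eta$ are exactly what encode the conformality of the admissible variations and the holomorphicity of the corresponding quadratic differential.
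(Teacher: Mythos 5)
You should first note that the paper itself does not prove this proposition: it states it and refers to \cite{BPP1} for the proof. So there is no internal argument to match, and your plan is in fact a reconstruction of the variational proof in that reference. The skeleton is sound, and your ``finite-dimensional duality'' step is genuinely unproblematic despite the infinite-dimensional space of variations: since $H^0(M,K^2)$ is finite dimensional, the linear-algebra fact that a functional vanishing on the common kernel of finitely many functionals is a linear combination of them holds in any vector space, so no closed-range or Banach-manifold machinery is needed; and invoking diffeomorphism invariance of $\mathcal W$ to replace ``zero change of conformal structure'' by ``change annihilated by all of $H^0(M,K^2)$'' is the correct reduction.

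The gap is that the two assertions carrying all the mathematical content are announced rather than established, and one of them is stated in a form that is false without a correction. (i) The first-variation formula $\delta\mathcal{W}(\dot L)=\int_M\langle d^\nabla(2{*}A),\dot L\rangle$ is written with ``should'': deriving it in this formalism requires varying $S$ along with $L$ through the characterization \eqref{eq:mcs_condition}, and one needs $d^\nabla(2{*}A)\in\Omega^2(\mathcal R)$ (Proposition~15 of \cite{BFLPP02}) even to make sense of the pairing of the gradient with $\dot L\in\Gamma(\Hom(L,V/L))$ via the trace; none of this computation appears. (ii) Your claim that every $d^\nabla\eta$, $\eta\in\Omega^1(\mathcal R)$, pairs trivially with conformal variations is not true for arbitrary $\eta$: it requires the type conditions $*\eta=S\eta=\eta S$, which — as the paper points out immediately after the proposition — are not hypotheses but consequences of the equation $d^\nabla(2{*}A+\eta)=0$ itself, extracted from $\delta\wedge(2{*}A+\eta)=0$ together with $*A=SA$ and the dual identity from \eqref{eq:def_constrained-willmore2}. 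So in your converse direction you must first derive \eqref{eq:eta_symmetry} from the hypothesis, conclude that $\eta\delta$ is a holomorphic quadratic differential, and only then run the trivial-pairing argument; and in the forward direction you must verify that every $q\in H^0(M,K^2)$ is realized as $q=\eta\delta$ for some $\eta$ of the correct type — this does follow from $\mathcal R\cong\Hom(V/L,L)$ and the nowhere vanishing of $\delta$, but it is exactly the surjectivity you defer to ``the hard part''. As submitted, this is the right strategy (the one the paper outsources to \cite{BPP1}), but it is a detailed plan, not a proof: both decisive computations remain to be done.
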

A proof of the Euler--Lagrange equation for constrained Willmore immersions of
compact surfaces can be found in \cite{BPP1}.  The form $\eta$ in
\eqref{eq:def_constrained-willmore} is the Lagrange multiplier of the
underlying constrained variational problem.  The vanishing of $\eta$
corresponds to the case of Willmore surfaces which are characterized by
$d^{\nabla} {*}A =0$, see Chapter~6 of \cite{BFLPP02}.

Equation \eqref{eq:def_constrained-willmore} is equivalent to
\begin{equation}   \label{eq:def_constrained-willmore2}
  d^{\nabla} (2{*}Q + \eta ) =0,
\end{equation}
because $d^{\nabla}{*}Q=d^{\nabla}{*}A$.  Every 1--form $\eta\in
\Omega^1(\mathcal{R})$ with \eqref{eq:def_constrained-willmore} satisfies
$\eta\in \Gamma(K\mathcal{R}_+)$, i.e.,
\begin{equation}
  \label{eq:eta_symmetry}
  *\eta=S\eta=\eta S.
\end{equation}
In fact, equation \eqref{eq:def_constrained-willmore} implies
$\delta\wedge(2{*}A + \eta )=0$ and hence $*\eta = S\eta$, because $*A=SA$.
Similarly, equation \eqref{eq:def_constrained-willmore2} implies $*\eta=\eta
S$.  Using $\nabla=\hat \nabla+ A + Q$, where $\hat \nabla=\partial+\dbar$ is
the $S$--commuting part of $\nabla$, we obtain the decomposition 
\begin{align}
  \label{eq:decomposed_el_eq}
  \underbrace{d^{\hat \nabla} \eta}_+ + \underbrace{ 2 S d^{\hat \nabla} A +
    A\wedge \eta + \eta \wedge Q}_- = d^{\nabla} (2{*}A + \eta ) = 0
\end{align}
of \eqref{eq:def_constrained-willmore} into $S$--commuting and anti--commuting
parts, as usual denoted by $\pm$.  This implies $d^{\hat \nabla} \eta=0$ which
is equivalent to $\eta\delta\in \Gamma(K^2\End_+(L))=\Gamma(K^2)$ being a
holomorphic quadratic differential. In particular, if $\eta$ does not vanish
identically it vanishes at isolated points only. Moreover, by
\eqref{eq:decomposed_el_eq} and the analogous decomposed version of
\eqref{eq:def_constrained-willmore2}, if $\eta\not \equiv 0$ and one of the
Hopf fields $A$ and $Q$ vanishes on some open set $U$, then both $A$ and $Q$
have to vanish on $U$, that is, on $U$ the immersion is totally umbilic.

In the following we denote by $A_\circ$ and $Q_\circ$ the 1--forms defined by
$2{*}A_\circ = 2{*}A + \eta$ and $2{*}Q_\circ=2{*}Q+ \eta$. Like the Hopf
fields $A$ and $Q$ they satisfy
\begin{gather*}
  \nabla S = 2{*}Q_\circ - 2{*}A_\circ, \\
  \im(A_\circ) \subset L \textrm{ and } L\subset \ker(Q_\circ), \\
  *A_\circ = SA_\circ \textrm{ and } *Q_\circ = Q_\circ S.
\end{gather*}
However, in contrast to the Hopf fields, the forms $A_\circ$ and $Q_\circ$ do
not anti--commute with $S$ if $\eta$ does not vanish identically.

\subsection{Uniqueness and non--uniqueness of the Lagrange multiplier $\eta$.}
For discussing the uniqueness of the Lagrange multiplier $\eta$ in the
Euler--Lagrange equation \eqref{eq:def_constrained-willmore} of constrained
Willmore surfaces we need the following quaternionic characterization of
isothermic surfaces, see e.g.\ \cite{HJ03,Boh03}: a conformal immersion
$f\colon M \rightarrow S^4$ is \emph{isothermic} if there is a non--trivial
1--form $\omega\in \Omega^1(\mathcal{R})$ with $d^\nabla \omega=0$, where
$\mathcal{R}= \{ B\in
\End(V) \mid \im(B) \subset L \subset \ker(B)\}$ as in
Section~\ref{sec:el_equation}.  As above one can prove that every closed
1--form $\omega \in \Omega^1(\mathcal{R})$ satisfies $\omega \in
\Gamma(K\mathcal{R}_+)$, that is,
\begin{equation}
  \label{eq:isothermic}
  *\omega = S\omega = \omega S,
\end{equation} and that the quadratic differential $\omega\delta \in
\Gamma(K^2\End_+(L))= \Gamma(K^2)$ is  holomorphic.
With this definition of isothermic surfaces, the following lemma is evident.

\begin{Lem}
  The Lagrange--multiplier $\eta$ occurring in the Euler--Lagrange equation
  \eqref{eq:def_constrained-willmore} of constrained Willmore surfaces is
  either unique or the surface is isothermic. In the latter case, the form
  $\eta$ is unique up to adding a closed form $\omega\in
  \Omega^1(\mathcal{R})$.
\end{Lem}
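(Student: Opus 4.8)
The plan is to prove this by analyzing when the Euler–Lagrange equation \eqref{eq:def_constrained-willmore} admits more than one solution $\eta$. First I would suppose that $\eta_1, \eta_2 \in \Omega^1(\mathcal{R})$ are two Lagrange multipliers, both satisfying $d^\nabla(2{*}A + \eta_i) = 0$. Setting $\omega := \eta_1 - \eta_2 \in \Omega^1(\mathcal{R})$ and subtracting the two equations, the common term $d^\nabla(2{*}A)$ cancels, leaving $d^\nabla \omega = 0$. Thus the difference of any two Lagrange multipliers is a closed $\mathcal{R}$-valued $1$-form.

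Next I would invoke the quaternionic characterization of isothermic surfaces recalled just above the lemma: $f$ is isothermic precisely when there exists a \emph{non-trivial} closed $1$-form $\omega \in \Omega^1(\mathcal{R})$. This dichotomy is exactly what the lemma asserts. If $\omega = \eta_1 - \eta_2 \equiv 0$, then $\eta_1 = \eta_2$ and the multiplier is unique. If instead $\omega \not\equiv 0$, then by definition $f$ is isothermic, and $\omega$ is a closed $\mathcal{R}$-valued $1$-form of the type appearing in the isothermic condition. Conversely, if $f$ is isothermic with a closed $\omega \in \Omega^1(\mathcal{R})$ and $\eta$ is any Lagrange multiplier, then $\eta + \omega$ also satisfies $d^\nabla(2{*}A + \eta + \omega) = d^\nabla(2{*}A + \eta) + d^\nabla \omega = 0$, so it is again a valid multiplier; hence $\eta$ is determined only up to adding such a closed form.

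Combining these two directions gives precisely the statement: either the multiplier is unique (the non-isothermic case, where the only closed $\mathcal{R}$-valued $1$-form is zero), or the surface is isothermic and $\eta$ is unique up to adding a closed $\omega \in \Omega^1(\mathcal{R})$. As the author notes, the lemma is ``evident'' once the isothermic characterization is in place, so there is no substantial obstacle; the entire content is the cancellation $d^\nabla(2{*}A)$ upon taking differences, together with matching the resulting condition $d^\nabla\omega = 0$ against the stated definition of isothermic surfaces. The only point requiring a line of justification is that $\Omega^1(\mathcal{R})$ is a real vector space closed under the operations used, so that differences and sums of multipliers again lie in $\Omega^1(\mathcal{R})$, which is immediate from the linearity of the pointwise condition $\im(B) \subset L \subset \ker(B)$ defining $\mathcal{R}$.
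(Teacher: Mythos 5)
Your proof is correct and is exactly the argument the paper has in mind: the paper declares the lemma ``evident'' after recalling the quaternionic definition of isothermic surfaces, and your spelled-out version (difference of two multipliers is a closed form in $\Omega^1(\mathcal{R})$, which by definition means the surface is isothermic, together with the converse that adding any closed form preserves the Euler--Lagrange equation) is precisely that reasoning made explicit.
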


For isothermic surfaces that are not totally umbilic, the space of closed
forms in $\Omega^1(\mathcal{R})$ is real 1--dimensional, see e.g.\
\cite{Boh03}.  Examples of constrained Willmore surfaces for which the
Lagrange--multiplier $\eta$ is not unique are CMC surfaces in 3--dimensional
space forms, cf.\ Sections~\ref{sec:cmc_r3} and \ref{sec:cmc_s3}.  In fact,
CMC tori with respect to 3--dimensional space form subgeometries are the only
possible examples of constrained Willmore tori in the conformal 3--sphere with
non--unique Lagrange multiplier $\eta$, cf.~\cite{BPP02}.

Examples of constrained Willmore tori whose Lagrange multiplier~$\eta$ is
unique (namely $\eta\equiv 0$) are super conformal tori, see the discussion
following equation~\eqref{eq:decomposed_el_eq}.  Examples of constrained
Willmore tori in the 3--sphere which are non--isothermic (and hence not CMC
with respect to any space form subgeometry) can be obtained from Pinkall's
Hopf torus construction \cite{Pi}: a Hopf torus, the preimage of a closed
curve in $S^2$ under the Hopf fibration $S^3\rightarrow S^2$, is never
isothermic unless it is the Clifford torus. It is Willmore if the curve in
$S^2$ is elastic \cite{Pi} and it is constrained Willmore if the underlying
curve is generalized elastic \cite{BPP1}.

\subsection{Associated family $\nabla^\mu$ of flat connections of constrained
  Willmore surfaces in the 4--sphere}\label{sec:nabla_mu}
The fundamental tool in our study of constrained Willmore tori is the
\emph{associated family}
\begin{equation} \label{eq:nabla_mu} \nabla^\mu = \nabla +(\mu-1) \frac{1-i
    S}2 A_\circ+ (\mu^{-1}-1) \frac{1+i S}2 A_\circ,
\end{equation}
of complex connections on the complex rank~4 bundle $(V,i)$ which rationally
depends on the \emph{spectral parameter} $\mu \in \C_*$, where $(V,i)$ denotes
$V$ seen as a complex vector bundle by restricting the scalar field to
$\C=\Span_\R \{1,i\}$.  In the following we call a 1--form $\omega$ to be of
type $(1,0)$ or $(0,1)$ if $*\omega=\omega i$ or $*\omega=-\omega i$.  Setting
$A_\circ^{(1,0)}= \frac{1-i S}2 A_\circ$ and $A_\circ^{(0,1)}= \frac{1+i S}2
A_\circ$, formula \eqref{eq:nabla_mu} simplifies to
\[ \nabla^\mu = \nabla +(\mu-1) A_\circ^{(1,0)}+ (\mu^{-1}-1) A_\circ^{(0,1)}.
\]
\begin{Lem}
  The connection $\nabla^\mu$ is flat for every spectral parameter $\mu \in
  \C_*$.
\end{Lem}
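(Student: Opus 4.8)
The plan is to compute the curvature of $\nabla^\mu$ directly and read off its vanishing as a condition on the coefficients of a Laurent polynomial in $\mu$. Writing $\nabla^\mu=\nabla+\alpha_\mu$ with $\alpha_\mu=(\mu-1)A_\circ^{(1,0)}+(\mu^{-1}-1)A_\circ^{(0,1)}$ and abbreviating $a:=A_\circ^{(1,0)}$, $b:=A_\circ^{(0,1)}$, the flatness of the trivial connection $\nabla$ gives
\[
  R^{\nabla^\mu}=d^\nabla\alpha_\mu+\alpha_\mu\wedge\alpha_\mu.
\]
The first observation I would record is that $a$ is of type $(1,0)$ and $b$ of type $(0,1)$, so that $a\wedge a=0$ and $b\wedge b=0$: the wedge product of two $\End(V)$--valued 1--forms of the same type vanishes on a Riemann surface, as one checks in a local conformal frame using that the values of the Hopf field commute with right multiplication by $i$. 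Hence $\alpha_\mu\wedge\alpha_\mu=(\mu-1)(\mu^{-1}-1)\,c=(2-\mu-\mu^{-1})\,c$ with $c:=a\wedge b+b\wedge a$.

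Collecting powers of $\mu$, the curvature becomes
\[
  R^{\nabla^\mu}=\mu\,(d^\nabla a-c)+\mu^{-1}(d^\nabla b-c)-\bigl((d^\nabla a-c)+(d^\nabla b-c)\bigr),
\]
so the constant term is automatically minus the sum of the $\mu^{\pm1}$ terms. Flatness for every $\mu\in\C_*$ is therefore equivalent to the two identities $d^\nabla a=c$ and $d^\nabla b=c$, which I would replace by their difference $d^\nabla(a-b)=0$ and their sum $d^\nabla(a+b)=2c$.

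The difference is exactly the Euler--Lagrange equation: since $a+b=A_\circ$ and $*a=ai$, $*b=-bi$, we have $*A_\circ=(a-b)i$, and as right multiplication by $i$ is $\nabla$--parallel, $d^\nabla(a-b)=0$ is equivalent to $d^\nabla{*}A_\circ=0$, which is \eqref{eq:def_constrained-willmore} rewritten by means of $2{*}A_\circ=2{*}A+\eta$. The sum $d^\nabla A_\circ=2c$ is the remaining identity; I would obtain it by decomposing $\nabla=\hat\nabla+A+Q$, expanding $d^\nabla A_\circ=d^{\hat\nabla}A_\circ+(A+Q)\wedge A_\circ+A_\circ\wedge(A+Q)$, and matching the result against $2(a\wedge b+b\wedge a)$ by means of the type relation $*A_\circ=SA_\circ$, the identity $d^{\hat\nabla}\eta=0$ established after \eqref{eq:decomposed_el_eq}, and the flatness $R^\nabla=0$ of the trivial connection decomposed into its $S$--commuting and anticommuting parts.

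I expect the main obstacle to be precisely this sum identity $d^\nabla A_\circ=2(a\wedge b+b\wedge a)$: the remaining steps are either elementary power--counting in $\mu$ or a direct restatement of the Euler--Lagrange equation, whereas here the bookkeeping between the left--acting complex structure $S$ and the right--acting scalar $i$ in the projectors $\tfrac{1\mp iS}{2}$ defining $a$ and $b$, together with the correct placement of the $\eta$--contribution inside $A_\circ$, has to be carried out with care.
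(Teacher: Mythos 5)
Your proposal is correct and is at bottom the paper's own proof: the same expansion of $R^{\nabla^\mu}$, the same observation that $a\wedge a=b\wedge b=0$ (so that $\alpha_\mu\wedge\alpha_\mu=(2-\mu-\mu^{-1})(a\wedge b+b\wedge a)$, and moreover $a\wedge b+b\wedge a=(a+b)\wedge(a+b)=A_\circ\wedge A_\circ$), and the same reduction of flatness for all $\mu\in\C_*$ to the pair $d^\nabla a=d^\nabla b=A_\circ\wedge A_\circ$. Your difference/sum repackaging is equivalent to the paper's chain $d^\nabla A_\circ^{(1,0)}=d^\nabla A_\circ^{(0,1)}=\tfrac12 d^\nabla A_\circ=A_\circ\wedge A_\circ$, and your handling of the difference --- $d^\nabla(a-b)=0$ is \eqref{eq:def_constrained-willmore} because $*A_\circ=(a-b)i$ and right multiplication by $i$ is $\nabla$--parallel --- is fine.

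The one step you leave sketched, the sum identity $d^\nabla A_\circ=2A_\circ\wedge A_\circ$, is also exactly where the paper is terse, but it does not require the $\hat\nabla+A+Q$ bookkeeping you anticipate: it is a two--line consequence of the Euler--Lagrange equation together with the structural identities listed at the end of Section~\ref{sec:el_equation}. Differentiating $*A_\circ=SA_\circ$ gives $0=d^\nabla({*}A_\circ)=\nabla S\wedge A_\circ+S\,d^\nabla A_\circ$; inserting $\nabla S=2{*}Q_\circ-2{*}A_\circ$ and killing the first term via $*Q_\circ=Q_\circ S$, $SL=L$ and $\im(A_\circ)\subset L\subset\ker(Q_\circ)$ (so $*Q_\circ\wedge A_\circ=0$), one is left with $S\,d^\nabla A_\circ=2{*}A_\circ\wedge A_\circ=2S(A_\circ\wedge A_\circ)$, which is the desired identity --- the same one the paper reuses in Section~\ref{sec:asymptotics}. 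In particular, both of your two identities flow from the single equation $d^\nabla({*}A_\circ)=0$ plus the algebraic properties of $A_\circ$ and $Q_\circ$; the sum is not an independent condition to be verified by matching terms against \eqref{eq:decomposed_el_eq}, so the ``main obstacle'' you flag dissolves. (Your proposed route through the decomposition of $\nabla$ would presumably also close, but it recomputes information already packaged in $\nabla S=2{*}Q_\circ-2{*}A_\circ$.)
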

\begin{proof}
  The curvature of $\nabla^\mu$ is
  \begin{multline*}
    R^{\nabla^\mu} = (\mu-1) d^\nabla A_\circ^{(1,0)} + (\mu^{-1}-1) d^\nabla
    A_\circ^{(0,1)} + (\mu-1)(\mu^{-1}-1) ( A_\circ^{(1,0)}\wedge
    A_\circ^{(0,1)} + A_\circ^{(0,1)} \wedge A_\circ^{(1,0)}).
  \end{multline*}
  From $d^\nabla*A_\circ= 0$ we obtain $d^\nabla A_\circ^{(1,0)} = d^\nabla
  A_\circ^{(0,1)} = \frac12 d^\nabla A_\circ = A_\circ \wedge A_\circ$ such
  that $R^{\nabla^\mu}=0$, because $(\mu-1)(\mu^{-1}-1)( A_\circ^{(1,0)}\wedge
  A_\circ^{(0,1)} + A_\circ^{(0,1)} \wedge A_\circ^{(1,0)})= (2-\mu-\mu^{-1})
  (A_\circ \wedge A_\circ)$.
\end{proof}

This associated family $\nabla^\mu$ of flat connections has the symmetry
\begin{equation}
  \label{eq:symmetry_nabla_mu}
  \nabla^{1/\bar \mu} = j^{-1} \nabla^\mu j \qquad \textrm{ for all } \mu
  \in \C_*
\end{equation}
with $j$ denoting the complex anti--linear endomorphism of $(V,i)$ given by
right--multiplication with the quaternion $j$. For every $\mu\in S^1\subset
\C_*$, the connection $\nabla^\mu$ is therefore quaternionic.

Another holomorphic family of flat complex connections is given by
\[ \tilde \nabla^\mu= \nabla +(\mu-1) Q_\circ^{(1,0)} + (\mu^{-1}-1) 
Q^{(0,1)}_\circ
\]
with $Q_\circ^{(1,0)}= Q_\circ \frac{1-i S}2$ and $Q_\circ^{(0,1)}= Q_\circ
\frac{1+i S}2$. Both families are in fact gauge equivalent:
\begin{align}
  \label{eq:gauge_trafo}
  \nabla^\mu = \big( (\mu + 1) - i (\mu-1) S\big) \circ \tilde \nabla^\mu
  \circ \big((\mu+1) - i (\mu-1) S\big)^{-1} \quad \textrm{ for all } \mu
  \in \C_*.
\end{align}
As a consequence we obtain that in case of super conformal Willmore surfaces
the connection $\nabla^\mu$ is trivial for every $\mu\in \C_*$, because either
$A\equiv 0$ or $Q\equiv 0$.

The family $\tilde \nabla^\mu$ of connections arises naturally by dualizing
the associated family
\begin{align}
  \label{eq:associated_perp}
  (\nabla^\perp)^\mu = \nabla + (\mu-1) (A_\circ^\perp)^{(1,0)} + (\mu^{-1}-1)
(A_\circ^\perp)^{(0,1)}
\end{align}
of the dual constrained Willmore surface $L^\perp\subset V^*$ seen as
connection on the complex bundle $(V^*,-i)$: the immersion $f^\perp$ has the
mean curvature sphere $S^\perp = S^*$ with Hopf fields $A^\perp = -Q^*$ and
$Q^\perp = -A^*$. The form $2{*}A_\circ^\perp := 2{*} A^\perp + \eta^\perp$
with $\eta^\perp := -\eta^*$ is closed which shows that $f^\perp$ is again
constrained Willmore.  The complex bundle $(V^*,-i)$ carries the usual complex
structure of the complex dual $(V,i)^*$ to $(V,i)$ when applying the canonical
identification between quaternionic and complex dual space, that is, the
identification between $\alpha\in V^*$ and its complex part $\alpha_\C\in
(V,i)^*$. The sign in $(V^*,-i)$ reflects the fact that $V^*$ is made into a
quaternionic right vector bundle by $\alpha\lambda := \bar \lambda \alpha$ for
$\alpha\in V^*$ and $\lambda\in \H$.

\subsection{Darboux transforms}
The following lemma is essential for the next sections as it provides a link
between the associated family $\nabla^\mu$ of flat connections and
quaternionic holomorphic geometry.  Recall that a conformal immersion $f\colon
M \rightarrow S^4\cong \HP^1$ induces a unique quaternionic holomorphic
structure \cite{FLPP01} on the bundle $V/L$ with the property that all
parallel sections of the trivial connection on the bundle $V$ project to
holomorphic sections: the complex structure on $V/L$ is $\tilde J$ from
\eqref{eq:holomorphic_curve_condition} while the holomorphic structure
$D\colon \Gamma(V/L) \rightarrow \Gamma(\bar K V/L)$ is defined by $D\pi =
(\pi\nabla)''$, where $\pi \colon V \rightarrow V/L$ is the canonical
projection and $()''$ denotes the $\bar K$--part with respect to $\tilde J$,
see \cite{BLPP} for details.  The projection
\begin{align}
  \label{eq:prolongation}
  \psi\in \Gamma(V) \quad \mapsto \quad \tilde \psi:= \pi \psi \in \Gamma(V/L)
\end{align}
induces a 1--1--correspondence between sections of $V$ with $\nabla\psi \in
\Omega^1(L)$ and holomorphic sections of $V/L$; the section $\psi$ with
$\nabla\psi \in \Omega^1(L)$ and $\tilde \psi:= \pi \psi$ is called the
\emph{prolongation} of the holomorphic section $\tilde \psi$.  Existence and
uniqueness of the prolongation $\psi$ for a given holomorphic section $\tilde
\psi$ immediately follows from the fact that $\delta=\pi\nabla_{|L}$ is
nowhere vanishing.  Flatness of $\nabla$ implies that $\nabla\psi \in
\Gamma(KL)$ for every $\psi\in \Gamma(V)$ with $\nabla\psi \in \Omega^1(L)$.

\begin{Lem}\label{lem:nabla_mu_parallel_prolonge_hol}
  Let $f\colon M \rightarrow S^4$ be a constrained Willmore immersion and
  denote by $\nabla^\mu$ its associated family of flat connections. Then every
  (local) $\nabla^\mu$--parallel section of $V$ is the prolongation of a
  (local) holomorphic section of $V/L$.
\end{Lem}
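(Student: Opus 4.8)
The plan is to show that any $\nabla^\mu$-parallel section $\psi$ satisfies $\nabla\psi\in\Omega^1(L)$, since by the prolongation correspondence recalled just before the lemma this is exactly the condition characterizing prolongations of holomorphic sections of $V/L$. Thus the entire content reduces to verifying a single algebraic containment on the difference $\nabla\psi-\nabla^\mu\psi$.

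\textbf{Key steps.} First I would write out the defining relation. Since $\psi$ is $\nabla^\mu$-parallel we have $\nabla^\mu\psi=0$, so from the definition \eqref{eq:nabla_mu} of the associated family,
\begin{equation*}
  \nabla\psi = -(\mu-1)\,A_\circ^{(1,0)}\psi - (\mu^{-1}-1)\,A_\circ^{(0,1)}\psi.
\end{equation*}
Everything now hinges on the image of $A_\circ$. Recall from Section~\ref{sec:el_equation} that $2{*}A_\circ=2{*}A+\eta$, that $\im(A)\subset L$ (the third condition characterizing the mean curvature sphere, equivalent to $Q_{|L}=0$), and that $\eta\in\Omega^1(\mathcal{R})$ with $\mathcal{R}=\{B\in\End(V)\mid \im(B)\subset L\subset\ker(B)\}$, so in particular $\im(\eta)\subset L$. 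Consequently $\im(A_\circ)\subset L$. Since $A_\circ^{(1,0)}$ and $A_\circ^{(0,1)}$ are obtained from $A_\circ$ by composing on the right with the projections $\tfrac{1\mp iS}2$, their images are also contained in $L$. Therefore each term on the right-hand side above lies in $\Omega^1(L)$, and hence $\nabla\psi\in\Omega^1(L)$.

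\textbf{Conclusion.} Having established $\nabla\psi\in\Omega^1(L)$, the prolongation correspondence \eqref{eq:prolongation} applies verbatim: $\psi$ is the prolongation of the holomorphic section $\tilde\psi=\pi\psi$ of $V/L$, where holomorphicity of $\tilde\psi$ is precisely the statement that sections $\psi$ with $\nabla\psi\in\Omega^1(L)$ project to holomorphic sections under $\pi$. This finishes the proof.

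\textbf{Main obstacle.} There is no serious analytic difficulty; the only point requiring care is the bookkeeping of where $A_\circ$ takes its values. One must remember that $A_\circ$ differs from the genuine Hopf field $A$ by the Lagrange multiplier term $\tfrac12 S^{-1}*\!\eta$ (equivalently, through $2{*}A_\circ=2{*}A+\eta$), and then confirm that this correction does not spoil the containment $\im(A_\circ)\subset L$ — which it does not, precisely because $\eta$ is valued in $\mathcal{R}$ and $\mathcal{R}$ enforces $\im(\eta)\subset L$. Verifying that the two type-decomposed pieces $A_\circ^{(1,0)}$ and $A_\circ^{(0,1)}$ inherit this image property (right-multiplication by $S$ preserves $L$, since $SL=L$) is the one place where the quaternionic formalism must be handled attentively, but it is routine.
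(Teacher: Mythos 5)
Your proposal is correct and coincides with the paper's own proof: from $\nabla^\mu\psi=0$ one reads off $\nabla\psi=(1-\mu)A_\circ^{(1,0)}\psi+(1-\mu^{-1})A_\circ^{(0,1)}\psi\in\Omega^1(L)$, using that $\im(A_\circ)\subset L$ (stated in Section~\ref{sec:el_equation}), and then invokes the prolongation correspondence \eqref{eq:prolongation}. One cosmetic remark: the type projections act by \emph{left} composition, $A_\circ^{(1,0)}=\tfrac{1-iS}2 A_\circ$, not on the right as you wrote, but since $SL=L$ the projections $\tfrac{1\mp iS}2$ preserve $L$ and your containment $\im(A_\circ^{(1,0)})\subset L$ stands unchanged.
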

\begin{proof}
  The lemma is an immediate consequence of the fact that $A_\circ$ takes
  values in $L$, because every $\nabla^\mu$--parallel section $\psi$ of $V$
  satisfies
  \[ \nabla\psi = (1-\mu) A_\circ^{(1,0)} \psi + (1-\mu^{-1}) A_\circ^{(0,1)}
  \psi \in \Omega^1(L).  \vspace{-0.5cm} \] 
\end{proof}
 
A map $f^\sharp\colon M \rightarrow S^4$ is called a \emph{Darboux transform}
\cite{BLPP} of a conformal immersion $f\colon M \rightarrow S^4 \cong \HP^1$
if the corresponding line subbundle $L^\sharp\subset V$ is locally of the form
$L^\sharp=\psi\H$ for $\psi$ the prolongation of a nowhere vanishing
holomorphic section of $V/L$. In case $f$ is constrained Willmore we call
$f^\sharp$ a \emph{$\nabla^\mu$--Darboux transform} if there is $\mu\in \C_*$
such that the corresponding bundle is locally of the form $L^\sharp=\psi\H$
for $\psi$ a $\nabla^\mu$--parallel section whose projection to $V/L$ is
nowhere vanishing.

\begin{The}\label{the:nabla_dt_cw}
  Let $f\colon M \rightarrow S^4$ be a constrained Willmore immersion of a
  Riemann surface $M$. Every $\nabla^\mu$--Darboux transform $f^\sharp\colon M
  \rightarrow S^4$ of~$f$ is again constrained Willmore when restricted to the
  open subset of $M$ over which it is immersed.  In case $f$ is Willmore and
  $f^\sharp$ is a $\nabla^\mu$--Darboux transform (for $\nabla^\mu$ taken with
  $\eta\equiv 0$), then $f^\sharp$ is again Willmore where immersed.
\end{The}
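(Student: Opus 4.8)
The plan is to work on the open set $U\subset M$ on which $f^\sharp$ is immersed, i.e.\ the locus where $\delta^\sharp\neq 0$, and there to exhibit the mean curvature sphere of $f^\sharp$ together with a Lagrange multiplier realizing the Euler--Lagrange equation \eqref{eq:def_constrained-willmore}. Since $\psi$ is the prolongation of a nowhere vanishing holomorphic section, on $U$ the line $L^\sharp=\psi\H$ is transverse to $L$, so that $V=L\oplus L^\sharp$ and $V/L^\sharp\cong L$. First I would settle conformality of $f^\sharp$: the identity $\nabla^\mu\psi=0$ together with $\im(A_\circ)\subset L$ gives $\nabla\psi=-\big((\mu-1)A_\circ^{(1,0)}+(\mu^{-1}-1)A_\circ^{(0,1)}\big)\psi\in\Omega^1(L)$, hence $\nabla\psi\in\Gamma(KL)$ by flatness of $\nabla$, and since $\nabla$ is quaternionic and $S$ is quaternionic--linear the whole restriction $\nabla_{|L^\sharp}$ takes values in $KL$. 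Thus $\delta^\sharp=\pi^\sharp\nabla_{|L^\sharp}$, with $\pi^\sharp\colon V\to V/L^\sharp$ the projection, is a nowhere vanishing $K$--type form into $V/L^\sharp\cong L$ on $U$; taking $\tilde J^\sharp$ to be the complex structure on $V/L^\sharp$ transported from $S_{|L}$ via $V/L^\sharp\cong L$, and $J^\sharp:=(\delta^\sharp)^{-1}\tilde J^\sharp\delta^\sharp$, verifies the holomorphic curve condition \eqref{eq:holomorphic_curve_condition}.

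The heart of the argument is to produce the associated family of $f^\sharp$ from that of $f$ by a gauge transformation. I would look for a family $r(\nu)\in\Gamma(\GL(V,i))$, rational in the spectral parameter $\nu$ and built from the projections of the splitting $V=L\oplus L^\sharp$ (a ``simple factor'' whose singularities sit at the parameters distinguished by $\psi$, namely $\nu=\mu$ and, forced by the symmetry \eqref{eq:symmetry_nabla_mu}, $\nu=1/\bar\mu$, with residues built from the projection onto $L^\sharp$ along $L$), such that the gauged family $r(\nu)\,\nabla^\nu\,r(\nu)^{-1}$ is again of associated--family type, namely $\nabla+(\nu-1)(A_\circ^\sharp)^{(1,0)}+(\nu^{-1}-1)(A_\circ^\sharp)^{(0,1)}$ for a form $A_\circ^\sharp$ with $\im(A_\circ^\sharp)\subset L^\sharp$ decomposed with respect to the mean curvature sphere $S^\sharp$ of $f^\sharp$. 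Matching the gauged connection to this ansatz simultaneously pins down $S^\sharp$ (equivalently its Hopf fields $A^\sharp,Q^\sharp$) and the $1$--form $\eta^\sharp$ defined by $2{*}A_\circ^\sharp=2{*}A^\sharp+\eta^\sharp$. Since each $\nabla^\nu$ is flat and gauge transformations preserve flatness, the whole family $r(\nu)\,\nabla^\nu\,r(\nu)^{-1}$ is flat; reading the curvature computation that established flatness of $\nabla^\mu$ backwards then forces $d^\nabla{*}A_\circ^\sharp=0$, i.e.\ $d^\nabla(2{*}A^\sharp+\eta^\sharp)=0$, so that $f^\sharp$ is constrained Willmore on $U$.

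The main obstacle I expect is precisely this matching step: verifying that the gauged family has the required associated--family form and, in particular, that the resulting $\eta^\sharp$ lies in $\Omega^1(\mathcal{R}^\sharp)$, where $\mathcal{R}^\sharp=\{B\in\End(V)\mid \im(B)\subset L^\sharp\subset\ker(B)\}$, rather than being an arbitrary endomorphism--valued form. This amounts to careful bookkeeping of the $(1,0)$/$(0,1)$ and $K$/$\bar K$ types relative to $S^\sharp$ --- crucially using $\im(A_\circ)\subset L$ and $\nabla\psi\in\Gamma(KL)$ --- in order to check both the containment $\im(\eta^\sharp)\subset L^\sharp\subset\ker(\eta^\sharp)$ and the symmetry $*\eta^\sharp=S^\sharp\eta^\sharp=\eta^\sharp S^\sharp$ analogous to \eqref{eq:eta_symmetry}. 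Finally, for the Willmore statement I would carry the same construction through with $\eta\equiv 0$, so that $A_\circ=A$ anti--commutes with $S$, and use the gauge equivalence \eqref{eq:gauge_trafo} between the $A$-- and $Q$--families together with the duality $A^\perp=-Q^*$ to check that the constructed $\eta^\sharp$ again vanishes, i.e.\ that $A_\circ^\sharp=A^\sharp$ anti--commutes with $S^\sharp$; then $d^\nabla{*}A^\sharp=0$ exhibits $f^\sharp$ as Willmore where immersed.
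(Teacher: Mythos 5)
Your strategy --- produce the associated family of $f^\sharp$ by dressing $\nabla^\nu$ with a rational simple factor --- is a legitimate route in principle (it is essentially how Burstall and Quintino later recast Darboux transforms of constrained Willmore surfaces), but as written it has two genuine gaps, one of which is a concrete error in the proposed gauge. For the singularity of $r(\nu)\,\nabla^\nu\,r(\nu)^{-1}$ at a pole or zero $\nu_0$ of the scalar part of $r$ to be removable, the corresponding residue bundle must be $\nabla^{\nu_0}$--parallel. Your factor is built ``from the projection onto $L^\sharp$ along $L$'', but the quaternionic line $L^\sharp=\psi\H$ is $\nabla^\mu$--parallel only for $\mu\in S^1$: by \eqref{eq:symmetry_nabla_mu} it is $\psi\C$ that is $\nabla^\mu$--parallel while $\psi j\C$ is $\nabla^{1/\bar\mu}$--parallel, so for $\mu\notin S^1$ the quaternionic projection is not adapted to either connection. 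Worse, the complementary bundle $L$ is not $\nabla^\nu$--parallel for \emph{any} $\nu$ (its derivative is $\delta\neq 0$), so the singularity at the second parameter $1/\bar\mu$ cannot cancel at all with residues taken along the geometric splitting $V=L\oplus L^\sharp$. A working factor must be assembled from genuinely parallel \emph{complex} subbundles compatible with $j$, and you have not established that such a factor exists, nor that it can be normalized with $r(1)=\Id$ so that the dressed family again has $\nabla$ at $\nu=1$. Second, the step you yourself flag as ``the main obstacle'' is in fact the entire content of the theorem: even granting that the dressed family has the algebraic shape $\nabla+(\nu-1)a+(\nu^{-1}-1)b$, flatness for all $\nu$ only gives closedness of the resulting form; it does not identify $a+b$ as $*$ of the Hopf field of the actual mean curvature sphere congruence $S^\sharp$ of $f^\sharp$ plus a multiplier, nor does it yield the crucial annihilation $\eta^\sharp|_{L^\sharp}=0$ needed for $\eta^\sharp\in\Omega^1(\mathcal{R}^\sharp)$ in \eqref{eq:def_constrained-willmore}. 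Deferring this to ``careful bookkeeping'' leaves the theorem unproved.

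For contrast, the paper's proof sidesteps the loop-group machinery entirely. From $\nabla^\mu\psi=0$ and the type decomposition in \eqref{eq:nabla_mu} one obtains the single pointwise identity $\nabla\psi\, b+*\nabla\psi=(2*A+\eta)\psi$ with the \emph{constant} $b=\tfrac{2i}{1-\mu}-i$ of \eqref{eq:b}; this says that the endomorphism $B+C\in\Gamma(\End(L^\sharp))$ of equation $(74)$ of \cite{Boh03} is multiplication by $b$, hence parallel, and the characterization $(75)$ of \cite{Boh03} (with $D:=C$) then certifies that $f^\sharp$ is constrained Willmore where immersed; when $\eta\equiv 0$ one gets $C\equiv 0$, hence $D\equiv 0$ and $f^\sharp$ is Willmore. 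If you want to complete your own route, you would essentially have to reprove that criterion (or carry out the Burstall--Quintino analysis of simple factors with the correct parallel residue data at $\mu$ and $1/\bar\mu$), which is substantially more work than the identity above.
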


\begin{proof}
  The proof uses notation and several results from~\cite{Boh03}.  If
  $f^\sharp$ is a Darboux transform of $f$, the corresponding line bundles
  satisfy $V=L\oplus L^\sharp$ and locally $L^\sharp$ admits a nowhere
  vanishing section $\psi$ with $\nabla\psi\in \Omega^1(L)$. In case $\psi$ is
  $\nabla^\mu$--parallel for $\mu\in \C\backslash\{ 0,1\}$, by definition of
  $\nabla^\mu$ we have
  \begin{equation}
    \label{eq:cw_preserved}
    \nabla \psi \, b +* \nabla\psi = (2* A_\circ) \psi = (2*A+\eta) \psi
  \end{equation}
  with $b\in \C$  defined by
  \begin{equation}
    \label{eq:b}
    b = \frac{2i}{1-\mu} - i = \frac{-2i}{1-\mu^{-1}} + i.   
  \end{equation}
  The endomorphisms $B$, $C\in \Gamma(\End(L^\sharp))$ in equation $(74)$ of
  \cite{Boh03} then satisfy $(B+C)\psi=\psi b$ and $B+C$ is parallel, because
  $b$ is constant. This shows that equation $(75)$ of \cite{Boh03} is
  satisfied for $D:=C$ such that $f^\sharp$ is again constrained Willmore.  In
  particular, if $f$ is Willmore and $\eta\equiv 0$, then $C\equiv 0$ and
  hence $D\equiv 0$ which proves that $f^\sharp$ is also Willmore.
\end{proof}

\subsection{Global Darboux transforms of conformal tori in the 4--sphere}\label{sec:global_DT}
In case the underlying surface $M$ is a torus $T^2=\C/\Gamma$, global Darboux
transforms of $f$ are obtained from prolongations of holomorphic sections with
\emph{monodromy} of $V/L$: these are sections $\psi\in \Gamma(\tilde V)$ of
the pullback $\tilde V$ of $V$ to the universal covering $\C$ of $T^2$ which
transform by
\begin{equation}
  \label{eq:hol_monodromy}
   \gamma^*\psi = \psi h_\gamma, \qquad  \gamma\in \Gamma
\end{equation}
for some \emph{multiplier} $h\in \Hom(\Gamma,\H^*)$ and have derivative
$\nabla\psi$ with values in the pullback $\tilde L$ of $L$.  Multiplying
$\psi$ by a quaternionic constant $\lambda\in \H_*$ yields the same Darboux
transform while the multiplier $h$ gets conjugated $\lambda^{-1}h\lambda$.
Because the group $\Gamma$ of deck transformations of the torus is abelian it
is therefore sufficient to consider prolongations of holomorphic section with
complex multiplier $h\in \Hom(\Gamma,\C_*)$.

For a constrained Willmore torus $f\colon T^2 \rightarrow S^4 \cong \HP^1$,
one can obtain global $\nabla^\mu$--Darboux transforms from holomorphic
sections with monodromy of $V/L$ whose prolongation $\psi \in \Gamma(\tilde
V)$ is $\nabla^\mu$--parallel for some $\mu\in \C_*$.  Every
$\nabla^\mu$--parallel section $\psi \in \Gamma(\tilde V)$ satisfying
\eqref{eq:hol_monodromy} for some $h\in \Hom(\Gamma,\C_*)$ gives rise to a
$\nabla^\mu$--parallel (complex) line subbundle of $V$ and vice versa.  Such
line subbundles correspond to 1--dimensional invariant subspaces of the
holonomy representation, i.e., to simultaneous eigenlines of $H^\mu_p(\gamma)$
for all $\gamma\in \Gamma$. These will be studied in the following section.

\section{Holonomy of Constrained Willmore Tori}
\label{sec:holonomy}

The characterization of constrained Willmore tori in~$S^4$ in terms of the
associated family $\nabla^\mu$ of flat connections puts us in a situation that
is quite familiar in integrable systems theory.  What one usually does when
encountering a family of flat connections over the torus is to investigate its
holonomy representations and, in particular, the eigenlines of the holonomy.
In general, investigating the holonomies of a family of flat connections on a
complex rank~4 bundle over the torus $T^2=\C/\Gamma$ is more involved than in
case of rank~2 bundles (like for harmonic tori in $S^2$ or $S^3$, see
\cite{Hi} or Section~\ref{sec:harmonic_spec} below) because one has to deal
with various possible configurations of collapsing eigenvalues.  In the
present section we show that for the associated family $\nabla^\mu$ of
constrained Willmore tori only few of these configurations do actually occur.

\subsection{Main result of the section}
Before stating the main result of the section we collect the relevant
properties of the holonomy representation for a family $\nabla^\mu$ of flat
connections on a surface $M$:
\begin{itemize}
\item Because all $\nabla^\mu$ are flat, for a fixed $p\in M$ and fixed
  $\mu\in \C_*$, the holonomy $H^\mu_p(\gamma)\in\GL_\C(V_p)$ depends only on
  the homotopy class of closed curves based at the point $p$.  The holonomy is
  thus a representation $\gamma\in \Gamma \mapsto
  H^\mu_p(\gamma)\in\GL_\C(V_p)$ of the group $\Gamma$ of deck
  transformations.
\item For fixed $p\in M$ and $\gamma\in \Gamma$, the holonomies
  $H^\mu_p(\gamma)$ depend holomorphically on the spectral parameter $\mu\in
  \C_*$.
\item The holonomies for different points on the torus are conjugated; the
  eigenvalues of $H^\mu_p(\gamma)$ are therefore independent of $p\in M$, only
  the eigenlines change when changing $p\in M$.
\end{itemize}
In case the underlying surface is a torus $T^2=\C/\Gamma$, the group $\Gamma$
of Deck transformations is abelian and the holonomies $H^\mu_p(\gamma_1)$ and
$H^\mu_p(\gamma_2)$ commute for all $\gamma_1$, $\gamma_2\in \Gamma$. For
fixed $p\in T^2$ and $\mu\in \C_*$, the eigenspaces of the holonomy
$H^\mu_p(\gamma_1)$ for one $\gamma_1\in \Gamma$ are thus invariant subspaces
of all other holonomies $H^\mu_p(\gamma_2)$, $\gamma_2\in \Gamma$. In
particular, simple eigenspaces are eigenspaces of all holonomies. More
generally, every eigenspace of $H^\mu_p(\gamma_1)$ for one $\gamma_1\in
\Gamma$ contains a simultaneous eigenline of $H^\mu_p(\gamma_2)$ for all
$\gamma_2\in \Gamma$.  The restriction of the holonomy representation
$H^\mu_p(\gamma)$ to such a simultaneous eigenline is a multiplier $h\in
\Hom(\Gamma,\C_*)$ which is the monodromy of the $\nabla^\mu$--parallel
sections $\psi\in \Gamma(\tilde V)$ whose value $\psi_p$ at $p$ is contained
in the eigenline (cf.\ Section~\ref{sec:global_DT}).

\begin{Pro} \label{prop:cases} Let $f\colon T^2 \rightarrow S^4$ be a
  constrained Willmore torus. The holonomy representations of the associated
  family $\nabla^\mu$ of $f$ belong to one of the following three cases:
  \begin{itemize}
  \item[I.] there is $\gamma\in \Gamma$ such that, away from isolated $\mu\in
    \C_*$, the holonomy $H^\mu_p(\gamma)$ has $4$ distinct eigenvalues which
    are non--constant as functions of $\mu$,
  \item[II.] all holonomies $H^\mu_p(\gamma)$ have a 2--dimensional common
    eigenspace with eigenvalue~1 and there is $\gamma\in \Gamma$ such that,
    away from isolated $\mu\in \C_*$, the holonomy $H^\mu_p(\gamma)$ has 2
    simple eigenvalues which are non--constant as functions of $\mu$, or
  \item[III.]  all $H^\mu_p(\gamma)$, $\gamma\in \Gamma$ have $1$ as an
    eigenvalue of multiplicity 4.  More precisely, either
    \begin{enumerate}
    \item[(a)] all holonomies are trivial, i.e., $H^\mu_p(\gamma)=\Id$ or
    \item[(b)] all holonomies are of Jordan--type with two $2\times2$
      Jordan--blocks.
    \end{enumerate}
  \end{itemize}
  If the immersions $f$ has topologically non--trivial normal bundle, it
  belongs to Case~III.
\end{Pro}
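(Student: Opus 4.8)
The plan is to extract the trichotomy from three inputs — the normalization at $\mu=1$, the determinant constraint, and, decisively, the quaternionic symmetry \eqref{eq:symmetry_nabla_mu} — and then to obtain the last assertion by a degree argument. Since $\Gamma\cong\Z^2$ is abelian, the commuting holonomies $H^\mu_p(\gamma)$ are analyzed through their joint spectrum, which depends holomorphically on $\mu$. First I would record that $\nabla^1=\nabla$ is the trivial connection, so $H^1_p(\gamma)=\Id$ for all $\gamma$; hence every eigenvalue branch equals $1$ at $\mu=1$, and any branch that is constant in $\mu$ is identically $1$. Writing the joint eigenvalues as characters $h_1,\dots,h_4\colon\Gamma\to\C_*$ depending on $\mu$, the inclusion $H^\mu_p(\gamma)\in\SL_4(\C)$ (the perturbation $\nabla^\mu-\nabla$ is trace--free, as $A$ anti--commutes with $S$ and $\eta$ is nilpotent) gives $\prod_j h_j(\gamma)\equiv 1$. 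Let $m$ denote the number of non--trivial characters $h_j$, the remaining $4-m$ eigenlines being common eigenlines with eigenvalue $1$.

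The heart of the classification is that, for $\mu\in S^1$, the connection $\nabla^\mu$ is quaternionic by \eqref{eq:symmetry_nabla_mu}, so $H^\mu_p(\gamma)$ lies in $\GL(2,\H)\cap\SL_4(\C)$, acting $\C$--linearly on $V_p\cong\C^4$ and commuting with $j$. For any real eigenvalue $r$ the space $\ker(H^\mu_p(\gamma)-r)$ is then $j$--invariant, and since $j$ is $\C$--antilinear with $j^2=-\Id$ it carries a quaternionic structure; hence every real eigenvalue, in particular $1$, has even complex multiplicity, and likewise every generalized eigenspace $\ker(H^\mu_p(\gamma)-\Id)^k$. Consequently $m\in\{0,2,4\}$ (the odd values being excluded), giving Cases~III, II and~I: for $m=4$ a single $\gamma$ separating the four characters realizes four distinct non--constant eigenvalues; for $m=2$ the two trivial eigenlines span a $2$--dimensional common eigenvalue--$1$ eigenspace while a suitable $\gamma$ gives two simple non--constant eigenvalues; and for $m=0$ all holonomies have $1$ as an eigenvalue of multiplicity~$4$. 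In the last case the even--multiplicity constraint on $\dim\ker(H^\mu_p(\gamma)-\Id)^k$ forces the nilpotent part $H^\mu_p(\gamma)-\Id$ to have Jordan blocks occurring in equal--size pairs, so in total dimension $4$ the only options are the trivial holonomy~(a) and two $2\times 2$ Jordan blocks~(b).

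For the final assertion I would argue the contrapositive: if $f$ is in Case~I or~II then $\deg(\perp_f)=0$. In either case there is a non--trivial simple eigenvalue for generic $\mu\in S^1$, and by quaternionicity its eigenline $E$ together with the conjugate eigenline $jE$ spans a $\nabla^\mu$--parallel quaternionic line subbundle $\ell\subset V$; being a simple common eigenspace, $\ell$ is invariant under all $H^\mu_p(\gamma)$. Since $\nabla^\mu|_L$ still projects to the nowhere--vanishing $\delta$, the bundle $L$ is not $\nabla^\mu$--parallel, so $\ell\neq L$ and $\ell$ defines a Darboux transform, with $\pi\colon\ell\to V/L$ a nonzero quaternionic bundle map. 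As $\ell$ is parallel for the flat connection $\nabla^\mu$ it is flat and hence of quaternionic degree $0$, so the existence of $\pi$ forces $\deg(V/L)\ge 0$; the symmetric argument applied to the gauge--equivalent family $\tilde\nabla^\mu$, whose perturbation annihilates $L$, reverses the inequality and yields $\deg(V/L)=0$, whence $\deg(\perp_f)=0$ through the relation between these degrees valid on a torus. The main obstacle is exactly this last step: the degree bookkeeping that turns the parallel eigenline $\ell$ into the vanishing of $\deg(V/L)$ and then of $\deg(\perp_f)$, together with the technically delicate task of controlling the loci where eigenvalues collapse so that $\ell$ and its Darboux transform are defined globally enough to compute degrees. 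This is where the quaternionic Pl\"ucker formula \cite{FLPP01} is needed, and it is the genuinely hard part of the proposition.
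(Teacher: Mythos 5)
Your two structural inputs are sound and match the paper's: the normalization $H^{\mu=1}=\Id$ together with the observation that constant eigenvalue branches are identically $1$, and the even multiplicity of the constant eigenvalue $1$ obtained from the $j$--symmetry \eqref{eq:symmetry_nabla_mu} on $S^1$ and extended to all $\mu$ by the holomorphic--family argument, are exactly Lemma~\ref{lem:non-const}. But the trichotomy does not follow from these, and the decisive gap is your tacit assumption that the non--trivial characters are pairwise distinct. Consider the configuration in which $H^\mu(\gamma)$ generically has two non--constant eigenvalues $\lambda(\mu)$ and $\lambda(\mu)^{-1}$, each of algebraic multiplicity $2$: on $S^1$ this reads $\{\lambda,\lambda,\bar\lambda,\bar\lambda\}$, which is perfectly compatible with the quaternionic pairing, with $\det=1$, and with even multiplicity of the constant eigenvalue $1$ (here multiplicity $0$). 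In your counting this has $m=4$, yet it belongs to none of Cases~I--III, so your linear--algebra constraints cannot establish the proposition. (Your pairing argument does dispose of a triple--plus--simple splitting, and the determinant rules out repeated non--trivial characters when $m=2$, but the double--double configuration survives everything you use.) Excluding it is the bulk of the paper's proof in Section~\ref{proof:trivial} and is genuinely analytic: in the diagonalizable sub--case, a $2$--dimensional common eigenspace for a continuum of $\mu$ would produce $2$--dimensional spaces of holomorphic sections of $V/L$ with a continuum of different multipliers, contradicting the fact from the multiplier spectral curve theory (Section~\ref{sec:multiplier_spec}, \cite{BPP2}) that such spaces exist only for isolated multipliers; in the Jordan sub--case one needs the uniqueness of $2$--dimensional linear systems with Jordan monodromy (Lemma~\ref{lem:linear_sys_with_monodromy}), which forces $\partial\psi^\mu/\partial\mu$ to be $\nabla^\mu$--parallel, hence $A_\circ\psi^\mu=0$ and trivial holonomy --- a contradiction. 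No trace of either mechanism appears in your proposal.

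Your degree argument for the final assertion also fails as stated. The projection $\pi|_\ell\colon\ell\to V/L$ lands in a bundle that is only \emph{quaternionic} holomorphic, $D=\dbar+Q$ with a potential term, and for such bundles a nonzero holomorphic map from a flat (degree--$0$) line bundle does \emph{not} force $\deg(V/L)\ge 0$: the bundle $V/L$ of \emph{every} immersed torus carries the $2$--dimensional space of projections of constant sections of $V$, regardless of the sign of $\deg(V/L)=\tfrac12\deg(\perp_f)$, so your monotonicity principle would ``prove'' that every immersed torus has non--negative normal bundle degree. The Pl\"ucker estimate for a $1$--dimensional linear system with monodromy carries no sign information; the paper's actual argument in Section~\ref{proof:non--trivial} exploits the full family of eigenlines over varying $\mu$: a non--constant branch of multipliers yields linearly independent holomorphic sections with pairwise different monodromies, hence $n$--dimensional linear systems with monodromy for arbitrarily large $n$, and then $\mathcal{W}(V/L)\ge -n\deg(V/L)$ (Appendix of \cite{BLPP}) contradicts finiteness of $\mathcal{W}$ when $\deg(V/L)<0$. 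So the non--trivial normal bundle statement is proved directly (eigenvalue $1$ of multiplicity $4$, i.e., Case~III), not through a single parallel eigenline; your single--eigenline contrapositive cannot be repaired without importing exactly this continuum--of--multipliers argument.
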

The proposition will be proven in Sections~\ref{proof:non--trivial}
and~\ref{proof:trivial}.  The cases of immersions with trivial and
non--trivial normal bundle are treated separately, because the situation in
both cases is quite different and so is the analysis needed in the proof.  In
the quaternionic model, the normal bundle of an immersion is the bundle
$\Hom_-(L,V/L)$, where ``$-$'' denotes the homomorphisms anti--commuting with
$J$ and $\tilde J$.  The degree of the normal bundle for an immersion of a
compact surface is
\[\deg(\perp_f) = \deg(\Hom_-(L,V/L)) = 2\deg(V/L)+\deg(K),\]
where the last equality holds because the differential $\delta$ of $f$ is a
nowhere vanishing section of $K\Hom_+(L,V/L)$. In particular, the normal
bundle of an immersed torus is trivial if and only if the induced quaternionic
holomorphic line bundle $V/L$ has degree zero.

All cases described in Proposition~\ref{prop:cases} do actually occur:
\begin{itemize}
\item examples for Case~I are Willmore tori with $\eta\equiv 0$ that are
  neither super conformal nor Euclidean minimal with planar ends for some
  point $\infty$ at infinity (see Corollary~\ref{cor:willmore}),
\item examples for Case~II are CMC tori in $\R^3$ (see
  Section~\ref{sec:harmonic}),
\item examples for Case~IIIa are super conformal tori (see
  Section~\ref{sec:nabla_mu}), and
\item examples for Case~IIIb are Euclidean minimal tori with planar ends for
  which the surfaces in the minimal surface associated family have
  translational periods (see~\cite{LPP05}).
\end{itemize}

\subsection{Non--trivial eigenvalues of $H^\mu$}\label{sec:non--trivial}
For fixed $\gamma\in \Gamma$, away from isolated spectral parameters $\mu\in
\C_*$ the eigenvalues of the holonomy $H^\mu(\gamma)$ are locally given by
holomorphic functions $\mu\mapsto \lambda(\mu)$: to see this note that 
\[\{ (\lambda,\mu)\in \C_*\times \C_* \mid f(\lambda,\mu)=0\} \qquad \textrm{
  with } \qquad f(\lambda,\mu)=\det(\lambda-H^\mu(\gamma))\] is a
1--dimensional analytic subset of $\C_*\times \C_*$ whose non--empty
intersection with $\C_*\times\{\mu\}$ for $\mu\in \C_*$ consists of up to 4
points. Denote by $X$ the Riemann surface normalizing this analytic set and by
$\mu\colon X \rightarrow \C_*$ its projection to the $\mu$--coordinate.  The
holomorphic function $\mu$ is then a branched covering whose number of sheets
is between $1$ and $4$ and coincides with the generic number of different
eigenvalues of $H^\mu(\lambda)$. Away from the branch points of $\mu$, the
different sheets of the analytic set are therefore locally graphs of
holomorphic functions $\mu\mapsto \lambda(\mu)$ describing the eigenvalues of
$H^\mu(\gamma)$.


If one of the local holomorphic functions $\mu\mapsto \lambda(\mu)$ that
describe the eigenvalues of the holonomy $H^\mu(\gamma)$ for some $\gamma\in
\Gamma$ is constant, that is, if there is $\lambda\in \C$ that is eigenvalue
of $H^\mu(\gamma)$ for all $\mu$ in an open subset of $\C_*$, then $\lambda$
is eigenvalue for all $\mu \in \C_*$ and the following lemma implies that
$\lambda\equiv 1$.

\begin{Lem} \label{lem:non-const} If $\lambda\in \C_*$ is an eigenvalue of
  $H^\mu(\gamma)$ for all $\mu$, then $\lambda=1$. The multiplicity of
  $\lambda=1$ as simultaneous eigenvalue of $H^\mu(\gamma)$ for all $\mu\in
  \C_*$ is even.
\end{Lem}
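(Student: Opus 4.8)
I would prove the first statement by evaluating the holonomy at the distinguished value $\mu=1$. By \eqref{eq:nabla_mu} we have $\nabla^1=\nabla$, and since $\nabla$ is the trivial connection on the trivial bundle $V$ its holonomy is trivial, $H^1_p(\gamma)=\Id$ for every $\gamma\in\Gamma$. If $\lambda\in\C_*$ is an eigenvalue of $H^\mu_p(\gamma)$ for all $\mu\in\C_*$, then specializing to $\mu=1$ gives $\det(\lambda\,\Id-\Id)=0$, whence $\lambda=1$. Thus the only possible constant eigenvalue is $\lambda\equiv1$.

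For the evenness of the multiplicity the plan is to exploit the reality symmetry \eqref{eq:symmetry_nabla_mu} of the associated family. As $j$ is a constant endomorphism of $V$, the gauge equivalence $\nabla^{1/\bar\mu}=j^{-1}\nabla^\mu j$ descends to the holonomies as $H^{1/\bar\mu}_p(\gamma)=j^{-1}H^\mu_p(\gamma)\,j$. I would then restrict attention to the unit circle $\mu\in S^1\subset\C_*$, where $1/\bar\mu=\mu$ and this relation becomes $H^\mu_p(\gamma)=j^{-1}H^\mu_p(\gamma)\,j$; that is, $H^\mu_p(\gamma)$ commutes with $j$. Since $j$ is complex anti--linear with $j^2=-\Id$ and commutes with $H^\mu_p(\gamma)$, it commutes with every power of $H^\mu_p(\gamma)-\Id$, so the generalized eigenspace $E_1=\ker(H^\mu_p(\gamma)-\Id)^4$ is $j$--invariant. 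The restriction $j|_{E_1}$ is an anti--linear endomorphism squaring to $-\Id$, hence it equips $E_1$ with the structure of a quaternionic vector space; therefore $\dim_\C E_1$, the algebraic multiplicity of the eigenvalue $1$, is even for every $\mu\in S^1$.

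It then remains to identify this even number with the persistent multiplicity of $\lambda=1$. I would factor the characteristic polynomial $p(\lambda,\mu)=\det(\lambda\,\Id-H^\mu_p(\gamma))$, whose coefficients are holomorphic in $\mu\in\C_*$, as $p(\lambda,\mu)=(\lambda-1)^m\,q(\lambda,\mu)$ with $m$ chosen maximal, so that $q(1,\cdot)\not\equiv0$; this $m$ is exactly the multiplicity of $\lambda=1$ as simultaneous eigenvalue of $H^\mu_p(\gamma)$ for all $\mu$. By the identity theorem $q(1,\mu)$ vanishes only at isolated $\mu\in\C_*$, hence at finitely many points of the compact circle $S^1$. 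For every other $\mu\in S^1$ the algebraic multiplicity of the eigenvalue $1$ equals $m$, and by the previous paragraph this common value is even, so $m$ is even. The step requiring the most care is precisely this last bookkeeping: one must ensure that the exceptional set on $S^1$, where non--persistent eigenvalues accidentally collapse onto $1$ and temporarily inflate the multiplicity, is genuinely finite, so that the quaternionic evenness obtained at a generic $\mu\in S^1$ really is the evenness of $m$.
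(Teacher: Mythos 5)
Your proof is correct, and its skeleton matches the paper's: the first statement from $H^{\mu=1}_p(\gamma)=\Id$, and evenness from the quaternionic symmetry \eqref{eq:symmetry_nabla_mu} at $\mu\in S^1$, where commutation with the anti--linear $j$ satisfying $j^2=-\Id$ puts a quaternionic structure on the eigenvalue--$1$ space. Where you genuinely diverge is the propagation step from $S^1$ to all of $\C_*$. The paper invokes Proposition~\ref{pro:holomorphic_family}, applied to the holomorphic family $\mu\mapsto H^\mu_p(\gamma)-\Id$, whose minimal kernel dimension is generic and attained away from isolated points --- an argument phrased in terms of the kernel, i.e.\ the \emph{geometric} multiplicity. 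You instead expand the characteristic polynomial as $p(\lambda,\mu)=\sum_k c_k(\mu)(\lambda-1)^k$, set $m=\min\{k\mid c_k\not\equiv 0\}$, and use the identity theorem: $c_m=q(1,\cdot)$ is holomorphic and not identically zero on $\C_*$, so it has only isolated zeros, hence finitely many on the compact circle $S^1$, and the generic value $m$ of the \emph{algebraic} multiplicity is therefore attained (and even) at some $\mu\in S^1$ --- this settles the ``bookkeeping'' concern you raise at the end. Your variant buys two things: it is elementary, requiring no Fredholm or analytic--family machinery, and it directly controls the algebraic multiplicity, which is the notion the paper actually uses later (in Case~IIIb of Proposition~\ref{prop:cases} the holonomies have $1$ as eigenvalue of algebraic multiplicity $4$ but geometric multiplicity $2$, so the two notions really differ); to get the algebraic statement from the paper's mechanism one would apply Proposition~\ref{pro:holomorphic_family} to $(H^\mu_p(\gamma)-\Id)^4$ rather than to $H^\mu_p(\gamma)-\Id$. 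What the paper's route buys in exchange is uniformity: Proposition~\ref{pro:holomorphic_family} is the workhorse reused throughout the paper (e.g.\ in Sections~\ref{proof:trivial} and \ref{sec:case_by_case}), so quoting it here keeps the proof to two lines.
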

\begin{proof}
  The first statement follows from the fact that $H^{\mu}(\gamma)=\Id$ for
  $\mu=1$.  The second statement is a consequence of the quaternionic symmetry
  \eqref{eq:symmetry_nabla_mu} of $\nabla^\mu$ for $\mu \in S^1$. It implies
  that, for $\mu \in S^1$, the multiplicity of $1$ as an eigenvalue of
  $H^\mu_p(\gamma)$ is even. The same holds for all $\mu\in \C_*$, because the
  minimal kernel dimension of the holomorphic family $\mu\mapsto
  H^\mu_p(\gamma)-\Id$ of endomorphisms is generic and attained away from
  isolated points, see Proposition~\ref{pro:holomorphic_family} below.
\end{proof}

We denote by \emph{non--trivial eigenvalues} the eigenvalues that are not
equal to $1$ and therefore locally given by non--constant functions
$\mu\mapsto \lambda(\mu)$.  In the proof of Proposition~\ref{prop:cases} we
will see that non--trivial eigenvalues are generically simple, i.e., have
algebraic multiplicity~1.  The corresponding eigenlines are in the following
called \emph{non--trivial eigenlines}.

In the investigations of the present paper we will frequently apply this
proposition a proof of which can be found in \cite{BPP2} (see Proposition~3.1
there):

\begin{Pro}\label{pro:holomorphic_family}
  For a family of Fredholm operators that holomorphically depends on a
  parameter in a connected complex manifold $X$, the minimal kernel dimension
  is generic and attained away from an analytic subset $Y\subset X$.  In case
  $X$ is 1--dimensional, $Y$ is a set of isolated points and the holomorphic
  vector bundle defined by the kernels over $X\backslash Y$ holomorphically
  extends through the isolated points $Y$ with higher dimensional kernel. If
  the index of the operators is zero, the set of $x$ for which they are
  invertible is locally given as the vanishing locus of one holomorphic
  function.
\end{Pro}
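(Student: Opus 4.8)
The plan is to reduce, locally in the parameter, the given family of Fredholm operators to a holomorphic family of matrices of fixed size by a Grushin (Schur complement) construction, and then to extract all three assertions from elementary properties of holomorphic families of matrices. Write $T_x$ for the operator at $x\in X$ and fix a base point $x_0\in X$. Since $T_{x_0}$ is Fredholm I would split its domain as $\ker(T_{x_0})\oplus W$ with $W$ a closed complement, and its target as $\image(T_{x_0})\oplus C$ with $C$ finite dimensional. With respect to these splittings $T_x$ becomes a holomorphic $2\times 2$ block operator whose $W\to\image(T_{x_0})$ corner is invertible at $x_0$, hence on a neighbourhood $U$ of $x_0$ by openness of invertibility. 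Eliminating this corner yields a holomorphic family $M\colon U\to\Hom(\ker(T_{x_0}),C)$ of $k\times c$ matrices, $k=\dim\ker(T_{x_0})$ and $c=\dim C$, together with natural holomorphic isomorphisms $\ker(T_x)\cong\ker(M(x))$ and $\coker(T_x)\cong\coker(M(x))$; in particular $T_x$ is invertible iff $M(x)$ is, and the (locally constant, hence globally constant) index equals $k-c$.

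Given this reduction, the first assertion is almost immediate. The kernel dimension $x\mapsto\dim\ker(T_x)=k-\operatorname{rank} M(x)$ is integer valued and upper semicontinuous, and its locally minimal value near $x_0$ is $k-r$, with $r$ the maximal rank of $M$ on $U$; this locally minimal value is itself a locally constant function of the base point, so by connectedness of $X$ it equals a single integer $d_0$, the global minimum of $\dim\ker$. Locally on $U$ the exceptional set is $\{\operatorname{rank} M<r\}$, cut out by the simultaneous vanishing of all $r\times r$ minors of $M$, which are holomorphic on $U$; hence $Y=\{x\in X:\dim\ker(T_x)>d_0\}$ is a proper analytic subset and the minimal kernel dimension is attained exactly on the dense open set $X\setminus Y$.

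When $\dim X=1$ a proper analytic subset is discrete, so $Y$ is a set of isolated points, and over $X\setminus Y$ the kernels form a holomorphic rank $(k-r)$ subbundle of the trivial bundle with fibre $\ker(T_{x_0})$. To extend it across an isolated $x_1\in Y$ I would pass to a disc $\Delta$ with coordinate $z$ vanishing at $x_1$ and regard $M$ as a morphism $\mathcal{O}_\Delta^{\,k}\to\mathcal{O}_\Delta^{\,c}$ of free modules over the local ring $\mathcal{O}_{\Delta,x_1}=\C\{z\}$. This ring is a discrete valuation ring, hence a principal ideal domain, so the kernel module is free of rank $k-r$; its saturation inside $\mathcal{O}_\Delta^{\,k}$ --- concretely, a local holomorphic frame with the largest possible common power of $z$ divided out --- is a rank $(k-r)$ holomorphic subbundle agreeing with $\ker M$ off $x_1$ whose fibre at $x_1$ is a $(k-r)$--dimensional subspace of the strictly larger space $\ker(M(x_1))$. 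Transporting back through the Grushin isomorphisms gives the asserted extension of the kernel bundle ``with higher dimensional kernel''. Finally, if the index vanishes then $k=c$, the matrix $M(x)$ is square, and $T_x$ is invertible iff $\det M(x)\neq 0$; hence near every point the non--invertible locus is the zero set of the single holomorphic function $\det M$, which is the last claim.

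The step I expect to be the main obstacle is the extension of the kernel bundle across the points of $Y$: the matrix reduction is routine once carried out holomorphically, but the extension uses essentially that the base is one--dimensional (so that $\C\{z\}$ is a valuation ring and torsion--free modules are free), equivalently a Riemann--type removable--singularity argument for the induced holomorphic map $X\setminus Y\to\mathrm{Gr}(k-r,\ker(T_{x_0}))$ into the Grassmannian. Care is also needed to verify that the Grushin isomorphisms $\ker(T_x)\cong\ker(M(x))$ depend holomorphically on $x$, so that the finite--dimensional conclusions genuinely transfer back to the original family.
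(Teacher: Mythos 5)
The paper gives no proof of this proposition itself, deferring to Proposition~3.1 of \cite{BPP2}, and your Grushin/Schur--complement reduction to a holomorphic family of finite matrices is exactly the standard finite--dimensionalization underlying that result, with all three assertions correctly extracted: the exceptional set cut out by the $r\times r$ minors of $M$, the extension of the kernel bundle across a puncture via freeness of the kernel module over $\C\{z\}$ (note the kernel sheaf is automatically saturated, since $\mathcal{O}^c$ is torsion--free, so the real content is your frame--evaluation argument showing the basis vectors remain independent at $z=0$), and $\det M$ in the index--zero case. One caution on your closing aside: the claimed ``equivalent'' Riemann--type removable--singularity argument into the Grassmannian is not literally valid, since holomorphic maps of a punctured disc into a compact complex manifold can have essential singularities (e.g.\ $e^{1/z}$ into $\CP^1$); the extension genuinely rests on the meromorphicity supplied by the holomorphic minors --- i.e.\ on dividing a Pl\"ucker--type frame by the maximal common power of $z$, which is the PID argument you actually carry out --- so the proof stands as written but that parenthetical should be dropped or qualified.
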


\subsection{Proof of Proposition~\ref{prop:cases} in the non--trivial normal
  bundle case}\label{proof:non--trivial}
The proof in case of non--trivial normal bundle is analogous to that of
Lemma~3.1 in \cite{LPP05}.  We show that $1$ is an eigenvalue of
multiplicity~$4$ for all holonomies of $\nabla^\mu$.  Because the degree of
the quaternionic holomorphic line bundle $V^*/L^\perp\cong L^{-1}$ induced by
the dual constrained Willmore torus $f^\perp$ is $\deg(V^*/L^\perp) =
-\deg(V/L)$ and the holonomy representation of $(\nabla^\perp)^\mu$, by
\eqref{eq:gauge_trafo} and \eqref{eq:associated_perp}, is equivalent to the
dual representation of the holonomy of $\nabla^\mu$, we assume without loss of
generality (if necessary by passing to the dual surface $f^\perp$) that the
degree of the normal bundle and therefore of $V/L$ is negative.

If the multiplicity of $1$ as an eigenvalue was not $4$ for all holonomies, by
Lemma~\ref{lem:non-const}, there had to be a non--constant holomorphic map
$h\colon U\rightarrow \Hom(\Gamma,\C_*)$ defined on an open subset $U\subset
\C_*$ such that, for every $\mu\in U$, there is a non--trivial
$\nabla^\mu$--parallel section $\psi^\mu \in \Gamma(\tilde V)$ with
$H^\mu(\gamma)\psi^\mu=\psi^\mu h_\gamma^\mu$ for all $\gamma\in \Gamma$.
Projecting the $\psi^\mu$ to $V/L$ yields a family of holomorphic sections
with monodromy all of whose multipliers are different and which are therefore
linearly independent.  But the existence of such an infinite dimensional space
of holomorphic sections with monodromy of $V/L$ contradicts the
quaternionic Pl\"ucker formula with monodromy according to which
\[ \mathcal{W}(V/L) \geq -n \deg(V/L) \] in case there exists an
$n$--dimensional linear system with monodromy, see Appendix of~\cite{BLPP}.

The quaternionic symmetry \eqref{eq:symmetry_nabla_mu} of $\nabla^\mu$ for
$\mu\in S^1$ implies that in the Jordan--case all Jordan blocks are $2\times2$
(because they are $2\times2$ for $\mu\in S^1$ such that the holomorphic family
$H^\mu(\gamma)-\Id$ of endomorphisms squares to zero for all $\mu \in S^1$ and
hence everywhere). \qed

\subsection{The multiplier spectral curve}\label{sec:multiplier_spec} 
The proof of Proposition~\ref{prop:cases} in the trivial normal bundle case
requires ideas from quaternionic holomorphic geometry related to the spectral
curve of conformally immersed tori $f\colon T^2 \rightarrow S^4$ with trivial
normal bundle.  The spectral curve of $f$, in the following also called the
\emph{multiplier spectral curve} $\Sigma_{mult}$, is the Riemann surface
normalizing its \emph{spectrum}, the complex analytic set that consists of
all complex multipliers
\[h\in \Hom(\Gamma,\C_*) \cong \C_*\times \C_*\] for which there is a
non--trivial holomorphic section with monodromy $h$ of~$V/L$,
cf.~\cite{BLPP,BPP2}.  The idea of defining a spectral curve for conformal
immersions is due to Taimanov \cite{Ta98}, Grinevich, and Schmidt \cite{GS98}
who give a slightly different (but equivalent, cf.~\cite{Boh03}) definition of
the spectral curve for immersions $f\colon T^2\rightarrow \R^3$ which is based
on the Euclidean concept of Weierstrass representation.

In order to justify the definition of $\Sigma_{mult}$ one has to verify that
the possible multipliers form a 1--dimensional complex analytic set. In
\cite{BPP2} this is proven by asymptotic analysis of a holomorphic family of
elliptic operators. In addition it is shown that $\Sigma_{mult}$ has one or
two ends (depending on whether its genus is infinite or finite) and one or two
connected components each of which contains an end. Moreover, the minimal
vanishing order of the functions describing the spectrum is one at generic
points which implies that, away from isolated points $\sigma\in
\Sigma_{mult}$, the space of holomorphic sections with monodromy $h^\sigma$ of
$V/L$ is complex 1--dimensional.

Because the kernels of a holomorphic 1--parameter family of elliptic operators
form a holomorphic vector bundle which holomorphically extends through the
isolated points with higher dimensional kernel, see
Proposition~\ref{pro:holomorphic_family}, we obtain \cite{BPP2} a unique line
subbundle $\mathcal{L}$ of the trivial bundle $\Sigma_{mult}\times
\Gamma(\widetilde{V/L})$ equipped with the $C^\infty$--topology each fiber
$\mathcal{L}_\sigma$ of which is contained in (and generically coincides with)
the space of holomorphic sections with monodromy $h^\sigma$ of~$V/L$.  This
defines~\cite{BLPP} a map
\[ F\colon T^2\times \Sigma_{mult} \rightarrow \CP^3, \qquad (p,\sigma)
\mapsto \psi^\sigma(p)\C,
\] where $\psi^\sigma$ denotes the prolongation of a non--trivial element of
$\mathcal{L}_\sigma$.  For a fixed point $p\in T^2$ on the torus, the map
$\sigma \in \Sigma_{mult} \rightarrow F(p,\sigma)$ is holomorphic. For fixed
$\sigma\in \Sigma_{mult}$ in the spectral curve, the twistor projection of
$p\mapsto F(p,\sigma)$ to $\HP^1$ is a \emph{singular Darboux transform} of
$f$, that is, a Darboux transform defined away from the finitely many points
$p$ at which $\psi^\sigma$ is contained in $L$.

The set of possible multipliers is invariant under complex conjugation,
because multiplying a holomorphic section with monodromy
$h\in\Hom(\Gamma,\C_*)$ by the quaternion $j$ yields a holomorphic section
with monodromy $\bar h$. By lifting the map $h\mapsto \bar h$ to the
normalization $\Sigma_{mult}$ we obtain an anti--holomorphic involution
$\rho\colon\Sigma_{mult} \rightarrow \Sigma_{mult}$ which has no fixed points
because \[F(p,\rho(\sigma)) = F(p, \sigma)j.\]

Let $\varphi^\sigma$ be a nowhere vanishing local holomorphic section of the
bundle $\mathcal{L}\rightarrow \Sigma_{mult}$, i.e., $\varphi^\sigma$ is a
family of holomorphic sections in $\Gamma(\widetilde{V/L})$ which
holomorphically depends on $\sigma$ and satisfies
\[ \gamma^*\varphi^\sigma= \varphi^\sigma h^\sigma_\gamma \] for all
$\gamma\in \Gamma$.  Taking the derivative $\tfrac{\partial}{\partial x}$ with
respect to an arbitrary chart $x$ of $\Sigma_{mult}$ yields
\[ \gamma^*\tfrac{\partial \varphi^\sigma}{\partial x}= \tfrac{\partial
  \varphi^\sigma}{\partial x} h^\sigma_\gamma + \varphi^\sigma \tfrac{\partial
  h^\sigma_\gamma}{\partial x}\] such that $\varphi^{\sigma}$ and
$\tfrac{\partial \varphi^\sigma}{\partial x}$ span a 2--dimensional linear
system with Jordan monodromy of $V/L$.  The following lemma shows that this
2--dimensional linear system is generically the only linear system with Jordan
monodromy and eigenvalue $h^\sigma$ of $V/L$ (like $\varphi^\sigma$
generically spans the space of holomorphic sections with monodromy $h^\sigma$
of $V/L$).

\begin{Lem}\label{lem:linear_sys_with_monodromy}
  For a generic point $\sigma\in \Sigma_{mult}$ in the multiplier spectral
  curve of an immersed torus $f\colon T^2\rightarrow S^4$ with trivial normal
  bundle, there is a unique 2--dimensional linear systems with Jordan
  monodromy and eigenvalues $h^\sigma$ of $V/L$.
\end{Lem}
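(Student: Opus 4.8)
The plan is to show that any such system must coincide with the canonical one spanned by $\varphi^\sigma$ and $\tfrac{\partial\varphi^\sigma}{\partial x}$, by first reducing the question to one about \emph{directions}. A $2$--dimensional linear system with Jordan monodromy and eigenvalue $h^\sigma$ contains a distinguished line of holomorphic sections with monodromy exactly $h^\sigma$, namely the eigenline of the Jordan blocks; for generic $\sigma\in\Sigma_{mult}$ this line is $\varphi^\sigma\C$ by the $1$--dimensionality of the space of holomorphic sections with monodromy $h^\sigma$ established in \cite{BPP2}. Hence the system has the form $\Span_\C\{\varphi^\sigma,\psi\}$ with $\gamma^*\psi=\psi h^\sigma_\gamma+\varphi^\sigma c_\gamma$ for some $c\colon\Gamma\to\C$, and the compatibility of the $\gamma$--action forces $d_\gamma:=c_\gamma(h^\sigma_\gamma)^{-1}$ to be a nonzero homomorphism $\Gamma\to\C$. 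Replacing $\psi$ by $\psi\lambda+\varphi^\sigma\kappa$ only rescales $d$, so the system is determined by, and determines, the direction $[d]\in\P(\Hom(\Gamma,\C))$. In these terms the assertion becomes: for generic $\sigma$ exactly one direction $[d]$ is realized by a holomorphic $\psi$, and we already know that $d^{\mathrm{can}}:=\tfrac{\partial}{\partial x}\log h^\sigma$ is realized by $\tfrac{\partial\varphi^\sigma}{\partial x}$.

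Next I would turn realizability of a direction $d$ into a linear solvability condition by untwisting the monodromy. Choosing $g\colon\C\to\C_*$ with $\gamma^*g=g\,h^\sigma_\gamma$ and the real--linear function $\ell$ on $\C$ with increments $\ell(z+\gamma)-\ell(z)=d_\gamma$, the section $\tilde\psi:=\psi g^{-1}-\hat\varphi^\sigma\ell$ with $\hat\varphi^\sigma:=\varphi^\sigma g^{-1}$ descends to $T^2$, and $\psi$ is holomorphic if and only if $\tilde\psi$ solves $D_{h^\sigma}\tilde\psi=-(\hat\varphi^\sigma\,d\ell)''$. Here $D_{h^\sigma}=D+(\,\cdot\,d\log g)''$ is the elliptic operator obtained by twisting $D$ with the flat connection of multiplier $h^\sigma$, whose kernel is the space of holomorphic sections with monodromy $h^\sigma$ of $V/L$ (cf.~\cite{BPP2}). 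Writing $\Gamma\subset\C$ and $d_\gamma=a\gamma+b\bar\gamma$, a short computation of the $\bar K$--part yields $(\hat\varphi^\sigma\,d\ell)''=(\hat\varphi^\sigma)^+a\,dz+(\hat\varphi^\sigma)^-b\,d\bar z$, where $(\hat\varphi^\sigma)^\pm$ are the parts of $\hat\varphi^\sigma$ with $\tilde J(\hat\varphi^\sigma)^\pm=\pm(\hat\varphi^\sigma)^\pm i$. Thus $d$ is realizable precisely when this right--hand side lies in $\image(D_{h^\sigma})$.

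Since $D_{h^\sigma}$ is a holomorphic family of Fredholm operators of index zero, Proposition~\ref{pro:holomorphic_family} shows that for generic $\sigma$ its cokernel is one--dimensional; let $\chi$ be a nonzero functional vanishing exactly on $\image(D_{h^\sigma})$. Realizability of $d$ then reduces to the single linear equation $a\,\chi\big((\hat\varphi^\sigma)^+dz\big)+b\,\chi\big((\hat\varphi^\sigma)^-d\bar z\big)=0$ in $(a,b)\in\C^2$, of which $d^{\mathrm{can}}$ is a nonzero solution; so it suffices to prove that its coefficient vector does not vanish, for then the solution space is exactly the line $\C\,d^{\mathrm{can}}$ and the Jordan system is unique. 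This non--degeneracy is where I expect the main obstacle to lie. The way I would establish it is to identify this functional, up to a nonzero factor, with the differential $d\mathcal{D}$ of the holomorphic function $\mathcal{D}$ whose zero set is locally the spectrum (Proposition~\ref{pro:holomorphic_family}): by the standard variation formula through a simple kernel, $d\mathcal{D}(a,b)$ is proportional to $\chi$ applied to the first variation $\tfrac{\partial}{\partial t}D_{h^\sigma_t}|_0\,\hat\varphi^\sigma=(\hat\varphi^\sigma\,d\ell)''$. At a generic $\sigma$ the spectrum is smooth and reduced with $\dim\ker D_{h^\sigma}=1$, so $d\mathcal{D}\neq0$ and its kernel is the tangent line to the spectrum at $h^\sigma$, namely $\C\,d^{\mathrm{can}}$. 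Hence the realizable directions form exactly this line, and the unique $2$--dimensional Jordan system is the canonical one spanned by $\varphi^\sigma$ and $\tfrac{\partial\varphi^\sigma}{\partial x}$.
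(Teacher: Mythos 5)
Your proposal is correct, but it reaches the lemma by a genuinely different route than the paper. The paper argues globally, in two steps: it first exhibits a \emph{single} point $\sigma_0\in\Sigma_{mult}$ where uniqueness holds, by choosing $\sigma_0$ so that sections with monodromy $h^{\sigma_0}$ are nowhere vanishing, using the flat connection on $V/L$ that makes them parallel to identify the spectrum of $V/L$ with that of the degree--zero bundle $KV/L$, and reading off uniqueness of $\varphi_2$ from the $1$--dimensionality of the space of sections with monodromy $h^{\sigma_0}$ of $KV/L$; it then propagates uniqueness to generic $\sigma$ by an analytic--set argument over $\tilde\Sigma\times\CP^1$: the locus where $D_{\omega,\eta}$ has kernel is $1$--dimensional, and any component besides the canonical graph and the vertical fibers would surject onto a component of $\tilde\Sigma$, contradicting uniqueness at the special point which, by the structure results of \cite{BPP2}, every component contains. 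You instead work locally at each generic $\sigma$: your reduction of a Jordan system to a direction $[d]\in\P(\Hom(\Gamma,\C))$ is legitimate (at generic $\sigma$ the eigenline must be $\varphi^\sigma\C$, and two systems realizing the same $d$ differ by a section with monodromy $h^\sigma$, hence by a multiple of $\varphi^\sigma$), your untwisting reproduces the paper's correspondence $(*)$, and the decisive step is then the Lyapunov--Schmidt first--variation identity: realizability of $d$ is one scalar equation against the generically $1$--dimensional cokernel, and the coefficient functional is the differential of the determinant--type function of Proposition~\ref{pro:holomorphic_family} cutting out the spectrum, which is nonzero at generic points by the order--one vanishing statement from \cite{BPP2} quoted in Section~\ref{sec:multiplier_spec}. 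So the realizable directions form exactly the tangent line of the spectrum, realized by $\tfrac{\partial\varphi^\sigma}{\partial x}$. The paper's route avoids perturbation theory but needs global inputs (Lemma~4.9 of \cite{BPP2}, the component and end structure of $\tilde\Sigma$, and the graph argument); yours is purely local, treats each generic $\sigma$ independently, and has the added conceptual payoff of identifying the unique Jordan system with the tangent direction of the spectral curve.

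Two points deserve explicit mention in a final write--up. First, your nondegeneracy step requires the generic order--one vanishing for the \emph{canonical} Fredholm--reduction function itself --- the function $\mathcal D$ whose differential your first--variation formula computes --- not merely for some reduced equation of the zero set (if $\mathcal D$ were a square of a reduced defining function, your functional would vanish identically and all directions would be realizable); since the minimal--vanishing--order statements of \cite{BPP2}, also used by the paper via the inequality ``vanishing order $\geq$ kernel dimension'', concern precisely these canonical functions, the citation covers you, but the proof should say so. Second, with the paper's convention $*dz=i\,dz$ the $\bar K$--part pairs $(\hat\varphi^\sigma)^-$ with $dz$ and $(\hat\varphi^\sigma)^+$ with $d\bar z$, opposite to your labeling; this is a harmless convention issue that does not affect the argument.
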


\begin{proof}
  Firstly, we prove the existence of a point $\sigma_0\in \Sigma_{mult}$ that
  admits only one 2--dimensional linear system with Jordan monodromy and
  eigenvalue $h^{\sigma_0}$. Secondly, we show how, using
  Proposition~\ref{pro:holomorphic_family}, this implies that generic points
  $\sigma\in \Sigma_{mult}$ admit a unique 2--dimensional linear system with
  Jordan monodromy and eigenvalue $h^\sigma$.

  Let $\sigma_0\in\Sigma_{mult}$ be a point that normalizes a regular point of
  the spectrum, the analytic set of possible multipliers of holomorphic
  sections with monodromy. The spectrum is then locally given as the vanishing
  locus of a holomorphic function that vanishes to first order at
  $h^{\sigma_0}$.  Because the minimal vanishing order of holomorphic
  functions describing the spectrum is greater or equal to the dimension of
  the space of holomorphic sections with monodromy, the space of holomorphic
  sections with monodromy $h^{\sigma_0}$ is 1--dimensional.  By Lemma~4.9 of
  \cite{BPP2} we can assume that $\sigma_0$ is chosen such that non--trivial
  holomorphic sections with monodromy $h^{\sigma_0}$ have no zeros.  Denote by
  $\nabla$ the quaternionic connection of $V/L$ rendering this space of
  holomorphic sections parallel.  Then $d^\nabla$ makes $KV/L$ into a
  quaternionic holomorphic line bundle of degree~$0$. Because $\nabla$ is
  flat, it maps holomorphic sections with monodromy of $V/L$ to holomorphic
  sections with monodromy of $KV/L$ such that the spectrum of $V/L$ is
  included in the spectrum of $KV/L$.  This shows that both spectra coincide,
  because the spectrum of a quaternionic holomorphic line bundle of degree~0
  over a torus is a 1--dimensional analytic set that is either irreducible or
  has two irreducible components interchanged under complex conjugation, see
  \cite{BPP2}.  In particular, there is a local holomorphic function
  describing the spectrum of $KV/L$ that vanishes to first order at
  $h^{\sigma_0}$ such that the space of holomorphic sections with monodromy
  $h^{\sigma_0}$ of $KV/L$ is also 1--dimensional.  This implies the
  uniqueness of the 2--dimensional linear system with Jordan monodromy
  $h^{\sigma_0}$ of $V/L$: let $\varphi_1$, $\varphi_2$ be holomorphic
  sections of $\widetilde{V/L}$ and $t \in \Hom(\Gamma,\C)$ such that
  \[\gamma^*\varphi_1=\varphi_1
  h^{\sigma_0}_\gamma\qquad \textrm{ and } \qquad\gamma^*\varphi_2=\varphi_2
  h^{\sigma_0}_\gamma+ \varphi_1 t_\gamma h^{\sigma_0}_\gamma\] for all
  $\gamma\in \Gamma$.  Then $\varphi_1$ is unique up to scaling, because it is
  a holomorphic section with monodromy $h^{\sigma_0}$, and $\varphi_2$ is
  unique up to scaling and adding a multiple of $\varphi_1$, because
  $\nabla\varphi_2$ is a holomorphic section with monodromy $h^{\sigma_0}$ of
  $KV/L$.

  Similar as in Section~2.3 of \cite{BPP2} one can check that 2--dimensional
  linear systems with Jordan monodromy correspond to solutions to
  \[ D_\omega \varphi_1 =0 \qquad \textrm{ and }\qquad D_\omega \varphi_2 +
  (\varphi_1 \eta)'' =0, \tag{$*$}\] where $\omega$, $\eta\in
  \Hom(\Gamma,\C)\cong \Harm(\C/\Gamma,\C)$ and $D_{\omega}\varphi_1 =
  D\varphi_1 + (\varphi_1 \omega)''$: given a solution $\varphi_1$,
  $\varphi_2\in \Gamma(V/L)$ to $(*)$,
  \[ (\tilde \varphi_1, \tilde \varphi_2) = (\varphi_1,\varphi_2) e^{\int
    \omega}
  \begin{pmatrix} 1 & \int\eta \\ 0 & 1 \end{pmatrix}\] is a 2--dimensional
  linear system with Jordan monodromy and eigenvalue $h$ of $V/L$, where
  $h_\gamma = e^{\int_\gamma\omega}$. Clearly, non--trivial solutions to $(*)$
  can only exists if $h$ with $h_\gamma = e^{\int_\gamma\omega}$ belongs to
  the spectrum. Denote by $\tilde\Sigma$ the ``logarithmic spectral curve'',
  the normalization of the space of $\omega$ for which $h$ belongs to the
  spectrum. We consider now the holomorphic family
  \[ D_{\omega,\eta} \begin{pmatrix} \varphi_1\\\varphi_2
  \end{pmatrix}=
  \begin{pmatrix}
    D_\omega \varphi_1 \\ D_\omega \varphi_2 + (\varphi_1 \eta)''
  \end{pmatrix}
  \]
  of elliptic operators parametrized over $\tilde\Sigma\times
  (\Harm(\C/\Gamma,\C)\backslash\{0\})\cong \tilde \Sigma\times
  (\C^2\backslash\{0\})$. The fact that for every $h$ in the spectrum there is
  at least one 2--dimensional linear system with Jordan monodromy and
  eigenvalue $h$ implies that for every $\omega\in \tilde \Sigma$ there is at
  least a line worth of $\eta\in (\Harm(\C/\Gamma,\C)\backslash\{0\})$ for
  which $D_{\omega,\eta}$ has a non--trivial kernel.  On the other hand, we
  have proven above the existence of a multiplier $h$ that admits a unique
  linear system with Jordan monodromy such that there exists $\omega\in \tilde
  \Sigma$ admitting a unique (up to scale) $\eta$ with
  $\ker(D_{\omega,\eta})\neq \{0\}$.  Because $D_{\omega,\eta}$ has index~$0$,
  Proposition~\ref{pro:holomorphic_family} implies that the set of
  $\omega,\eta$ for which $D_{\omega,\eta}$ has a non--trivial kernel is a
  non--empty 2--dimensional analytic set which projects to a 1--dimensional
  analytic subset of $\tilde \Sigma\times \CP^1$.  To complete the proof we
  have to show that, apart from components of the form $\{\omega\}\times
  \CP^1$, the normalization of this 1--dimensional analytic set is a graph
  over $\tilde \Sigma$. Assume this was not the case.  Then, the normalization
  has one component $X$ that is neither of the form $\{\omega\}\times \CP^1$
  nor a part of the graph over $\tilde \Sigma$ that corresponds to the
  ``generic'' 2--dimensional linear systems with Jordan monodromy described
  above (before the statement of the lemma).

  The projection to $\tilde \Sigma$ would map this additional component $X$
  onto a connected component of $\tilde \Sigma$ (because, for every Riemann
  surface $Y$, the image of a connected component of the normalization of a
  1--dimensional analytic subset of $Y \times \CP^1$ under the projection to
  $Y$ is either a point or a connected component of $Y$). But this is
  impossible, because every connected component of $\tilde \Sigma$ contains a
  regular point at which the corresponding holomorphic sections with monodromy
  of $V/L$ are nowhere vanishing such that, as seen above, there is a unique
  2--dimensional linear system with Jordan monodromy belonging to the
  respective eigenvalue.
\end{proof}

\subsection{Proof of Proposition~\ref{prop:cases} in the trivial normal bundle
  case}\label{proof:trivial}
We fix $\gamma\in \Gamma$ for which $H^\mu(\gamma)$ generically has the
maximal number of different eigenvalues such that the branched covering
$\mu\colon X \rightarrow \C_*$ has the maximal number of sheets, where $X$ as
in Section~\ref{sec:non--trivial} denotes the Riemann surface normalizing the
1--dimensional analytic subset of $\C_*\times \C_*$ given by
$f(\lambda,\mu)=0$ with $f(\lambda,\mu)=\det(\lambda-H^\mu(\gamma))$.  Recall
that the number of sheets is 4 in case all eigenvalues of $H^\mu(\gamma)$ are
distinct and 1, 2,~or~3 if the discriminant of the characteristic polynomial
of $H^\mu_p(\tilde \gamma)$ vanishes identically for all $\tilde \gamma\in
\Gamma$.

In case the number of sheets is $4$ we are in Case~I of the above list: away
from isolated parameters $\mu$ the holonomy $H^\mu_p(\gamma)$ has then 4
different eigenvalues which, by Lemma~\ref{lem:non-const}, are non--constant
as functions of $\mu$.

If the number of sheets is 1 we are in Case~III of the above list: for
every $\mu \in \C_*$ the eigenvalue of the $\SL(4,\C)$--holonomy is then a
fourth root of unity and hence equal to $1$, because $\nabla^{\mu=1}$ is
trivial. As in the non--trivial normal bundle case, the statement about the
Jordan holonomy is an immediate consequence of the quaternionic
symmetry~\eqref{eq:symmetry_nabla_mu} of $\nabla^\mu$ for $\mu\in S^1$.

If the number of sheets is 3 we are in Case~II of the above list: away from a
discrete set of points, the dimension of the generalized eigenspaces
$\ker((\lambda -H^\mu(\gamma))^2)$ is constant on connected components of
$\Sigma$, cf.\ Proposition~\ref{pro:holomorphic_family}. The Riemann surface
$\Sigma$ is thus the disconnected sum of one sheet that corresponds to a
double eigenvalue of $H^\mu(\gamma)$ and a hyper--elliptic surface that
parametrizes its simple eigenvalues.  The quaternionic
symmetry~\eqref{eq:symmetry_nabla_mu} implies that for generic $\mu \in \S^1$
the holonomy $H^\mu(\gamma)$ has 2 simple eigenvalues which are complex
conjugate and one real eigenvalue of geometric multiplicity~2. The
corresponding eigenspaces are invariant under all holonomies $H^\mu(\tilde
\gamma)$, $\tilde \gamma\in \Gamma$. In particular, because there is no
holonomy with 4 different eigenvalues, for all $\tilde\gamma\in \Gamma$ the
restriction of $H^\mu(\tilde \gamma)$ to the 2--dimensional eigenspace of
$H^\mu(\gamma)$ is a multiple of identity.  As explained in
Section~\ref{sec:multiplier_spec}, there is only a discrete set of complex
multipliers $h\in \Hom(\Gamma,\C_*)$ for which the space of holomorphic
sections with monodromy $h$ of $V/L$ has dimension greater or equal~2.
Because $\nabla^\mu$--parallel sections that correspond to simultaneous
eigenlines of the holonomy project to holomorphic sections with monodromy of
$V/L$, see Lemma~\ref{lem:nabla_mu_parallel_prolonge_hol}, we obtain that the
double eigenvalues of $H^\mu(\tilde \gamma)$, $\tilde \gamma\in \Gamma$ are
locally constant as function of $\mu$ and hence, by Lemma~\ref{lem:non-const},
equal to $1$. The same lemma shows that the simple eigenvalues are
non--constant as functions of $\mu$ such that we are in Case~II of the list.

To complete the proof of the proposition it remains to show that the branched
covering $\mu\colon \Sigma\rightarrow \C_*$ cannot be 2--sheeted.  It is
impossible that the Riemann surface $\Sigma$ is the disconnected sum of two
sheets that correspond to a simple and a triple eigenvalue, respectively,
because this would contradict the quaternionic
symmetry~\eqref{eq:symmetry_nabla_mu} for $\mu \in S^1$. Thus, if $\Sigma$ is
a 2--sheeted branched covering, by Proposition~\ref{pro:holomorphic_family}
the generalized eigenspaces of the holonomies $H^\mu(\gamma)$ define a rank~2
bundle over $\Sigma$.

In case all holonomies $H^\mu(\tilde \gamma)$, $\tilde \gamma\in \tilde
\Gamma$ are diagonizable, every vector in this rank~2 bundle is an eigenvector
of $H^\mu(\tilde \gamma)$ for all $\tilde \gamma\in \tilde \Gamma$, because
otherwise there would be $\tilde \gamma\in \tilde \Gamma$ for which
$H^\mu(\tilde \gamma)$ has four distinct eigenvalues such that we were in
Case~I. Hence, every fiber of this rank~2 bundle gives rise to a
2--dimensional space of holomorphic section with monodromy of $V/L$. But this
is impossible, because the eigenvalues of $H^\mu(\gamma)$ are non--constant as
function of $\mu$ while higher dimensional spaces of holomorphic section with
monodromy $h$ of $V/L$ can only exists for isolated $h\in \Hom(\Gamma,\C_*)$,
see Section~\ref{sec:multiplier_spec}.

We can therefore chose $\gamma\in \Gamma$ for which $H^\mu(\gamma)$
generically has two double eigenvalues with geometric multiplicity~1 and
algebraic multiplicity~2. All holonomies $H^\mu(\tilde \gamma)$,
$\tilde\gamma\in \Gamma$ then leave the rank~2 bundle defined by the
generalized eigenspaces of $H^\mu(\gamma)$ invariant and all their
restrictions to this rank~2 bundle have a double eigenvalue.  In other words,
taking projections to $V/L$ of the $\nabla^\mu$--parallel sections which
correspond to the rank~2 bundle, a generic point $\sigma\in \Sigma$ gives rise
to a 2--dimensional linear system with Jordan monodromy and eigenvalue
$h^\sigma$ of $V/L$.

By Lemma~\ref{lem:linear_sys_with_monodromy}, for generic $\sigma$ such linear
system is unique such that if $\psi^\mu$ denotes a holomorphic family of
$\nabla^\mu$--parallel sections with monodromy of $V$, the sections
$\frac{\partial \psi^\mu}{\partial \mu}$ are also $\nabla^\mu$--parallel
(because generically the projections to $V/L$ of $\psi^\mu$ and
$\frac{\partial \psi^\mu}{\partial \mu}$ span the unique linear system with
Jordan monodromy belonging to the corresponding multiplier, see the discussion
before the statement of Lemma~\ref{lem:linear_sys_with_monodromy}). Taking the
derivative of $\nabla^\mu \psi^\mu=0$ with respect to $\mu$ then yields
$A^{(1,0)}_\circ\psi^\mu- \mu^{-2} A^{(0,1)}_\circ\psi^\mu=0$ such that
$A_\circ\psi^\mu=0$. Hence, all $\psi^\mu$ are constant sections of $V$ and
all holonomies are trivial such that the number of sheets of $\mu\colon
\Sigma\rightarrow \C_*$ is 1.  This completes the proof, because it shows that
the number of sheets of $\mu\colon \Sigma\rightarrow \C_*$ can never be~2.
\qed

\subsection{The holonomy spectral curve}
We show that a constrained Willmore torus that belongs to Case~I or II of
Proposition~\ref{prop:cases} gives rise to a Riemann surface parametrizing the
non--trivial eigenlines of the holonomies $H^\mu_p(\gamma)$, $\gamma\in
\Gamma$.

\begin{Lem}\label{lem:holonomy_curve}
  Let $A(\mu)$ be a family of complex $n\times n$--matrices holomorphically
  depending on $\mu\in U$ in a connected open subset $U\subset \C$ with the
  property that eigenvalues which are non--constant as functions of $\mu$ are
  generically simple, i.e., have algebraic multiplicity~1. Then there exists a
  unique Riemann surface $\Sigma$ with holomorphic maps $\mu\colon \Sigma
  \rightarrow U$ and $\mathcal{E}\colon \Sigma \rightarrow \CP^{n-1}$ such
  that
  \begin{itemize}
  \item[a)] for every $\sigma\in \Sigma$, the line $\mathcal{E}(\sigma)$ is an
    eigenline of $A(\mu(\sigma))$ and
  \item[b)] for generic $\mu_1\in U$, the set $\mathcal{E}(\mu^{-1}\{\mu_1\})$
    is the set of eigenlines of $A(\mu_1)$ that belong to non--constant
    eigenvalues.
  \end{itemize}
\end{Lem}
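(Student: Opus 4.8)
The plan is to realize $\Sigma$ as the normalization of the non--constant part of the characteristic variety of $A(\mu)$ and to place the eigenlines on it by extending the naive eigenline map across its finitely many bad points. First I would form the monic characteristic polynomial $p(\lambda,\mu)=\det(\lambda\,\Id-A(\mu))$, whose coefficients are holomorphic in $\mu$, and consider its zero set
\[ C=\{(\lambda,\mu)\in\C\times U\mid p(\lambda,\mu)=0\}, \]
a $1$--dimensional analytic subset of $\C\times U$ exactly as in Section~\ref{sec:non--trivial}. Decomposing $C$ into irreducible components, those on which $\lambda$ is constant correspond to the constant eigenvalues and are to be discarded; let $C'$ be the union of the remaining components, on each of which $\lambda$ is non--constant. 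I would then take $\Sigma$ to be the normalization of $C'$, a possibly disconnected Riemann surface, and let $\mu\colon\Sigma\to U$ and $\lambda\colon\Sigma\to\C$ be the holomorphic functions induced by the two coordinate projections. By construction $\mu$ is a branched covering of $U$ whose generic number of sheets equals the generic number of non--constant eigenvalues, and $\lambda(\sigma)$ is a non--constant eigenvalue of $A(\mu(\sigma))$ for every $\sigma$.

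Next I would define the eigenline map. On the open dense subset $\Sigma^\circ\subset\Sigma$ over which $\lambda(\sigma)$ is a \emph{simple} eigenvalue of $A(\mu(\sigma))$---dense because by hypothesis non--constant eigenvalues are generically simple---the kernel $\ker(\lambda(\sigma)\,\Id-A(\mu(\sigma)))$ is a line depending holomorphically on $\sigma$, and I set $\mathcal{E}(\sigma)$ equal to this eigenline. To extend $\mathcal{E}$ over the isolated remaining points of $\Sigma$, I would apply Proposition~\ref{pro:holomorphic_family} to the holomorphic family of endomorphisms $\sigma\mapsto\lambda(\sigma)\,\Id-A(\mu(\sigma))$ on $\Sigma$: each member is singular, its minimal kernel dimension is $1$ and is attained on $\Sigma^\circ$, so the kernel line bundle over $\Sigma^\circ$ extends holomorphically through the finitely many points of higher--dimensional kernel. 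This produces a global holomorphic map $\mathcal{E}\colon\Sigma\to\CP^{n-1}$; since the extended fibre at a bad point lies in the kernel of the limiting operator, $\mathcal{E}(\sigma)$ is an eigenline of $A(\mu(\sigma))$ for \emph{every} $\sigma$, which is property~a). Property~b) is then immediate: for $\mu_1$ outside the discrete set of branch and collision points, $\mu^{-1}\{\mu_1\}$ consists of one point for each non--constant (hence simple) eigenvalue of $A(\mu_1)$, and $\mathcal{E}$ sends these bijectively to the corresponding eigenlines.

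The step I expect to be the main obstacle is precisely this holomorphic extension of $\mathcal{E}$ across the points where the eigenvalue ceases to be simple---where several non--constant eigenlines may collide or a Jordan block may form---together with the verification that the extended line is genuinely an eigenline. Normalizing $C'$ \emph{before} extending is what makes this work, since it separates the distinct non--constant branches so that Proposition~\ref{pro:holomorphic_family} applies to each sheet individually and the limits along different branches are not confused.

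Finally, for uniqueness, suppose $\Sigma'$ with maps $\mu'$ and $\mathcal{E}'$ also satisfies a) and b). Since a line that is an eigenline of $A(\mu'(\sigma'))$ has a well--defined eigenvalue $\lambda'(\sigma')$, the assignment $\sigma'\mapsto(\lambda'(\sigma'),\mu'(\sigma'))$ is a holomorphic map of $\Sigma'$ into the spectrum $C$, and condition~b) forces its image into the non--constant part $C'$. By the universal property of the normalization this map lifts to a holomorphic map $\Sigma'\to\Sigma$ commuting with $\mu$ and $\mu'$; condition~b) shows it is generically bijective, and comparing the sheet numbers of the two branched coverings $\mu,\mu'$ over $U$ shows it has degree one, hence is a biholomorphism. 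Because $\mathcal{E}$ and $\mathcal{E}'$ both restrict on the generic locus to the actual eigenline attached to $(\lambda,\mu)$, this biholomorphism intertwines them, and they then agree everywhere by density and holomorphicity. Thus $(\Sigma,\mu,\mathcal{E})$ is unique up to canonical isomorphism.
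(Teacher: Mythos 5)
Your proposal is correct and follows essentially the same route as the paper: normalize the characteristic variety with the constant-eigenvalue part removed (the paper splits off the linear factors $(\lambda-\lambda_j)$, you discard the irreducible components on which $\lambda$ is constant --- the same thing), define the eigenline bundle as the kernel of $\lambda(\sigma)\Id-A(\mu(\sigma))$ over the generic locus, and extend it through the isolated degenerate points via Proposition~\ref{pro:holomorphic_family}. Your uniqueness argument is merely a more detailed spelling-out of the paper's remark that $\mathcal{E}$ recovers $\lambda$, reducing uniqueness to that of the normalized eigenvalue curve.
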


\begin{proof}
  Denote by $g(\lambda,\mu)= \det(\lambda - A(\mu))$ the characteristic
  polynomial of $A(\mu)$. After splitting of linear factors $(\lambda
  -\lambda_j)$ belonging to eigenvalues that are constant in $\mu$ we obtain a
  function $\tilde g(\lambda,\mu)$ whose discriminant, as a polynomial in
  $\lambda$, does not vanish identically.  We define $\Sigma$ as the Riemann
  surface normalizing the 1--dimensional analytic set \[ \{ (\lambda,\mu) \in
  \C \times U \mid \tilde g(\lambda,\mu)=0 \}.\] The projection $\mu\colon
  \Sigma\rightarrow U$ is then a branched covering and $\Sigma$ is the unique
  Riemann surface equipped with holomorphic functions $\mu$ and $\lambda$ such
  that, for generic $\mu_1\in U$, the non--constant eigenvalues of $A(\mu_1)$
  are given by $\lambda(\sigma_j)$ with $\sigma_1$, ..., $\sigma_{\tilde n}
  \in \mu^{-1}\{\mu_1\}$.

  The family of matrices $\lambda(\sigma)\Id -A(\mu(\sigma))$ depends
  holomorphically on $\sigma\in \Sigma$ and generically, away from a set of
  isolated points, has a 1--dimensional kernel which is an eigenline of
  $A(\mu(\sigma))$. Because $\Sigma$ is complex 1--dimensional, the line
  bundle $\mathcal{E}(\sigma) = \ker(\lambda(\sigma)-A(\mu(\sigma)))$ defined
  over generic points extends holomorphically through the isolated points with
  a higher dimensional kernel, cf.~Proposition~\ref{pro:holomorphic_family}.

  By construction, $\Sigma$ with $\mu$ and $\mathcal{E}$ satisfies a) and b) in
  the statement of the lemma. The uniqueness of $\Sigma$ with $\mu$ and
  $\mathcal{E}$ follows from the above uniqueness property of $\Sigma$ with
  $\mu$ and $\lambda$, because the eigenline map $\mathcal{E}$ of $A(\mu)$
  allows to recover the holomorphic function $\lambda$ describing the
  eigenvalues.
\end{proof}

For constrained Willmore tori belonging to Case~I or II of
Proposition~\ref{prop:cases}, the preceding lemma allows to define a Riemann
surface, in the following called the \emph{holonomy spectral curve}
$\Sigma_{hol}$, that parametrizes the non--trivial eigenlines of the
holonomies $H^\mu(\gamma)$: we define $\Sigma_{hol}$ as the 2 or 4--sheeted
branched covering of $\C_*$ obtained from Lemma~\ref{lem:holonomy_curve}
applied to $A(\mu) = H^\mu_p(\gamma)$ with fixed $p\in T^2$ and $\gamma \in
\Gamma$ for which $H^\mu_p(\gamma)$ has the maximal number of non--trivial
eigenvalues. Because the holonomies for different $\gamma\in \Gamma$ commute,
the uniqueness part of Lemma~\ref{lem:holonomy_curve} shows that
$\Sigma_{hol}$ is independent of the choice of $\gamma$.  Moreover, the
Riemann surface $\Sigma_{hol}$ does not depend on the choice of $p$, because
changing the point $p$ on the torus amounts to conjugate the holonomy (but the
Riemann surface $\Sigma$ in the proof of Lemma~\ref{lem:holonomy_curve} is
defined purely in terms of eigenvalues of $A(\mu)$).  What does depend on the
point $p\in T^2$ of the torus is the eigenline curve $\mathcal{E}_p\colon
\Sigma_{hol}\rightarrow \CP^3$.

For every point $\sigma\in \Sigma_{hol}$, the line $\mathcal{E}_p(\sigma)$ is
invariant under the holonomy representation $\gamma\in \Gamma\mapsto
H^{\mu(\sigma)}_p(\gamma)$. This defines a holomorphic map $h\colon
\Sigma_{hol}\rightarrow \Hom(\Gamma,\C_*)$ whose image is contained in the set
of multipliers of holomorphic sections with monodromy of $V/L$.  This map
lifts to a holomorphic map $\iota \colon \Sigma_{hol} \rightarrow
\Sigma_{mult}$ which turns out to be injective and almost surjective, see
Theorem~\ref{the:biholomorphicity}.  By definition, the eigenline curve
$\mathcal{E}$ is related to the map $F$ defined in
Section~\ref{sec:multiplier_spec} by
$\mathcal{E}_p(\sigma)=F(\iota(\sigma),p)$ for all $\sigma\in \Sigma_{hol}$.
The map $F$ can therefore be seen as a generalization of the holonomy
eigenline curve $\mathcal{E}$ of the constrained Willmore associated family
$\nabla^\mu$ to arbitrary conformal immersions $f\colon T^2\rightarrow S^4$
with trivial normal bundle.

The map $\iota \colon \Sigma_{hol} \rightarrow \Sigma_{mult}$ interchanges the
fixed point free, anti--holomorphic involutions on $\Sigma_{hol}$ and
$\Sigma_{mult}$: under $\iota$, the involution $\rho$ on $\Sigma_{hol}$
induced by the symmetry \eqref{eq:symmetry_nabla_mu} via
\[ \mathcal{E}_{\rho(\sigma)} = \mathcal{E}_\sigma j \] corresponds to the
involution $\rho$ on $\Sigma_{mult}$. This fixed points free involution $\rho$
on $\Sigma_{hol}$ covers the involution $\mu\mapsto 1/\bar \mu$ of the
$\mu$--plane.

\section{The Asymptotics of $\nabla^\mu$--parallel Sections}
\label{sec:asymptotics}

The proof of the main theorem in Section~\ref{sec:proof} requires some control
over the asymptotic behavior of $\nabla^\mu$--parallel sections for
$\mu\rightarrow 0$ or $ \infty$. This is provided by
Proposition~\ref{pro:asymptotic} of the present section.  As an immediate
application of Proposition~\ref{pro:asymptotic} we show that the holonomy
spectral curve essentially coincides with the multiplier spectral curve in
case they are both defined, that is, for constrained Willmore tori belonging
to Cases~I and II of Proposition~\ref{prop:cases}.

\subsection{Main result of the section}
Because of the symmetry \eqref{eq:symmetry_nabla_mu} it is sufficient to
understand the asymptotic behavior of parallel sections for
$\mu\rightarrow\infty$.  We approach this problem by investigating the
sections $\psi \in \Gamma(\tilde V)$ of the pullback $\tilde V$ of $V$ to the
universal covering $\C$ of $T^2=\C/\Gamma$ that satisfy
\begin{equation}
  \label{eq:asymptotic_DT}
  \nabla\psi\in \Omega^1(\tilde L) \qquad \textrm{ and } \qquad (A_\circ
  \psi)^{(1,0)}=0,
\end{equation}
where $\tilde L$ denotes the pullback of $L$ to the universal covering.
Equation~\eqref{eq:asymptotic_DT} is an asymptotic version of
$\nabla^\mu\psi^\mu=0$ for $\mu\rightarrow\infty$; examples of solutions to
\eqref{eq:asymptotic_DT} are the prolongations of holomorphic sections that
are suitable limits of $\nabla^\mu$--parallel sections for $\mu \rightarrow
\infty$.

The following proposition summarizes Lemmas~\ref{lem:fundamental_nonconstant}
and \ref{lem:fundamental_constant} below.

\begin{Pro}\label{pro:asymptotic}
  Let $f\colon T^2 \rightarrow S^4$ be a constrained Willmore torus. If $f$ is
  neither super conformal nor Euclidean minimal with planar ends and
  $\eta\equiv 0$, then the complex dimension of the space of solutions to
  \eqref{eq:asymptotic_DT} is at most two.
\end{Pro}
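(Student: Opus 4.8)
The plan is to bound the solution space by reducing \eqref{eq:asymptotic_DT} to a condition on holomorphic sections of $V/L$ and then showing that the extra equation $(A_\circ\psi)^{(1,0)}=0$ (with $A_\circ=A$, since $\eta\equiv0$) cuts the count down to two. First I would note that $\psi\mapsto\tilde\psi=\pi\psi$ is injective on the space of solutions: the condition $\nabla\psi\in\Omega^1(\tilde L)$ is exactly the statement that $\psi$ is the prolongation of the holomorphic section $\tilde\psi$ of $V/L$ (cf.\ Lemma~\ref{lem:nabla_mu_parallel_prolonge_hol}), and if $\tilde\psi=0$ then $\psi\in\Gamma(\tilde L)$ with $\delta\psi=\pi\nabla\psi=0$, hence $\psi=0$ because $\delta$ is nowhere vanishing. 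It therefore suffices to bound the dimension of the space of those holomorphic sections $\tilde\psi$ of $V/L$ whose prolongation $\psi$ additionally satisfies $A^{(1,0)}\psi=0$.

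Next I would make the constraint $A^{(1,0)}\psi=0$ explicit. Because $\im(A)\subset L$ and $A$ anticommutes with $S$, the endomorphism $A^{(1,0)}=\tfrac{1-iS}2A$ annihilates the $(+i)$--eigenbundle of $S$ on $V$ and maps its $(-i)$--eigenbundle into the complex line bundle $L^+$, the $(+i)$--eigenbundle of $S$ inside $L$; thus $A^{(1,0)}\psi=0$ constrains only the $(-i)$--part $\psi^-$ of $\psi$, confining it to the kernel of $A$ restricted to the $(-i)$--eigenbundle. Since $f$ is not super conformal we have $A\not\equiv0$, and the zeros of $A$ are isolated (the associated quadratic differential is holomorphic, as discussed after \eqref{eq:decomposed_el_eq}). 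Away from these zeros this kernel is a complex line, so the constraint drops exactly one complex dimension and forces $\psi$ into a complex rank--$3$ subbundle of $(V,i)$.

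The heart of the argument is to upgrade the pair of conditions ``$\tilde\psi$ holomorphic and $A^{(1,0)}\psi=0$'' into the statement that $\tilde\psi$ is parallel for a flat connection on a complex rank--$2$ bundle; on the simply connected covering $\C$ this at once bounds the solution space by the complex rank, namely two. Concretely I would combine the holomorphic structure $D$ on $V/L$ with the $(1,0)$--datum supplied by $A^{(1,0)}$ into a connection whose $\bar K$--part is $D$ and whose $K$--part is built from $A$, using the Willmore equation $d^\nabla{*}A=0$ (i.e.\ $\eta\equiv0$) to secure the integrability that makes this connection flat over all of $T^2$. The value two is also what the quaternionic symmetry \eqref{eq:symmetry_nabla_mu} predicts: it pairs the limits $\mu\to\infty$ and $\mu\to0$, so that of the four--dimensional space of $\nabla^\mu$--parallel sections only ``half'' survives into the clean $\mu\to\infty$ limit \eqref{eq:asymptotic_DT}.

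The main obstacle is the behaviour at the degeneracy locus, and this is precisely where the two excluded classes intervene. If $f$ were super conformal the constraint $A^{(1,0)}\psi=0$ would be vacuous and the solution space infinite dimensional; the Euclidean minimal tori with planar ends are, on the other hand, exactly the surfaces for which the $K$--part of the connection above acquires unavoidable singularities at the ends, so that global flatness fails and additional solutions appear. I would therefore organise the proof as the two cases recorded in Lemmas~\ref{lem:fundamental_nonconstant} and \ref{lem:fundamental_constant}: a generic ``non--constant'' case in which the flat rank--$2$ picture applies directly and yields the bound two, and a degenerate ``constant'' case which I would identify with precisely the super conformal and Euclidean--minimal--planar--end surfaces and hence rule out under the hypotheses. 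Controlling the solutions across the zeros of $A$, and proving that no extra global solutions arise outside these two excluded families, is the step I expect to be hardest.
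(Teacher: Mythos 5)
Your opening reduction (injectivity of $\psi\mapsto\pi\psi$ via the prolongation correspondence) is fine, but the proposal has two genuine gaps. The first is a misreading of the hypothesis that breaks the scope of your whole argument: the proposition does \emph{not} assume $\eta\equiv 0$ --- the excluded case is the conjunction ``Euclidean minimal with planar ends \emph{and} $\eta\equiv 0$'', and the statement must hold for arbitrary Lagrange multipliers, since that is how it is applied in the proof of Theorem~\ref{the:biholomorphicity}. Your identification $A_\circ=A$ and the pointwise eigenbundle computation (that $A^{(1,0)}$ kills the $+i$--eigenbundle of $S$ and maps the $-i$--eigenbundle into $L$) use the anticommutation $SA_\circ=-A_\circ S$, which fails exactly when $\eta\not\equiv 0$, as noted at the end of Section~\ref{sec:el_equation}. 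Worse, your plan to identify the constant--$\check L$ case with the two excluded families and rule it out is false: CMC tori in $\R^3$ with the choice $\rho=0$ of Section~\ref{sec:cmc_r3} have $\check L=\ker(A_\circ)$ constant with $\eta\not\equiv 0$, satisfy the hypotheses of the proposition, and for them the solution space of \eqref{eq:asymptotic_DT} is \emph{exactly} two--dimensional (the constant sections of $\check L$, Lemma~\ref{lem:fundamental_constant}). Only the additional assumption $\eta\equiv 0$ turns constancy of $\check L$ into the Euclidean--minimal--with--planar--ends case. So the case your argument discards is precisely the one in which the bound two is attained, and the paper must (and does) prove it by showing, via the splitting \eqref{eq:S_checkLconst} and formula \eqref{eq:A_harm}, that a solution with nonzero $L$--component forces $B\equiv 0$, i.e.\ $\eta\equiv 0$ --- a contradiction.

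The second gap is that the ``heart'' of your argument --- a flat rank--$2$ connection whose parallel sections are the solutions --- is asserted but never constructed, and this is exactly where the work lies. Pointwise confinement to a rank--$2$ or rank--$3$ subbundle does not bound the solution space on the simply connected cover; one needs either genuine parallelism or a rigidity statement, and you have neither. The paper's mechanism in the non--constant case is different: differentiating $\check S\psi+\psi i$ and using $2{*}\check Q_\circ=2{*}A_\circ$ from the B\"acklund transform (Lemma~\ref{lem:2step}) shows solutions lie in the $-i$--eigenbundle of $\check S$ (equation \eqref{eq:fund}); then, since all solutions are holomorphic for the complex structure $(\nabla-A_\circ)^{(0,1)}$ by \eqref{eq:zero_one_part}, a putative third solution $\tilde\psi=\psi g_1+\varphi g_2$ has holomorphic coefficients, and the quaternionic rigidity fact that a complex--holomorphic quotient of two holomorphic sections of $V/L$ is constant when $Q\not\equiv 0$ forces $g_1,g_2$ constant. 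That rigidity input is absent from your sketch; without it the flat--connection claim is circular. Two further miscalibrations: in the Willmore case $\eta\equiv 0$ with $\check L$ non--constant the sharp bound is $1$ (or $0$ when $AQ\equiv 0$), obtained from $B=\check B=0$ and $\check A\psi=0$ in Lemma~\ref{lem:fundamental_nonconstant}, so the ``rank--$2$ flat picture'' is not the operative mechanism even where your standing assumption holds; and the degeneration at Euclidean minimal tori with planar ends is not caused by ``singularities of the $K$--part at the ends'' --- $f\colon T^2\to S^4$ and $A$ are smooth there --- but by $B\equiv 0$ together with constant $\check L$, which removes the obstruction in the proof of Lemma~\ref{lem:fundamental_constant} and allows the four--dimensional solution space seen in Case~IIIa (cf.\ Lemma~\ref{lem:caseIIIa}).
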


For the rest of the section we assume that $f$ is not super conformal and, in
particular, $A_\circ$ does not vanish identically.  Because $A_\circ$ takes
values in $L$, its rank is then at most one.  Away from its isolated zeros the
rank of $A_\circ$ is one and the kernel bundle $\check L=\ker(A_\circ)$
smoothly extends through the isolated zeroes to a line bundle over $T^2$.  To
see this note that, because $\delta\eta$ is a holomorphic quadratic
differential, either $\eta\not\equiv 0$ such that $\eta$ and hence $A_\circ$
have no zeroes at all, or $\eta\equiv 0$ such that $A_\circ = A$ itself is
holomorphic, see Proposition~22 of \cite{BFLPP02}.

Denote by $U$ the open set of points where $A_{|L}$ does not vanish or,
equivalently, where $V=L\oplus\check L$.  The set $U$ is non--empty:
otherwise, by Lemma~22 of \cite{BFLPP02}, the Hopf field $A$ had to vanish
identically which is impossible because then $\eta\equiv 0$ (see the
discussion following \eqref{eq:decomposed_el_eq}) such that $A_\circ\equiv 0$.
If $\check L=\ker(A_\circ)$ is constant the subset $U$ is dense, because it is
the complement of the set of points where the immersion $f$ goes through
$\check L$.

Let $\psi$ be a solution to \eqref{eq:asymptotic_DT} defined on $U$.  Using
$d^\nabla A_\circ = 2 A_\circ\wedge A_\circ$, differentiation of $*A_\circ
\psi = -A_\circ \psi i$ yields
\[ -* A_\circ \wedge \nabla\psi = -2 A_\circ \wedge A_\circ \psi i + A_\circ
\wedge \nabla\psi i. \] Because $\nabla\psi$ takes values in $L$ and
$(A_\circ)_{|L} = A_{|L}$ this implies
\[ A_\circ\wedge (-S\nabla\psi + \nabla\psi i - 2 A_\circ \psi i)=0. \] On the
open set $U$ where $A_{|L}$ has no zeros, every form $\alpha\in \Gamma(KL)$
with $A_\circ \wedge \alpha =0$ has to vanish identically.  Since $\nabla\psi
\in \Gamma(KL)$, on $U$ every solution $\psi$ to \eqref{eq:asymptotic_DT}
satisfies
\begin{equation}
  \label{eq:zero_one_part}
  (\nabla\psi)^{(0,1)} = A_\circ \psi. 
\end{equation}

\subsection{The case that   \texorpdfstring{$A_\circ \not \equiv 0 $ and $\check
    L=\ker(A_\circ)$}{$ker(A_0)$}  is non--constant}
If $\check L=\ker(A_\circ)$ is non--constant, the corresponding map into the
4--sphere is called a 2--step \emph{B\"acklund transformation} of $L$ (see
e.g.~\cite{Boh03} for a detailed discussion of B\"acklund transformations).

\begin{Lem}\label{lem:2step}
  Let $f\colon M\rightarrow S^4$ be a constrained Willmore immersion with
  $A_\circ\not \equiv 0$ for which $\check L = \ker(A_\circ)$ is
  non--constant.  The corresponding map $\check f$ into $S^4$ is then conformal
  and, on the open set $U$ where $V=L\oplus \check L$, it is constrained
  Willmore and admits a 1--form $\check \eta$ with $\im(\check \eta)\subset
  \check L \subset \ker(\check\eta)$ such that the form $2{*}\check
  Q_\circ=2{*} \check Q + \check \eta$ is closed and satisfies $2{*}\check
  Q_\circ=2{*}A_\circ$.
\end{Lem}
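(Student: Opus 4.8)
The plan is to work throughout on the open set $U$ where $V=L\oplus\check L$, and to exhibit $\check f=\check L$ as a conformal immersion whose mean curvature sphere $\check S$ and Hopf field $\check Q$ are tied to the data of $f$ through the single closed form $2{*}A_\circ=2{*}A+\eta$. The guiding observation is that this form is already $d^\nabla$--closed, since it is the Euler--Lagrange form \eqref{eq:def_constrained-willmore} of $f$. Hence, once $2{*}A_\circ$ is identified with the corresponding form $2{*}\check Q_\circ$ of $\check f$, the constrained Willmore property of $\check f$ follows from that of $f$ essentially for free, via the characterization in Proposition~\ref{} (in its \eqref{eq:def_constrained-willmore2} form). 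The real work is therefore to construct $\check S$ and to check that $2{*}A_\circ$ genuinely has the algebraic shape of a $2{*}\check Q_\circ$.

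First I would establish conformality. Since $SL=L$, in the splitting $V=L\oplus\check L$ the endomorphism $S$ is upper triangular, so $J:=S|_L$ and $\hat J:=\pi_{\check L}\circ S|_{\check L}$ both square to $-\Id$, where $\pi_{\check L}$ is the projection onto $\check L$. These furnish a complex structure on $\check L$ and, via $V/\check L\cong L$, on $V/\check L$. Writing $\check\delta=\check\pi\,\nabla|_{\check L}$ and using $\nabla=\hat\nabla+A+Q$ together with $*A_\circ=SA_\circ$ and $\im(A_\circ)\subset L$, I would read off that $\check\delta$ satisfies the holomorphic curve condition \eqref{eq:holomorphic_curve_condition} with respect to these complex structures (after adjusting orientations if necessary); that $\check\delta$ is nowhere vanishing on $U$ is exactly the transversality $V=L\oplus\check L$. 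This makes $\check f$ a conformal immersion on $U$.

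Next I would pin down $\check S$ by demanding that conjugation by it turn the $A$--type relation $*A_\circ=SA_\circ$ into a $Q$--type relation $*\check Q_\circ=\check Q_\circ\check S$ with $\check Q_\circ:=A_\circ$; concretely, $\check S$ is the unique complex structure with $\check S\check L=\check L$ and $SA_\circ=A_\circ\check S$. Evaluating the latter on $\check L=\ker(A_\circ)$ and on $L$ shows that it determines $\check S$ up to the ambiguity already fixed by $\check S^2=-\Id$, and it yields $\check Q_\circ\check S=A_\circ\check S=SA_\circ={*}A_\circ={*}\check Q_\circ$ as required. With this $\check S$ one verifies the mean curvature sphere conditions \eqref{eq:mcs_condition} for $\check f$: the touching conditions come from the conformality step, while the defining condition $\check Q|_{\check L}=0$ is arranged by the construction of $\check S$. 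Since $\check Q_\circ=A_\circ$ has $\im\subset L$ and $\ker\supseteq\check L$, the candidate Lagrange multiplier $\check\eta:=2{*}A_\circ-2{*}\check Q$ should turn out to satisfy $\im(\check\eta)\subset\check L\subset\ker(\check\eta)$ once $\check Q$ is computed, so that $\check\eta$ lies in the analogue of $\Omega^1(\mathcal R)$ for $\check f$. Finally $2{*}\check Q_\circ=2{*}\check Q+\check\eta=2{*}A_\circ$ is $d^\nabla$--closed, because $d^\nabla(2{*}A+\eta)=0$ is the Euler--Lagrange equation of $f$, and by Proposition~\ref{} this proves $\check f$ constrained Willmore with the asserted data.

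The main obstacle is the computation of the Hopf field $\check Q$ of $\check S$, that is, of $\nabla\check S=2{*}\check Q_\circ-2{*}\check A_\circ$. One has $\nabla\check S=\nabla S-\nabla N$ with $N:=S-\check S\in\Gamma(\mathcal R)$, and $\nabla S=2{*}Q_\circ-2{*}A_\circ$ is known, so the heart of the matter is to control $\nabla N$ and to separate $\nabla\check S$ into its $\check S$--commuting and $\check S$--anti--commuting parts. This is where the identity $d^\nabla A_\circ=2A_\circ\wedge A_\circ$ from the flatness lemma, the type relations for $A_\circ$ and $Q_\circ$, and the block structure of $S$ on $L\oplus\check L$ must all be combined. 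Forcing the anti--commuting part to reproduce exactly $2{*}\check Q_\circ=2{*}A_\circ$, rather than merely up to an $\mathcal R$--valued correction, is the delicate point; it is precisely this discrepancy that $\check\eta$ is designed to absorb.
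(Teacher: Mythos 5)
Your overall strategy is the paper's own: exhibit $2{*}A_\circ$ itself as the form $2{*}\check Q_\circ$ of $\check f$, so that closedness is inherited directly from the Euler--Lagrange equation \eqref{eq:def_constrained-willmore}. But two of your steps have genuine gaps. First, conformality of $\check f$ cannot be ``read off'' from the type relations $*A_\circ=SA_\circ$ and $\im(A_\circ)\subset L$ alone: what is needed is $A_\circ\wedge\check\delta=0$, and this is exactly where the closedness must enter --- the paper differentiates $0={*}A_\circ\varphi$ for $\varphi\in\Gamma(\check L)$ and uses $d^\nabla({*}A_\circ)=0$ to conclude ${*}A_\circ\wedge\nabla\varphi=0$. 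For a general immersion $\ker(A)$ is \emph{not} conformal (one only has $d^\nabla({*}A)\in\Omega^2(\mathcal R)$, and forms in $\mathcal{R}$ annihilate $L$, not $\check L$), so a purely algebraic derivation along the lines you sketch is impossible. Moreover, the transversality $V=L\oplus\check L$ constrains positions, not derivatives: it does not make $\check\delta$ nowhere vanishing, and the paper accordingly only obtains a possibly branched conformal map. Second, your proposed characterization of $\check S$ fails to determine it: since $A_\circ$ kills $\check L$, the relation $SA_\circ=A_\circ\check S$ is insensitive to everything but the action of $\check S$ on $L$ modulo $\check L$. Writing $\check S=\left(\begin{smallmatrix} -J & 0\\ \check B & -\check J\end{smallmatrix}\right)$ with respect to $L\oplus\check L$ as in \eqref{eq:mcs_tilde}, one checks $A_\circ\check S=-A_\circ J=SA_\circ$ for \emph{every} admissible choice of $\check B$ and $\check J$; only the upper--left block $-J$ is pinned down. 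Consequently your claim that ``the defining condition $\check Q_{|\check L}=0$ is arranged by the construction of $\check S$'' is vacuous: $\check S$ must simply be taken to be the mean curvature sphere of the conformal map $\check f$, and its defining conditions must then actually be put to work.

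And they must be put to work at precisely the point you leave open. You correctly observe that $\check\eta:=2{*}A_\circ-2{*}\check Q$ can only serve as a Lagrange multiplier if $\im(\check\eta)\subset\check L\subset\ker(\check\eta)$, but $\check\eta$ can only ``absorb'' a discrepancy already lying in the lower--left block; whether the $L\to L$ blocks of $2{*}A_\circ$ and $2{*}\check Q$ agree is the actual content of the lemma, and your sketch (via $\nabla\check S=\nabla S-\nabla N$ and $d^\nabla A_\circ=2A_\circ\wedge A_\circ$) contains no mechanism that produces this. In the paper it is the identity $B\delta=\check\delta\check B$ of \eqref{eq:B_and_tilde_B}: by \eqref{eq:A_Q_dagger} and \eqref{eq:tilde_Q} the relevant blocks are $-\nabla^LJ+B\delta$ and $-\nabla^LJ+\check\delta\check B$, and the mean curvature sphere condition for $S$ (namely $Q_{|L}=0$) gives $(\nabla^LJ)''=B\delta$, while the mean curvature sphere condition for $\check S$ (namely $\im(\check A)\subset\check L$) gives $(\nabla^LJ)''=\check\delta\check B$, forcing the two blocks to coincide. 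Without invoking \emph{both} mean curvature sphere conditions in tandem --- which your construction of $\check S$ does not secure --- the ``delicate point'' you flag remains unproven, so the proposal does not yet constitute a proof.
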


The last formula shows $L= \im(\check Q_\circ)$ which, in the language of
\cite{Boh03}, implies that $L$ is a 2--step backward B\"acklund transformation
of $\check L$.

\begin{proof}
  For every $\varphi\in \Gamma(\check L)$ we have $*A_\circ\varphi=0$ and
  therefore
  \[ 0=d^\nabla({*}A_\circ \varphi)=\underbrace{d^\nabla({*}A_\circ)}_{=0}
  \varphi- {*}A_\circ \wedge \nabla\varphi.\] Hence $\check f$ is a (possibly
  branched) conformal immersion, because $A_\circ \wedge \check \delta = 0$
  with $\check \delta = \pi_{V/\check L} \nabla_{|\check L}$ denoting the
  derivative of $\check f$. With respect to the splitting $L\oplus \check L$
  on $U$, the connection $\nabla$ and the mean curvature sphere $S$ of $L$ can
  be written as
\begin{equation}
  \label{eq:split_connection_mcs_L}
  \nabla=
  \begin{pmatrix}
    \nabla^L & \check \delta \\ \delta & \check \nabla
  \end{pmatrix} \qquad \textrm{ and } \qquad 
  S= 
  \begin{pmatrix}
    J & B \\ 0 & \tilde J
  \end{pmatrix},
\end{equation}
where $J$ and $\tilde J$ are complex structures on $L$ and $\check L$ with
$*\delta = \delta J = \tilde J \delta$ and where $B\in \Gamma(\Hom(\tilde
L,L))$ with $JB+B\tilde J=0$. The derivative of $S$ is
\begin{equation*}
  \nabla S = 
  \begin{pmatrix}
    \nabla^L J -B\delta & \nabla B + \check \delta\tilde J  - J \check \delta \\
    0 & \check \nabla \tilde J + \delta B
  \end{pmatrix}.
\end{equation*}
The mean curvature sphere condition $Q_{|L}=0$ now becomes that $\nabla^L J
-B\delta$ is left $K$ (and right $\bar K$) with respect to $J$. Moreover,
because $(A_\circ)_{|\check L}=0$ and $(Q_\circ)_{|L}=0$, the identity $\nabla
S = 2{*} Q_\circ-2{*}A_\circ$ implies
\begin{equation}
  \label{eq:A_Q_dagger}
  2{*}A_\circ=
  \begin{pmatrix}
    -\nabla^L J + B\delta & 0\\ 
    0 & 0
  \end{pmatrix}
  \qquad \textrm{ and } \qquad
   2{*}Q_\circ= 
  \begin{pmatrix}
    0 & \nabla B + \check \delta\tilde J  - J \check \delta \\
    0 & \check \nabla \tilde J + \delta B
  \end{pmatrix}.
\end{equation}
From $A_\circ \wedge \check \delta = 0$ we obtain $*\check \delta = - J \check
\delta$ (because $\nabla^L J-B\delta$ is right--$\bar K$) and the mean
curvature sphere of $\check L$ is
\begin{equation}
  \label{eq:mcs_tilde}
  \check S = 
  \begin{pmatrix}
  -J & 0 \\ \check B & -\check J
  \end{pmatrix},
\end{equation}
where $\check J$ is the complex structure on $\check L$ with $*\check \delta =
-\check \delta \check J$ and where $\check B\in \Gamma(\Hom(L,\check L))$ with
$\check J \check B+\check B J=0$.  Now
\begin{equation*}
   \nabla \check S = 
  \begin{pmatrix}
    -\nabla^L J + \check \delta \check B & 0 \\
    \nabla \check B + \check J \delta - \delta J & -\check \nabla \check J -
    \check B \check \delta
  \end{pmatrix}
\end{equation*}
and the condition $\im(\check A)\subset \check L$ that $\check S$ is the mean
curvature sphere of $\check L$ becomes that $-\nabla^L J + \check \delta
\check B$ is left $K$ (and right $\bar K$) with respect to $J$.

The Hopf field $\check Q$ of $\check L$ is given by
\begin{equation}
  \label{eq:tilde_Q}
  2{*}\check Q = \frac12(\nabla\check S + \check S {*} \nabla \check S) =   
  \begin{pmatrix}
    -\nabla^L J + \check \delta \check B & 0 \\
     * & 0 
  \end{pmatrix}.
\end{equation}

The condition that $S$ is mean curvature sphere of $L$ is equivalent to
$(\nabla^L J)'' = B\delta$ with $()''$ denoting the $\bar K$--part with
respect to $J$. Similarly, that $\check S$ is mean curvature sphere of $\check
L$ is equivalent to $(\nabla^L J)'' = \check \delta\check B$.  This implies
\begin{equation}
  \label{eq:B_and_tilde_B}
  B \delta = \check \delta \check B.
\end{equation}
By \eqref{eq:A_Q_dagger}, \eqref{eq:tilde_Q} and \eqref{eq:B_and_tilde_B}, the
1--form $\check \eta:= 2{*}A_\circ -2{*}\check Q $ satisfies $\im(\check \eta)
\subset \check L \subset \ker(\check \eta)$ and $2{*}\check Q_\circ=
2{*}\check Q+\check \eta$ is closed because
\begin{equation}\label{eq:bt}
  2{*}\check Q_\circ =   2{*}A_\circ.
\end{equation} 
This shows that, on $U$, $\check L$ is constrained Willmore.
\end{proof}

\begin{Lem} \label{lem:fundamental_nonconstant} Let $f\colon T^2 \rightarrow
  S^4$ be a constrained Willmore immersion with $A\not \equiv 0$, $Q\not
  \equiv 0$ for which $\check L = \ker(A_\circ)$ is non--constant.  Then, the
  space of solutions to \eqref{eq:asymptotic_DT} defined on the universal
  covering of $T^2$ is at most (complex) 2--dimensional.  In case $f$ is
  Willmore with $\eta\equiv 0$ but neither super conformal nor Euclidean
  minimal with planar ends, the space of solutions to \eqref{eq:asymptotic_DT}
  is 0--dimensional if $AQ\equiv 0$ and 1--dimensional if $AQ\not\equiv 0$.
\end{Lem}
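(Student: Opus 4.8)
The plan is to carry out the analysis on the dense open set $U$ where $V=L\oplus\check L$ and the complement is discrete, and to bound the dimension there; by unique continuation a solution on the universal covering $\C$ is determined by its restriction to (the preimage of) $U$. On $U$ the system \eqref{eq:asymptotic_DT} is, by \eqref{eq:zero_one_part}, equivalent to $\nabla\psi\in\Omega^1(L)$ together with $(\nabla\psi)^{(0,1)}=A_\circ\psi$. I would split a solution as $\psi=s+\varphi$ with $s\in\Gamma(L)$ and $\varphi\in\Gamma(\check L)$ and rewrite both conditions in the block form \eqref{eq:split_connection_mcs_L} of $\nabla$ and $S$. The condition $\nabla\psi\in\Omega^1(L)$ then reads $\delta s+\check\nabla\varphi=0$; since $\delta$ is nowhere vanishing this recovers $s$ from $\varphi$ and shows that the linear map $\psi\mapsto\varphi=\pi_{\check L}\psi$ is injective on the solution space (if $\varphi\equiv0$ then $\delta s=0$, hence $s\equiv0$ and $\psi\equiv0$). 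It therefore suffices to bound the space of admissible $\check L$-components $\varphi$.

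Next I would use Lemma~\ref{lem:2step}, which makes $\check f$ a constrained Willmore immersion on $U$ with $\check Q_\circ=A_\circ$ and $L=\im(\check Q_\circ)$, to turn the two conditions into first-order equations purely for $\varphi$. The relation $\delta s+\check\nabla\varphi=0$ forces $\check\nabla\varphi$ to be of the type of $\delta s$, i.e.\ of type $K$ with respect to the complex structure $\tilde J$ on $\check L$ from \eqref{eq:split_connection_mcs_L}; this is precisely a quaternionic holomorphicity equation for $\varphi$ on $(\check L,\tilde J)$. Substituting $s=-\delta^{-1}\check\nabla\varphi$ into $(\nabla\psi)^{(0,1)}=A_\circ s=\check Q_\circ s$ and using the identities \eqref{eq:A_Q_dagger}, \eqref{eq:tilde_Q} and \eqref{eq:B_and_tilde_B} yields a second, transverse first-order condition. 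The resulting system is overdetermined, and the key point is that --- again by \eqref{eq:bt} and the gauge equivalence \eqref{eq:gauge_trafo} between the $A_\circ$- and $Q_\circ$-families --- it expresses $\varphi$ as an asymptotic solution for the transform $\check f$ at the opposite end $\mu\to0$, transported back through the B\"acklund relation.

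I would then extract the dimension bound from this overdetermined system. I expect the two transverse first-order conditions to force $\varphi$ to be parallel for a (flat) connection on $\check L$, equivalently, to constrain its holomorphic data on $(\check L,\tilde J)$ to a space of complex dimension at most $\operatorname{rank}_\C\check L=2$; concretely this is the degeneration, as $\mu\to\infty$, of the complex $4$-dimensional space of $\nabla^\mu$-parallel sections on $\C$ down to the $\check L$-directions. The bound can only fail to be $2$ when the relevant quaternionic holomorphic line bundle becomes special, and by the quaternionic Pl\"ucker estimate these exceptional cases are exactly those in which $f$ is super conformal or Euclidean minimal with planar ends --- precisely the situations excluded in the hypotheses of Proposition~\ref{pro:asymptotic}. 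Under the standing assumptions we therefore obtain $\dim_\C\le2$.

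For the Willmore refinement $\eta\equiv0$ (so $A_\circ=A$ and $\check Q_\circ=A$) I would compute the second Hopf field $\check A$ of $\check f$ from \eqref{eq:A_Q_dagger}, \eqref{eq:mcs_tilde} and \eqref{eq:tilde_Q}, and relate its relevant off-diagonal block to the composition $AQ$ of the two Hopf fields of $f$ (note $QA=0$ since $\im A\subset L\subset\ker Q$, while $AQ\equiv0$ is equivalent to $\im Q\subset\ker A=\check L$). Reading off the degree and the non-vanishing locus then pins the count: the transform degenerates and kills all solutions when $AQ\equiv0$, giving a $0$-dimensional solution space, whereas $AQ\not\equiv0$ leaves exactly one, giving a $1$-dimensional space. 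The main obstacle throughout is the dimension bound of the previous paragraph: showing that the combined overdetermined system for $\varphi$ admits no more than two independent solutions on $\C$, i.e.\ correctly identifying the flat connection (equivalently the degree-$\le0$ quaternionic holomorphic line bundle) behind the two transverse conditions and verifying that the only way to exceed dimension two is through the super conformal or Euclidean-minimal-with-planar-ends degeneracies.
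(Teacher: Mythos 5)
Your framing — restrict to the open set $U$ where $V=L\oplus\check L$, split $\psi=s+\varphi$ in the block decomposition \eqref{eq:split_connection_mcs_L}, invoke Lemma~\ref{lem:2step}, recover $\psi$ from $\varphi$ via $\delta s+\check\nabla\varphi=0$, and extend by unique continuation of prolongations — is consistent with the paper's setting, and the injectivity of $\psi\mapsto\pi_{\check L}\psi$ is correct. But the heart of the lemma, the bound $\dim_\C\le 2$, is asserted rather than proved: you say you \emph{expect} the overdetermined system to force $\varphi$ to be parallel for a flat connection on $\check L$, and no such connection is constructed (nor does one exist in the paper's argument). The actual mechanism has two ingredients, both absent from your outline. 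First, a pointwise constraint: every solution satisfies $\check S\psi=-\psi i$, where $\check S$ is the mean curvature sphere of $\check L$. This is derived by computing $\nabla(\check S\psi+\psi i)\in\Omega^1(\check L)$ from the identity $2{*}\check Q_\circ=2{*}A_\circ$ of Lemma~\ref{lem:2step} together with \eqref{eq:zero_one_part}, and then using non--constancy of $\check L$ to conclude $\check S\psi+\psi i=0$. Second — and this is where your plan would fail even granting the subbundle constraint — confinement to a complex rank~2 subbundle bounds nothing by itself: solutions of \eqref{eq:asymptotic_DT} are prolongations of holomorphic sections and a priori form an infinite--dimensional family on the universal covering. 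The paper closes this by a rigidity argument: all solutions are holomorphic for $(\nabla-A_\circ)^{(0,1)}$, so a putative third solution is $\tilde\psi=\psi g_1+\varphi g_2$ with complex holomorphic coefficients relative to two pointwise spanning solutions, and projecting $\nabla\tilde\psi$ to $V/L$ together with the fact that a complex--holomorphic quotient of two quaternionic holomorphic sections of $V/L$ must be constant when $Q\not\equiv 0$ (which holds on $U$ since $\eta\not\equiv0$ there in the general case) forces $g_1,g_2$ constant. Your appeal to the quaternionic Pl\"ucker formula plays no role in this lemma, and the heuristic that the $4$--dimensional spaces of $\nabla^\mu$--parallel sections ``degenerate to the $\check L$--directions'' as $\mu\to\infty$ is not an argument: \eqref{eq:asymptotic_DT} is not a parallelism equation and its solution space can jump (it is $4$--dimensional in Case~IIIa).

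The Willmore refinement is likewise more specific than your sketch. With $\eta\equiv0$, the fact that $A_\circ=A$ anti--commutes with $S$ gives $B=0$ in \eqref{eq:A_Q_dagger}, hence $\check B=0$ by \eqref{eq:B_and_tilde_B}; inserting this and $\check S\psi=-\psi i$ into \eqref{eq:fund} yields the key identity $\check A\psi=0$, so solutions lie in the complex \emph{line} bundle $\{v\in\ker(\check A)\mid \check S v=-vi\}$, whence at most one dimension by the same quotient argument. When moreover $AQ\equiv0$, your observation $\im(Q)\subset\ker(A)$ is the right start, giving $\hat L=\check L$, but the conclusion requires $\ker(\check A)=L$ (Theorem~8 of \cite{BFLPP02}): a solution is then a section of $L$ with $\nabla\psi\in\Omega^1(L)$, which vanishes because $f$ is immersed. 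Nothing in your plan of ``reading off the degree and the non--vanishing locus'' of a block of $\check A$ produces either $\check A\psi=0$ or the identification $\ker(\check A)=L$, so the $0$-- versus $1$--dimensional dichotomy remains unproved as well.
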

Examples of Willmore immersions with $AQ\equiv 0$ are Willmore surfaces
contained in a 3--sphere $S^3$ and minimal surfaces in the metrical 4--sphere
or 4--dimensional hyperbolic space, see Chapter~10 of \cite{BFLPP02}. In case
of Willmore surfaces with $AQ\not\equiv 0$, the unique solution to
\eqref{eq:asymptotic_DT} corresponds to a 4--step Willmore B\"acklund
transformation.

\begin{proof} 
  We first prove that, on the open set $U$ where $L\oplus \check L$, a local
  solution $\psi$ to \eqref{eq:asymptotic_DT} satisfies $\check S\psi= -\psi
  i$ where $\check S$ is the mean curvature sphere of $\check L$, see
  \eqref{eq:mcs_tilde}.  Writing $\psi = (\psi_1,\psi_2)$ with respect to
  $L\oplus \check L$, the equation $*A_\circ \psi = -A_\circ \psi i$ implies
  $J\psi_1 = \psi_1 i$. By \eqref{eq:mcs_tilde} we thus have $\check S\psi +
  \psi i \in \Gamma(\check L)$.  Using \eqref{eq:bt} we obtain
  \begin{multline}\label{eq:fund}
    \nabla(\check S\psi + \psi i) = (2{*}\check Q_\circ - 2{*}\check
    A_\circ)\psi +\check S \nabla\psi + \nabla\psi i \\
    = (2{*}A_\circ - 2{*}\check A_\circ)\psi +\check S \nabla\psi +
    \nabla\psi i = - 2{*}\check A_\circ\psi + \check B \nabla\psi \in
    \Omega^1(\check L),
  \end{multline}
  where the last identity holds because, by \eqref{eq:zero_one_part}, we have
  $2*A_\circ\psi-*\nabla\psi+\nabla \psi i=0$.  Because $\check L$ is
  non--constant, this implies $\check S\psi + \psi i=0$ or, equivalently,
  $\check S \psi = -\psi i$.

  If $f$ is Willmore with $\eta\equiv0$, equation \eqref{eq:A_Q_dagger}
  implies $B=0$ because $S$ anti--commutes with $A=A_\circ$ and therefore, by
  \eqref{eq:B_and_tilde_B}, $\check B=0$. Plugging this and $\check S\psi +
  \psi i=0$ into \eqref{eq:fund} yields $\check A \psi=0$, that is, $\psi$ is
  a section of the complex line bundle $\{ v\in \ker(\check A) \mid \check S v
  = -v i \}$.  The space of solutions to \eqref{eq:asymptotic_DT} is thus at
  most 1--dimensional, because for any two solutions $\psi$, $\varphi$ there
  is a non--empty open set $U' \subset U$ and a complex function $g$ on $U'$
  such that $\psi = \varphi g$.  Hence, $\nabla \psi=\nabla\varphi g+ \varphi
  dg$ and, taking the projection $\pi$ to $V/L$, we have $(\pi\varphi) dg=0$
  which shows that $g$ is constant (because the holomorphic section
  $\pi\varphi$ of $V/L$ vanishes at isolated points only) and $\psi$ and
  $\varphi$ are linearly dependent on $T^2$ because they are linearly
  dependent on the open subset $U'\subset U$.

  If $\eta\equiv 0$ and $AQ\equiv0$, then every section $\psi$ solving
  \eqref{eq:asymptotic_DT} has to vanish identically: because $Q\not\equiv 0$,
  the 2--step (forward) B\"acklund transformations $\check L =\ker(A)$ is at
  the same time a 2--step (backward) B\"acklund transformations, that is,
  $\check L=\hat L = \im(Q)$.  Hence $\ker(\check A)=L$ by Theorem~8 of
  \cite{BFLPP02}.  Because $f$ is immersed, a section $\psi\in \Gamma(L)$ with
  $\nabla\psi \in \Omega^1(L)$ has to vanish identically. This proves the
  statement about the Willmore case with $\eta\equiv0$.

  The rest of the proof deals with the case that $\eta\not \equiv 0$.  Assume
  that, on $U$, there are two (complex) linearly independent solutions $\psi$
  and $\varphi$ to \eqref{eq:asymptotic_DT}. Then, on an open and dense subset
  $U'\subset U$ both section are pointwise linearly independent: if there were
  not, there had to be an open set $\tilde U$ and a complex function $g$ on
  $\tilde U$ such that $\psi=\varphi g$.  But this is impossible, because, by
  the same argument as above, $g$ then had to be constant and prolongations of
  holomorphic sections that are linearly dependent on an open set are linearly
  dependent everywhere.  We now prove that on the set $U$ there cannot be more
  solutions to \eqref{eq:asymptotic_DT} than the complex 2--dimensional space
  spanned by $\psi$ and $\varphi$. Assume $\tilde \psi$ was another solution.
  Then, on the open set $U'$ where $\psi$, $\varphi$ pointwise span the
  subbundle $\{v\in V \mid \check S v = - v i\}$, there would be complex
  valued function $g_1$, $g_2$ such that $\tilde \psi = \psi g_1 + \varphi
  g_2$. The functions $g_1$ and $g_2$ are holomorphic, because, by
  \eqref{eq:zero_one_part}, all solutions to \eqref{eq:asymptotic_DT} are
  holomorphic with respect to the complex holomorphic structure
  $(\nabla-A_\circ)^{(0,1)}$.  Taking the projection of $\nabla\tilde \psi =
  \nabla\psi g_1 + \nabla \varphi g_2+ \psi dg_1 + \varphi dg_2$ to $V/L$
  shows that if one of the functions $g_1$ and $g_2$ is constant, the other
  has to be constant as well.  To prove the claim we have to show that $g_1$
  and $g_2$ are both constant.  Assume that this was not the case, i.e., that
  both functions are non--constant.  The projection of $\nabla\tilde\psi$ to
  $V/L$ then yields $(\pi \psi) = (\pi \varphi) h$ with $h$ the meromorphic
  function defined by $h dg_1 = -dg_2$.  This would force $h$ to be constant
  and $\psi$ and $\varphi$ to be linearly dependent, because the quotient of
  two holomorphic sections of the quaternionic holomorphic line bundle $V/L$
  with non--trivial Hopf field $Q\not \equiv 0$ has to be constant if it is
  complex holomorphic (recall that, by the discussion following
  \eqref{eq:decomposed_el_eq}, the Hopf field $Q$ cannot vanish on any open
  subset of $U$ because $\eta\not\equiv 0$ and $A$ is nowhere vanishing on
  $U$).  Hence, both $g_1$ and $g_2$ have to be constant and the space of
  local solutions to \eqref{eq:asymptotic_DT} defined on the open set $U'$ is
  2--dimensional, because on $U'$ every solution $\tilde \psi$ is linearly
  dependent to $\psi$ and $\varphi$.

  Because prolongations of holomorphic sections are uniquely determined by
  their values on an open set this implies that the space of global solutions
  to \eqref{eq:asymptotic_DT} defined on the universal covering of $T^2$ is at
  most (complex) 2--dimensional.
\end{proof}

\subsection{The case that \texorpdfstring{$A_\circ \not \equiv 0 $ and $\check
    L=\ker(A_\circ)$}{$ker(A_0)$} is constant}
The following lemma is the analogue to Lemma~\ref{lem:fundamental_nonconstant}
in the case that $\check L =\ker(A_\circ)$ is constant. It should be noted
that every constrained Willmore torus in~$S^4$ for which $\check L
=\ker(A_\circ)$ is constant belongs to Case~II or III of
Proposition~\ref{prop:cases}, because a constant section of $\ker(A_\circ)$ is
$\nabla^\mu$--parallel for every $\mu\in \C_*$.  A detailed discussion of
constrained Willmore surfaces with constant $\check L =\ker(A_\circ)$ is given
in Section~\ref{sec:harmonic}.

\begin{Lem}\label{lem:fundamental_constant}
  For a constrained Willmore torus $f\colon T^2\rightarrow S^4$ with the
  property that $\eta\not\equiv 0$ and $\check L =\ker(A_\circ)$ is constant,
  the space of solutions to \eqref{eq:asymptotic_DT} is (complex)
  2--dimensional.
\end{Lem}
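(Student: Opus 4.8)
The strategy is to establish the two bounds separately. For the lower bound I would use that, since $\check L=\ker(A_\circ)$ is constant, it is spanned by a $\nabla$--parallel quaternionic section $\varphi_0$; every constant section $\varphi_0 q$, $q\in\H$, then satisfies $\nabla(\varphi_0 q)=0\in\Omega^1(\tilde L)$ and $A_\circ(\varphi_0 q)=0$, hence solves \eqref{eq:asymptotic_DT}. Because $(\check L,i)$ has complex rank two, with complex basis $\varphi_0,\varphi_0 j$, these already span a complex two--dimensional space of solutions. This is the infinitesimal shadow of the remark preceding the lemma, that constant sections of $\ker(A_\circ)$ are $\nabla^\mu$--parallel for every $\mu$.

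For the upper bound I would argue on the dense open set $U$ where $V=L\oplus\check L$ and write a solution as $\psi=\psi_1+\psi_2$. As in Lemma~\ref{lem:fundamental_nonconstant}, the identity $*A_\circ=SA_\circ$ together with $A_\circ|_L=A|_L$ (valid since $L\subset\ker\eta$) turns $(A_\circ\psi)^{(1,0)}=0$ into $J\psi_1=\psi_1 i$, so that $\psi_1$ is a section of the complex line bundle $L^+=\{v\in L : Jv=vi\}$. Constancy of $\check L$ means that the block $\check\delta$ in \eqref{eq:split_connection_mcs_L} vanishes, so the condition $\nabla\psi\in\Omega^1(\tilde L)$ collapses to the single differential relation $\check\nabla\psi_2=-\delta\psi_1$, while \eqref{eq:zero_one_part} shows that $\psi$ is holomorphic for $(\nabla-A_\circ)^{(0,1)}$; in particular $\psi_2=\pi\psi$ is a holomorphic section of $V/L$ and $\psi_1$ is $\bar\partial$--holomorphic in $L^+$.

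The crucial step is then to cut this holomorphic data down to dimension two. Mirroring the end of the proof of Lemma~\ref{lem:fundamental_nonconstant}, I would take two linearly independent solutions $\psi,\varphi$, express any further solution as $\tilde\psi=\psi g_1+\varphi g_2$ with $g_1,g_2$ holomorphic (holomorphicity of the coefficients again being a consequence of \eqref{eq:zero_one_part}), project the identity $\nabla\tilde\psi=\nabla\psi\,g_1+\nabla\varphi\,g_2+\psi\,dg_1+\varphi\,dg_2$ to $V/L$, and use $\nabla\psi,\nabla\varphi\in\Omega^1(\tilde L)$ to obtain $(\pi\psi)\,dg_1+(\pi\varphi)\,dg_2=0$. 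Since $\eta\not\equiv0$ and $A$ is nowhere vanishing on $U$, the Hopf field $Q$ does not vanish on any open subset of $U$ (discussion after \eqref{eq:decomposed_el_eq}), and the quotient of two holomorphic sections of $V/L$ that is complex holomorphic must be constant; this forces $g_1,g_2$ to be constant and hence $\tilde\psi\in\Span_\C\{\psi,\varphi\}$. Together with the lower bound the solution space is then exactly complex two--dimensional.

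I expect the genuine difficulty to be precisely this last cut--down. In the non--constant case of Lemma~\ref{lem:fundamental_nonconstant} the solutions were confined pointwise to the rank--two bundle $\{v:\check S v=-vi\}$ built from the mean curvature sphere $\check S$ of $\check L$ (see \eqref{eq:mcs_tilde}), so that a third solution was automatically a pointwise combination of two. Here $\check L$ is a constant point and carries no mean curvature sphere, and $\psi_1$ and $\psi_2$ are tied only through the differential relation $\check\nabla\psi_2=-\delta\psi_1$ rather than pointwise; the main work is therefore to show that solutions nevertheless span only a rank--two bundle along a dense subset of $U$ (equivalently, that the $L$--component $\psi_1$ cannot be enlarged beyond what the two parallel sections of $\check L$ already produce), so that the $V/L$--projection argument together with $Q\not\equiv0$ applies and pins the dimension to two.
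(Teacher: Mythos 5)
Your lower bound (constant sections of $\check L$) and the reduction on $U$ are fine, but the proof has a genuine gap exactly at the point you yourself flag: you never establish that solutions are pointwise confined to a complex rank--two subbundle, and without that the representation $\tilde\psi=\psi g_1+\varphi g_2$ with complex--valued functions $g_1,g_2$ is not even available, so the quotient argument from the end of Lemma~\ref{lem:fundamental_nonconstant} cannot get started. In the non--constant case that confinement came for free from $\check S\psi=-\psi i$; here there is no $\check S$, and announcing that ``the main work is to show that solutions span only a rank--two bundle'' is a correct diagnosis, not a proof. Symptomatically, the hypothesis $\eta\not\equiv 0$ is never actually used in the steps you carry out, yet the lemma is false without it (for $\eta\equiv 0$ and constant $\check L=\ker(A)$ the immersion is Euclidean minimal with planar ends, which is excluded from the statement precisely because the count fails there).

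The paper closes this hole with a short pointwise computation that yields something stronger than rank--two confinement: every solution has $\psi_1\equiv 0$, i.e.\ is a constant section of $\check L$. In the block form \eqref{eq:S_checkLconst} (the constant--case analogue of the splitting \eqref{eq:split_connection_mcs_L} you cite), your relation $\check\nabla\psi_2=-\delta\psi_1$ just says $\nabla\psi=\nabla^L\psi_1$, and one computes $(\nabla\psi)^{(0,1)}$ twice: directly from $J\psi_1=\psi_1 i$ it equals $\tfrac12 J(\nabla^L J)\psi_1$, while \eqref{eq:zero_one_part} together with the block formula \eqref{eq:A_harm} for $A_\circ$ gives $\tfrac12 J(\nabla^L J)\psi_1-\tfrac12 JB\delta\psi_1$. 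Hence $B\delta\psi_1=0$, and since $\delta$ is nowhere vanishing and $B$ is quaternionic linear, $B$ vanishes wherever $\psi_1$ does not; a solution with $\psi_1\not\equiv 0$ therefore forces $B\equiv 0$ on an open set, which by \eqref{eq:A_harm} is equivalent to $A_\circ$ anticommuting with $S$, i.e.\ to $\eta\equiv 0$ --- contradicting the hypothesis. This is where $\eta\not\equiv 0$ enters. Once $\psi_1\equiv 0$ is known, $\nabla\psi\in\Omega^1(L)\cap\Omega^1(\check L)=0$, so $\psi$ is constant and the space is exactly the complex two--dimensional space of constant sections of $\check L$, with no need for the $Q\not\equiv 0$ quotient argument you propose for the endgame.
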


The case that $\eta\equiv 0$ and $\check L=\ker(A_\circ)$ is constant which is
excluded from the lemma corresponds to minimal surfaces with planar ends in
the Euclidean space $\R^4=S^4\backslash\{\infty\}$ defined by $\infty =\check
L = \ker(A)$, see Section~\ref{sec:moebius_char}.

\begin{proof}
  As in the proof of Lemma~\ref{lem:2step}, on the open set $U$ where
  $V=L\oplus \check L$ the connection $\nabla$ and the mean curvature sphere
  $S$ of $L$ take the form
  \begin{equation}\label{eq:S_checkLconst}
    \nabla=
    \begin{pmatrix}
      \nabla^L & 0 \\ \delta & \check \nabla
    \end{pmatrix} \qquad \textrm{ and } \qquad S=
    \begin{pmatrix}
      J & B \\ 0 & \tilde J
    \end{pmatrix},
  \end{equation}
  where $J$ and $\tilde J$ are complex structures on $L$ and $\check L$ with
  $*\delta = \delta J = \tilde J \delta$ and where $B\in \Gamma(\Hom(\check L
  ,L))$ with $JB+B\tilde J=0$. The derivative of $S$ is
  \begin{equation}
    \nabla S = 
    \begin{pmatrix}
      \nabla^L J -B\delta & \nabla B  \\
      0 & \check \nabla \tilde J + \delta B
    \end{pmatrix}
  \end{equation}
  and, by $\nabla S= 2{*}Q_\circ - 2{*}A_\circ$, the form $A_\circ$ is given by
  \begin{equation} \label{eq:A_harm} 2{*}A_\circ =
    \begin{pmatrix}
      - \nabla^L J +B\delta & 0  \\
      0 & 0
    \end{pmatrix}.
  \end{equation}
  Let $\psi=(\psi_1,\psi_2)$ be a solution to \eqref{eq:asymptotic_DT} defined
  on $U$.  Then $J\psi_1=\psi_1 i$ and therefore $(\nabla^L J)\psi_1 =
  \nabla^L \psi_1 i - J \nabla^L \psi_1$.  Hence, taking the $(0,1)$--part of
  $\nabla\psi = \nabla^L\psi_1$ yields
  \[ (\nabla \psi)^{(0,1)} = \tfrac12(\nabla^L \psi_1 + J \nabla^L \psi_1 i )
  = \tfrac12 J (\nabla^L J) \psi_1. \] On the other hand, on $U$, equation
  \eqref{eq:asymptotic_DT} implies \eqref{eq:zero_one_part} such that by
  \eqref{eq:A_harm}
  \[ (\nabla \psi)^{(0,1)} = \tfrac12 J (\nabla^L J) \psi_1 - \tfrac12 J B
  \delta\psi_1. \] Together, the last two equations imply that solutions to
  \eqref{eq:asymptotic_DT} with $\psi_1\not\equiv 0$ can only exist if
  $B\equiv 0$ which is impossible because, by \eqref{eq:A_harm}, $B\equiv 0$
  is equivalent to $A_\circ$ anti--commuting with $S$ which again is
  equivalent to $\eta\equiv 0$.
\end{proof}

\subsection{Relation between holonomy and multiplier spectral curve}
As an application of Proposition~\ref{pro:asymptotic} we show now that the
holonomy spectral curve $\Sigma_{hol}$ essentially coincides with the
multiplier spectral curve $\Sigma_{mult}$ provided they are both defined. This
is the case for constrained Willmore tori belonging to Case~I or II of
Proposition~\ref{prop:cases}.

\begin{The} \label{the:biholomorphicity} Let $f\colon T^2\rightarrow S^4$ be a
  constrained Willmore torus for which both $\Sigma_{hol}$ and $\Sigma_{mult}$
  are defined. The holomorphic map $\iota \colon \Sigma_{hol} \rightarrow
  \Sigma_{mult}$ is an injective immersion whose image is $\Sigma_{mult}$ with
  finitely many points removed.  The multipliers of the removed points or
  their conjugates belong to holomorphic sections whose prolongations solve
  \eqref{eq:asymptotic_DT}.  In particular, all but finitely many points in
  $\Sigma_{mult}$ give rise to (possibly singular) Darboux transforms which
  are again constrained Willmore where they are immersed.  In case $f$ is
  Willmore, all Darboux transforms belonging to points of $\Sigma_{mult}$ are
  again Willmore.
\end{The}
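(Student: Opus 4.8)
The plan is to exploit that both spectral curves parametrize essentially the same holomorphic sections with monodromy of $V/L$: a point of $\Sigma_{mult}$ records only the multiplier, whereas a point of $\Sigma_{hol}$ records in addition a spectral parameter $\mu$, and the asymptotic analysis of the previous subsections controls exactly the finitely many points where this extra datum degenerates. The four assertions then fall into an injectivity statement, a co-finiteness statement, the identification of the missing points, and the Darboux-transform corollaries.

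First I would prove that $\iota$ is injective, which for a holomorphic map between Riemann surfaces automatically gives that it is an immersion (a holomorphic map with a critical point is locally $k$-to-$1$ for some $k\geq 2$, hence not injective, and a generically injective holomorphic map is injective and immersive everywhere). Suppose $\sigma_1,\sigma_2\in\Sigma_{hol}$ have the same image $\tau\in\Sigma_{mult}$ and normalize a generic point, so that the space of holomorphic sections with monodromy $h^\tau$ of $V/L$ is one-dimensional. By Lemma~\ref{lem:nabla_mu_parallel_prolonge_hol} the eigenlines $\mathcal{E}_p(\sigma_1)$ and $\mathcal{E}_p(\sigma_2)$ both give rise to the prolongation $\psi$ of this essentially unique section, so $\psi$ is simultaneously $\nabla^{\mu_1}$- and $\nabla^{\mu_2}$-parallel for $\mu_j=\mu(\sigma_j)$. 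Subtracting and using the explicit form \eqref{eq:nabla_mu}, the difference is $(\mu_1-\mu_2)A_\circ^{(1,0)}\psi+(\mu_1^{-1}-\mu_2^{-1})A_\circ^{(0,1)}\psi=0$; since the two summands are of type $(1,0)$ and $(0,1)$ they vanish separately, so either $\mu_1=\mu_2$ or $A_\circ^{(1,0)}\psi=A_\circ^{(0,1)}\psi=0$. The latter gives $A_\circ\psi=0$ and hence $\nabla\psi=0$, making $\psi$ genuinely parallel with trivial multiplier, contradicting that $\sigma_1,\sigma_2$ lie on non-trivial eigenlines. Thus $\mu_1=\mu_2$ and $\sigma_1=\sigma_2$, so $\iota$ is an injective immersion.

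Next I would identify the image $W=\iota(\Sigma_{hol})$ as $\Sigma_{mult}$ with finitely many points removed. Since $\iota$ is an immersion, $W$ is open. The projection $\mu\colon\Sigma_{hol}\rightarrow\C_*$ is a proper branched covering, so any sequence leaving every compact subset of $\Sigma_{hol}$ has $\mu\to 0$ or $\mu\to\infty$; consequently any boundary point $\tau^*\in\partial W$ arises as a limit $\iota(\sigma_n)\to\tau^*$ with $\mu(\sigma_n)\to 0$ or $\infty$ (a sequence staying in a compact set would converge inside $\Sigma_{hol}$ and have limit in the open set $W$). For $\mu\to\infty$ the corresponding $\nabla^\mu$-parallel sections, suitably normalized, converge to a holomorphic section whose prolongation solves the asymptotic equation \eqref{eq:asymptotic_DT}; for $\mu\to 0$ the symmetry \eqref{eq:symmetry_nabla_mu} and the involution $\rho$ give a solution for the complex-conjugate data. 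Proposition~\ref{pro:asymptotic} bounds the space of solutions to \eqref{eq:asymptotic_DT} by complex dimension two, so only finitely many limit multipliers occur and $\partial W$ is finite. Removing the finite set $\partial W$ from the connected curve $\Sigma_{mult}$ leaves it connected, and it decomposes into the disjoint open sets $W$ and $\Sigma_{mult}\setminus\overline W$; connectedness and $W\neq\emptyset$ force $\Sigma_{mult}\setminus\overline W=\emptyset$, so $\Sigma_{mult}\setminus W=\partial W$ is finite. The removed multipliers, or their conjugates, are exactly those coming from solutions of \eqref{eq:asymptotic_DT}; when $\Sigma_{mult}$ has two components interchanged by $\rho$ one argues componentwise.

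The corollaries then follow at once: every $\tau\in W$ corresponds to a $\nabla^{\mu(\sigma)}$-parallel section whose projection to $V/L$ is a holomorphic section with monodromy vanishing at most at isolated points, hence to a (possibly singular) $\nabla^\mu$-Darboux transform, which by Theorem~\ref{the:nabla_dt_cw} is again constrained Willmore where immersed; the Willmore case follows from the Willmore part of the same theorem applied with $\eta\equiv 0$. I expect the main obstacle to be the asymptotic convergence claim underlying the second paragraph — namely, that after a correct normalization the holonomy eigenlines and their $\nabla^\mu$-parallel sections genuinely converge, as $\mu\to 0,\infty$, to solutions of \eqref{eq:asymptotic_DT}, and that these limits account for \emph{every} boundary multiplier of $W$. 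Proposition~\ref{pro:asymptotic} supplies the crucial finiteness once convergence is established, but organizing the limiting behavior of the parallel sections into this clean statement is the technical heart of the argument.
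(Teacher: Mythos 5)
Your skeleton matches the paper's proof almost step for step: injectivity tested at a generic point of $\Sigma_{mult}$ (where the space of holomorphic sections with monodromy is $1$--dimensional), the two parallelity equations for $\mu_1\neq\mu_2$ forcing $A_\circ\psi=0$ and hence $\nabla\psi=0$, so $\psi$ is constant with trivial multiplier — the paper reaches constancy by a slightly different computation with $d^\nabla({*}A_\circ\psi)$, but your direct substitution into \eqref{eq:nabla_mu} is fine, as is your upgrade from generic injectivity to injective immersion; then the escape of $\mu_n$ to $0$ or $\infty$, the reduction to $\mu_n\to\infty$ via \eqref{eq:symmetry_nabla_mu} and $\rho$, finiteness from Proposition~\ref{pro:asymptotic}, and the componentwise connectedness argument showing the complement of the image is exactly the finite boundary. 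However, the step you yourself flag as the technical heart — that suitably normalized parallel sections converge, as $\mu_n\to\infty$, to a non--trivial prolongation solving \eqref{eq:asymptotic_DT} with the correct limiting multiplier $h^{\tau^*}$ — is left entirely open, and it is a genuine gap: with an ad hoc normalization you would still need an elliptic compactness argument to extract a non--zero limit and to identify its monodromy. The paper closes this with machinery already available to you from Section~\ref{sec:multiplier_spec}: the holomorphic line bundle $\mathcal{L}\subset\Sigma_{mult}\times\Gamma(\widetilde{V/L})$, whose existence rests on Proposition~\ref{pro:holomorphic_family}. Taking a local nowhere vanishing holomorphic section of $\mathcal{L}$ near the boundary point $\sigma_0$ and prolonging gives a family $\psi^\sigma$ depending smoothly on $\sigma$ with monodromy $h^\sigma$ built in; evaluating $\nabla\psi^{\sigma_n}=(1-\mu_n)(A_\circ\psi^{\sigma_n})^{(1,0)}+(1-\mu_n^{-1})(A_\circ\psi^{\sigma_n})^{(0,1)}$ along $\sigma_n\to\sigma_0$ and dividing by $\mu_n$ yields $(A_\circ\psi^{\sigma_0})^{(1,0)}=0$ with no normalization needed, while $\nabla\psi^{\sigma_0}\in\Omega^1(\tilde L)$ holds automatically because $\psi^{\sigma_0}$ is a prolongation. (The same device handles your bounded--$\mu_n$ case: if $\mu_n\to\mu_0\in\C_*$ then $\nabla^{\mu_0}\psi^{\sigma_0}=0$, contradicting $\sigma_0\notin\iota(\Sigma_{hol})$; your properness argument for $\mu\colon\Sigma_{hol}\to\C_*$ is an acceptable alternative for this sub--step.)

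A second, smaller omission: the theorem asserts that in the Willmore case \emph{all} Darboux transforms belonging to points of $\Sigma_{mult}$ — including the finitely many removed points — are Willmore, whereas Theorem~\ref{the:nabla_dt_cw} only covers the $\nabla^\mu$--Darboux transforms, i.e.\ points of $\iota(\Sigma_{hol})$; your last paragraph treats only the image. The paper obtains the remaining points by continuity, again powered by the holomorphic family $\psi^\sigma$: the Darboux transforms at boundary points are limits of the Willmore Darboux transforms at nearby points of $\iota(\Sigma_{hol})$, and the Willmore condition is closed under this convergence.
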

 
As suggested by the theorem, we will in the following not distinguish between
$\Sigma_{hol}$ and its image $\iota(\Sigma_{hol})$ under $\iota\colon
\Sigma_{hol}\rightarrow \Sigma_{mult}$. Theorem~\ref{the:mainA} below shows
that $\Sigma_{mult}\backslash \Sigma_{hol}$ consists of at most four points.

\begin{Cor}\label{cor:biholomorphicity_willmore_AQ_null}
  If $f\colon T^2\rightarrow S^4$ is a Willmore torus with $\eta\equiv0$ and
  $AQ\equiv0$, then $\Sigma_{hol}=\Sigma_{mult}$ in case they are both
  defined.
\end{Cor}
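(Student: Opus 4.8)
The plan is to combine Theorem~\ref{the:biholomorphicity} with the more precise dimension count in Lemma~\ref{lem:fundamental_nonconstant} for the Willmore case with $AQ\equiv0$. By Theorem~\ref{the:biholomorphicity}, the map $\iota\colon\Sigma_{hol}\to\Sigma_{mult}$ is an injective immersion whose image is $\Sigma_{mult}$ minus finitely many points, and the multipliers of the removed points---or their complex conjugates---belong to holomorphic sections whose prolongations solve the asymptotic equation \eqref{eq:asymptotic_DT}. So to prove $\Sigma_{hol}=\Sigma_{mult}$ it suffices to show that no points are removed, i.e.\ that there are no such holomorphic sections.

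First I would invoke the hypothesis $AQ\equiv0$ together with $\eta\equiv0$. These place $f$ in the Willmore setting where Lemma~\ref{lem:fundamental_nonconstant} applies, and there the dimension of the solution space to \eqref{eq:asymptotic_DT} is explicitly computed: it is $0$-dimensional precisely when $AQ\equiv0$ (and $1$-dimensional when $AQ\not\equiv0$). Thus under our hypotheses the space of solutions to \eqref{eq:asymptotic_DT} is trivial. I should first check that the geometric hypotheses of Lemma~\ref{lem:fundamental_nonconstant} are met: since $\Sigma_{mult}$ is defined we are in the trivial normal bundle case, and since $f$ is genuinely Willmore (not super conformal or Euclidean minimal with planar ends, as those fall under Case~III of Proposition~\ref{prop:cases} where $\Sigma_{hol}$ is \emph{not} defined), we may assume $A\not\equiv0$ and $Q\not\equiv0$. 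The case distinction in Lemma~\ref{lem:fundamental_nonconstant} according to whether $\check L=\ker(A_\circ)$ is constant or non-constant is also covered, since the constant case with $\eta\equiv0$ is exactly the Euclidean minimal situation, again excluded.

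With the solution space to \eqref{eq:asymptotic_DT} empty, the removed points in Theorem~\ref{the:biholomorphicity} would have to have multipliers (or conjugate multipliers) arising from such nonexistent sections, a contradiction unless there are no removed points at all. Here I would use the symmetry under complex conjugation: the removed points come in the form described via the involution $\rho$, so ruling out the sections on one side rules out the conjugate side as well, using $F(p,\rho(\sigma))=F(p,\sigma)j$. Hence $\iota$ is surjective, and being already an injective immersion it is a biholomorphism, giving $\Sigma_{hol}=\Sigma_{mult}$. The main subtlety---rather than a genuine obstacle---is correctly tracking that the vanishing of the solution space to \eqref{eq:asymptotic_DT} accounts for \emph{all} potentially removed points, including those appearing only through their conjugates; but Theorem~\ref{the:biholomorphicity} already packages the removed points in exactly this conjugation-symmetric form, so the deduction is essentially immediate once the dimension count from Lemma~\ref{lem:fundamental_nonconstant} is in hand.
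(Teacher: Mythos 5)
Your proposal is correct and takes essentially the same route as the paper, whose proof consists of one sentence citing Lemma~\ref{lem:fundamental_nonconstant} (the space of solutions to \eqref{eq:asymptotic_DT} is $0$--dimensional when $\eta\equiv0$ and $AQ\equiv0$), with Theorem~\ref{the:biholomorphicity} providing the implicit link to the removed points. Your extra steps---verifying that the definedness of $\Sigma_{hol}$ excludes the super conformal and Euclidean minimal cases (hence also the constant $\check L=\ker(A_\circ)$ case), and tracking the conjugation symmetry via $\rho$---merely make explicit what the paper leaves to the reader.
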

\begin{proof}
  This follows from Lemma~\ref{lem:fundamental_nonconstant} according to which
  the space of solutions to \eqref{eq:asymptotic_DT} is 0--dimensional if
  $AQ\equiv 0$.
\end{proof}

\begin{proof}[Proof of Theorem~\ref{the:biholomorphicity}]
  If the map $\iota \colon \Sigma_{hol} \rightarrow \Sigma_{mult}$ is not
  injective, there is a non--empty open subset of $\Sigma_{mult}$ all
  points of which have several preimages. In particular, there is
  $\sigma\in\Sigma_{mult}$ that has two preimages belonging to different
  parameters $\mu_1$ and $\mu_2\in \C_*$ and for which the space of
  holomorphic section with monodromy $h^\sigma$ is 1--dimensional, see
  Section~\ref{sec:multiplier_spec}.  The prolongation $\psi$ of such a
  holomorphic section satisfies
  \[ \nabla \psi = (1-\mu_l) (A_\circ\psi)^{(1,0)} + (1-\mu_l^{-1}) (
  A_\circ\psi)^{(0,1)} \] for $l=1$ and $2$. Because $\mu_1\neq \mu_2$ this
  implies $A_\circ\psi=0$ which yields
  \[ 0 = d^\nabla({*}A_\circ \psi ) = -{*}A_\circ \wedge \nabla\psi. \]
  Because $\nabla\psi$ takes values in $KL$ and $(A_\circ)_{|L}=A_{|L}$ is
  right $\bar K$ and does not vanish on the non--empty open set $U$ where
  $V=L\oplus \check L$, this forces $\psi$ to be constant on $U$ and hence
  everywhere. But this contradicts the assumption that the multiplier
  $h^\sigma$ is non--trivial such that the map $\iota$ is injective and in
  particular unbranched.

  The image $\iota(\Sigma_{hol})$ is an open subset of $\Sigma_{mult}$.  If
  $\Sigma_{mult}$ is not connected it has two connected components which get
  interchanged by $\rho$ and, because $\iota$ is compatible with the
  involutions~$\rho$ the image $\iota(\Sigma_{hol})$ intersects both
  components.  In order to prove the lemma we show that the boundary of
  $\iota(\Sigma_{hol})$ in $\Sigma_{mult}$ consist of points whose multipliers
  or their conjugates belong to global solutions to \eqref{eq:asymptotic_DT}.
  Assume there is a sequence of points $\sigma_n\in \iota(\Sigma_{hol})$
  converging to $\sigma_0\in \Sigma_{mult}\backslash \iota(\Sigma_{hol})$.
  The corresponding sequence of parameters $\mu_n\in \C_*$ cannot be contained
  in a bounded subset of $\C_*$: otherwise we could assume by passing to a
  subsequence that $\mu_n$ converges to some $\mu_0\in \C_*$. But then
  $\nabla^{\mu_n}\psi^{\sigma_n}=0$ would imply $\nabla^{\mu_0}
  \psi^{\sigma_0}=0$ which contradicts the assumption that $\sigma_0$ is not
  contained in $\iota(\Sigma_{hol})$; here $\psi^\sigma$ denotes the
  prolongation of a local holomorphic section defined near $\sigma_0$ of the
  line bundle $\mathcal{L}$ from Section~\ref{sec:multiplier_spec}, that is,
  every $\psi^\sigma$ is the prolongation of a holomorphic section
  $\pi\psi^\sigma$ with monodromy $h^\sigma$ of $V/L$ and $\sigma\mapsto
  \pi\psi^\sigma$ depends holomorphically on $\sigma$.

  Hence $\mu_n$ is not contained in a bounded subset of $\C_*$ and, by
  passing to a subsequence and possibly applying the anti--holomorphic
  involutions $\rho$ of $\Sigma_{hol}$ and $\Sigma_{mult}$, we can assume that
  $\mu_n$ converges to $\infty$. Because
  \[ \nabla\psi^{\sigma_n} = (1-\mu_n) (A_\circ\psi^{\sigma_n})^{(1,0)} +
  (1-\mu_n^{-1}) (A_\circ\psi^{\sigma_n})^{(0,1)} \] for all $n$, while
  $\nabla\psi^{\sigma_n} \rightarrow \nabla\psi^{\sigma_0}$ and $
  A_\circ\psi^{\sigma_n} \rightarrow A_\circ\psi^{\sigma_0}$ for $n\rightarrow
  \infty$, we obtain
  \[ (A_\circ\psi^{\sigma_0})^{(1,0)}=0. \] This shows that for every point
  $\sigma$ in the boundary of $\iota(\Sigma_{hol})$ the multiplier $h^\sigma$
  or $\bar h^\sigma= h^{\rho(\sigma)}$ admits a holomorphic section of $V/L$
  whose prolongation solves \eqref{eq:asymptotic_DT}.  By
  Proposition~\ref{pro:asymptotic}, there are at most two multipliers
  belonging to solutions to \eqref{eq:asymptotic_DT}, because holomorphic
  sections of $V/L$ with different monodromies are linearly independent over
  $\C$. The boundary of $\iota(\Sigma_{hol})$ thus consists of finitely many
  points and hence coincides with its complement
  $\Sigma_{mult}\backslash\iota(\Sigma_{hol})$.

  By Theorem~\ref{the:nabla_dt_cw}, the (possibly singular) Darboux
  transforms corresponding to points of $\Sigma_{hol}$ are constrained
  Willmore or Willmore, respectively. In the Willmore case, by continuity this
  holds for all points of $\Sigma_{mult}$.
\end{proof}

Theorem~\ref{the:biholomorphicity} immediately implies that the spectral curve
$\Sigma_{mult}$ of a constrained Willmore torus that belongs to Case~I or II
has finite genus: in the infinite genus case the spectral curve of a conformal
torus in $S^4$ has only one end asymptotic to two planes joined by an infinite
number of handles, see Theorem~4.1 of \cite{BPP2}.  But this is impossible for
a constrained Willmore torus of Case~I or II, because its spectral curve
$\Sigma_{mult}$ has two ends one of which is an end of $\{\sigma \in
\Sigma_{hol} |\mid \mu(\sigma)|<1\}$ while the other is an end of $\{\sigma
\in \Sigma_{hol}\mid |\mu(\sigma)|>1\}$.

In Section~\ref{sec:proof} we give a more geometric proof of the fact that
constrained Willmore tori of Case~I and II have finite spectral genus by
showing the existence of a polynomial Killing field.  Moreover, we prove that
constrained Willmore tori of Case~III are super conformal or Euclidean minimal
with planar ends.

\subsection{Constant Darboux transforms in $\Sigma$}
For conformally immersed tori $f\colon T^2\rightarrow S^4$ with trivial normal
bundle, the normalization map $h\colon \Sigma_{mult}\rightarrow
\Hom(\Gamma,\C_*)$ of the spectrum has a special multiple point at the trivial
multiplier $h=1 \in \Hom(\Gamma,\C_*)$. This singularity is characteristic for
spectral curves of quaternionic holomorphic line bundles of degree~0 that are
induced by immersed tori.  Among the points desingularizing this singularity
one is especially interested in the points corresponding to constant Darboux
transforms of $f$. In the following we investigate the number of such points
in case of constrained Willmore tori in $S^4$.  For general conformal
immersions $f\colon T^2\rightarrow S^4$ very few is known about this number.

\begin{Lem}
  Let $f\colon T^2\rightarrow S^4$ be a constrained Willmore immersion of a
  torus which belongs to Case~I or II of Proposition~\ref{prop:cases}.
  \begin{itemize}
  \item If $\check L = \ker(A_\circ)$ is non--constant, the only points in
    $\Sigma_{mult}$ that correspond to constant Darboux transforms are the 2
    or 4 points $\mu^{-1}(\{1\})\subset \Sigma_{hol}$.
  \item If $\check L = \ker(A_\circ)$ is constant, then
    $\mu^{-1}(\{1\})\subset \Sigma_{hol}$ consists of 2 points corresponding
    to constant Darboux transforms.  Moreover, every constant Darboux
    transform that corresponds to a point in $\Sigma_{mult} \backslash
    \mu^{-1}(\{1\})$ is contained in $\check L =\ker(A_\circ)$.
  \end{itemize}
\end{Lem}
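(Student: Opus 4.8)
The plan is to reduce the statement to a property of the prolongation $\psi^\sigma$ attached to a point $\sigma\in\Sigma_{mult}$ and then read it off from the associated family. First I would note that the (possibly singular) Darboux transform $p\mapsto\psi^\sigma(p)\H$ is constant in $p$ precisely when $\psi^\sigma$ is a $\nabla$--parallel section of $V$: if the line $L^\sharp=\psi^\sigma\H$ equals a fixed line $\psi_0\H$, then $\psi^\sigma=\psi_0\lambda$ and the prolongation condition $\nabla\psi^\sigma=\psi_0\,d\lambda\in\Omega^1(L)$ forces $d\lambda=0$ wherever $\psi_0\notin L$, so $\psi^\sigma$ is constant. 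The algebraic core is then the following dichotomy for parallel sections of $\nabla^\mu$: by \eqref{eq:nabla_mu} any $\nabla^\mu$--parallel $\psi$ satisfies $\nabla\psi=(1-\mu)A_\circ^{(1,0)}\psi+(1-\mu^{-1})A_\circ^{(0,1)}\psi$, and since the two terms have complementary types $(1,0)$ and $(0,1)$ this vanishes if and only if $\mu=1$ or $A_\circ\psi=0$, i.e.\ $\psi\in\check L=\ker(A_\circ)$.

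Next I would invoke Theorem~\ref{the:biholomorphicity} to write $\Sigma_{mult}$ as $\iota(\Sigma_{hol})$ together with finitely many removed points, and treat the two kinds separately. For $\sigma\in\Sigma_{hol}$ the section $\psi^\sigma$ is $\nabla^{\mu(\sigma)}$--parallel, so by the dichotomy it is constant exactly when $\mu(\sigma)=1$ or $A_\circ\psi^\sigma=0$. For a removed point $\sigma_0$ the proof of Theorem~\ref{the:biholomorphicity} shows that $h^{\sigma_0}$, or its conjugate $h^{\rho(\sigma_0)}$, carries a holomorphic section whose prolongation solves the asymptotic equation \eqref{eq:asymptotic_DT}, and that prolongation is $\psi^{\sigma_0}$ (respectively $\psi^{\sigma_0}j$). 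The delicate point, which I expect to be the main obstacle, is to exclude a constant Darboux transform at such a removed point: if $\psi^{\sigma_0}$ were constant then so is $\psi^{\sigma_0}j$, and combining $\nabla\psi^{\sigma_0}=0$ with \eqref{eq:zero_one_part} on the open set where $A_{|L}\neq0$ gives $A_\circ\psi^{\sigma_0}=0$, hence again $\psi^{\sigma_0}\in\check L$ (using that $A_\circ$ is quaternionic linear, so $A_\circ\psi=0$ and $A_\circ(\psi j)=0$ are equivalent). Thus a constant Darboux transform attached to any point of $\Sigma_{mult}\setminus\mu^{-1}(\{1\})$ has its prolongation contained in $\check L$.

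The two bullets now follow from the nature of $\check L$. If $\check L$ is non--constant it contains no non--zero constant section (such a section would make $\check L=\psi^\sigma\H$ constant), so $\psi^\sigma\in\check L$ is impossible and the constant Darboux transforms are exactly the points of $\mu^{-1}(\{1\})\subset\Sigma_{hol}$; there $\nabla^1=\nabla$, so the corresponding parallel sections are constant and do yield genuine (possibly singular) constant Darboux transforms. If $\check L$ is constant, its constant section is $\nabla^\mu$--parallel for every $\mu$ with trivial multiplier, hence defines a constant eigenvalue $\lambda\equiv1$ that is discarded in the construction of $\Sigma_{hol}$ (Lemma~\ref{lem:holonomy_curve}); it therefore lies in $\Sigma_{mult}\setminus\mu^{-1}(\{1\})$, and the computation above shows that every constant Darboux transform in $\Sigma_{mult}\setminus\mu^{-1}(\{1\})$ equals $\check L$, which is the second bullet.

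It remains to pin down the cardinality of $\mu^{-1}(\{1\})$. Since all non--constant eigenvalues tend to $1$ as $\mu\to1$, this fibre is non--empty and has at most as many points as sheets of $\mu\colon\Sigma_{hol}\to\C_*$: four in Case~I and two in Case~II of Proposition~\ref{prop:cases}. Because the fixed--point--free anti--holomorphic involution $\rho$ covers $\mu\mapsto1/\bar\mu$ and hence preserves the fibre over $\mu=1$ without fixed points, this cardinality is even; it is therefore $2$ or $4$ in general. In the constant $\check L$ case the section parallel for all $\mu$ excludes Case~I (cf.\ the discussion preceding Lemma~\ref{lem:fundamental_constant}), leaving only the two sheets of Case~II, so the fibre has exactly $2$ points. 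This completes the plan for both statements.
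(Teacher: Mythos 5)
Your proposal is correct and takes essentially the same route as the paper's proof: constant Darboux transforms have constant prolongations, the triviality of $\nabla^{\mu=1}=\nabla$ makes the fibre $\mu^{-1}(\{1\})$ consist of constant transforms (with even cardinality $2$ or $4$ by the fixed--point--free involution $\rho$ covering $\mu\mapsto 1/\bar\mu$, and exactly $2$ in Case~II, which constant $\check L$ forces via the discussion preceding Lemma~\ref{lem:fundamental_constant}), while for points of $\Sigma_{hol}$ with $\mu\neq 1$ the type decomposition of $\nabla^\mu$, and for points of $\Sigma_{mult}\setminus\Sigma_{hol}$ equations \eqref{eq:asymptotic_DT} and \eqref{eq:zero_one_part}, show that constancy forces $A_\circ\psi=0$, so the transform equals $\check L$, which must then itself be constant. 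The only blemish is your unneeded aside that the constant section of $\check L$ ``lies in $\Sigma_{mult}\setminus\mu^{-1}(\{1\})$'' — this is not established (the constant eigenvalue $1$ is discarded in building $\Sigma_{hol}$, and in examples such constant transforms in fact occur at points of $\Sigma_{hol}$ over $\mu\neq 1$, e.g.\ $\mu^{-1}(\{e^{i\alpha}\})$ for CMC tori in $S^3$), but the lemma's second bullet is a conditional statement and your computation proves exactly that conditional, so the proof stands.
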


\begin{proof}
  Because $\nabla^{\mu=1}$ is trivial, the points in the fiber
  $\mu^{-1}(\{1\})\subset \Sigma_{hol}$ correspond to constant Darboux
  transforms. The set $\mu^{-1}(\{1\})$ is invariant under the fixed point
  free involution $\rho$ and therefore consists of 2 or 4 points. For
  immersions whose holonomy representation belongs to Case~II, for example if
  $\check L = \ker(A_\circ)$ is constant, $\Sigma_{hol}$ is a 2--fold branched
  covering of $\C_*$ and $\mu^{-1}(\{1\})$ consists of 2 points.  In case the
  Darboux transform corresponding to a point $\Sigma_{hol}\backslash
  \mu^{-1}(\{1\})$ is constant it has to be contained in $\check L
  =\ker(A_\circ)$ which is only possible if $\check L$ is constant.

  A Darboux transform corresponding to a point in $\Sigma_{mult}\backslash
  \Sigma_{hol}$ is never constant unless $\check L =\ker(A_\circ)$ is
  constant: such Darboux transform has to be contained in $\check L$, because
  by Theorem~\ref{the:biholomorphicity} it solves \eqref{eq:asymptotic_DT}
  and, on a non--empty open set $U$, \eqref{eq:zero_one_part} such that
  $\check L$ is constant on $U$ and hence everywhere.  (In the following
  section we will see that $\Sigma_{mult}\backslash \Sigma_{hol}=\emptyset$ if
  $\check L$ is constant.)
\end{proof}

For some special cases, using results from the following two sections enables
us to give more precise information about the number of points in
$\Sigma_{mult}$ that correspond to constant Darboux transforms:
\begin{enumerate}[a)]
\item All minimal tori in the metrical 4--sphere $S^4$ that are not  super
  conformal belong to Case~I and the fiber $\mu^{-1}(\{1\})$ consists of 4
  points corresponding to constant Darboux transforms: the fact that they
  belong to Case~I follows from Corollary~\ref{cor:willmore}. The fiber
  $\mu^{-1}(\{1\})$ consists of 4 points, because for all $\mu\in S^1$ the
  holonomy of $\nabla^\mu$ is contained in $\SU(4)$ such that $\mu\colon
  \Sigma_{hol}\rightarrow \C_*$ is unbranched over the unit circle.
\end{enumerate}

If $\im(Q_\circ)$ is constant and the immersion is neither super conformal nor
Euclidean minimal with planar ends, by Proposition~\ref{pro:cases_harmonic}
its holonomy belongs to Case~II and, by Lemma~\ref{lem:2by2_to_4by4}, the two
points $\mu^{-1}(\{1\})\subset \Sigma_{hol}$ correspond to constant Darboux
transforms contained in $\im(Q_\circ)$. If $\ker(A_\circ)$ is non--constant,
no other points in $\Sigma_{mult}$ correspond to constant Darboux transforms.
If $\ker(A_\circ)$ is as well constant, there is at most one other pair of
points corresponding to constant Darboux transforms in $\Sigma_{mult}$
(following from the fact that a constant Darboux transform has to be contained
in $\ker(A_\circ)$ and that parallel sections for different $\mu$ of the
connections $\nabla^\mu$ on $V/L$ defined in
\eqref{eq:associated_willmoreconnection} are linearly independent over~$\C$).
\begin{enumerate}[b)]
\item For CMC tori in $\R^3$, the Lagrange multiplier $\eta$ can be chosen
  such that $\ker(A_\circ)=\im(Q_\circ)$, see Section~\ref{sec:cmc_r3}.  This
  shows that, apart from the 2 points $\mu^{-1}(\{1\})$ there are no other
  points in $\Sigma_{mult}$ that correspond to constant Darboux transforms.
\item[c)] In case of CMC tori in $S^3$ with mean curvature
  $H^{S^3}=\cot(\alpha/2)$, formula \eqref{eq:mc_cmc_s3} shows that for the
  right choice of $\eta$ (namely $\rho=\pm1/2$ in Section~\ref{sec:cmc_s3}),
  the 4 points $\mu^{-1}(\{1\})$ and $\mu^{-1}(\{e^{i\alpha}\})$ are the only
  points corresponding to constant Darboux transforms in $\im(Q_\circ)$ and
  $\ker(A_\circ)$ respectively.
\end{enumerate}

\section{The Main Theorem and its Proof}
\label{sec:proof}
We prove the main theorem of the paper by separately dealing with all possible
cases of holonomy representations that occur for the associated family of
constrained Willmore tori. For Cases~I and II of Proposition~\ref{prop:cases}
we prove the existence of a polynomial Killing field $\xi$, a family of
sections of $\End_\C(V)$ which is polynomial in $\mu$ and, for all $\mu\in
\C_*$, satisfies $\nabla^\mu\xi(\mu,.)=0$.  Because $\xi$ commutes with all
holonomies, its existence implies that $\Sigma_{hol}$ and hence
$\Sigma_{mult}$ can be compactified by adding points at infinity.  For
Case~IIIa of Proposition~\ref{prop:cases} we prove the existence of a
polynomial family of $\nabla^\mu$--parallel sections of the complex rank
4--bundle $(V,i)$ itself and for Case~IIIb the existence of a nil--potent
polynomial Killing field $\xi$.  Investigating the asymptotics of such
polynomial families of sections reveals that Cases~IIIa and IIIb correspond to
immersions that are super conformal or Euclidean minimal with planar ends.

\subsection{Main theorem of the paper}\label{sec:main_th}
The following is a more detailed formulation of the main theorem stated in the
introduction. Its proof will be given in Sections~\ref{sec:hitchin_trick} to
\ref{sec:proof_main}.

\begin{The}\label{the:mainA}
  Let $f\colon T^2 \rightarrow S^4$ be a constrained Willmore immersion from a
  torus into the conformal 4--sphere $S^4$. Then one of the following holds:
  \begin{enumerate}
  \item[I.] The holonomy spectral curve $\Sigma_{hol}$ can be compactified to
    a 4--fold branched covering of $\CP^1$ by adding points over $\mu=0$ and
    $\mu=\infty$.  The two ends of the spectral curve $\Sigma_{mult}$
    correspond to branch points of $\mu$ over $\mu=0$ and $\mu=\infty$. The
    complement $\Sigma_{mult}\backslash \Sigma_{hol}$ of the holonomy spectral
    curve inside the multiplier spectral curve consists of at most four
    points.
  \item[II.] The holonomy and multiplier spectral curves coincide
    $\Sigma_{mult}=\Sigma_{hol}$ and can be compactified to a 2--fold branched
    covering of $\CP^1$ by adding one point over $\mu=0$ and one over
    $\mu=\infty$.
  \item[III.] The immersion $f$ is super conformal or minimal with planar ends
    in the Euclidean space $\R^4=S^4\backslash \{\infty\}$ defined by some
    point $\infty\in S^4$ at infinity. More precisely:
    \begin{itemize}
    \item $f$ belongs to Case~IIIa in Proposition~\ref{prop:cases} if and only
      if it is super conformal or $f\colon T^2 \backslash\{p_1,...,p_n\}
      \rightarrow \R^4\cong S^4\backslash\{\infty\}$ is an algebraic Euclidean
      minimal immersion with planar ends (with algebraic meaning that the
      closed form $*df$ has no periods such that $f$ is the real part of a
      meromorphic null immersion from $T^2$ into $\C^4$).
    \item $f$ belongs to Case~IIIb in Proposition~\ref{prop:cases} if and only
      if $f$ is a non--algebraic Euclidean minimal immersion with planar ends
      (that is, $*df$ has periods).
    \end{itemize} 
  \end{enumerate}
  If the normal bundle $\perp_f$ of the immersion $f$ is topologically trivial
  and $f$ is not Euclidean minimal with planar ends, it belongs to the
  ``finite type'' Cases~I and II.  If the normal bundle $\perp_f$ is
  non--trivial, then $f$ is of ``holomorphic type'' and belongs to Case~III.
\end{The}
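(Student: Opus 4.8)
The plan is to organize the argument according to the three cases of the holonomy representation furnished by Proposition~\ref{prop:cases}, treating the ``finite type'' Cases~I and~II together and the ``holomorphic type'' Case~III through its two subcases. For Cases~I and~II the central object to construct is a \emph{polynomial Killing field}: a section $\xi$ of $\End_\C(V)$ that is a Laurent polynomial in $\mu$ and satisfies $\nabla^\mu\xi(\mu,\cdot)=0$ for every $\mu\in\C_*$. I would build it by Hitchin's trick: writing the parallelism condition $d\xi+[(\mu-1)A_\circ^{(1,0)}+(\mu^{-1}-1)A_\circ^{(0,1)},\xi]=0$ and collecting powers of $\mu$ yields a recursion for the coefficients of $\xi$, which can first be solved as a formal series at $\mu=\infty$ and then, via the quaternionic symmetry \eqref{eq:symmetry_nabla_mu}, at $\mu=0$. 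The leading term of $\nabla^\mu$ as $\mu\to\infty$ is $\mu A_\circ^{(1,0)}$, whose image lies in $L$; I would exploit this degeneracy together with the bound of Proposition~\ref{pro:asymptotic} on the dimension of the space of solutions to \eqref{eq:asymptotic_DT} in order to truncate the formal solution to a genuine Laurent polynomial of controlled degree. Since $\xi$ commutes with every holonomy $H^\mu_p(\gamma)$, its eigenvalues descend to meromorphic functions on $\Sigma_{hol}$ with poles only over $\mu=0$ and $\mu=\infty$, which forces $\Sigma_{hol}$ to have finite genus and to compactify by adding points there; the generic number of non--trivial eigenvalues then gives the $4$--fold (Case~I) or $2$--fold (Case~II) branched covering of $\CP^1$. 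The identification of the two ends of $\Sigma_{mult}$ with branch points over $\mu=0,\infty$, and the finiteness of $\Sigma_{mult}\setminus\Sigma_{hol}$, follow from Theorem~\ref{the:biholomorphicity}: the removed multipliers are precisely those whose prolongations solve \eqref{eq:asymptotic_DT}, of which Proposition~\ref{pro:asymptotic} admits at most two, so that together with their complex conjugates at most four points are removed.

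In Case~III the holonomy is trivial on eigenvalues, so no spectral curve is available and I would instead work with global parallel sections directly. In Case~IIIa all holonomies are the identity, so every $\nabla^\mu$ admits global parallel sections of $(V,i)$; I would assemble these into a family polynomial in $\mu$ and study its asymptotics as $\mu\to0,\infty$. The limiting sections solve \eqref{eq:asymptotic_DT}, and Lemmas~\ref{lem:fundamental_nonconstant} and~\ref{lem:fundamental_constant} then force the dichotomy: either one Hopf field vanishes and $f$ is super conformal, or $\check L=\ker(A_\circ)$ is constant and equal to a point $\infty\in S^4$, exhibiting $f$ as Euclidean minimal with planar ends in $\R^4=S^4\setminus\{\infty\}$, the absence of monodromy of the polynomial family corresponding precisely to $*df$ having no periods (the algebraic case). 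In Case~IIIb, where the holonomy consists of two $2\times2$ Jordan blocks, I would construct a nilpotent polynomial Killing field from the off--diagonal Jordan data; its existence identifies $f$ as a non--algebraic Euclidean minimal immersion with planar ends, the nilpotent part encoding the periods of $*df$.

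The statements on the normal bundle then follow by combining the three cases with Proposition~\ref{prop:cases}. A non--trivial normal bundle forces Case~III by the last sentence of that proposition, hence ``holomorphic type''. Conversely, if $\perp_f$ is trivial and $f$ is not Euclidean minimal with planar ends, then Case~III is excluded, because a super conformal immersed torus necessarily has $\deg(\perp_f)\neq0$: by \eqref{eq:willmore_functional}, $\deg(\perp_f)=0$ together with $A\equiv0$ would give $\int_M\<Q\wedge*Q\>=0$ and hence $Q\equiv0$, i.e.\ total umbilicity, which is impossible for a torus. The only remaining possibilities are then the finite--type Cases~I and~II. I expect the genuine difficulty to lie in the construction of the polynomial Killing field for Cases~I and~II, and specifically in passing from the formal asymptotic Killing fields to an honest Laurent polynomial of bounded degree; here the asymptotic dimension bound of Proposition~\ref{pro:asymptotic} and the quaternionic symmetry \eqref{eq:symmetry_nabla_mu} are the two ingredients I would rely on to control the truncation.
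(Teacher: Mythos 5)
Your handling of Case~III and of the normal--bundle statements is essentially the paper's own route (rank of the bundle $\mathcal V$ of parallel sections, asymptotics via Proposition~\ref{pro:asymptotic}, nilpotent Killing field in Case~IIIb, and the degree argument from \eqref{eq:willmore_functional} for super conformal tori). The genuine gap is in your construction of the polynomial Killing field for Cases~I and~II. Collecting powers of $\mu$ in $\nabla^\mu\xi=0$ gives at top order $[A_\circ^{(1,0)},\xi_d]=0$ and then $\nabla\xi_d=-[A_\circ^{(1,0)},\xi_{d-1}]+[A_\circ^{(0,1)},\xi_d]$, and solving this recursion order by order requires inverting $\mathrm{ad}\,(A_\circ^{(1,0)})$ on a complement of its kernel. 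But $A_\circ^{(1,0)}$ has rank at most one with image in $L$ (in an adapted frame it is the strictly triangular matrix of \eqref{eq:constant_xi}), hence is nilpotent rather than semisimple; the splitting $\End_\C(V)=\ker\mathrm{ad}\oplus\im\mathrm{ad}$ that underlies the formal Killing field recursion in the harmonic--map literature is simply unavailable, and this degeneracy is exactly the difficulty the paper flags. Moreover, Proposition~\ref{pro:asymptotic} bounds the dimension of solutions of the asymptotic equation \eqref{eq:asymptotic_DT}; it provides no mechanism for truncating a formal series to a Laurent polynomial --- passing from formal to polynomial Killing fields is precisely the ``finite type'' assertion you are trying to prove, so your truncation step is circular as it stands.

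The paper avoids formal series entirely: it applies Proposition~\ref{pro:holomorphic_family} to the holomorphic families $\partial^\mu$, $\dbar^\mu$ of \eqref{eq:holomorphic_families} acting on $\Gamma(\End_\C(V))$, obtaining a holomorphic subbundle $\mathcal U$ of $\CP^1\times\Gamma(\End_\C(V))$ of rank~4 (Case~I, Lemma~\ref{lem:caseI}) or rank~6 (Case~II, Lemma~\ref{lem:caseII}) whose generic fiber is the space of $\nabla^\mu$--parallel endomorphisms; a meromorphic section with a single pole at $\mu=\infty$ is then the polynomial Killing field. The crux --- entirely absent from your proposal --- is showing that for some, hence generic, $\mu$ the kernel of $\dbar^\mu$ on $\End_\C(V)$ is no larger than the space of parallel endomorphisms: excess $\dbar^\mu$--holomorphic endomorphisms exist exactly when two of the eigenline bundles in the generic splitting $V=E_{h_1}\oplus\dots\oplus E_{h_4}$ are holomorphically equivalent, and ruling this out takes most of the proof of Lemma~\ref{lem:caseI} (the induced map to the Jacobian of $T^2$, the asymptotics of the ends of $\Sigma_{mult}$ from \cite{BPP2}, and the lattice argument over $\mu\in S^1$ exploiting \eqref{eq:symmetry_nabla_mu}). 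A smaller inaccuracy: that the two ends of $\Sigma_{mult}$ are branch points of $\mu$ does not follow from Theorem~\ref{the:biholomorphicity}; the paper needs the separate observation that the multipliers blow up at the ends while $H^\mu(\gamma)\in\SL(4,\C)$, so the blow--up cannot occur on a single sheet --- your count of at most four removed points should be run through the resulting covering structure rather than only through the bound of Proposition~\ref{pro:asymptotic} on multipliers.
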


It should be noted that for an isothermic constrained Willmore torus $f\colon
T^2\rightarrow S^4$ which belongs to Case~I or II in Theorem~\ref{the:mainA},
changing the form $\eta\in \Omega^1(\mathcal{R})$ in the Euler--Lagrange
equation \eqref{eq:def_constrained-willmore} changes $\Sigma_{hol}$ which
amounts to changing the meromorphic function $\mu$ on the multiplier spectral
curve $\Sigma_{mult}$.  For different choices of $\eta$, the holonomy curve
might then change between Cases~I and II of the theorem.  This happens for
example in case of minimal tori in $S^3$, see Section~\ref{sec:cmc_s3}.

\begin{Cor}\label{cor:willmore}
  Let $f\colon T^2 \rightarrow S^4$ be a Willmore torus with $\eta\equiv 0$
  that is not Euclidean minimal with planar ends. Then:
  \begin{itemize}
  \item $f$ belongs to Case~I if and only if $\deg(\perp_f)=0$ and
  \item $f$ belongs to Case~III and is super conformal if and only if
    $\deg(\perp_f)\neq0$.
  \end{itemize}
\end{Cor}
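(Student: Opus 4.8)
The plan is to derive Corollary~\ref{cor:willmore} directly from Theorem~\ref{the:mainA} together with the normal-bundle formula $\deg(\perp_f) = 2\deg(V/L)+\deg(K)$ and the relevant facts about Willmore tori established earlier. First I would recall that for a Willmore immersion with $\eta\equiv 0$ the associated family $\nabla^\mu$ is the usual Willmore associated family of \cite{FLPP01,LPP05}, and that such an $f$ is of ``holomorphic type'' precisely when it is super conformal (one of the Hopf fields vanishes, giving trivial holonomy by the discussion following \eqref{eq:gauge_trafo}) or Euclidean minimal with planar ends. Since the hypothesis explicitly excludes the minimal-with-planar-ends case, the dichotomy for Willmore $f$ reduces to: either $f$ is of finite type (Cases~I or~II) or $f$ is super conformal (Case~IIIa).

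For the forward implications I would split according to the normal degree. Suppose $\deg(\perp_f)\neq 0$. By the last paragraph of Theorem~\ref{the:mainA}, a non-trivial normal bundle forces $f$ into Case~III; since $f$ is Willmore with $\eta\equiv0$ and is not minimal with planar ends, Theorem~\ref{the:mainA}(III) leaves only the super conformal alternative (Case~IIIa). This shows $\deg(\perp_f)\neq0 \Rightarrow$ super conformal (Case~III). Conversely, suppose $\deg(\perp_f)=0$. Again by the final dichotomy of Theorem~\ref{the:mainA}, a topologically trivial normal bundle together with $f$ not being minimal with planar ends places $f$ in the finite-type Cases~I or~II. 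It remains to upgrade ``Case~I or~II'' to ``Case~I'': here I would invoke that Willmore tori with trivial normal bundle cannot be super conformal in the degenerate way that produces Case~II. The cleanest route is to rule out Case~II for Willmore $f$ with $\eta\equiv0$: Case~II is characterized by $\check L=\ker(A_\circ)$ being constant (see the remark preceding Lemma~\ref{lem:fundamental_constant}), and by Lemma~\ref{lem:fundamental_constant} this constant-kernel situation with $\eta\equiv0$ is exactly the minimal-with-planar-ends case, which is excluded. Hence trivial normal degree forces Case~I.

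The main obstacle, and the step requiring the most care, is precisely this separation of Case~I from Case~II in the Willmore setting. The examples list after Proposition~\ref{prop:cases} already asserts that Willmore tori with $\eta\equiv0$ which are neither super conformal nor minimal-with-planar-ends furnish the examples of Case~I, so the content of the corollary is to confirm that no such $f$ slips into Case~II. I would argue that Case~II requires $\eta\not\equiv0$ (the two-dimensional solution space in Lemma~\ref{lem:fundamental_constant} and the $\SL(2,\C)$-reduction of Section~\ref{sec:harmonic} both hinge on a non-trivial Lagrange multiplier), so that under $\eta\equiv0$ the only constant-kernel option is the excluded Euclidean minimal one. With Case~II eliminated, the equivalences follow: $\deg(\perp_f)=0$ iff finite type iff (excluding Case~II) Case~I, and $\deg(\perp_f)\neq0$ iff holomorphic type iff (excluding minimal-with-planar-ends) super conformal Case~III. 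Assembling these biconditionals completes the proof.
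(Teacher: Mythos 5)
Your overall architecture (reduce everything via Theorem~\ref{the:mainA} and the final statement of Proposition~\ref{prop:cases} to the single task of ruling out Case~II for Willmore tori with $\eta\equiv 0$) is sound, and your handling of the $\deg(\perp_f)\neq 0$ direction is correct. But the decisive step has a genuine gap: you assert that ``Case~II is characterized by $\check L=\ker(A_\circ)$ being constant,'' citing the remark preceding Lemma~\ref{lem:fundamental_constant}. That remark proves only the \emph{converse} implication --- a constant $\check L=\ker(A_\circ)$ yields $\nabla^\mu$--parallel constant sections for all $\mu$ and hence places $f$ in Case~II or~III. Nothing in the paper shows that Case~II forces $\ker(A_\circ)$ to be constant; the holonomies having a $2$--dimensional eigenspace with eigenvalue~$1$ for every $\mu$ only gives, for each $\mu$, a $2$--dimensional space of $\nabla^\mu$--parallel sections with trivial monodromy, and these spaces may vary with $\mu$ (only a section parallel for two \emph{distinct} values of $\mu$ is forced to be constant and lie in $\ker(A_\circ)$). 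Indeed, the paper proves the equivalence you need only under the extra hypothesis $\mathcal{W}<8\pi$ (Corollary~\ref{cor:small_willmore_energy}), and there it requires the quaternionic Pl\"ucker formula to conclude that all holomorphic sections with trivial monodromy come from constant sections of $V$. Your auxiliary claim that ``Case~II requires $\eta\not\equiv 0$'' is exactly the statement to be proven, so invoking Lemma~\ref{lem:fundamental_constant} (which assumes constant kernel) and the $\SL(2,\C)$--reduction of Section~\ref{sec:harmonic} (which concerns a special class) is circular at this point.

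The paper closes this gap differently, by asymptotic analysis rather than by a constancy claim: for an immersion in Case~II (or~III), Lemma~\ref{lem:case_polynomial_conserved_quantity} produces a holomorphic vector bundle $\mathcal{V}$ of rank at least~$2$ over $\CP^1$ whose generic fiber is the space of $\nabla^\mu$--parallel sections with trivial monodromy, and --- as in the proof of Lemma~\ref{lem:caseIIIa} --- its fiber $\mathcal{V}_\infty$ over $\mu=\infty$ consists of solutions to the asymptotic equation \eqref{eq:asymptotic_DT}, yielding a solution space of dimension at least~$2$. This contradicts Lemma~\ref{lem:fundamental_nonconstant}, which bounds that space by dimension~$1$ (dimension~$0$ if $AQ\equiv 0$) for a Willmore torus with $\eta\equiv 0$ that is neither super conformal nor Euclidean minimal with planar ends; note that this is the correct lemma to use because, with $\eta\equiv 0$ and the minimal case excluded, $\check L$ is automatically non--constant by the characterization in Section~\ref{sec:moebius_char}. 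To repair your argument you would either need to supply a proof that Case~II implies constant $\ker(A_\circ)$ in the Willmore setting --- which, without an energy bound, appears to require precisely this limiting argument at $\mu=\infty$ --- or simply replace your characterization step by the $\mathcal{V}_\infty$ contradiction above.
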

\begin{proof}
  For immersions belonging to Case~II or III, one can check as in the proof of
  Lemma~\ref{lem:caseIIIa} below that the fiber $\mathcal{V}_\infty$ over
  $\mu=\infty$ of the holomorphic vector bundle $\mathcal{V}$ constructed in
  Lemma~\ref{lem:case_polynomial_conserved_quantity} is a space of solutions
  to \eqref{eq:asymptotic_DT} of dimension greater or equal~$2$.  By
  Lemma~\ref{lem:fundamental_nonconstant}, such bundle $\mathcal{V}$ cannot
  exist for a Willmore torus $f$ with $\eta\equiv 0$ that is neither super
  conformal nor Euclidean minimal.
\end{proof}

\begin{Cor}\label{cor:irreducible}
  If the holonomy spectral curve $\Sigma_{hol}$ of a constrained Willmore
  torus that belongs to Case~I or II in Theorem~\ref{the:mainA} coincides with
  $\Sigma_{mult}$, the spectral curve $\Sigma_{mult}=\Sigma_{hol}$ is
  irreducible, that is, has a single connected component.
\end{Cor}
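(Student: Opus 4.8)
The plan is to deduce irreducibility from a count of ends. First I would record the two structural facts that the argument rests on. By Lemma~\ref{lem:holonomy_curve} and the definition of $\Sigma_{hol}$, the map $\mu\colon \Sigma_{hol}\rightarrow \C_*$ is a finite branched covering (of $2$ sheets in Case~II, of $4$ sheets in Case~I), and in particular a proper, non--constant holomorphic map. Under the standing hypothesis $\Sigma_{mult}=\Sigma_{hol}$, the discussion following Theorem~\ref{the:biholomorphicity}, together with the compactification statement of Theorem~\ref{the:mainA}, tells us that the spectral curve has finite genus and exactly two ends, one lying in $\{|\mu| < 1\}$ (the end over $\mu=0$) and the other in $\{|\mu| > 1\}$ (the end over $\mu=\infty$).

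Next I would argue by contradiction, assuming $\Sigma_{mult}$ reducible, i.e.\ not connected. By the dichotomy recalled in the proof of Lemma~\ref{lem:linear_sys_with_monodromy}---the spectrum of a degree--$0$ quaternionic holomorphic line bundle over a torus is either irreducible or splits into two irreducible components interchanged by the conjugation involution $\rho$---reducibility would force exactly two connected components $\Sigma_1$ and $\Sigma_2=\rho(\Sigma_1)$, where $\rho$ is the fixed--point free anti--holomorphic involution covering $\mu\mapsto 1/\bar\mu$. The decisive observation is then that the restriction of $\mu$ to a single component $\Sigma_1$, which is a closed subset, is again proper and non--constant; its image is therefore open and closed in the connected base $\C_*$, hence all of $\C_*$, so $\mu\colon \Sigma_1\rightarrow \C_*$ is itself a finite branched covering of positive degree. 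A finite branched cover of the annulus $\C_*$ has at least one end over each of its two punctures, so $\Sigma_1$ alone already has at least two ends (one over $\mu=0$ and one over $\mu=\infty$), and the same holds for $\Sigma_2$.

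This yields at least four ends of $\Sigma_{mult}=\Sigma_1\sqcup\Sigma_2$, contradicting the two ends established in the first step; concretely, since $\rho$ interchanges $\{|\mu| < 1\}$ with $\{|\mu| > 1\}$ and swaps $\Sigma_1$ with $\Sigma_2$, the single end over $\mu=0$ and the single end over $\mu=\infty$ would be forced into different components, yet each component must carry ends over both punctures. Hence $\Sigma_{mult}=\Sigma_{hol}$ has a single connected component and is irreducible. The one step I expect to require care is the properness claim after restriction: I must check that a connected component of $\Sigma_{hol}$ is closed and that $\mu$ remains a finite branched covering of $\C_*$ on it, so that the ``at least one end over each puncture'' conclusion is fully justified; once that is in place, the remainder is just bookkeeping with the already established end and component counts.
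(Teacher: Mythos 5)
Your proof is correct and rests on exactly the same two inputs as the paper's own proof --- that $\mu\colon \Sigma_{hol}\rightarrow \C_*$ is a finite proper branched covering and that $\Sigma_{mult}=\Sigma_{hol}$ has only two ends --- so it is essentially the paper's argument run contrapositively: the paper compactifies by a single point over $\mu=0$ and over $\mu=\infty$ and notes that the full branch order there ($3$ in Case~I, $1$ in Case~II) forces connectedness, while you observe that each connected component would be a proper cover of $\C_*$ carrying its own end over each puncture, giving at least four ends and contradicting the count of two. The content is the same end-counting argument; your version merely replaces the branch-order observation at the added points by the per-component end count, and your flagged properness check is the right (and only) point needing care.
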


\begin{proof}
  Since $\Sigma_{hol}=\Sigma_{mult}$ has only two ends, cf.\
  Section~\ref{sec:multiplier_spec}, it can be compactified by glueing in a
  single point over $\mu=0$ and $\infty$, respectively.  Because the branch
  order of the meromorphic function $\mu$ at the two added points is then 3 in
  Case~I and 1 in Case~II, the total space of the branched covering $\mu
  \colon \Sigma_{hol}\rightarrow \C_*$ is connected.
\end{proof}

Irreducibility of the spectral curve automatically holds
\begin{itemize}
\item in Case~II, because then always $\Sigma_{hol}=\Sigma_{mult}$, and
\item for Willmore immersions $f$ with $\eta\equiv 0$ and $AQ\equiv 0$, see
  Corollary~\ref{cor:biholomorphicity_willmore_AQ_null}.
\end{itemize}
The condition $AQ=0$ is satisfied for all Willmore tori in the conformal
3--sphere $S^3$ and for minimal tori in the metrical 4--sphere or in
hyperbolic 4--space, see Chapter~10 of \cite{BFLPP02}.

\begin{Cor}\label{cor:small_willmore_energy}
  Let $f\colon T^2\rightarrow S^4$ be a constrained Willmore torus whose
  Willmore functional is bounded by $\mathcal{W}< 8\pi$.  Then:
  \begin{itemize}
  \item $f$ belongs to Case~I if and only if $\check L = \ker(A_\circ)$ is
    non--constant and
  \item $f$ belongs to Case~II if and only if $\check L = \ker(A_\circ)$ is
    constant.
  \end{itemize}
  In the latter case $\im(Q_\circ)$ is also constant and $\ker(A_\circ)\neq
  \im(Q_\circ)$.
\end{Cor}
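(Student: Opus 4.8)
The plan is to derive all four assertions from the trichotomy of Proposition~\ref{prop:cases}, using the bound $\mathcal{W}<8\pi$ first to discard the ``holomorphic type'' case and then to cap the number of holomorphic sections of $V/L$. First I would note that $\mathcal{W}<8\pi$ puts $f$ in Case~I or~II with trivial normal bundle: were $\perp_f$ non--trivial, $f$ would lie in Case~III by Proposition~\ref{prop:cases}, hence be super conformal or Euclidean minimal with planar ends (Theorem~\ref{the:mainA}), but both have $\mathcal{W}=4\pi n$, $n\geq 2$, by the quaternionic Pl\"ucker formula, contradicting the hypothesis; the same remark excludes Case~III directly. Thus $\deg(\perp_f)=0$, and by \eqref{eq:willmore_functional} the induced quaternionic holomorphic line bundle satisfies $\mathcal{W}(V/L)=\mathcal{W}(f)<8\pi$.

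The easy halves of the two equivalences are immediate. A constant section of $\check L=\ker(A_\circ)$ is $\nabla^\mu$--parallel for every $\mu$ (remark before Lemma~\ref{lem:fundamental_constant}), and with its $j$--multiple spans a two--complex--dimensional, $\mu$--independent space of $\nabla^\mu$--parallel sections of trivial monodromy; hence $\check L$ constant forces $1$ to be a holonomy eigenvalue of multiplicity $\geq 2$ for all $\mu$, which rules out Case~I and, with the first step, places $f$ in Case~II. Contrapositively, Case~I implies $\check L$ non--constant.

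The substance of the corollary is the converse, Case~II $\Rightarrow$ $\check L$ constant, and this is where the threshold $8\pi$ is used. The analytic input I would invoke is that for an immersed torus with trivial normal bundle $\dim_\H H^0(V/L)\geq 2$ implies $\mathcal{W}(V/L)\geq 8\pi$; since $H^0(V/L)$ is quaternionic, $\mathcal{W}<8\pi$ therefore forces $\dim_\C H^0(V/L)\leq 2$. On the other hand, in Case~II the common holonomy eigenspace for eigenvalue~$1$ is two--complex--dimensional for generic $\mu$; eigenvalue~$1$ means trivial monodromy, so projecting these $\nabla^\mu$--parallel sections to $V/L$ (injectively, as no nonzero parallel section lies in $L$) yields a two--dimensional subspace of $H^0(V/L)$, whence $\dim_\C H^0(V/L)=2$ and this projection is all of the fixed space $H^0(V/L)$ for every generic $\mu$. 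By uniqueness of the prolongation (Lemma~\ref{lem:nabla_mu_parallel_prolonge_hol}), the eigenvalue--$1$ space is then one $\mu$--independent space $W\subset\Gamma(V)$; each $\psi\in W$ is $\nabla^\mu$--parallel for all $\mu$, so \eqref{eq:nabla_mu} gives $A_\circ\psi=0$ and $\nabla\psi=0$, exhibiting $W$ as a two--dimensional space of constant sections of $\ker(A_\circ)$ and forcing $\check L$ to be constant.

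It remains to treat the last sentence. Running the same argument on the dual constrained Willmore torus $f^\perp$ of Section~\ref{sec:nabla_mu}, which has $\mathcal{W}(f^\perp)=\mathcal{W}(f)<8\pi$, holonomy dual to that of $f$ (hence again Case~II), and $\ker(A^\perp_\circ)$ corresponding to $\im(Q_\circ)$, shows that $\im(Q_\circ)$ is constant as well. For $\ker(A_\circ)\neq\im(Q_\circ)$ I would show that equality of these constant lines forces the degenerate configuration in the proof of Lemma~\ref{lem:fundamental_nonconstant} (via Theorem~8 of \cite{BFLPP02}), making $f$ Euclidean minimal with planar ends and hence Case~III, again contradicting $\mathcal{W}<8\pi$. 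The step I expect to be the main obstacle is establishing the sharp Pl\"ucker threshold $\dim_\H H^0(V/L)\geq 2\Rightarrow\mathcal{W}\geq 8\pi$ while controlling the base--point (order) contributions in the Pl\"ucker formula; the auxiliary inequality $\ker(A_\circ)\neq\im(Q_\circ)$ is the other place requiring a careful degeneration analysis.
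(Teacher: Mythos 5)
Your architecture follows the paper's proof (exclude Case~III by the energy quantization, get the easy direction from constant sections of $\ker(A_\circ)$ being $\nabla^\mu$--parallel, cap $H^0(V/L)$ by the Pl\"ucker formula for the hard direction, dualize for $\im(Q_\circ)$), but the central quantitative step is wrong as stated. Since $V\cong \H^2$ is trivialized by $\nabla$, the projections of the constant sections already form a quaternionic 2--dimensional (complex 4--dimensional) subspace of $H^0(V/L)$ -- the projection is injective because a constant section lying in $L$ would force $f$ to be constant. Hence your claimed input ``$\dim_\H H^0(V/L)\geq 2$ implies $\mathcal{W}\geq 8\pi$'' would make the corollary vacuous, and it is false anyway (the Clifford torus has $\dim_\H H^0(V/L)=2$ and $\mathcal{W}=2\pi^2<8\pi$); consequently ``$\dim_\C H^0(V/L)\leq 2$'' can never hold. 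What \cite{FLPP01} gives, and what the paper uses, is the threshold one quaternionic dimension higher: $\mathcal{W}<8\pi$ forces $\dim_\H H^0(V/L)$ to equal exactly $2$, i.e.\ $H^0(V/L)$ consists \emph{precisely} of the projections of constant sections. With this correction your argument closes, and in fact simplifies to the paper's: in Case~II a $\nabla^\mu$--parallel section with trivial monodromy, $\mu\neq 1$, is the prolongation of an element of $H^0(V/L)$ (Lemma~\ref{lem:nabla_mu_parallel_prolonge_hol}), hence by uniqueness of the prolongation a \emph{constant} section $\psi$ of $V$; then $0=\nabla^\mu\psi=(\mu-1)A_\circ^{(1,0)}\psi+(\mu^{-1}-1)A_\circ^{(0,1)}\psi$ forces $A_\circ\psi=0$ by type decomposition, so $\check L=\ker(A_\circ)$ is constant. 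Note that your intermediate step -- that the projected eigenspace is ``all of $H^0(V/L)$, hence $\mu$--independent'' -- not only rests on the false count, it would also fail with the correct one (a 2--dimensional subspace of a 4--dimensional space is not pinned down by dimension); fortunately it is not needed, since constancy of each individual parallel section already does the job.

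The final assertion also has a genuine gap: equality of the constant lines $\ker(A_\circ)=\im(Q_\circ)$ does \emph{not} force $f$ into Case~III. By Lemma~\ref{lem:both_constant}(a) it is equivalent to $H$ being constant for $f$ viewed in $\R^4=S^4\backslash\{\infty\}$ with $\infty=\check L$, i.e.\ $f$ is CMC in a 3--plane in $\H$ or Euclidean minimal -- and CMC tori in $\R^3$ genuinely realize this configuration (take $\rho=0$ in Section~\ref{sec:cmc_r3}, where $\check L_0=\hat L_0=\infty$) while belonging to Case~II, not Case~III. Your route via the proof of Lemma~\ref{lem:fundamental_nonconstant} and Theorem~8 of \cite{BFLPP02} is only available in the Willmore setting $\eta\equiv 0$ with $AQ\equiv 0$; here $\eta$ need not vanish, so the conclusion ``Euclidean minimal with planar ends'' does not follow. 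The paper instead excludes \emph{both} alternatives of Lemma~\ref{lem:both_constant}(a) by the energy bound: Euclidean minimal tori with planar ends have $\mathcal{W}\geq 8\pi$, and CMC tori in $\R^3$ do as well (by Alexandrov they are never embedded, so the Li--Yau bound applies). So $\ker(A_\circ)\neq\im(Q_\circ)$ must be argued through Lemma~\ref{lem:both_constant} together with the hypothesis $\mathcal{W}<8\pi$, not through membership in Case~III. The remaining parts of your proposal -- the exclusion of Case~III at the outset and the dualization argument for constancy of $\im(Q_\circ)$ -- agree with the paper and are correct.
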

\begin{proof}
  A constrained Willmore torus $f$ with $\mathcal{W}< 8\pi$ cannot belong to
  Case~III of Theorem~\ref{the:mainA}, because super conformal and Euclidean
  minimal immersions with planar ends have Willmore energy $\mathcal{W}\geq
  8\pi$.  Moreover, $f$ has trivial normal bundle and the quaternionic
  Pl\"ucker formula \cite{FLPP01} implies that the space $H^0(V/L)$ of
  holomorphic sections with trivial monodromy is quaternionic 2--dimensional.
  All holomorphic sections of $V/L$ with trivial monodromy are thus
  projections of constant sections of $V$. This shows that $f$ belongs to
  Case~II if and only if $\ker(A_\circ)$ is constant, because $\nabla^\mu$
  with $\mu\in \C_*\backslash \{1\}$ has parallel sections with trivial
  monodromy if and only if $\ker(A_\circ)$ is constant.

  The fact that $\im(Q_\circ)$ is also constant in case $\ker(A_\circ)$ is
  constant follows by passing to the dual constrained Willmore
  surface~$f^\perp$ because, by \eqref{eq:gauge_trafo} and
  \eqref{eq:associated_perp}, the immersion $f$ belongs to Case~II if and only
  if its dual immersion $f^\perp$ belongs to Case~II.
  Lemma~\ref{lem:both_constant} implies that $\ker(A_\circ)\neq \im(Q_\circ)$,
  because otherwise $f$ had to be CMC in $\R^3$ or Euclidean minimal with
  planar ends which is impossible if $\mathcal{W}< 8\pi$.
\end{proof}

\subsection{Hitchin trick}\label{sec:hitchin_trick}
In the following we construct, for each case in the holonomy list of
Proposition~\ref{prop:cases}, a family of sections of either $(V,i)$ or
$\End_\C(V)$ that is polynomial in the spectral parameter $\mu$ and has the
property that the evaluation at every $\mu\in \C_*$ is $\nabla^\mu$--parallel.
To do this we use the following idea from \cite{Hi}: although $\nabla^\mu$ has
poles at $\mu=0$ and $\mu=\infty$, the holomorphic families of elliptic
operators
\begin{equation}
  \label{eq:holomorphic_families}
  \begin{split}
    \partial^\mu & = (\nabla^\mu)^{(1,0)}= \nabla^{(1,0)} + (\mu-1)
    A_\circ^{(1,0)}
    \\
    \dbar^\mu & = (\nabla^\mu)^{(0,1)}= \nabla^{(0,1)} + (\mu^{-1}-1)
    A_\circ^{(0,1)}
  \end{split}
\end{equation}
obtained by taking the $(1,0)$-- and $(0,1)$--parts of the associated family
$\nabla^\mu$ of flat connections extend to $\mu = 0$ or $\mu = \infty$,
respectively.  Because elliptic operators on compact manifolds are Fredholm,
Proposition~\ref{pro:holomorphic_family} applies to the holomorphic families
$\partial^\mu$ and $\dbar^\mu$. We use this to show for each of the cases in
Proposition~\ref{prop:cases} that the family of spaces of
$\nabla^\mu$--parallel sections of either $(V,i)$ or $\End_\C(V)$ gives rise
to a holomorphic vector bundles over $\CP^1$ whose fiber over generic points
$\mu\in \C_*\subset \CP^1$ coincides with the space of $\nabla^\mu$--parallel
sections.  A polynomial family of parallel sections can then be obtained as a
meromorphic section with a single pole at $\mu=\infty$ of this holomorphic
vector bundle over~$\CP^1$.

\subsection{Case by case study}\label{sec:case_by_case} 
A \emph{polynomial Killing field} \cite{FPPS,BFPP} for the constrained
Willmore associated family $\nabla^\mu$ is a polynomial family
\[\xi(\mu,p) = \sum_{l=0}^d \xi_l(p) \mu^l \] 
of sections of $\End_\C(V)$ with coefficients $\xi_l\in \Gamma(\End_\C(V))$
whose value at every $\mu\in \C_*$ is $\nabla^\mu$--parallel, that is,
satisfies
\begin{equation}
  \label{eq:killing_field}
  \nabla^\mu \xi(\mu,\underline{\phantom{x}}) = 0
\end{equation}
which is equivalent to the Lax--type equation
\[ \nabla\xi = \big[(1-\mu)A_\circ^{(1,0)}+(1-\mu^{-1})A^{(0,1)}_\circ,
\xi\big].\] 

\begin{Lem}\label{lem:caseI} 
  A constrained Willmore torus that belongs to \textbf{Case~I} of
  Proposition~\ref{prop:cases} admits a polynomial Killing field $\xi$ which,
  for generic $\mu$, has four different eigenvalues.
\end{Lem}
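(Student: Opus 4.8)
The plan is to obtain the polynomial Killing field as a meromorphic section---regular on $\C_*$ and at $\mu=0$---of a holomorphic vector bundle over $\CP^1$ assembled from the spaces of $\nabla^\mu$-parallel endomorphisms, and then to force four distinct eigenvalues by a genericity argument carried out at a single spectral parameter.

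First I would apply the Hitchin trick of Section~\ref{sec:hitchin_trick} to the family of flat connections induced by $\nabla^\mu$ on $\End_\C(V)$. Splitting into $(1,0)$- and $(0,1)$-parts yields holomorphic families of elliptic operators extending to $\mu=0$ and $\mu=\infty$ respectively, so that Proposition~\ref{pro:holomorphic_family} applies and the spaces $P^\mu$ of $\nabla^\mu$-parallel sections of $\End_\C(V)$ extend to a holomorphic vector bundle $\mathcal{K}$ over $\CP^1$ whose fibre over generic $\mu\in\C_*$ is $P^\mu$. In Case~I of Proposition~\ref{prop:cases} there is $\gamma\in\Gamma$ for which $H^\mu(\gamma)$ has, away from isolated $\mu$, four distinct eigenvalues. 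Since $\Gamma$ is abelian, every other holonomy commutes with $H^\mu(\gamma)$ and hence preserves its four now simple eigenlines; a section of $\End_\C(V)$ is therefore $\nabla^\mu$-parallel precisely when it is diagonal in this eigenbasis. Thus $\dim_\C P^\mu=4$ generically, i.e.\ $\mathcal{K}$ has rank four.

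Next I would extract the Killing field. Writing $\mathcal{K}\cong\bigoplus_{i=1}^{4}\mathcal{O}(a_i)$ by Grothendieck's theorem and twisting by $\mathcal{O}(d\cdot\infty)$ for $d$ large, Riemann--Roch produces an abundance of global sections of $\mathcal{K}(d\cdot\infty)$, all holomorphic on $\C=\CP^1\setminus\{\infty\}$. Each such section is, after evaluation, a $\mu$-polynomial family $\xi(\mu)=\sum_{l=0}^{d}\xi_l\mu^l$ of endomorphisms whose value at every $\mu\in\C_*$ is $\nabla^\mu$-parallel, that is, a polynomial Killing field in the sense of \eqref{eq:killing_field}.

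It remains to arrange four distinct eigenvalues, which I expect to be the main obstacle. For $d\geq\max_i(-a_i)$ the bundle $\mathcal{K}(d\cdot\infty)$ is globally generated, so evaluation at a fixed generic $\mu_0\in\C_*$ maps the space of these Killing fields \emph{onto} the fibre $\mathcal{K}_{\mu_0}=P^{\mu_0}$, the full four-dimensional commutant of the element $H^{\mu_0}(\gamma)$ with four distinct eigenvalues. The endomorphisms with four distinct eigenvalues form a Zariski-dense open subset of this commutant, so some Killing field $\xi$ from the construction satisfies that $\xi(\mu_0)$ has four distinct eigenvalues. The discriminant of $\det(\lambda-\xi(\mu))$, a polynomial in $\mu$, is then non-zero at $\mu_0$ and hence for all but finitely many $\mu$; therefore $\xi(\mu)$ has four distinct eigenvalues for generic $\mu$, as required. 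The crux is precisely the surjectivity of evaluation onto the generic fibre, which rests on the identification $\dim_\C P^\mu=4$ in Case~I together with the positivity gained by twisting at $\infty$.
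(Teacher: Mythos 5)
Your construction has a genuine gap at its very first step. Proposition~\ref{pro:holomorphic_family} applies to holomorphic families of Fredholm operators, and the only such families available here are the elliptic operators $\partial^\mu$ and $\dbar^\mu$ of \eqref{eq:holomorphic_families} acting on $\End_\C(V)$ --- not the connections $\nabla^\mu$ themselves, which have poles at both $\mu=0$ and $\mu=\infty$. The kernel of $\dbar^\mu$ consists of $\dbar^\mu$--holomorphic endomorphisms and a priori \emph{strictly contains} the commutant $P^\mu$: writing $V=E_{h_1}\oplus\dots\oplus E_{h_4}$ as a sum of $\nabla^\mu$--parallel line bundles, one has $\End_\C(V)=\oplus_{k,l}\,E_{h_k}E_{h_l}^{-1}$, and an off--diagonal summand contributes extra $\dbar^\mu$--holomorphic sections exactly when the degree--zero holomorphic line bundles $E_{h_k}$ and $E_{h_l}$ are holomorphically isomorphic, i.e.\ represent the same point of the Jacobian of $T^2$. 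Nothing in Case~I excludes this: the multipliers $h_k$, $h_l$ are distinct as \emph{flat} bundles for generic $\mu$, yet could be holomorphically equivalent for every $\mu$. Were that so, the bundle produced by Proposition~\ref{pro:holomorphic_family} would have rank greater than four, its generic fibre would not be $P^\mu$, and your meromorphic sections would be families of $\dbar^\mu$--holomorphic rather than $\nabla^\mu$--parallel endomorphisms --- no Killing field would result. Ruling out this degeneration is precisely the bulk of the paper's proof: it shows that the locus of $\sigma\in \Sigma_{hol}$ over which two sheets carry holomorphically equivalent multipliers is open and closed in the complement of the branch locus; that, by the asymptotics of the ends of the spectral curve (where one type of end is excluded because a map into the Jacobian would descend and extend to a constant holomorphic map of $\CP^1$, the other by a residue computation), this locus would have to be everything; and finally that this contradicts the quaternionic symmetry \eqref{eq:symmetry_nabla_mu} over $\mu\in S^1$, which would force coincidence of multipliers, impossible in Case~I. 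Your sentence asserting that a section is $\nabla^\mu$--parallel precisely when diagonal in the eigenbasis correctly computes $\dim_\C P^\mu=4$, but that is not the statement you need: you need the \emph{generic kernel of the elliptic family} to equal $P^\mu$, and this is where the hard work lies.

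By contrast, the second half of your argument --- twisting by $\mathcal{O}(d\cdot\infty)$, global generation, surjectivity of evaluation at a generic $\mu_0$ onto the fibre, Zariski--density of elements with four distinct eigenvalues in the commutant, and the discriminant argument propagating distinctness to generic $\mu$ --- is sound, and in fact spells out a point the paper leaves implicit (the paper only remarks that a global meromorphic section of the rank--4 bundle with a single pole at $\mu=\infty$ is the desired polynomial Killing field). But this part cannot start until the rank--four bundle with generic fibre $P^\mu$ has actually been established.
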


\begin{proof}
  Away from isolated spectral parameters $\mu\in \C_*$, the space of
  $\nabla^\mu$--parallel sections of the bundle $\End_\C(V)$ is
  4--dimensional, because parallel endomorphisms are parametrized by the
  elements in the fiber $\End_\C(V)_p$ over one point $p\in T^2$ that commute
  with all $H^\mu_p(\gamma)$, $\gamma\in \Gamma$. In order to prove the
  existence of a polynomial Killing field we construct now a holomorphic
  rank~4 subbundle $\mathcal{U}$ of the trivial bundle $\CP^1\times
  \Gamma(\End_\C(V)$) whose fiber $\mathcal{U}_\mu$ over generic $\mu \in
  \C_*$ coincides with the space of $\nabla^\mu$--parallel sections.  As
  sketched in Section~\ref{sec:hitchin_trick}, we do this by applying
  Proposition~\ref{pro:holomorphic_family} to the operators
  \eqref{eq:holomorphic_families} acting on sections of $\End_\C(V)$. The
  existence of $\mathcal{U}$ then follows by proving that there is one $\mu\in
  \C_*$ for which the kernel of $\dbar^\mu$ (or, which is equivalent by
  \eqref{eq:symmetry_nabla_mu}, the kernel of
  $\partial^\mu=j^{-1}\dbar^{1/\bar \mu} j$) coincides with the space of
  $\nabla^\mu$--parallel sections.

  Assume this was not the case. For all $\mu$ the dimension of the space of
  $\dbar^\mu$--holomorphic endomorphisms is then higher than~4. For generic
  $\mu\in \C_*$, the bundle $V$ is a direct sum $V=E_{h_1} \oplus ...  \oplus
  E_{h_4}$ of $\nabla^\mu$--parallel subbundles whose parallel sections have
  different multipliers $h_1$,..., $h_4$.  Therefore, $\End_\C(V) =
  \oplus_{kl} \, E_{h_k} E_{h_l}^{-1}$ and the fact that there are more
  $\dbar^\mu$--holomorphic than $\nabla^\mu$--parallel endomorphisms implies
  that for some $k\neq l$ there exists a non--trivial holomorphic homomorphism
  between the complex holomorphic line bundles $E_{h_k}$ and $E_{h_l}$ of
  degree~0, that is, the bundles $E_{h_k}$ and $E_{h_l}$ are holomorphically
  equivalent and represent the same point in the Jacobi variety of~$T^2$. In
  other words, over generic points $\mu\in \C_*$ on the 4--sheeted branched
  covering $\Sigma_{hol}\rightarrow \C_*$ the holomorphic map $h\colon
  \Sigma_{hol}\rightarrow \Hom(\Gamma,\C_*)$ takes 4 different values
  corresponding to 4 different gauge equivalence classes of flat complex line
  bundles, but the induced map into the Jacobian, i.e., the space of isomorphy
  classes of complex holomorphic line bundles, takes the same value on at
  least 2 of the 4 points over $\mu$.  Because locally, away from the set
  $B=\{\sigma\in \Sigma_{hol} \mid \mu(\sigma) \textrm{ is singular value of }
  \mu \}$, for every pair of sheets of the covering $\Sigma_{hol}\backslash B
  \rightarrow \C_*$ the multipliers belonging to the pair of points over the
  same $\mu$'s are either all holomorphically equivalent or holomorphically
  equivalent for isolated $\mu$'s only, the non--empty set
  \begin{multline*}
    U = \{ \sigma\in \Sigma_{hol} \backslash B \mid \textrm{for all } \sigma'
    \textrm{ near } \sigma \textrm{ there is }
    \sigma''\neq \sigma' \textrm{ with } \mu(\sigma')=\mu(\sigma'') \\
    \textrm{ for which } h^{\sigma'} \textrm{ and } h^{\sigma''} \textrm{ give
      rise to isomorphic holomorphic line bundles} \}
  \end{multline*}
  is open and closed and hence a connected component of $\Sigma_{hol}
  \backslash B$.

  The assumption that $U$ is non--empty implies $U=\Sigma_{hol} \backslash B$:
  this is clear in case that the Riemann surface $\Sigma_{mult}$ and
  therefore, by Theorem~\ref{the:biholomorphicity}, $\Sigma_{hol}$ is
  connected.  If $\Sigma_{mult}$ is not connected it has two connected
  components and the assumption $U\neq \emptyset$ and $U\neq \Sigma_{hol}
  \backslash B$ implies that $U$ coincides with one of the two connected
  components of $\Sigma_{hol} \backslash B$. Each of the two components of
  $\Sigma_{mult}$ has one end which, by Lemma~5.2 of \cite{BPP2}, can be
  parametrized by $x\in D^*$ in a punctured disc $D^*=\{x\in \C_*\mid
  |x|<\epsilon\}$ such that either $h^{\sigma(x)}(\gamma)= \exp((a_0+x^{-1})
  \gamma + b(x) \bar \gamma)$ or $h^{\sigma(x)}(\gamma)= \exp(a(x)\gamma +
  (b_0+ x^{-1}) \bar \gamma)$ for all $x\in D^*$ and $\gamma\in \Gamma$ where
  $a(x)$ respectively $b(x)$ are holomorphic in $x=0$ (note that we use here
  that a reducible spectral curve has finite genus, see Theorem~4.1 of
  \cite{BPP2}).  The first kind of end cannot be contained in $U$, because
  then the holomorphic map $U\rightarrow T^*=\C/\Gamma'$ given by
  $\sigma\mapsto b(\sigma) \textrm{ mod } \Gamma'$, where $\Gamma'$ is the
  lattice
  \[\Gamma'=\{c\in \C \mid -\bar c \gamma + c \bar \gamma \in 2\pi i \Z
  \textrm{ for all } \gamma\in \Gamma\}\] and $b(\sigma)$ satisfies
  $h^\sigma(\gamma) = \exp(a(\sigma)\gamma+b(\sigma)\bar \gamma)$ for all
  $\gamma\in \Gamma$, would descend to the $\mu$--plane and could be extended
  to a holomorphic map $\CP^1\rightarrow T^*$ which had to be constant. To see
  that the second kind of end cannot be contained in $U$ we use that the map
  $x\mapsto \mu(\sigma(x))$ is a 2--fold covering of a punctured neighborhood
  of $\mu=0$ or $\mu=\infty$ and introduce another coordinate $y$ at the end
  defined by $\mu=y^2$ or $\mu=y^{-2}$ respectively. Because the multipliers
  $h^{\sigma(y)}$ and $h^{\sigma(-y)}$ are holomorphically equivalent this
  implies $\frac{1}{x(y)} = \frac{1}{x(-y)} + c$ for $c \in \Gamma'$ which is
  impossible, because the residues of both sides of the equation have opposite
  signs.

  We prove now by a consideration for $\mu \in S^1\subset \C_*$, that the
  assumption $U\neq \emptyset$ and hence $U=\Sigma_{hol} \backslash B$ leads
  to a contradiction.  For this, we have to deal with two different cases:
  either for all $\sigma\in \mu^{-1}(S^1)$ the multipliers $h^\sigma$ and
  $h^{\rho(\sigma)}$ are holomorphically equivalent or for all $\sigma_1\in
  \mu^{-1}(S^1)$ there is $\sigma_2\neq \rho(\sigma_1)$ with
  $\mu(\sigma_1)=\mu(\sigma_2)$ such that $h^{\sigma_1}$ and $h^{\sigma_2}$ as
  well as $h^{\rho(\sigma_1)}$ and $h^{\rho(\sigma_2)}$ are holomorphically
  equivalent.

  To prove that the first case is impossible we write the multiplier
  $h^\sigma$ of $\sigma\in \mu^{-1}(S^1)$ as
  \[ h^\sigma(\gamma) = \exp(a\gamma+ b\bar\gamma) \] where $a$, $b\in \C$ are
  unique up to the action of $c\in \Gamma'$ by $(a,b)\mapsto (a-\bar c, b+c)$.
  The multiplier of $\rho(\sigma)$ is then $h^{\rho(\sigma)}(\gamma)=\exp(\bar
  b \gamma + \bar a \bar \gamma)$ and, because $h^\sigma$ and
  $h^{\rho(\sigma)}$ give rise to isomorphic holomorphic line bundles, $\bar
  a= b+c$ for some $c\in \Gamma'$.  But this implies
  $h^{\rho(\sigma)}(\gamma)=\exp((a-\bar c) \gamma + (b+c) \bar \gamma)$ such
  that $h^\sigma=h^{\rho(\sigma)}$ for all $\sigma\in \mu^{-1}(S^1)$ which is
  impossible because in Case~I over generic $\mu \in \C_*$ all multipliers
  are different.

  To prove that the second case is impossible, we chose two curves
  $\sigma_1(t)$ and $\sigma_2(t)$ in $\mu^{-1}(S^1)\subset \Sigma_{hol}$ with
  $\mu(\sigma_1(t))= \mu(\sigma_2(t))$ for all $t$ and $\mu(\sigma_1(0))=
  \mu(\sigma_2(0))=1$ such that $h^{\sigma_1(t)}$ and $h^{\sigma_2(t)}$ are
  holomorphically equivalent. Then there are unique complex functions
  $a_1(t)$, $a_2(t)$ and $b(t)$ with $a_1(0)=a_2(0)=b(0)$ such that
  \[ h^{\sigma_1(t)}(\gamma) = \exp(a_1(t) \gamma+ b(t) \bar \gamma) \qquad
  \textrm{ and } \qquad h^{\sigma_2(t)}(\gamma) = \exp(a_2(t) \gamma+ b(t)
  \bar \gamma) \] for all $t$.  Because $h^{\rho(\sigma_1(t))}$ and
  $h^{\rho(\sigma_2(t))}$ are holomorphically equivalent as well there is a
  constant $c\in \Gamma'$ such that $a_2(t) = a_1(t) -\bar c$.  Evaluating at
  $t=0$ implies $c=0$. As a consequence, $h^{\sigma_1(t)}=h^{\sigma_2(t)}$ for
  all $t$ which is impossible.

  Hence $U=\emptyset$ and the minimal kernel dimension of $\dbar^\mu$ (and
  $\partial^\mu$) on $\End_\C(V)$ is 4.
  Proposition~\ref{pro:holomorphic_family} applied to the operators
  \eqref{eq:holomorphic_families} acting on sections of $\End_\C(V)$ thus
  yields the existence of a holomorphic rank~4 subbundle $\mathcal{U}$ of the
  trivial bundle $\CP^1\times \Gamma(\End_\C(V))$ whose fiber over a generic
  point $\mu\in \C_*$ coincides with the space of $\nabla^\mu$--parallel
  endomorphisms.  A global meromorphic section of $\mathcal{U}$ with a single
  pole at $\mu=\infty$ is then a polynomial Killing field $\xi$.
\end{proof}

In Cases~II and III of Proposition~\ref{prop:cases}, the space of
$\nabla^\mu$--parallel sections with trivial monodromy for $\mu \in \C_*$ is
at least a (complex) 2--dimensional subspace of the finite dimensional space
$\mathcal{H}=\{ \psi \in \Gamma(V)\mid \nabla\psi \in \Omega^1(L)\}$ of
prolongations of holomorphic sections with trivial monodromy of $V/L$.

\begin{Lem}\label{lem:case_polynomial_conserved_quantity}
  For a constrained Willmore torus that belongs to \textbf{Case~II or III} of
  Proposition~\ref{prop:cases}, there is a holomorphic vector subbundle
  $\mathcal{V}$ of the trivial bundle $\CP^1\times \mathcal{H}$ whose fiber
  $\mathcal{V}_\mu$ over generic $\mu \in \C_*\subset \CP^1$ coincides with
  the space of $\nabla^\mu$--parallel sections with trivial monodromy.  The
  bundle $\mathcal{V}$ has rank~2 in Case~II and~IIIb and rank~4 in Case~IIIa.
\end{Lem}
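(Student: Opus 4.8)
The plan is to realise each fiber $\mathcal{V}_\mu$ as the kernel of a holomorphic family of Fredholm operators and then to apply Proposition~\ref{pro:holomorphic_family}, using the Hitchin trick of Section~\ref{sec:hitchin_trick} to cross the poles of $\nabla^\mu$ at $\mu=0,\infty$. First I would reduce to a finite--dimensional problem. By Lemma~\ref{lem:nabla_mu_parallel_prolonge_hol} every $\nabla^\mu$--parallel section with trivial monodromy is the prolongation of a holomorphic section of $V/L$ with trivial monodromy and hence lies in $\mathcal{H}$; conversely a section $\psi\in\mathcal{H}$ is $\nabla^\mu$--parallel with trivial monodromy exactly when $\nabla^\mu\psi=0$. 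Thus for every $\mu\in\C_*$ the space of $\nabla^\mu$--parallel sections with trivial monodromy is precisely $\mathcal{V}_\mu=\ker(\nabla^\mu|_{\mathcal{H}})$, and $\mathcal{H}$ is finite--dimensional because prolongation identifies it with the space of holomorphic sections of $V/L$ with trivial monodromy.

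Next I would observe that $\nabla^\mu|_{\mathcal{H}}$ takes values in one fixed finite--dimensional space. Expanding \eqref{eq:nabla_mu} gives, for $\psi\in\mathcal{H}$,
\[
\nabla^\mu\psi=\nabla\psi+\tfrac12(\mu+\mu^{-1}-2)\,A_\circ\psi-\tfrac{i}2(\mu-\mu^{-1})\,SA_\circ\psi ,
\]
so $\nabla^\mu\psi$ always lies in the finite--dimensional subspace $W_0:=\Span_\C\{\nabla\psi,\,A_\circ\psi,\,SA_\circ\psi\mid\psi\in\mathcal{H}\}\subset\Gamma(KL)$. Hence $\Phi^\mu:=\nabla^\mu|_{\mathcal{H}}\colon\mathcal{H}\to W_0$ is a holomorphic family of linear maps between finite--dimensional spaces, in particular a holomorphic family of Fredholm operators over $\C_*$. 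Proposition~\ref{pro:holomorphic_family} then yields a holomorphic subbundle $\mathcal{V}$ of $\C_*\times\mathcal{H}$ whose fiber over generic $\mu$ is $\ker\Phi^\mu=\mathcal{V}_\mu$, of rank equal to the generic value of $\dim\ker\Phi^\mu$. That this generic rank is $2$ in Cases~II and IIIb and $4$ in Case~IIIa follows from Proposition~\ref{prop:cases}: $\dim\mathcal{V}_\mu$ is the dimension of the simultaneous eigenspace of the holonomies $H^\mu(\gamma)$ for the eigenvalue~$1$, which is the $2$--dimensional common eigenspace in Case~II, the two genuine eigenvectors of the two $2\times2$ Jordan blocks in Case~IIIb, and the full $4$--dimensional space of parallel sections when the holonomy is trivial (Case~IIIa).

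The main obstacle is extending $\mathcal{V}$ across $\mu=0$ and $\mu=\infty$, where $\Phi^\mu$ itself has poles. Here I would invoke the Hitchin trick: multiplying by a nowhere--vanishing scalar does not change kernels, so $\mathcal{V}_\mu=\ker(\mu^{-1}\Phi^\mu)$ for $\mu\in\C_*$, while in the local coordinate $\mu^{-1}$ at infinity the family $\mu^{-1}\Phi^\mu$ extends holomorphically through $\mu=\infty$ with limiting operator $\psi\mapsto A_\circ^{(1,0)}\psi$ (the $\mu\to\infty$ normalisation underlying \eqref{eq:holomorphic_families} and \eqref{eq:asymptotic_DT}); symmetrically $\mu\,\Phi^\mu$ extends holomorphically through $\mu=0$ with limit $\psi\mapsto A_\circ^{(0,1)}\psi$. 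Applying Proposition~\ref{pro:holomorphic_family} to these two pole--cleared families on punctured neighbourhoods of $\mu=\infty$ and $\mu=0$ extends the kernel bundle holomorphically through both points, and since all three families share the kernel $\mathcal{V}_\mu$ over $\C_*$ the local extensions agree with $\mathcal{V}$ on the overlaps and glue to a holomorphic subbundle of $\CP^1\times\mathcal{H}$ of the asserted rank. I would close by noting that the extended fiber $\mathcal{V}_\infty$ is contained in the kernel of the limiting operator $A_\circ^{(1,0)}$; as $\nabla\psi\in\Omega^1(L)$ holds automatically on $\mathcal{H}$, this means $\mathcal{V}_\infty$ consists of solutions of \eqref{eq:asymptotic_DT}, which is exactly the input feeding Proposition~\ref{pro:asymptotic} and Corollary~\ref{cor:willmore}.
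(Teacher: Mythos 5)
Your construction is correct in its main thrust and takes a genuinely different route from the paper's. The paper applies Proposition~\ref{pro:holomorphic_family} to the operators $\dbar^\mu$ and $\del^\mu$ of \eqref{eq:holomorphic_families} acting on $\mathcal{H}$; these are pole--free at $\mu=\infty$ resp.\ $\mu=0$ without any rescaling, but the price is that $\ker(\dbar^\mu|_{\mathcal{H}})$ can a priori be strictly larger than the space of $\nabla^\mu$--parallel sections, and the bulk of the paper's proof is the case--by--case verification of generic equality (in Case~II via the fact that a complex quotient of two holomorphic sections of $V/L$ must be constant unless $Q\equiv 0$, in Case~IIIb via an explicit normal form for the $\dbar^\mu$--holomorphic sections with trivial monodromy). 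Your rescaled families $\mu^{-1}\Phi^\mu$ and $\mu\,\Phi^\mu$ achieve the same pole--clearing at $\mu=\infty$ and $\mu=0$ while keeping the fiber over $\mu\in\C_*$ \emph{tautologically} equal to the space of parallel sections with trivial monodromy (by Lemma~\ref{lem:nabla_mu_parallel_prolonge_hol} and finite dimensionality of $\mathcal{H}$), so that verification disappears entirely; the gluing of the three kernel bundles over $\CP^1$ and your closing identification of $\mathcal{V}_\infty$ with solutions to \eqref{eq:asymptotic_DT} are sound and match how the paper later uses $\mathcal{V}$ in Lemmas~\ref{lem:caseIIIa} and \ref{lem:caseIIIb}.

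There is, however, a genuine gap in your rank count in Case~IIIb. The fiber $\mathcal{V}_\mu$ is the \emph{common} fixed space $\bigcap_{\gamma\in\Gamma}\ker(H^\mu(\gamma)-\Id)$, whereas Proposition~\ref{prop:cases} (Case~IIIb) only says that each individual holonomy has two $2\times2$ Jordan blocks, i.e.\ that each $\ker(H^\mu(\gamma)-\Id)$ is $2$--dimensional. For commuting unipotent matrices of this type the common kernel can be $1$--dimensional: take $R_1$, $R_2$ with $R_1e_3=e_1$, $R_1e_4=e_2$, $R_2e_2=e_1$, $R_2e_4=e_3$ and all other basis vectors mapped to zero; then $R_1R_2=R_2R_1$, $R_l^2=0$, both have rank~$2$, yet $\ker R_1\cap \ker R_2=\C e_1$, so $\Id+R_1$ and $\Id+R_2$ are commuting holonomy candidates of the prescribed Jordan type with only a line of common fixed vectors. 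Hence ``the two genuine eigenvectors of the two $2\times2$ Jordan blocks'' does not follow from commutativity alone. The missing ingredient is the quaternionic symmetry \eqref{eq:symmetry_nabla_mu}: for $\mu\in S^1$ the connection $\nabla^\mu$ is quaternionic, so the common fixed space is invariant under right multiplication by $j$ and has even complex dimension; since commuting unipotent matrices can be simultaneously triangularized they always share a fixed vector, so that dimension is $\geq 2$ on $S^1$, and it is $\leq 2$ because a single holonomy already has only a $2$--dimensional fixed space. Since by Proposition~\ref{pro:holomorphic_family} the generic kernel dimension of your family is attained away from isolated points of $\C_*$, and $S^1$ cannot consist of such points, the generic rank over $\C_*$ is $2$, as claimed. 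This is exactly the structure ($\psi$, $\psi j$ parallel with trivial monodromy, plus Jordan partners $\varphi$, $\varphi j$ with nontrivial monodromy) that the paper asserts for generic $\mu\in S^1$ at the start of its own Case~IIIb computation.
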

\begin{proof}
  We prove the existence of $\mathcal{V}$ by applying
  Proposition~\ref{pro:holomorphic_family} to the operators
  \eqref{eq:holomorphic_families} acting on the finite dimensional space
  $\mathcal{H}$. For doing this it remains to check that, over generic $\mu\in
  \C_*$, the space of $\dbar^\mu$--holomorphic sections (or, which is
  equivalent by \eqref{eq:symmetry_nabla_mu}, the space of
  $\del^\mu$--anti--holomorphic sections) contained in $\mathcal{H}$ coincides
  with the space of $\nabla^\mu$--parallel sections.

  This is immediately clear in Case~IIIa of Proposition~\ref{prop:cases}
  because, for every $\mu\in \C_*$, the connection $\nabla^\mu$ is then
  trivial and so is the induced holomorphic structure $\dbar^\mu =
  (\nabla^\mu)^{(0,1)}$.  Therefore, all $\dbar^\mu$--holomorphic sections are
  $\nabla^\mu$--parallel and $\mathcal{V}$ is a holomorphic bundle of rank~4.
  In Case~II, for generic $\mu\in \C_*$, the trivial bundle $V$ over the torus
  has a splitting $V=E_1 \oplus E_{h_1} \oplus E_{h_2}$ into
  $\nabla^\mu$--parallel subbundles with the property that the connection
  induced by $\nabla^\mu$ on the rank 2 bundle $E_1$ is trivial while parallel
  sections of the line bundles $E_{h_l}$, $l=1,2$ have nontrivial monodromy
  $h_l\in \Hom(\Gamma,\C_*)$.  As above, every $\dbar^\mu$--holomorphic
  section of the trivial subbundle $E_1$ is $\nabla^\mu$--parallel.  Every
  $\dbar^\mu$--holomorphic section of $E_{h_l}$ is of the form $\psi_l f_l$,
  where $\psi_l$ is a $\nabla^\mu$--parallel section of $E_l$ and $f_l$ is a
  holomorphic complex function with monodromy $h^{-1}_l$. But such a section
  $\psi_l f_l$ is never contained in $\mathcal{H}$: if the quotient of two
  holomorphic sections of $V/L$ is complex, it is constant unless the Hopf
  field $Q$ is trivial (which is impossible, because a torus of Case~II cannot
  be super conformal).  Thus, in Case~II, the holomorphic subbundle
  $\mathcal{V}$ of the trivial bundle $\CP^1\times \mathcal{H}$ has rank~2.

  Proving the existence of $\mathcal{V}$ in Case~IIIb of
  Proposition~\ref{prop:cases} is slightly more involved: for generic $\mu\in
  S^1\subset \C_*$ there is a $\nabla^\mu$--parallel section $\psi$ with
  trivial monodromy and a parallel section $\varphi$ together with $t\in
  \Hom(\Gamma,\C)$ such that $\gamma^*\varphi = \varphi + \psi \, t_\gamma$
  for all $\gamma\in \Gamma$.  Every $\dbar^\mu$--holomorphic section $\tilde
  \psi$ is then of the form $\tilde \psi = \psi (f_1 + j f_2) + \varphi (g_1 +
  j g_2)$ where $f_1, f_2, g_1$ and $g_2$ are complex holomorphic functions.
  Such a section has trivial monodromy if and only if $g_1$, $g_2$ are
  constant and $\gamma^* f_1 = f_1 - g_1\, t_\gamma$ and $\gamma^* f_2 = f_2 -
  g_2\, \bar t_\gamma$ for all $\gamma\in \Gamma$. By the same argument as in
  Case~II, the section $\tilde \psi$ is in $\mathcal{H}$ if and only if $f_1$
  and $f_2$ are constant and $g_1=g_2=0$.  This shows that in Case~IIIb the
  bundle $\mathcal{V}$ has rank~2.
\end{proof}

\begin{Lem}\label{lem:caseII}
  A constrained Willmore torus that belongs to \textbf{Case~II} of
  Proposition~\ref{prop:cases} admits a polynomial Killing field $\xi$ which,
  for generic $\mu$, has a 2--dimensional kernel and two different
  non--trivial eigenvalues.
\end{Lem}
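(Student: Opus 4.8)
The plan is to adapt the construction of the polynomial Killing field in Lemma~\ref{lem:caseI}. For a torus in Case~II of Proposition~\ref{prop:cases}, over generic $\mu\in\C_*$ the bundle $V$ splits into $\nabla^\mu$--parallel subbundles $V=E_1\oplus E_{h_1}\oplus E_{h_2}$, where $E_1$ is the rank~$2$ subbundle spanned by the fibre $\mathcal{V}_\mu$ of the bundle $\mathcal{V}$ from Lemma~\ref{lem:case_polynomial_conserved_quantity} (the $\nabla^\mu$--parallel sections with trivial monodromy, on which $\nabla^\mu$ is trivial) and $E_{h_1},E_{h_2}$ are the non--trivial eigenlines carrying the simple multipliers $h_1,h_2$. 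A $\nabla^\mu$--parallel endomorphism that annihilates $E_1$ automatically preserves this decomposition and acts by a scalar on each of $E_{h_1},E_{h_2}$; such endomorphisms form a complex $2$--dimensional space $W_\mu$, and a generic element of $W_\mu$ has eigenvalues $0,0,a,b$ with $a\neq b$. This is exactly the pattern required, so it suffices to exhibit a holomorphic rank~$2$ bundle $\mathcal{W}$ over $\CP^1$ whose fibre over generic $\mu$ is $W_\mu$, together with a meromorphic section of $\mathcal{W}$ having a single pole at $\mu=\infty$.

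First I would build, exactly as in Lemma~\ref{lem:caseI}, the holomorphic bundle of parallel endomorphisms: applying Proposition~\ref{pro:holomorphic_family} to the operators $\dbar^\mu$ of \eqref{eq:holomorphic_families} on $\End_\C(V)$ (and using \eqref{eq:symmetry_nabla_mu} to pass to $\del^\mu$ near $\mu=0$) yields a holomorphic bundle $\mathcal{U}$ over $\CP^1$ whose fibre over generic $\mu\in\C_*$ is the full commutant of the holonomy $\{H^\mu_p(\gamma)\}_{\gamma\in\Gamma}$, which is of rank~$6$ in Case~II. Since a parallel endomorphism maps a $\nabla^\mu$--parallel section to a $\nabla^\mu$--parallel section of the same monodromy, it preserves $\mathcal{V}_\mu$, so restriction defines a holomorphic bundle map $r\colon\mathcal{U}\to\End(\mathcal{V})$. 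Over generic $\mu$ the map $r$ is onto $\End(E_1)$ with kernel $W_\mu$, and I would take $\mathcal{W}$ to be the holomorphic rank~$2$ subbundle of $\mathcal{U}$ extending $\ker r$.

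The main obstacle is the rank count for $\mathcal{U}$, namely showing that over generic $\mu$ there are no $\dbar^\mu$--holomorphic endomorphisms beyond the $6$ parallel ones; this is the precise analogue of the delicate step in Lemma~\ref{lem:caseI}. An extra holomorphic endomorphism would produce a non--diagonal holomorphic homomorphism among the summands $E_1,E_{h_1},E_{h_2}$, hence either a holomorphic isomorphism $E_{h_1}\cong E_{h_2}$ or a holomorphic trivialisation of some $E_{h_k}$ (recall that $E_1$ is holomorphically trivial). As in Lemma~\ref{lem:caseI}, this would force the induced map from $\Sigma_{hol}$ to the Jacobian of $T^2$ to take equal values on the two points lying over a generic $\mu$; I would rule this out by the same analysis of the two ends of $\Sigma_{hol}$ together with the fixed--point--free involution $\rho$ and a consideration over $\mu\in S^1$, which here is simplest because $\mu\colon\Sigma_{hol}\to\C_*$ is only $2$--sheeted. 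This step also uses Theorem~\ref{the:biholomorphicity} to identify $\Sigma_{hol}$ with $\Sigma_{mult}$ away from finitely many points, so that the description of the ends from \cite{BPP2} applies.

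Granting the rank count, a global meromorphic section $\xi$ of $\mathcal{W}$ with a single pole at $\mu=\infty$ is polynomial in $\mu$ and satisfies $\nabla^\mu\xi(\mu,\cdot)=0$, hence is a polynomial Killing field. For generic $\mu$ its value lies in $W_\mu$, so it has $E_1$ as a $2$--dimensional kernel and acts on $E_{h_1},E_{h_2}$ by scalars $a(\mu),b(\mu)$ that are the two branches over $\mu$ of a single meromorphic function on the $2$--sheeted covering $\Sigma_{hol}$. The elements of $W_\mu$ with $a=b$ span a line in each fibre (the multiples of the projection onto $E_{h_1}\oplus E_{h_2}$ along $E_1$), so as long as $\xi$ is chosen outside this sub--line--bundle the two branches differ away from the finitely many branch points, and they are non--zero away from the isolated zeros of the function. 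Thus for generic $\mu$ the Killing field $\xi$ has a $2$--dimensional kernel and two distinct non--trivial eigenvalues, as claimed.
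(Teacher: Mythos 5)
Your proposal is correct and its core is the same as the paper's: both construct, via Proposition~\ref{pro:holomorphic_family} applied to the operators \eqref{eq:holomorphic_families} on $\End_\C(V)$, a holomorphic rank--$6$ bundle $\mathcal{U}\subset\CP^1\times\Gamma(\End_\C(V))$ of parallel endomorphisms, with the rank count settled by showing that extra $\dbar^\mu$--holomorphic endomorphisms would make $h_1^\mu$ and $h_2^\mu$ holomorphically equivalent, which is impossible. Two differences are worth recording. For the rank count, the paper gets by with less than you deploy: since the covering is only $2$--sheeted, the two points over $\mu\in S^1$ are $\sigma$ and $\rho(\sigma)$, so holomorphic equivalence of $h_1^\mu$ and $h_2^\mu=\overline{h_1^\mu}$ forces $h_1^\mu=h_2^\mu$ exactly as in the first subcase of the Case~I proof, with no ends analysis and no appeal to Theorem~\ref{the:biholomorphicity}; your heavier route is not wrong, and your explicit acknowledgment of the subcase where some $E_{h_k}$ is holomorphically trivial (a hom between $E_1$ and $E_{h_k}$) is more scrupulous than the paper's one--line reduction. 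For the second step, the paper does not pass to the kernel subbundle $\mathcal{W}=\ker r$ of a restriction map as you do; it takes a meromorphic section $\xi$ of the full rank--$6$ bundle $\mathcal{U}$ and corrects it by hand, setting $\tilde\xi=\xi-\sum_{kl}\xi(\psi_k)\,g_{kl}^{-1}\,\alpha^l_\C$, where $\psi_1,\psi_2$ are meromorphic sections of $\mathcal{V}$, $\alpha^1,\alpha^2$ are meromorphic sections of the gauged dual bundle $\tilde{\mathcal{V}}^\perp$ attached to $f^\perp$ via \eqref{eq:gauge_trafo}, and $g_{kl}(\mu)=\langle\alpha^k_\C(\mu,.),\psi_l(\mu,.)\rangle$ is generically invertible. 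Your kernel--bundle construction buys independence from the dual surface, the gauge transformation, and the invertibility of $g$, at the routine cost of checking that $r\colon\mathcal{U}\to\End(\mathcal{V})$ is holomorphic and that $\ker r$ extends across the isolated degenerate parameters, which Proposition~\ref{pro:holomorphic_family} supplies; you also verify the final clause of the lemma (generic eigenvalues $0,0,a,b$ with $a\neq b$ and $a,b\neq 0$, by avoiding the line spanned by the projection onto $E_{h_1}\oplus E_{h_2}$) explicitly, a point the paper leaves implicit.
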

\begin{proof}
  The existence of $\xi$ is proven by similar arguments as in Case~I. For
  generic $\mu\in \C_*$, the trivial bundle $V$ has a splitting $V=E_1 \oplus
  E_{h_1} \oplus E_{h_2}$ into $\nabla^\mu$--parallel subbundles.  The space
  of $\nabla^\mu$--parallel sections of $\End_\C(V)$ is now 6--dimensional,
  because $E_1$ is a trivial subbundle.  Using
  Proposition~\ref{pro:holomorphic_family} we construct a subbundle
  $\mathcal{U}$ of the trivial bundle $\CP^1\times \Gamma(\End_\C(V))$ whose
  fiber $\mathcal{U}_\mu$ over generic $\mu$ coincides with the space of
  $\nabla^\mu$--parallel sections. For this we have to check that generically
  the space of $\dbar^\mu$--holomorphic sections of $\End_\C(V)$ (or,
  equivalently, that of $\partial^\mu$--anti--holomorphic sections) is also
  6--dimensional.  Assuming that this was not the case would force the
  multipliers $h_1^\mu$ and $h_2^\mu$ to be holomorphically equivalent for
  every $\mu$ which is impossible because, as in Case~I, this would imply
  $h_1^\mu=h_2^\mu$ for all $\mu\in S^1$.

  A global meromorphic section $\xi$ of the holomorphic rank~6 bundle
  $\mathcal{U}$ with a single pole at $\mu=\infty$ is a polynomial Killing
  field. For the reconstruction of $\Sigma_{hol}$ from $\xi$ it is preferable
  to have a polynomial Killing field $\tilde \xi$ that, for every $\mu$,
  vanishes on the trivial $\nabla^\mu$--parallel subbundle~$E_1$. We construct
  such $\tilde \xi$ by using the holomorphic rank~2 bundle $\mathcal V$
  defined in Lemma~\ref{lem:case_polynomial_conserved_quantity} as well as the
  corresponding bundle $\mathcal{V}^\perp$ that belongs to the dual
  constrained Willmore immersion $f^\perp$. The latter is the holomorphic
  rank~2 subbundle of $\mathcal{H}^\perp=\{ \alpha \in \Gamma(V^*)\mid
  \nabla\alpha \in \Omega^1(L^\perp)\}$ whose fiber over generic $\mu\in \C_*$
  is the space of $(\nabla^\perp)^\mu$--parallel sections. The existence of
  $\mathcal{V}^\perp$ follows from
  Lemma~\ref{lem:case_polynomial_conserved_quantity} because, by
  \eqref{eq:gauge_trafo}, the connections $(\nabla^\perp)^\mu$ on the bundle
  $(V^*,-i)$ are gauge equivalent to the connections $(\tilde
  \nabla^\perp)^\mu = \nabla + (\mu-1)( Q^\perp_\circ)^{(1,0)}+
  (\mu^{-1}-1)(Q_\circ^\perp)^{(0,1)}$ dual to $\nabla^\mu$ such that all its
  holonomies have $1$ as an eigenvalue of geometric multiplicity~$2$.  Let
  $\psi_1$, $\psi_2$ be two linearly independent meromorphic sections of
  $\mathcal{V}$ and $\alpha^1$, $\alpha^2$ two linearly independent
  meromorphic sections of the holomorphic bundle $\tilde{\mathcal{V}}^\perp$
  that is image of $\mathcal{V}^\perp$ under the gauge transformation
  of~\eqref{eq:gauge_trafo}.

  For generic $\mu \in \C_*$, the sections $\psi_1(\mu,.)$, $\psi_2(\mu,.)$ of
  $(V,i)$ are pointwise linearly independent and span the rank~2 subbundle
  $E_1=\ker(H^\mu_p(\gamma)-\Id)\subset (V,i)$ where $\gamma\in
  \Gamma\backslash\{0\}$.  Similarly, for generic $\mu$, the sections
  $\alpha^1(\mu,.)$, $\alpha^2(\mu,.)$ of $(V^*,-i)$ are pointwise linearly
  independent and span the rank~2 subbundle $\ker((H^\mu_p(\gamma))^*-\Id)=
  \im(H^\mu_p(\gamma)-\Id)^\perp$ for $\gamma\in \Gamma\backslash\{0\}$, where
  we use the identification of Section~\ref{sec:nabla_mu} between $(V^*,-i)$
  and the complex dual space of $(V,i)$. The meromorphic family of
  $2\times2$--matrices $g_{kl}(\mu)= \<\alpha^k_\C(\mu,.), \psi_l(\mu,.)\>$
  (with $\alpha^k_\C$ denoting the complex part of the $\alpha^l$) is
  therefore invertible for generic $\mu$ and \[ \tilde \xi(\mu,p) :=
  \xi(\mu,p) - \sum_{kl} \xi(\mu,p)(\psi_k(\mu,p))\cdot g_{kl}^{-1}(\mu)\cdot
  \alpha^l_\C(\mu,p) \] defines a polynomial Killing field that, for every
  $\mu$, vanishes on the trivial subbundle~$E_1$.
\end{proof}

\begin{Lem}\label{lem:caseIIIa}
  If a constrained Willmore torus belongs to \textbf{Case~IIIa} of
  Proposition~\ref{prop:cases} it is super conformal or Euclidean minimal with
  planar ends and $\eta\equiv 0$.
\end{Lem}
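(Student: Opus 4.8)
The plan is to use the rank~$4$ holomorphic bundle $\mathcal{V}$ over $\CP^1$ furnished by Lemma~\ref{lem:case_polynomial_conserved_quantity} and to read off information about $f$ from its fibre over $\mu=\infty$. First I would note that in Case~IIIa all holonomies $H^\mu_p(\gamma)$ are trivial, so every $\nabla^\mu$--parallel section of $(V,i)$ has trivial monodromy and, being the prolongation of a holomorphic section of $V/L$ by Lemma~\ref{lem:nabla_mu_parallel_prolonge_hol}, lies in the finite dimensional space $\mathcal{H}=\{\psi\in\Gamma(V)\mid\nabla\psi\in\Omega^1(L)\}$. Hence $\mathcal{V}_\mu$ is $4$--dimensional for generic $\mu\in\C_*$, and since $\mathcal{V}$ is a holomorphic rank~$4$ subbundle of $\CP^1\times\mathcal{H}$, its fibre $\mathcal{V}_\infty$ over $\mu=\infty$ is a $4$--dimensional subspace of $\mathcal{H}$.

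The key step is to show that every element of $\mathcal{V}_\infty$ solves \eqref{eq:asymptotic_DT}. Let $\psi(\nu)\in\mathcal{H}$ be a local holomorphic section of $\mathcal{V}$ in the coordinate $\nu=\mu^{-1}$ near $\nu=0$, so that for generic small $\nu$ the section $\psi(\nu)$ is $\nabla^{\nu^{-1}}$--parallel, i.e.
\[ \nabla\psi(\nu)=(1-\nu^{-1})(A_\circ\psi(\nu))^{(1,0)}+(1-\nu)(A_\circ\psi(\nu))^{(0,1)}. \]
Multiplying by $\nu$ and letting $\nu\to0$ forces the left hand side to $0$ and the right hand side to $-(A_\circ\psi^\infty)^{(1,0)}$, where $\psi^\infty=\psi(0)$; thus $(A_\circ\psi^\infty)^{(1,0)}=0$. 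Together with $\nabla\psi^\infty\in\Omega^1(L)$, which holds because $\psi^\infty\in\mathcal{H}$, this shows that the trivial--monodromy section $\psi^\infty$ descends to $T^2$ and solves \eqref{eq:asymptotic_DT}. As $\mathcal{V}_\infty$ is a $4$--dimensional subspace of $\mathcal{H}$ consisting of such solutions, the space of solutions to \eqref{eq:asymptotic_DT} on the universal covering is at least $4$--dimensional.

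It then remains to confront this lower bound with the asymptotic lemmas. If $\eta\not\equiv0$, then either $\check L=\ker(A_\circ)$ is constant, in which case Lemma~\ref{lem:fundamental_constant} gives a $2$--dimensional solution space, or $\check L$ is non--constant, in which case $A,Q\not\equiv0$ (otherwise $f$ would be totally umbilic on a dense set, impossible for a torus) and Lemma~\ref{lem:fundamental_nonconstant} bounds the solution space by $2$; either way this contradicts the bound $4$, so $\eta\equiv0$. With $\eta\equiv0$ established, Proposition~\ref{pro:asymptotic} shows that if $f$ were neither super conformal nor Euclidean minimal with planar ends the solution space of \eqref{eq:asymptotic_DT} would again be at most $2$--dimensional, contradicting the lower bound. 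Hence $f$ is super conformal or Euclidean minimal with planar ends. I expect the main obstacle to be the asymptotic identification of the second paragraph: the fibre $\mathcal{V}_\infty$ is only the holomorphic continuation of the family of parallel--section spaces past the generic locus, so one must justify both that its elements continue to satisfy the limiting equation \eqref{eq:asymptotic_DT} and that they span a genuinely $4$--dimensional subspace of solutions.
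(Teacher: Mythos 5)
Your proposal is correct and follows essentially the same route as the paper: in Case~IIIa the bundle $\mathcal{V}$ from Lemma~\ref{lem:case_polynomial_conserved_quantity} has rank~4, its fibre $\mathcal{V}_\infty$ is a 4--dimensional space of solutions to \eqref{eq:asymptotic_DT}, and this contradicts the dimension bound of Proposition~\ref{pro:asymptotic} unless $f$ is super conformal or Euclidean minimal with planar ends with $\eta\equiv 0$. The only difference is that you spell out the limiting computation in the coordinate $\nu=\mu^{-1}$ and unpack Proposition~\ref{pro:asymptotic} into Lemmas~\ref{lem:fundamental_nonconstant} and \ref{lem:fundamental_constant}, details the paper leaves implicit.
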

\begin{proof}
  In Case~IIIa of Proposition~\ref{prop:cases}, the bundle $\mathcal V$
  constructed in Lemma~\ref{lem:case_polynomial_conserved_quantity} has
  rank~4.  Because a local holomorphic section $\psi^\mu$ of $\mathcal{V}$
  defined in a neighborhood $U$ of $\mu=\infty$ satisfies $\nabla^\mu
  \psi^\mu=0$ for all $\mu\in U\cap\C_*$, the fiber $\mathcal{V}_\infty$ over
  $\mu=\infty$ is a 4--dimensional space of solutions to
  \eqref{eq:asymptotic_DT}.  Thus, by Proposition~\ref{pro:asymptotic}, an
  immersion $f$ belonging to Case~IIIa has to be super conformal or Euclidean
  minimal with planar ends and $\eta\equiv 0$.
\end{proof}

\begin{Lem}\label{lem:caseIIIb}
  If a constrained Willmore torus belongs to \textbf{Case~IIIb} of
  Proposition~\ref{prop:cases} it is Euclidean minimal with planar ends and
  $\eta\equiv 0$.
\end{Lem}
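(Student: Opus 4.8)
The plan is to reduce the statement to Proposition~\ref{pro:asymptotic} by exhibiting at least three linearly independent solutions to \eqref{eq:asymptotic_DT}. If the space of such solutions has complex dimension at least three, then Proposition~\ref{pro:asymptotic} forces $f$ to be super conformal or Euclidean minimal with planar ends and $\eta\equiv0$; a super conformal torus has trivial $\nabla^\mu$ and hence belongs to Case~IIIa rather than IIIb (see Section~\ref{sec:nabla_mu} and Proposition~\ref{prop:cases}). Thus only the Euclidean minimal alternative with $\eta\equiv0$ survives, which is the assertion. Two of the required solutions are produced exactly as in Lemma~\ref{lem:caseIIIa}: by Lemma~\ref{lem:case_polynomial_conserved_quantity} the bundle $\mathcal{V}$ has rank~$2$ in Case~IIIb, and a local holomorphic section of $\mathcal{V}$ near $\mu=\infty$ is $\nabla^\mu$--parallel for $\mu$ close to $\infty$, so its value at $\mu=\infty$ solves \eqref{eq:asymptotic_DT}; this gives the two--dimensional space $\mathcal{V}_\infty$ of solutions with trivial monodromy.

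To reach the generalized eigenspace I first record that in Case~IIIb each $N^\mu(\gamma):=H^\mu_p(\gamma)-\Id$ satisfies $(N^\mu(\gamma))^2=0$, commutes with all holonomies, and hence defines a nilpotent $\nabla^\mu$--parallel endomorphism whose image and kernel both equal $\mathcal{V}_\mu$. Applying the Hitchin trick of Section~\ref{sec:hitchin_trick}, that is Proposition~\ref{pro:holomorphic_family} for the operators $\partial^\mu,\dbar^\mu$ on $\End_\C(V)$, these assemble into a holomorphic subbundle of parallel nilpotent endomorphisms over $\CP^1$; a meromorphic section with a single pole at $\mu=\infty$ yields a nonzero nilpotent polynomial Killing field $\xi(\mu)=\sum_{l=0}^d\xi_l\mu^l$ with $\xi(\mu)^2=0$, inducing for generic $\mu$ an isomorphism $V/\mathcal{V}_\mu\xrightarrow{\ \sim\ }\mathcal{V}_\mu$. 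Since $\mathcal{V}_\mu=\ker\xi(\mu)$ for generic $\mu$ and $\mathcal{V}_\infty=\lim_{\mu\to\infty}\ker\xi(\mu)$, the leading coefficient satisfies $\mathcal{V}_\infty\subseteq\ker\xi_d$.

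The third solution comes from the Jordan direction. As $\xi(\mu)$ maps the $\nabla^\mu$--parallel sections onto $\mathcal{V}_\mu$ with kernel $\mathcal{V}_\mu$, I may choose on the universal covering a $\nabla^\mu$--parallel Jordan partner $\varphi^\mu$ with $\gamma^*\varphi^\mu=\varphi^\mu+\psi^\mu t^\mu_\gamma$ for some $\psi^\mu\in\mathcal{V}_\mu$ and $t^\mu\in\Hom(\Gamma,\C)$, depending meromorphically on $\mu$; set $s^\mu:=\xi(\mu)\varphi^\mu\in\mathcal{V}_\mu$, and after clearing poles normalize $\varphi^\mu$ to be holomorphic and nonzero at $\mu=\infty$. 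From $\nabla\varphi^\mu=(1-\mu)A_\circ^{(1,0)}\varphi^\mu+(1-\mu^{-1})A_\circ^{(0,1)}\varphi^\mu$ and the boundedness of $\nabla\varphi^\mu$ as $\mu\to\infty$ one gets $A_\circ^{(1,0)}\varphi^\infty=0$, while the same limit shows $\nabla\varphi^\infty\in\Omega^1(\tilde L)$ (each surviving term lands in $L$); hence $\varphi^\infty$ solves \eqref{eq:asymptotic_DT}. The role of $\xi$ is to prevent collapse into $\mathcal{V}_\infty$: the transverse component of $\varphi^\mu$ persists in the limit precisely when $\xi_d\varphi^\infty=\lim_{\mu\to\infty}\mu^{-d}s^\mu\neq0$, and then $\varphi^\infty\notin\ker\xi_d\supseteq\mathcal{V}_\infty$. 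So $\varphi^\infty$ is independent of $\mathcal{V}_\infty$, the solution space of \eqref{eq:asymptotic_DT} is at least three--dimensional, and the reduction in the first paragraph concludes.

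The main obstacle is the asymptotic control just invoked: one must show that the Jordan partner $\varphi^\mu$, which carries the unipotent monodromy $t^\mu$, can be normalized to extend holomorphically through $\mu=\infty$ with a nonzero limit whose transverse part does not collapse, i.e.\ with $\xi_d\varphi^\infty\neq0$. This requires packaging the generalized ($\nabla^\mu$--parallel, unipotent) sections as kernels of a holomorphic family of Fredholm operators extending to $\mu=\infty$, so that Proposition~\ref{pro:holomorphic_family} applies; the eigenvalue~$1$, non--semisimple part of the holonomy is exactly what makes this delicate, in contrast with the distinct--multiplier situation of Case~I treated in Lemma~\ref{lem:caseI}. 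Once the extension and the non--collapse are secured, the identity \eqref{eq:zero_one_part} on the open set where $A_{|L}\neq0$ reconciles the limiting relation with \eqref{eq:asymptotic_DT}, and the symmetry \eqref{eq:symmetry_nabla_mu} gives the mirror picture at $\mu=0$.
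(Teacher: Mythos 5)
Your reduction to Proposition~\ref{pro:asymptotic} has a genuine gap at exactly the point you flag as ``the main obstacle'', and the problem is worse than delicacy: the third solution you want cannot exist in all the scenarios you need to exclude. Your strategy must rule out every non-minimal configuration in Case~IIIb, including the possibility that $\check L=\ker(A_\circ)$ is \emph{constant} with $\eta\not\equiv 0$. But by Lemma~\ref{lem:fundamental_constant} the space of solutions to \eqref{eq:asymptotic_DT} is then \emph{exactly} complex 2--dimensional, so no third solution exists and your non--collapse claim $\xi_d\varphi^\infty\neq 0$ necessarily fails there; no normalization of the Jordan partner can rescue it. The paper disposes of this sub--case by an entirely separate argument, Proposition~\ref{pro:cases_harmonic} (the harmonic--case analysis of Section~\ref{sec:harmonic}, via Lemmas~\ref{lem:harmonic}, \ref{lem:no_ends} and \ref{lem:2by2_to_4by4}), showing that a non--super--conformal torus with constant $\check L$ is in Case~III if and only if it is Euclidean minimal with planar ends. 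Even in the non--constant $\check L$ case your plan is not completed: you never prove that the Jordan partners assemble into a holomorphic family extending through $\mu=\infty$, nor that the transverse component survives in the limit, and these are precisely the content of the step you defer.

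There is also a secondary gap in your construction of the nilpotent Killing field. Applying the Hitchin trick (Proposition~\ref{pro:holomorphic_family}) to $\partial^\mu,\dbar^\mu$ on $\End_\C(V)$ requires knowing that the generic dimension of $\dbar^\mu$--holomorphic endomorphisms equals that of $\nabla^\mu$--parallel ones; in Cases~I and~II this took the nontrivial multiplier--equivalence arguments of Lemmas~\ref{lem:caseI} and~\ref{lem:caseII}, and in Case~IIIb all multipliers are trivial so those arguments do not transfer. The paper avoids this entirely by defining $\xi(\mu,p)=\psi(\mu,p)\,\alpha_\C(\mu,p)$ with $\psi$ a meromorphic section of the rank~2 bundle $\mathcal V$ and $\alpha$ of the dual bundle $\tilde{\mathcal V}^\perp$, which makes nilpotency and $\ker\xi(\mu)\supset\mathcal V_\mu$ automatic. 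Its contradiction then runs in the opposite direction from yours: rather than enlarging the solution space of \eqref{eq:asymptotic_DT}, it uses that this space is at most 2--dimensional (Proposition~\ref{pro:asymptotic}), hence equals $\mathcal V_\infty$ and spans the $-i$--eigenbundle of $\check S$; the leading coefficient $\xi^d$ must kill this bundle and take values in it, and an explicit frame computation with \eqref{eq:coeff} (using that $\check B$, equivalently $\eta$, is nowhere vanishing) forces $\xi^d\equiv 0$ on $U$, contradicting $\xi\not\equiv 0$ when $\check L$ is non--constant. You would need to replace your third--solution paragraph with an argument of this kind, plus the constant--$\check L$ analysis, to close the proof.
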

\begin{proof}
  Firstly, we prove the existence of a nilpotent polynomial Killing field
  $\xi$ for the associated family $\nabla^\mu$ of a constrained Willmore torus
  that belongs to Case~IIIb.  We do this by using the holomorphic rank~2
  bundle $\mathcal V$ defined in
  Lemma~\ref{lem:case_polynomial_conserved_quantity} as well as the
  corresponding bundle $\mathcal{V}^\perp$ that belongs to the dual
  constrained Willmore immersion $f^\perp$ (which exists because, by
  \eqref{eq:gauge_trafo}, the connections $(\nabla^\perp)^\mu$ on the bundle
  $(V^*,-i)$ also have Jordan--holonomy): we define $\xi$ by
  \[\xi(\mu,p) =  \psi(\mu,p) \alpha_\C(\mu,p),\] 
  where $\psi$ is a non--trivial meromorphic section of $\mathcal{V}$ and
  $\alpha_\C$ the complex part of a non--trivial meromorphic section $\alpha$
  of the holomorphic bundle $\tilde{\mathcal{V}}^\perp$ that is image of
  $\mathcal{V}^\perp$ under the gauge transformation
  of~\eqref{eq:gauge_trafo}.  The Killing field $\xi$ is polynomial in $\mu$
  if $\psi$ and $\alpha$ are chosen holomorphic on $\C$ with a single pole at
  $\mu = \infty$.

  The polynomial Killing field $\xi$ thus constructed is nilpotent: for
  generic $\mu \in \C_*$, at every point $p\in T^2$ the elements of
  $\mathcal{V}_\mu\subset\Gamma(\mathcal{V})$ span the rank~2 subbundle
  $\im(R^\mu_p(\gamma))=\ker(R^\mu_p(\gamma))\subset (V,i)$ with
  $R^\mu_p(\gamma)= H^\mu_p(\gamma)-\Id$ denoting the nilpotent part of the
  Jordan--holonomy around a non--trivial cycle $\gamma\in
  \Gamma\backslash\{0\}$.  Similarly, the elements of $(\tilde
  {\mathcal{V}}^\perp)_\mu\subset\Gamma(\mathcal{V}^*)$ generically span the
  subbundle $\im((R^\mu_p(\gamma))^*)=\ker((R^\mu_p(\gamma))^*)$ of $(V^*,-i)$
  which, under the identification of Section~\ref{sec:nabla_mu} between
  $(V^*,-i)$ and the complex dual space to $(V,i)$, coincides with the
  subbundle $\im(R^\mu_p(\gamma))^\perp =\ker(R^\mu_p(\gamma))^\perp$.

  We show now that the existence of $\xi$ leads to a contradiction unless $f$
  is Euclidean minimal with planar ends and $\eta\equiv 0$.  Because $\xi$
  does not vanish identically, on the non--empty open set $U$ on which
  $V=L\oplus \check L$ with $\check L=\ker(A_\circ)$, see
  Section~\ref{sec:asymptotics}, it takes the form $\xi = \sum_{l=0}^d \xi^l
  \mu^l$ with highest coefficient $\xi^d\in \Gamma_U(\End_\C(V))$ that is not
  identically zero.  Comparing coefficients in $\nabla^\mu \xi=0$ implies that
  $\xi^d\in \Gamma_U(\End_\C(V))$ satisfies
  \begin{align}
    \label{eq:coeff}
    [ A_\circ^{(1,0)} ,\xi^d ] = 0 \quad \textrm{ and } \quad \nabla\xi^d = -
    [ A_\circ^{(1,0)},\xi^{(d-1)} ] + [ A_\circ^{(0,1)}, \xi^d ].
  \end{align}

  Assume now that $\check L=\ker(A_\circ)$ is non--constant.  The bundles $L$
  and $\check L$ can then be trivialized by nowhere vanishing sections
  $\psi\in \Gamma(L)$ and $\check \psi \in \Gamma(\check L)$ with $J\psi =
  \psi i$ and $\check J \check \psi =\check \psi i$, where $\check J$ is the
  complex structure on $\check L$ occurring in the mean curvature sphere
  $\check S$ of $\check L$, see \eqref{eq:mcs_tilde}.  With respect to the
  frame $\psi$, $\psi j$, $\check \psi$, $\check \psi j$ on $U$ we then have
  \begin{align}
    \label{eq:constant_xi}
    A_\circ^{(1,0)} =
    \begin{pmatrix}
      0 & a \, dz & 0 & 0 \\ 0 & 0 & 0 & 0 \\ 0 & 0 & 0 & 0 \\ 0 & 0 & 0 & 0
    \end{pmatrix} \qquad \textrm{ and therefore } \qquad \xi^d=
    \begin{pmatrix}
      \xi_{11} & \xi_{12} & \xi_{13} & \xi_{14} \\
      0 & \xi_{11} & 0 & 0 \\
      0 & \xi_{32} & \xi_{33} & \xi_{34} \\
      0 & \xi_{42} & \xi_{43} & \xi_{44}
    \end{pmatrix},
  \end{align}
  where the form of $\xi^d$ follows from the first part of \eqref{eq:coeff}
  because $a$ doesn't vanish on~$U$.  The fact that $\xi$ is nilpotent implies
  that $\xi_{11}$ vanishes identically.

  As in the proof of Lemma~\ref{lem:caseIIIa}, the elements in the fiber
  $\mathcal{V}_\infty$ over $\infty$ of the rank~2 bundle $\mathcal{V}$ are
  solutions to \eqref{eq:asymptotic_DT}.  Because an immersion that belongs to
  Case~IIIb is not super conformal and $\check L$ is assumed to be
  non--constant such that the immersion is not Euclidean minimal with
  $\eta\equiv 0$, by Proposition~\ref{pro:asymptotic} the space of solutions
  to \eqref{eq:asymptotic_DT} is at most 2--dimensional and therefore
  coincides with $\mathcal{V}_\infty\subset \Gamma(V)$.  By construction of
  $\xi$, its highest order coefficient $\xi^d$ vanishes on the elements in
  $\mathcal{V}_\infty$ and pointwise takes values in the span of the sections
  in $\mathcal{V}_\infty$.

  As we have seen in the proof of Lemma~\ref{lem:fundamental_nonconstant},
  there is a dense open subset of $U$ on which the solutions to
  \eqref{eq:asymptotic_DT} span the $-i$--eigenspace of $\check S$. This
  eigenspace is the complex span of $\check \psi$ and $\psi - \check \psi
  \frac k2 b$ with $b$ given by $\check B\psi = \check \psi j b$ for $\check
  B$ as in \eqref{eq:mcs_tilde}. The function $b$ is nowhere vanishing,
  because $\check B$ vanishes at a point $p\in U$ if and only if $\eta$
  vanishes at $p$ (because, by \eqref{eq:A_Q_dagger}, $(A_\circ)_{|p}$
  commutes with $S_p$ iff $B_p=0$ which, by \eqref{eq:B_and_tilde_B}, is
  equivalent to $\check B_p=0$) but $\eta$ has no zeros at all since on a
  torus the holomorphic quadratic differential $\delta\eta$ has none (if it
  had one, $\eta$ had to vanish identically which is impossible by
  Lemma~\ref{lem:fundamental_nonconstant}, because in the Willmore case with
  $\eta \equiv 0$ the space of solutions to \eqref{eq:asymptotic_DT} is at
  most 1--dimensional unless the immersion is Euclidean minimal).

  That $\xi^d$ vanishes on the sections of $\mathcal{V}_\infty$ and therefore
  on the span of $\check S$ thus implies
  \begin{align*}
    \xi^d =
    \begin{pmatrix}
      0 & \xi_{12} & 0 & 0 \\
      0 & 0 & 0 & 0 \\
      0 & \xi_{32} & 0 & 0 \\
      0 & \xi_{42} & 0 & 0 \\
    \end{pmatrix}.
  \end{align*}
  In particular, because $\xi^d\check\psi j = 0$, the second part of
  \eqref{eq:coeff} yields
  \begin{align*}
    -\xi^d\, \check\delta\check\psi j = (\nabla\xi^d)\check \psi j = - [
    A^{(1,0)}_\circ, \xi^{(d-1)} ]\check \psi j + [ A^{(0,1)}_\circ, \xi^d
    ]\check \psi j,
  \end{align*}
  where $\check \delta$ denotes the derivative $\check \delta=
  \pi_L\nabla_{|\check L}$ of $\check L$ with $\pi_L$ the projection to
  $L\cong V/\check L$. Because the right hand side of this equation takes
  values in $L$ we obtain $\xi_{32}=\xi_{42}=0$ and, because $\xi^d$ takes
  values in the $-i$--eigenbundle of $\check S$, $\xi_{12}=0$. This is
  impossible as it contradicts the assumption that $\xi^d$ does not vanish
  identically on $U$.

  The assumption that $\check L=\ker(A_\circ)$ is non--constant thus leads to
  a contradiction such that $\check L=\ker(A_\circ)$ has to be constant.
  Proposition~\ref{pro:cases_harmonic} below shows that a constrained Willmore
  torus which is not super conformal and has constant $\check L=\ker(A_\circ)$
  belongs to Case~III if and only if it is Euclidean minimal with planar ends.
  (Alternatively, one could directly prove, by similar arguments as in case of
  non--constant $\check L=\ker(A_\circ)$, that if $\check L=\ker(A_\circ)$ is
  constant the polynomial Killing field constructed above can only exists if
  $\eta\equiv 0$ and the immersion is Euclidean minimal with planar ends.)
\end{proof}

\subsection{Proof of the main theorem}\label{sec:proof_main}
For a constrained Willmore torus that belongs to Case~I or~II of
Proposition~\ref{prop:cases}, Lemmas~\ref{lem:caseI} and \ref{lem:caseII}
imply the existence of a polynomial Killing field~$\xi$. This gives rise to a
Riemann surface parametrizing the non--trivial eigenlines of~$\xi$, see
Lemma~\ref{lem:holonomy_curve}. Because for every $\mu\in \C_*$ the polynomial
Killing field $\xi$ commutes with all holonomies of $\nabla^\mu$, the
eigenlines of $\xi$ are also eigenlines of the holonomies and the uniqueness
part of Lemma~\ref{lem:holonomy_curve} implies that the Riemann surface
constructed using $\xi$ coincides with the holonomy spectral curve
$\Sigma_{hol}$.  The fact that $\xi$ is polynomial in $\mu$ implies that
$\Sigma_{hol}$ has finite genus and can be compactified by adding points over
$\mu=0$ and $\mu=\infty$.

By Theorem~\ref{the:biholomorphicity}, the holonomy spectral curve
$\Sigma_{hol}$ is a dense subset of the multiplier spectral curve
$\Sigma_{mult}$ which has two ends, see Section~\ref{sec:multiplier_spec}.
Thus, two of the points needed to compactify $\Sigma_{hol}$ correspond to the
two ends of the multiplier spectral curve~$\Sigma_{mult}$ while the other
added points are contained in the complement $\Sigma_{mult}\backslash
\Sigma_{hol}$ of the holonomy spectral curve inside the multiplier spectral
curve.  Because the two ends are interchanged under the anti--holomorphic
involution $\rho$ which covers the involution $\mu\mapsto 1/\bar \mu$, one of
the ends corresponds to a point over $\mu=0$ and the other to a point over
$\mu=\infty$.  To see that both ends of $\Sigma_{mult}$ are branch points of
$\mu$, note that when $\sigma$ tends to one of the ends the corresponding
multipliers go to infinity in $\Hom(\Gamma,\C^2) \cong T^2\times \R^2$ which
cannot happen on a single sheet of the covering $\mu\colon
\Sigma_{hol}\rightarrow \C_*$ only, because the holonomies $H^\mu(\gamma)$
itself can be represented as $\SL(4,\C)$--matrices.

For immersions belonging to Case~II of Proposition~\ref{prop:cases} this shows
that the two points corresponding to the ends of $\Sigma_{mult}$ are the only
points needed to compactify $\Sigma_{hol}$, because $\mu\colon
\Sigma_{hol}\rightarrow \C_*$ is a 2--fold branched covering.  In particular,
$\Sigma_{hol}$ then coincides with $\Sigma_{mult}$.  In Case~I, the map
$\mu\colon \Sigma_{hol}\rightarrow \C_*$ is a 4--fold branched covering and,
because $\mu$ is branched at the two ends of $\Sigma_{mult}$, the complement
$\Sigma_{mult}\backslash \Sigma_{hol}$ of the holonomy spectral curve inside
the multiplier curve consists of at most four points.

As we have seen in Section~\ref{sec:nabla_mu}, all super conformal tori belong
to Case~IIIa of Proposition~\ref{prop:cases}.  Euclidean minimal tori with
planar ends belong to Case~III, because the holonomy of $\nabla^\mu$ for
$\mu\in S^1$ is of Jordan type with eigenvalue~1 and off--diagonal part
related to the translational periods of $*df$, see \cite{LPP05}.

Conversely, Lemma~\ref{lem:caseIIIa} shows that a constrained Willmore torus
that belongs to Case~IIIa of Proposition~\ref{prop:cases} is either super
conformal or Euclidean minimal with planar ends and Lemma~\ref{lem:caseIIIb}
shows that Case~IIIb is only possible for Euclidean minimal tori with planar
ends.

The fact that all constrained Willmore tori with non--trivial normal bundle
$\perp_f$ belong to Case~III follows from Proposition~\ref{prop:cases}.  A
constrained Willmore torus with trivial normal bundle $\perp_f$ that is not
Euclidean minimal with planar ends belongs to Case~I or II, because otherwise
it had to be super conformal which is impossible, because super conformal tori
have non--trivial normal bundle, see \eqref{eq:willmore_functional}.\qed

\section{The Harmonic Case}
\label{sec:harmonic}

We discuss a special class of constrained Willmore surfaces related to
harmonic maps to~$S^2$. It includes CMC surfaces in $\R^3$ and $S^3$,
Hamiltonian stationary Lagrangian surfaces in $\C^2\cong \H$, and Lagrangian
surfaces with conformal Maslov form in $\C^2\cong \H$.  Constrained Willmore
tori of this class belong to Cases~II or~III of Theorem~\ref{the:mainA}. More
precisely, they belong to Case~II if and only if the appendant harmonic map to
$S^2$ is non--conformal. Then, the harmonic map itself admits a spectral curve
\cite{PS89,Hi} which is shown to coincide with the constrained Willmore
spectral curve studied above.

\subsection{Main theorem of the section}
The following theorem will be proven in Section~\ref{sec:proof_the_b}.

\begin{The}\label{the:mainB}
  If a conformal immersion $f\colon M\rightarrow S^4$ of a Riemann surface $M$
  admits a point $\infty\in S^4$ at infinity for which one factor of the
  (Euclidean) Gauss map \[ M \rightarrow Gr^+(2,4)=S^2\times S^2\] of $f$
  seen as an immersion into $\mathbb{R}^4= S^4\backslash \{\infty\}$ is
  harmonic, then $f$ is constrained Willmore.  If $M=T^2$ is a torus there is
  a Lagrange multiplier $\eta$ such that $f$ belongs~to Case~II or~III of
  Theorem~\ref{the:mainA}, depending on whether the harmonic factor is
  non--conformal or conformal. In the non--conformal case, the harmonic map
  spectral curve coincides with the constrained Willmore spectral curve.
\end{The}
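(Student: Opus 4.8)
The plan is to work in the affine chart $\R^4=\H=S^4\setminus\{\infty\}$ determined by the distinguished point at infinity and to translate the harmonicity of one Gauss factor into the statement that $\check L=\ker(A_\circ)$ is the constant line $\infty$. I would identify $M\to\R^4=\H$ with a map $f$ whose left and right normals $N,R\colon M\to S^2$, characterized by $*df=N\,df=df\,R$, are the two factors of the Gauss map $M\to Gr^+(2,4)=S^2\times S^2$. By passing to the dual surface $f^\perp$ (which by \eqref{eq:associated_perp} interchanges the roles of $A$ and $Q$, hence of the two factors) I may assume without loss of generality that the \emph{left} normal $N$ is the harmonic factor. In the quaternionic projective model the point $\infty$ corresponds to a constant line $L_\infty\subset V$, and the first step is to compute the mean curvature sphere $S$ and its Hopf fields $A$, $Q$ in this chart, expressing $A$ through the derivative of $N$.

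First I would establish the constrained Willmore property and exhibit the Lagrange multiplier. Since the Hopf field $A$ is built from $dN$, the tension field of $N$ appears in $d^\nabla{*}A$. The harmonic map equation for $N$ then delivers a $1$--form $\eta\in\Omega^1(\mathcal{R})$ — geometrically the analogue of a Maslov--type form measuring the deviation of the mean curvature from being conformally constant — for which $2{*}A_\circ=2{*}A+\eta$ has $\im(A_\circ)\subset L\subset\ker(A_\circ)=L_\infty$ and satisfies $d^\nabla(2{*}A+\eta)=0$. Verifying this closedness directly from the harmonic map equation is, by the Proposition of Section~\ref{sec:el_equation}, exactly the Euler--Lagrange equation, so $f$ is constrained Willmore with $\check L=\ker(A_\circ)$ the constant line $\infty$.

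Next, for $M=T^2$ I would invoke the analysis of Sections~\ref{sec:asymptotics} and~\ref{sec:proof}: a constant section of $\ker(A_\circ)$ is $\nabla^\mu$--parallel for every $\mu\in\C_*$, so every holonomy $H^\mu(\gamma)$ fixes a common line and $f$ cannot lie in Case~I, placing it in Case~II or~III of Proposition~\ref{prop:cases}. To separate the two I would use Lemma~\ref{lem:fundamental_constant}: with $S$ as in \eqref{eq:S_checkLconst}, the off--diagonal block $B$ vanishes identically iff $A_\circ$ anti--commutes with $S$ iff $\eta\equiv0$. Relating $B$ to the antiholomorphic part $\dbar N$ shows $B\equiv0$ precisely when $N$ is conformal, i.e. $\pm$holomorphic. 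Hence $N$ conformal forces $\eta\equiv0$ and $f$ is Euclidean minimal with planar ends (Case~III), while $N$ non--conformal gives $\eta\not\equiv0$ and, by Corollary~\ref{cor:small_willmore_energy} together with the trichotomy of Proposition~\ref{prop:cases}, Case~II.

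Finally, in the non--conformal (Case~II) situation I would match the spectral curves. Because $\ker(A_\circ)=\infty$ is a constant $\nabla^\mu$--invariant line, the associated family $\nabla^\mu$ descends to a holomorphic family of flat connections on the rank--$2$ quotient, reducing the holonomy to $\SL(2,\C)$; this reduced family is gauge equivalent, after the standard reparametrization of the spectral parameter, to the associated family of flat connections of the harmonic map $N$ whose eigenline holonomy defines the Pinkall--Sterling/Hitchin harmonic map spectral curve. By the uniqueness clause of Lemma~\ref{lem:holonomy_curve} the Riemann surface parametrizing the non--trivial holonomy eigenlines is the same for both families, so the harmonic map spectral curve equals $\Sigma_{hol}$, which by Theorem~\ref{the:biholomorphicity} coincides with $\Sigma_{mult}$ in Case~II. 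The main obstacle is the first computation: extracting the precise $\eta$ from the harmonic map equation and checking $d^\nabla(2{*}A+\eta)=0$ in the quaternionic chart, and then the careful identification of the $\SL(2,\C)$--reduced $\nabla^\mu$ with the harmonic map's associated family — matching gauges and spectral parameters — so that the two spectral curves are literally identified.
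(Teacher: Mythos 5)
Your endpoints match the paper's route — the chart computation producing the Lagrange multiplier is exactly Lemma~\ref{lem:harmonic}, and the identification of spectral curves via a rank--2 reduction together with the uniqueness clause of Lemma~\ref{lem:holonomy_curve} is how Proposition~\ref{pro:cases_harmonic} concludes — but the step that separates Case~II from Case~III is broken. You invoke Corollary~\ref{cor:small_willmore_energy}, whose hypothesis is $\mathcal{W}<8\pi$; no energy bound is available here, so the corollary does not apply (its proof even rules out Case~III precisely \emph{because} of the energy bound, so the citation is circular in spirit). Knowing that $\check L$ or $\hat L$ is constant rules out Case~I, as you say, but it does not exclude Case~III: one must show that for generic $\mu$ the holonomy of $\nabla^\mu$ has non--trivial eigenvalues. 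The paper does this by letting the constant point $\infty=\im(Q_\circ)$ induce a Willmore connection on the quotient $V/L$ (not on $V/\check L$ as in your sketch), proving by Lemma~\ref{lem:holonomy_not_trivial} that the associated $\SL(2,\C)$--family has non--trivial holonomy eigenvalues away from finitely many $\mu$ — this uses $A\not\equiv0$ and $Q\not\equiv0$ for the rank--1 Willmore connection, i.e.\ exactly the non--conformality of the harmonic factor via Lemma~\ref{lem:rank_1_willmore_over_t2} — and then transferring these eigenvalues to the rank--4 family through the explicit prolongation computation of Lemma~\ref{lem:2by2_to_4by4}. That computation is precisely the ``matching of gauges and spectral parameters'' you defer as the main obstacle; it is not a routine check but the step on which both the Case~II conclusion and the equality $\Sigma_{harm}=\Sigma_{hol}=\Sigma_{mult}$ rest (and no reparametrization of $\mu$ is needed: the same $\mu$ works on both sides).

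Two further genuine issues. First, a global point you ignore: the Euclidean normals and the induced connection on the quotient are a priori defined only away from the set $Z$ of points where $f$ passes through $\infty$; the paper needs Lemma~\ref{lem:no_ends}, resting on the appendix Lemma~\ref{lem:singularity}, to show the harmonic factor extends smoothly through $Z$ and that $Z=\emptyset$ when it is non--conformal — without this your globally defined reduced family over $T^2$ need not exist. Second, a bookkeeping slip: by Lemma~\ref{lem:harmonic}, harmonicity of the \emph{left} normal $N$ corresponds to $\im(Q_\circ)=\infty$ being constant, while $\ker(A_\circ)=\infty$ corresponds to the \emph{right} normal $R$; your reduction ``WLOG $N$ harmonic, hence $\check L=\ker(A_\circ)=\infty$'' pairs these wrongly, so the subsequent use of \eqref{eq:S_checkLconst} and of the equivalence $B\equiv0\Leftrightarrow\eta\equiv0$ from the proof of Lemma~\ref{lem:fundamental_constant} must be applied to the dual immersion $f^\perp$. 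Finally, when the harmonic factor is conformal it may be holomorphic rather than anti--holomorphic, which yields the super conformal alternative of Case~III, not only the Euclidean minimal one as your text suggests.
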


As pointed out by Fran~Burstall~\cite{Bu07}, the property that the Gauss map
of an immersion into~$\R^4$ has a harmonic factor is equivalent to
holomorphicity or anti--holomorphicity of its mean curvature vector, see
Section~\ref{sec:euclidean}. The first part of Theorem~\ref{the:mainB} has
been generalized by Burstall~\cite{Bu07} who proved that every immersion into
a 4--dimensional space form with (anti--)holomorphic mean curvature vector is
constrained Willmore.

It should be noted that, conversely, every constrained Willmore torus $f\colon
T^2\rightarrow S^4$ that belongs to Case~III of Theorem~\ref{the:mainA} (and
hence is super conformal or Euclidean minimal with planar ends) admits a point
$\infty \in S^4$ at infinity for which the Gauss map of $f$ seen as an
immersion into $\R^4=S^4\backslash\{\infty\}$ has a conformal factor, see
Section~\ref{sec:euclidean}.  It is also worth noting that (following from
Corollary~\ref{cor:small_willmore_energy} together with
Lemma~\ref{lem:harmonic}) every constrained Willmore torus that belongs to
Case~II and has Willmore functional $\mathcal{W}< 8\pi$ admits a point $\infty
\in S^4$ for which a factor of the Euclidean Gauss map is harmonic and
non--conformal.

\subsection{Surfaces in Euclidean 4--space $\R^4=\H$}\label{sec:euclidean}
The immersions studied in this section come with a distinguished
point~$\infty\in S^4$ at infinity in the conformal 4--sphere. In the following
we therefore work with a fixed trivialization $V\cong \H^2$ and identify
$\R^4=\H$ with $\HP^1\backslash \{\infty\}$ via $x\in \H\mapsto [ (x,1)]$.
The relation between the (M\"obius geometric) mean curvature sphere congruence
of an immersion $f\colon M \rightarrow \H= \HP^1\backslash\{\infty\}$ and its
(Euclidean) Gauss map is as follows (see Chapter~7 of \cite{BFLPP02} for
details): the Gauss map of a conformal immersion $f\colon M \rightarrow \H$ is
represented by the \emph{left} and \emph{right normal vectors} $N$, $R\colon M
\rightarrow S^2\subset \Im\H$ of $f$ which are defined by
\[ *df = N df = - df R. \] The mean curvature vector $\mathcal{H} = \frac12
\Tr (\sff)$ can be expressed in terms of $N$ and $R$ by
\begin{equation}
  \label{eq:mcvector_and_left_right_normals}
  dN'= \frac12(dN -N{*}dN)= -df H \qquad \text{and} \qquad dR'= \frac12(dR -
  R{*}dR)= - H df 
\end{equation}
where $H = \bar{\mathcal{H}} N = R \bar{\mathcal{H}}$.  The mean curvature
sphere of the immersion $L=
\begin{pmatrix}
  f \\ 1
\end{pmatrix}\H$ is then
\begin{equation}
  \label{eq:euclidean_form_of_S}
   S = \Ad 
   \begin{pmatrix}
     1 & f \\
     0 & 1
   \end{pmatrix}
   \begin{pmatrix}
     N & 0 \\
     H & -R
   \end{pmatrix}
\end{equation}
and the Hopf fields of $f$ take the form
\begin{equation}
  \label{eq:AQ_in_Euclidean_chart}
  2{*}A = \Ad 
\begin{pmatrix}
  1 & f \\
  0 & 1 
\end{pmatrix}
\begin{pmatrix}
  0 & 0 \\
  w & dR''
\end{pmatrix}
\quad \textrm{ and } \quad 
   2{*}Q = \Ad 
\begin{pmatrix}
  1 & f \\
  0 & 1 
\end{pmatrix}
\begin{pmatrix}
  dN'' & 0 \\
  w+dH & 0
\end{pmatrix}
\end{equation}
with $dN''=\frac12(dN +N{*}dN)$, $dR''=\frac12(dR +R{*}dR)$ and $w=
\tfrac12(-dH - R{*}dH + H{*}dN'')$.

An immersion $f\colon M \rightarrow \R^4= S^4\backslash\{\infty\}$ is
Euclidean minimal if and only if its left and right normal vectors~$N$ and~$R$
with respect to $\infty$ are both anti--holomorphic, that is, if $dN'=0$ and
$dR'=0$, see \eqref{eq:mcvector_and_left_right_normals}.  An immersion
$f\colon M \rightarrow S^4$ is super conformal, i.e., satisfies $A\equiv 0$ or
$Q\equiv 0$, if and only if with respect to one and therefore every point
$\infty$ at infinity its left normal $N$ is holomorphic $dN''=0$ or its right
normal $R$ is holomorphic $dR''=0$, see \eqref{eq:AQ_in_Euclidean_chart} and
Lemma~22 of \cite{BFLPP02}.

The following lemma gives a quaternionic characterization of harmonic maps
into $S^2$.
\begin{Lem}\label{lem:harmonic_map_into_2sphere}
  A map $N\colon M\rightarrow S^2\subset \Im(\H)$ into the 2--sphere is
  harmonic if and only if the 1--form $dN'=\frac12(dN -N{*}dN)$ or,
  equivalently, $dN''=\frac12(dN +N{*}dN)$ is closed.
\end{Lem}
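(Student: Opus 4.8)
The plan is to show directly that the failure of $dN'$ to be closed is governed by the tension field of $N$, so that closedness of $dN'$ becomes equivalent to harmonicity of $N$. First I would dispose of the equivalence between the two closedness conditions in the statement: since $dN=dN'+dN''$ is globally exact we have $d\,dN=0$, hence $d(dN')=-d(dN'')$ and $dN'$ is closed precisely when $dN''$ is. This step is metric-independent and uses nothing beyond the definitions, which is reassuring because harmonicity of a map out of a surface and the forms $dN'$, $dN''$ (built from the conformally invariant $*$ on $1$-forms) are all conformally invariant.

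The heart of the argument is an identity I would establish relating $d(dN')$ to the tension field $\tau(N)=\Delta N+|dN|^2N$ of $N\colon M\to S^2$. Expanding the definition of $dN'=\tfrac12(dN-N{*}dN)$ and using $d\,dN=0$ together with the Leibniz rule gives $d(dN')=-\tfrac12\bigl(dN\wedge{*}dN+N\,d({*}dN)\bigr)$. Working in a local conformal coordinate $z=x+iy$ and using crucially that $N$ and its derivatives are purely imaginary quaternions — so that $v^2=-|v|^2$ for $v\in\Im\H$ and $N^2=-1$ — the two terms collapse to $dN\wedge{*}dN=-|dN|^2\,\omega$ and $d({*}dN)=(\Delta N)\,\omega$, where $\omega$ is the area form and $\Delta$ the Laplacian. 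Substituting and factoring out $N$ on the left, using $-|dN|^2=|dN|^2N^2$, yields $d(dN')=-\tfrac12\,N\,\tau(N)\,\omega$.

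Given this identity the conclusion is immediate: since $N$ is a unit imaginary quaternion it is invertible, with $N^{-1}=-N$, so $d(dN')=0$ if and only if $\tau(N)=0$, which is exactly the statement that $N\colon M\to S^2$ is harmonic. Combined with the first step, which identifies closedness of $dN'$ with closedness of $dN''$, this proves both equivalences asserted in the lemma.

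I expect the only real work to be the quaternionic bookkeeping in the middle step — keeping track of the order of the quaternion factors in $dN\wedge{*}dN$ and of the sign conventions for $*$ and $\Delta$. There is no conceptual obstacle: the single simplification $N^2=-1$ together with $v^2=-|v|^2$ for imaginary $v$ is what reorganizes the exterior-derivative expression into the tension field. One should also check that the computation is insensitive to the conformal factor, but since $\Delta N$ and $|dN|^2$ scale in the same way and $*$ on $1$-forms is conformally invariant, the vanishing of $d(dN')$ does not depend on the chosen representative of the conformal class.
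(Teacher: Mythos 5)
Your proof is correct, and the quaternionic bookkeeping checks out: in a conformal coordinate one indeed has $dN\wedge{*}dN=-|dN|^2\,\omega$ and $d({*}dN)=(\Delta N)\,\omega$, and the factorization $-|dN|^2+N\,\Delta N=N\bigl(\Delta N+|dN|^2N\bigr)$ yields the identity $d(dN')=-\tfrac12\,N\,\tau(N)\,\omega$, from which the lemma follows since left multiplication by the unit quaternion $N$ is invertible. The paper argues from the other end: it writes ${*}dN=N\,dN'-N\,dN''$, expands $d{*}dN=dN'\wedge dN'+N\,d(dN')-dN''\wedge dN''-N\,d(dN'')$, asserts (as ``easily checked'') that the wedge terms are normal to $S^2$ while $d(dN')$ and $d(dN'')$ are tangential, and concludes that the tangential part of $d{*}dN$ equals $2N\,d(dN')=-2N\,d(dN'')$, whose vanishing is harmonicity. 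The underlying characterization of harmonicity --- vanishing of the tangential part of $d{*}dN$ --- is the same in both arguments, but your decomposition is genuinely different: rather than splitting $d{*}dN$ by type and sorting tangential against normal pieces (which hides computations such as $dN''\wedge dN'=0$ behind the paper's display), you compute $d(dN')$ directly and obtain a sharper, explicit formula identifying it with the tension field up to the invertible factor $-\tfrac12 N$ and the area form. Your identity in fact subsumes the paper's unverified tangentiality claim, since $\tau(N)\perp N$ forces $N\,\tau(N)\perp N$, and it measures the failure of closedness quantitatively instead of merely detecting it; what the paper's version buys in exchange is a coordinate-free presentation consistent with the $K$/$\bar K$ type formalism used throughout the rest of the text. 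Your observations that the equivalence of the two closedness conditions follows from $d\,dN=0$ and that everything is insensitive to the conformal factor match the paper's implicit use of the same facts.
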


\begin{proof}
  A map $N\colon M \rightarrow S^2$ is harmonic if the 2--form
  \[ d{*}dN=d(N dN' - N dN'') = dN'\wedge dN' + N d(dN') - dN''\wedge dN'' - N
  d(dN''), \] is normal to $S^2$. Both $dN'\wedge dN'$ and $dN''\wedge dN''$
  are normal and one can easily check that $d(dN')$ and $d(dN'')$ are
  tangential.  The tangential part of the form $d{*}dN$ is therefore $N
  d(dN')- N d(dN'')=2 N (dN') = -2N(dN'')$ which proves the statement.
\end{proof}

Using Lemma~\ref{lem:harmonic_map_into_2sphere} we show now that the Gauss map
of an immersion $f\colon M \rightarrow \R^4 =\H$ into Euclidean 4--space has
a harmonic factor if and only if its mean curvature vector $\mathcal{H}$ is
(anti--)holomorphic. More precisely, $f$ has a harmonic left normal $N$ if
and only if its mean curvature vector is a holomorphic section of the normal
bundle, that is, $*\nabla^\perp \mathcal{H}= N \nabla^\perp \mathcal{H}$ with
$\nabla^\perp$ denoting the normal connection of $f$.  Analogously,
harmonicity of the right normal $R$ is equivalent to anti--holomorphicity of
$\mathcal{H}$, that is, to $*\nabla^\perp \mathcal{H}= - N \nabla^\perp
\mathcal{H}$. We only prove the second statement: by
\eqref{eq:mcvector_and_left_right_normals}, $R$ is harmonic if and only if
$*dH=dH N$ or, equivalently, $*d\bar H=-N d\bar H$.  Using again
\eqref{eq:mcvector_and_left_right_normals}, $HN=RH$ implies $dH N -R dH = dR''
H - H dN''$ which shows that $*d\bar H=-N d\bar H$ is equivalent to
$*\nabla^\perp \bar H= - N \nabla^\perp \bar H$. By $\mathcal{H} = N \bar H$
this is equivalent to $*\nabla^\perp \mathcal{H}= - N \nabla^\perp
\mathcal{H}$.

\subsection{M\"obius geometric characterization of the harmonic case}\label{sec:moebius_char}
The following lemma gives a characterization in terms of the Hopf fields $A$
and $Q$ of the property that there is a Euclidean subgeometry in which the
Euclidean Gauss map has a harmonic factor.

\begin{Lem} \label{lem:harmonic} Let $f\colon M \rightarrow \H=
  \HP^1\backslash\{\infty\}$ be a conformal immersion. Then:
  \begin{itemize}
  \item[a)] The right normal vector $R\colon M \rightarrow S^2$ of $f$ with
    respect to the point $\infty$ at infinity is harmonic if and only if $f$
    is constrained Willmore and admits a 1--form $\eta\in\Omega^1(\mathcal R)$
    such that $2{*}A_\circ=2{*}A + \eta$ is closed and vanishes on the line
    corresponding to $\infty$.
  \item[b)] The left normal vector $N\colon M \rightarrow S^2$ of $f$ with
    respect to $\infty$ is harmonic if and only if $f$ is constrained Willmore
    and admits $\eta$ such that the form $2{*}Q_\circ=2{*}Q + \eta$ is closed
    and takes values in the line described by $\infty$.
  \end{itemize}
\end{Lem}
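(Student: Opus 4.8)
The plan is to compute everything in the Euclidean chart attached to $\infty$. Fixing the trivialization $V\cong\H^2$ with $L=\begin{pmatrix}f\\1\end{pmatrix}\H$ and the line at infinity $\infty=\begin{pmatrix}1\\0\end{pmatrix}\H$, I would gauge by $g=\begin{pmatrix}1&f\\0&1\end{pmatrix}$, for which the connection form of the trivial connection $\nabla$ is $\alpha=g^{-1}dg=\begin{pmatrix}0&df\\0&0\end{pmatrix}$, so that the explicit expressions \eqref{eq:euclidean_form_of_S} and \eqref{eq:AQ_in_Euclidean_chart} for $S$ and the Hopf fields are available. I will prove statement b) in detail; statement a) follows by the mirror computation, or equivalently by passing to the dual surface $f^\perp$, which interchanges the roles of $A$ and $Q$ and of the normals $N$ and $R$.

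For b), note first that $L\subset\ker(Q_\circ)$ holds automatically and that any $\eta\in\Omega^1(\mathcal{R})$ has the form $\Ad g^{-1}\eta=\begin{pmatrix}0&0\\\eta_0&0\end{pmatrix}$. Reading off \eqref{eq:AQ_in_Euclidean_chart} then gives $\Ad g^{-1}(2{*}Q_\circ)=\begin{pmatrix}dN''&0\\w+dH+\eta_0&0\end{pmatrix}$, and the requirement that $\im(Q_\circ)$ lie in the line $\infty$ forces $\eta_0=-(w+dH)$; thus there is a \emph{unique} admissible $\eta$, and for it $\Ad g^{-1}(2{*}Q_\circ)=\begin{pmatrix}dN''&0\\0&0\end{pmatrix}$. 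Since $d^\nabla{*}Q=d^\nabla{*}A$, the constrained Willmore Euler--Lagrange equation for this $\eta$ is $d^\nabla(2{*}Q_\circ)=0$, cf.\ \eqref{eq:def_constrained-willmore2}. Both implications of b) therefore reduce to the single claim that, for this distinguished $\eta$, one has $d^\nabla(2{*}Q_\circ)=0$ if and only if $N$ is harmonic: in the forward direction any admissible $\eta$ must be this one and closedness yields harmonicity, and conversely a harmonic $N$ makes this $\eta$ a Lagrange multiplier.

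The core step is the computation of $d^\nabla(2{*}Q_\circ)$. With $\nabla$ trivial, the exterior covariant derivative is represented in the $g$-frame by $dM+\alpha\wedge M+M\wedge\alpha$ for $M=\begin{pmatrix}dN''&0\\0&0\end{pmatrix}$, and a short calculation shows that $d^\nabla(2{*}Q_\circ)$ is represented in the $g$-frame by
\[ \begin{pmatrix}d(dN'')&dN''\wedge df\\0&0\end{pmatrix}. \]
By Lemma~\ref{lem:harmonic_map_into_2sphere}, $d(dN'')=0$ is exactly harmonicity of $N$, so everything hinges on the off-diagonal entry vanishing identically. This is where the type conditions enter: $dN''$ is valued in $T_NS^2=\Im\H\cap N^\perp$ and satisfies ${*}dN''=-N\,dN''$, while ${*}df=N\,df$ is the defining relation of the left normal. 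Writing $dN''=(p,q)$ and $df=(f_x,f_y)$ in a conformal chart, these read $q=Np$ and $f_y=-N f_x$, whence $dN''\wedge df=(p\,f_y-q\,f_x)\,dx\wedge dy=-(pN+Np)f_x\,dx\wedge dy$, which vanishes because $p\perp N$ forces $pN+Np=-2\langle p,N\rangle=0$. Hence $d^\nabla(2{*}Q_\circ)=0$ if and only if $N$ is harmonic, proving b).

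I expect the main care points to be twofold. First, one must confirm that the distinguished $\eta$ is a bona fide Lagrange multiplier: membership in $\Omega^1(\mathcal{R})$ is immediate from its off-diagonal form, and the symmetry \eqref{eq:eta_symmetry} is automatic for any $\eta\in\Omega^1(\mathcal{R})$ solving the Euler--Lagrange equation, as recorded after \eqref{eq:def_constrained-willmore}. Second, the genuine content is the clean vanishing of the off-diagonal term, which rests entirely on $dN''$ being tangent to $S^2$; here I would double-check that $dN''=\tfrac12(dN+N{*}dN)$ indeed takes values in $T_NS^2$, which holds since $N\cdot T_NS^2\subset T_NS^2$. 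Statement a) is the exact mirror image: the distinguished $\eta$ is now fixed by the condition $\infty\subset\ker(A_\circ)$, giving $\Ad g^{-1}(2{*}A_\circ)=\begin{pmatrix}0&0\\0&dR''\end{pmatrix}$, and the analogous off-diagonal term $df\wedge dR''$ vanishes because ${*}dR''=dR''R$ and ${*}df=-df\,R$, so that harmonicity of $R$ becomes equivalent to $d(dR'')=0$.
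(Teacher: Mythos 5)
Your proof is correct and follows essentially the same route as the paper's: the same Euclidean gauge $g=\begin{pmatrix}1&f\\0&1\end{pmatrix}$, the explicit Hopf--field matrices \eqref{eq:AQ_in_Euclidean_chart}, the observation that the value (resp.\ kernel) condition at $\infty$ pins down $\eta$ uniquely, and Lemma~\ref{lem:harmonic_map_into_2sphere} to translate closedness of the remaining diagonal term into harmonicity. The only organizational differences are that you prove b) directly while the paper proves a) and obtains b) by dualizing, and that you verify the off--diagonal wedge identity $dN''\wedge df=0$ (resp.\ $df\wedge dR''=0$) by a direct pointwise quaternionic computation, where the paper instead imports $d^\nabla(2{*}A)\in\Omega^2(\mathcal R)$ from Proposition~15 of \cite{BFLPP02} and encodes harmonicity as the type condition ${*}w=wN$ on the multiplier -- logically equivalent packaging of the same facts.
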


\begin{proof} It is sufficient to prove a), because b) follows by passing to
  the dual immersion $f^\perp$. By Proposition~15 of \cite{BFLPP02}, the
  differential of $2{*}A$ is in $\Omega^2(\mathcal R)$, i.e., vanishes on $L$
  and takes values in $L$, such that
  \begin{equation}
    \label{eq:nabla_A}
     d^\nabla(2{*}A) = \Ad
  \begin{pmatrix}
    1 & f \\ 0 & 1 
  \end{pmatrix}
  \begin{pmatrix}
    0 & 0 \\ dw  & 0
  \end{pmatrix} = 
  \begin{pmatrix}
    f dw & -f dw f  \\ dw & -dw f 
  \end{pmatrix}.
  \end{equation}
  Using \eqref{eq:AQ_in_Euclidean_chart} this implies $w \wedge df+ d(dR'')=0$
  and, by Lemma~\ref{lem:harmonic_map_into_2sphere}, the right normal vector
  $R$ is harmonic if and only if $*w= w N$.

  Every 1-form $\eta \in \Omega^1(\mathcal R)$ can be written as
  \begin{equation}\label{eq:eta}
      \eta = \Ad
  \begin{pmatrix}
    1 & f \\ 0 & 1 
  \end{pmatrix}
  \begin{pmatrix}
    0 & 0 \\ \hat \eta & 0
  \end{pmatrix} 
  \quad \textrm{ and  }  \quad 
  d^\nabla \eta = \Ad
  \begin{pmatrix}
    1 & f \\ 0 & 1 
  \end{pmatrix}
  \begin{pmatrix}
    0 & 0 \\ d\hat \eta  & 0
  \end{pmatrix}\in \Omega^2(\mathcal R)
  \end{equation}
  if and only if $*\hat \eta=-R\hat \eta = \hat \eta N$. In particular, the
  form $2{*}A+\eta$ is closed if and only if $*\hat \eta=-R\hat \eta = \hat
  \eta N$ and $dw +d\hat\eta=0$.

  This proves the lemma: the form $2{*}A+\eta$ vanishes on $\infty=[(1,0)]$ if
  and only if $\hat \eta=-w$ and, because $w$ always satisfies $*w=-Rw$, the
  form $2{*}A+\eta$ is closed if and only if $*w=wN$ which, as we have seen
  above, is equivalent to $R$ being harmonic.
\end{proof}

For a Willmore immersion $f\colon M\rightarrow S^4$ with $\eta\equiv 0$ that
is not super conformal, the following are equivalent (see Section~11.2 of
\cite{BFLPP02} for details):
\begin{itemize}
\item $\check L = \ker(A_\circ)=\ker(A)$ is constant,
\item $\hat L = \im(Q_\circ)=\im(A)$ is constant, and
\item $f$ is Euclidean minimal with planar ends in
  $\R^4=S^4\backslash\{\infty\}$ for some $\infty\in S^4$.
\end{itemize}
In particular, we then have $\infty = \check L = \hat L$. This immediately
follows from the fact that, firstly, $\check L$ and $\hat L$ are invariant
under $S$ if $\eta\equiv 0$ and, secondly, if there is a point~$\infty$
contained in all mean curvature spheres the immersion is Euclidean minimal
with planar ends in $\R^4=S^4\backslash\{\infty\}$ and vice versa.

\subsection{Willmore bundles of rank~1 and the harmonic map spectral
  curve}\label{sec:harmonic_spec}
We review the quaternionic approach to the spectral curve of harmonic maps
from tori to $S^2$ as developed in Sections~6.1 to 6.3 of~\cite{FLPP01}. This
allows a short proof of the prototype result mentioned in the introduction.

A flat connection~$\nabla$ on a quaternionic vector bundle $W$ with complex
structure $J$ over a Riemann surface $M$ is a \emph{Willmore connection} if
the Hopf field $A = \tfrac 14 (J\nabla J + {*}\nabla J)$ satisfies
$d^\nabla(2{*} A)=0$, or equivalently, if for every parameter $\mu\in \C_*$
the complex connection
\begin{equation}
  \label{eq:associated_willmoreconnection}
  \nabla^\mu = \nabla + (\mu-1) A^{(1,0)} + (\mu^{-1}-1) A^{(0,1)}
\end{equation}
on the complex bundle $(W,i)$ is flat.

The Hopf fields $A$ and $Q= \tfrac 14 (J\nabla J - {*}\nabla J)$ of a Willmore
connection $\nabla$ are holomorphic sections $A\in H^0(K\End_-(W))$ and $Q\in
H^0(\End_-(W)K)$, see \cite{FLPP01}.  In case $W$ is a quaternionic line
bundle, the complex line bundle $\End_+(W)$ is canonically trivial and $AQ\in
H^0(K^2)\cong H^0(K^2\End_+(W))$ is a holomorphic quadratic differential.  In
particular, if $M=T^2$ is a torus the holomorphic quadratic differential $AQ$
is either nowhere vanishing or vanishes identically.

\begin{Lem} \label{lem:rank_1_willmore_over_t2} Let $\nabla$ be a trivial
  connection on a quaternionic line bundle $W$ with complex structure $J$ over
  a Riemann surface $M$ and define $N\colon M \rightarrow S^2$ by $J\psi=\psi
  N$ for $\psi$ a non--trivial parallel section. Then $\nabla$ is a Willmore
  connection if and only if $N$ is harmonic.  In case $\nabla$ is Willmore and
  $M=T^2$ is a torus, either
  \begin{itemize}
  \item $AQ$ is nowhere vanishing and $N$ is non--conformal with $\deg(N)=0$
    or
  \item $AQ\equiv 0$ and $N$ is holomorphic or anti--holomorphic depending on
    whether $A\equiv 0$ or $Q\equiv 0$. In particular $\deg(N)\neq 0$ unless
    $N$ is constant.
  \end{itemize}
\end{Lem}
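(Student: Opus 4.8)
The plan is to trivialize $W$ by the global parallel frame $\psi$ and translate everything into statements about $N$. Writing an arbitrary section as $\psi q$ with $q\colon M\to\H$, the parallel frame gives $\nabla(\psi q)=\psi\,dq$ and $J(\psi q)=\psi\,Nq$, so a short computation yields $(\nabla J)(\psi q)=\psi\,(dN)\,q$; that is, in this frame $\nabla J$ acts as left multiplication by the $\Im\H$--valued $1$--form $dN$. Consequently $J\nabla J$ and $*\nabla J$ act as left multiplication by $N\,dN$ and $*dN$, and using the definitions of $dN'$ and $dN''$ in \eqref{eq:mcvector_and_left_right_normals} one finds that $2{*}A$ and $2{*}Q$ act as left multiplication by $-dN'$ and $dN''$ respectively. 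Since $\nabla$ is trivial, $d^\nabla$ of such a left--multiplication operator is left multiplication by the exterior derivative of its coefficient, so $d^\nabla(2{*}A)=0$ is equivalent to $d(dN')=0$. By Lemma~\ref{lem:harmonic_map_into_2sphere} this is exactly harmonicity of $N$, which proves the first assertion.

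Now assume $\nabla$ is Willmore and $M=T^2$. The fibrewise identification of $\End_-(W_p)$ with the imaginary quaternions orthogonal to $1$ and $N_p$, i.e.\ with $T_{N_p}S^2$, gives an isomorphism of complex line bundles $\End_-(W)\cong N^*TS^2$, whence $\deg\End_-(W)=\pm 2\deg(N)$. The Hopf fields are holomorphic sections $A\in H^0(K\End_-(W))$ and $Q\in H^0(\End_-(W)K)$ of complex line bundles whose degrees, since $\deg K=0$ on the torus, equal $\pm2\deg(N)$. A nowhere vanishing holomorphic section forces the degree of its line bundle to be zero, so $A$ or $Q$ nowhere vanishing forces $\deg(N)=0$.

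For the dichotomy I would use that $AQ\in H^0(K^2)$ and that $K^2$ is trivial on $T^2$, so $AQ$ is either identically zero or nowhere vanishing. Since the pointwise product of the two complex--line--bundle sections vanishes exactly where $A$ or $Q$ vanishes, and a nontrivial holomorphic section has only isolated zeros, $AQ\equiv 0$ forces $A\equiv 0$ or $Q\equiv 0$, while $AQ$ nowhere vanishing forces both $A$ and $Q$ to be nowhere vanishing. In the latter case the previous paragraph gives $\deg(N)=0$ and, as neither Hopf field vanishes, $N$ is non-conformal. In the former case $A\equiv0$ is equivalent to $dN'\equiv0$ and $Q\equiv0$ to $dN''\equiv0$, i.e.\ $N$ is anti-holomorphic respectively holomorphic; a non-constant (anti-)holomorphic map $T^2\to S^2=\CP^1$ has nonzero mapping degree, so $\deg(N)\neq0$ unless $N$ is constant.

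The main obstacle I anticipate is bookkeeping the left/right placement of the canonical bundle together with the two competing complex structures (the $J$ on $W$ versus the $*$ on $M$) carefully enough to confirm both that $AQ$ is genuinely a section of $K^2$ rather than a $(1,1)$--form, and that the sign in $\deg\End_-(W)=\pm2\deg(N)$ is fixed so that vanishing of $A$ matches anti-holomorphicity and vanishing of $Q$ matches holomorphicity of $N$ as stated. Once these conventions are aligned, the degree comparison and the triviality of $K^2$ on the torus make the dichotomy and the degree statements immediate.
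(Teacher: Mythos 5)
Your proof is correct and follows essentially the paper's own route: the parallel--frame computation giving $(2{*}A)\psi=-\psi\,dN'$, $A\psi=\psi\tfrac12 N\,dN'$, $Q\psi=\psi\tfrac12 N\,dN''$, the reduction of $d^\nabla(2{*}A)=0$ to $d(dN')=0$ and hence to harmonicity via Lemma~\ref{lem:harmonic_map_into_2sphere}, the zero--or--nowhere--vanishing dichotomy for the holomorphic quadratic differential $AQ\in H^0(K^2)$ on the torus, and the degree count with $A$, $Q$ nowhere vanishing holomorphic sections of $K\End_-(W)$ and $\End_-(W)K$ are all exactly the paper's argument. Your identification $\End_-(W)\cong N^*TS^2$ (legitimate here because the global parallel frame trivializes $W$) is the same computation as the paper's $\deg(\End_-(W))=2\deg(W)=2\deg(N)$, and the sign is in fact $+$, not $\pm$; you also spell out the step, left implicit in the paper, that $AQ\equiv0$ forces $A\equiv0$ or $Q\equiv0$ because nontrivial holomorphic sections have isolated zeros. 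The one genuine divergence is the conformal case: the paper gets $\deg(N)\neq0$ from the formula $\deg(N)=\deg(W)=\frac1{2\pi}\int_M A\wedge{*}A-Q\wedge{*}Q$, which moreover exhibits the sign of $\deg(N)$ according to which Hopf field survives, whereas you invoke the elementary fact that a non--constant (anti--)holomorphic map $T^2\to\CP^1$ is a branched covering and so has nonzero degree; both are valid, and yours is the more self--contained. Finally, on the convention you flag: with the paper's usage in Section~\ref{sec:euclidean} ($dN'=0$ meaning anti--holomorphic, $dN''=0$ holomorphic), your assignment $A\equiv0\Leftrightarrow N$ anti--holomorphic and $Q\equiv0\Leftrightarrow N$ holomorphic is the consistent one, and the lemma's phrase ``holomorphic or anti--holomorphic depending on whether $A\equiv 0$ or $Q\equiv 0$'' reads the other way; this is a labeling discrepancy in the statement, not a gap in your argument.
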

\begin{proof}
  That $\nabla$ is Willmore if and only if $N$ is harmonic follows from
  $(2{*}A)\psi = -\psi dN'$ and Lemma~\ref{lem:harmonic_map_into_2sphere}.  If
  $M=T^2$, the holomorphic quadratic differential $AQ$ either has no zeros at
  all or vanishes identically. Because $A\psi=\psi\frac12 N dN'$ and
  $Q\psi=\psi\frac12 N dN''$, the latter is equivalent to $N$ being conformal.
  The statement about the degree $\deg(N)$ of $N$ in the non--conformal case
  holds, because $A$ and $Q$ are then nowhere vanishing holomorphic sections
  of the bundles $K\End_-(W)$ and $\End_-(W)K$, respectively, and
  $\deg(\End_-(W))=2\deg(W)=2\deg(N)$.  In the conformal case the statement
  about the degree follows from $\deg(N)=\deg(W)=\frac1{2\pi}\int_M A\wedge *A
  - Q\wedge *Q$.
\end{proof}

The following lemma implies the existence of the harmonic map spectral curve
for non--conformal harmonic maps $N\colon T^2\rightarrow S^2$.
\begin{Lem} \label{lem:holonomy_not_trivial} Let $W$ be a quaternionic line
  bundle with complex structure $J$ and Willmore connection $\nabla$ over a
  torus $T^2$. In case $\nabla$ has non--trivial Hopf fields $A\not\equiv0$
  and $Q\not\equiv0$, there are only finitely many spectral parameters $\mu \in
  \C_*$ for which all holonomies of the flat connection $\nabla^\mu$ in
  \eqref{eq:associated_willmoreconnection} have eigenvalue $1$.
\end{Lem}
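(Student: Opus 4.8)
The plan is to recognise the bad set $B=\{\mu\in\C_*\mid \text{every } H^\mu(\gamma)\text{ has eigenvalue } 1\}$ as finite by combining a Fredholm--determinant argument at the two punctures $\mu=0,\infty$ with the unitary reduction of the holonomy on the unit circle. First note that $A$ and $Q$, taking values in $\End_-(W)$, anti--commute with $J$ and are hence trace--free as complex endomorphisms of $(W,i)$; therefore $\nabla^\mu$ is trivial on $\det(W,i)$ and $H^\mu(\gamma)\in\SL(2,\C)$. In $\SL(2,\C)$ the eigenvalue $1$ forces both eigenvalues to be $1$, so ``eigenvalue $1$'' means unipotent; since the $H^\mu(\gamma)$ commute, $\mu\in B$ exactly when the two generators of $\Gamma$ act unipotently, equivalently when they have a common fixed line, equivalently when there is a nonzero global $\nabla^\mu$--parallel section of $(W,i)$ over $T^2$.

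I would then bring in the elliptic families $\partial^\mu$ and $\bar\partial^\mu$ of \eqref{eq:holomorphic_families} acting on sections of $(W,i)$. As $A,Q\not\equiv0$ the map $N$ is non--conformal, so $\deg(N)=0$ by Lemma~\ref{lem:rank_1_willmore_over_t2}, $(W,i)$ has degree $0$, and both operators have Fredholm index $0$; moreover $\bar\partial^\mu$ extends holomorphically to $\mu=\infty$ and $\partial^\mu$ to $\mu=0$. A global $\nabla^\mu$--parallel section lies in $\ker\partial^\mu\cap\ker\bar\partial^\mu$, so $B\subseteq S_\partial\cap S_{\bar\partial}$, where $S_{\bar\partial}$ and $S_\partial$ denote the non--invertibility loci. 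By Proposition~\ref{pro:holomorphic_family} each is the zero set of a single holomorphic function, on $\CP^1\setminus\{0\}$ and $\CP^1\setminus\{\infty\}$ respectively, provided the operators are invertible somewhere. Granting this, $S_{\bar\partial}$ has no accumulation point in $\C_*\cup\{\infty\}$ and $S_\partial$ none in $\C_*\cup\{0\}$, so $B$ has none in the compact $\CP^1$ and is finite.

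The whole argument thus rests on invertibility of $\bar\partial^\mu$ for at least one $\mu$ (invertibility of $\partial^\mu$ then follows from the symmetry \eqref{eq:symmetry_nabla_mu}, which identifies $\ker\partial^\mu$ with the conjugate of $\ker\bar\partial^{1/\bar\mu}$). For $\mu\in S^1$ the connection $\nabla^\mu$ is quaternionic, so its holonomy commutes with $j$ and lies in $\SU(2)$. I would first exclude $B=\C_*$: if every $H^\mu(\gamma)$ were unipotent then for $\mu\in S^1$ it would be unitary and unipotent, hence the identity, so $H^\mu(\gamma)\equiv\Id$ on $\C_*$ by holomorphicity; taking a holomorphic family of global parallel sections $\psi^\mu$ and differentiating $\nabla^\mu\psi^\mu=0$ in $\mu$ as in Section~\ref{proof:trivial} would give $A^{(1,0)}\psi^\mu=A^{(0,1)}\psi^\mu=0$, whence $\nabla\psi^\mu=0$ and $A\psi^\mu=0$, contradicting $A\not\equiv0$. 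Since $B$ cannot contain all of $S^1$ (that would force $\Tr H^\mu(\gamma)\equiv2$ and hence $B=\C_*$), there is $\mu_0\in S^1\setminus B$; at $\mu_0$ the commuting unitary holonomies split $(W,i)$ into flat unitary eigenline bundles $E_\chi,E_{\chi^{-1}}$ with $\chi$ non--trivial, and because a flat unitary line bundle over $T^2$ is holomorphically trivial only for trivial monodromy, neither summand has holomorphic sections, so $\ker\bar\partial^{\mu_0}=0$.

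I expect the delicate point to be the exclusion of the everywhere--unipotent case $B=\C_*$: this is exactly where the hypothesis $A,Q\not\equiv0$ (the non--conformality of $N$) must be used, and the differentiation step requires the same uniqueness input as in Section~\ref{proof:trivial}. Once a single invertible $\bar\partial^{\mu_0}$ on the unit circle is produced, the Fredholm--determinant mechanism of the second paragraph closes the proof with no asymptotic analysis at the punctures.
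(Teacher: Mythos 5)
Your overall architecture (the $\SL(2,\C)$ reduction, common fixed line of commuting unipotents, $\SU(2)$-holonomy for $\mu\in S^1$, vanishing of $H^0$ for flat unitary line bundles with non-trivial monodromy, and the two non-invertibility loci of $\partial^\mu$ and $\dbar^\mu$ controlled via Proposition~\ref{pro:holomorphic_family}) is individually correct, but there is a genuine gap at exactly the point you flag yourself: the exclusion of the everywhere-unipotent case $B=\C_*$. After your unit-circle reduction this case means $H^\mu(\gamma)\equiv\Id$ for all $\mu$, and your differentiation step then needs $\partial\psi^\mu/\partial\mu$ to be $\nabla^\mu$-parallel. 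In Section~\ref{proof:trivial} that parallelism is supplied by Lemma~\ref{lem:linear_sys_with_monodromy}, which is unavailable here for two reasons: it is stated and proved for the bundle $V/L$ of a conformally immersed torus, not for an abstract Willmore bundle $W$; and it applies only at \emph{generic} points of the spectral curve, with non-constant multipliers, whereas in your scenario the multiplier is constantly the trivial one --- precisely the special multiple point of the spectrum. Indeed, with $H^\mu\equiv\Id$ the space of $\nabla^\mu$-parallel sections is $2$-dimensional and $\gamma^*\frac{\partial\psi^\mu}{\partial\mu}=\frac{\partial\psi^\mu}{\partial\mu}$, so there is no Jordan structure forcing anything; differentiating $\nabla^\mu\psi^\mu=0$ only yields $\nabla^\mu\frac{\partial\psi^\mu}{\partial\mu}=-(A^{(1,0)}-\mu^{-2}A^{(0,1)})\psi^\mu$, i.e.\ that the right-hand side is $\nabla^\mu$-exact, from which no contradiction with $A\not\equiv0$ follows. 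Note also that in this case $\ker\dbar^\mu\neq 0$ for \emph{every} $\mu$, so the Fredholm mechanism of your second paragraph collapses entirely; the case must be excluded by hand, and your proposal does not actually do so.

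The paper's own proof both closes this gap and makes the whole Fredholm apparatus unnecessary: every $\nabla^\mu$-parallel section with trivial monodromy is $\nabla''$-holomorphic, hence lies in the finite-dimensional space $H^0(W)$. If there were infinitely many bad $\mu$, one chooses distinct $\mu_0,\dots,\mu_n$ with parallel sections $\psi_{\mu_l}$ such that $\psi_{\mu_1},\dots,\psi_{\mu_n}$ are linearly independent over $\C$ and $\psi_{\mu_0}=\sum_{l}\psi_{\mu_l}\lambda_l$; applying $\nabla$ to this relation and separating $(1,0)$- and $(0,1)$-types gives $\sum_l(\mu_l-\mu_0)\lambda_l(A\psi_{\mu_l})^{(1,0)}=0$ and its $(0,1)$-analogue. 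Since $Q\not\equiv0$, no non-zero holomorphic section of $W$ lies in a $\pm i$-eigenspace of $J$, and since $AQ$ is a non-zero holomorphic quadratic differential on $T^2$ the field $A$ is nowhere vanishing; together these make the $(A\psi_{\mu_l})^{(1,0)}$ (and the $(A\psi_{\mu_l})^{(0,1)}$) linearly independent, forcing $\lambda_l=0$ and a contradiction. This one-paragraph argument treats all holonomy configurations uniformly, including the everywhere-trivial one, and is where the hypotheses $A\not\equiv0$, $Q\not\equiv0$ genuinely enter. Your route would constitute a valid alternative proof once the everywhere-unipotent case is excluded --- but that exclusion is where the real content of the lemma sits, and it needs an argument of the above kind rather than the uniqueness input you cite.
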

\begin{proof}
  Assume the statement was not true. Because the space $H^0(W)$ of
  $\nabla''$--holomorphic sections with trivial monodromy is finite
  dimensional, there are then $\mu_0,...,\mu_n\in \C_*$ and $\psi_{\mu_0},
  ...,\psi_{\mu_n}\in H^0(W)$ with $\nabla^{\mu_l}\psi_{\mu_l}=0$ for
  $l=0,...,n$ such that $\psi_{\mu_1},...,\psi_{\mu_n}$ are linearly
  independent over $\C$ while $\psi_{\mu_0},...,\psi_{\mu_n}$ are linearly
  dependent.  Because $Q\not\equiv 0$, it is impossible that a holomorphic
  section of $W$ is contained in the $\pm i$ eigenspaces of~$J$ such that both
  $(A\psi_{\mu_1})^{(1,0)},...,(A\psi_{\mu_n})^{(1,0)}$ and
  $(A\psi_{\mu_1})^{(0,1)},...,(A\psi_{\mu_n})^{(0,1)}$ are also linearly
  independent over~$\C$.  Moreover, there are $\lambda_l\in \C$ such that
  $\psi_{\mu_0} = \sum_{l=0}^n \psi_{\mu_l} \lambda_l$ and, using $\nabla
  \psi_{\mu_l} = (1-\mu_l) (A\psi_{\mu_l}))^{(1,0)}+(1-\mu^{-1}_l)
  (A\psi_{\mu_l})^{(0,1)}$, we obtain
  \[ \sum_{l=0}^n (A\psi_{\mu_l})^{(1,0)} (\mu_l-\mu_0) \lambda_l +
  \sum_{l=0}^n (A\psi_{\mu_l})^{(0,1)} (\mu_l^{-1}-\mu_0^{-1}) \lambda_l =
  0
  \] which is a contradiction, because $\mu_0\neq \mu_l$ for all $l=1$,...,$n$
  and $\lambda_l \neq 0$ for some $l$.
\end{proof}
  
The \emph{harmonic map spectral curve} of a non--conformal harmonic map
$N\colon T^2 \rightarrow S^2$ is the hyper--elliptic Riemann surface
$\mu\colon \Sigma_{harm} \rightarrow \C_*$ parametrizing the holonomy
eigenlines of the associated family \eqref{eq:associated_willmoreconnection}
of the corresponding trivial Willmore connection.  For the spectral curve to
be well defined one has to make sure that Lemma~\ref{lem:holonomy_curve} can
be applied to the holonomies of $\nabla^\mu$ around non--trivial loops, i.e.,
that for generic $\mu$ the holonomies have two different eigenvalues.  If this
was not the case, all holonomies had a double eigenvalue which had to be $1$
because $\nabla^\mu$ is a family of $\SL(2,\C)$--connections and
$\nabla^{\mu=1}$ is trivial.  But this is impossible by
Lemma~\ref{lem:holonomy_not_trivial}.  The same lemma shows that the harmonic
map spectral curve $\Sigma_{harm}$ has finite genus, because the number of
branch points of the projection $\mu\colon \Sigma_{harm} \rightarrow \C_*$ is
finite: the holonomies corresponding to branch points have $\pm 1$ as double
eigenvalues such that every branch point gives rise to a
$\nabla''$--holomorphic section with trivial monodromy defined on a 4--fold
cover of $T^2$.

\subsection{Proof of Theorem~\ref{the:mainB}}\label{sec:proof_the_b}
The first part of Theorem~\ref{the:mainB} immediately follows from
Lemma~\ref{lem:harmonic}.  In Section~\ref{sec:euclidean} we have seen that an
immersion $f\colon M\rightarrow \H$ with conformal left or right normal $N$ or
$R$ is super conformal or Euclidean minimal with planar ends.  Thus, an
immersions $f\colon T^2\rightarrow S^4$ of a torus whose left or right normal
with respect to some point $\infty\in S^4$ is conformal belongs to Case~III of
Theorem~\ref{the:mainA}. It therefore remains to show that $f$ belongs to
Case~II of Theorem~\ref{the:mainA} if there is $\infty \in S^4$ for which one
of the Euclidean normals is harmonic but non--conformal. This is proven
by Proposition~\ref{pro:cases_harmonic} below.

Firstly, we show that if a constrained Willmore immersion $f\colon T^2
\rightarrow S^4=\HP^1$ is neither super conformal nor Euclidean minimal with
planar ends and has constant $\hat L=\im(Q_\circ)$, the quaternionic
holomorphic line bundle $W=V/L$ carries a Willmore connection $\nabla$ which
is induced by $\infty=\hat L=\im(Q_\circ)$ and compatible $D=\nabla''$ with
the holomorphic structure $D$ on $V/L$.  (Passing to the dual constrained
Willmore immersion $f^\perp$ shows that, if $\check L = \ker(A_\circ)$ is
constant, the same is true for the holomorphic line bundle
$L^{-1}=V^*/L^\perp$.)

For an arbitrary conformal immersion $f\colon M\rightarrow \HP^1$, the
canonical projection $\pi$ to $V/L$ projects $v=(1,0) \in V\cong\H^2$ to a
holomorphic section $\varphi= \pi(v)\in H^0(V/L)$ that vanishes on the
discrete set $Z$ of points at which $f$ goes through $\infty = [(1,0)]$. Away
from $Z$, the complex structure $\tilde J\in \Gamma(\End(V/L))$ satisfies
$\tilde J\varphi=\varphi N$, where $N$ is the left normal of $f\colon
M\backslash Z \rightarrow \H= \HP^1\backslash\{\infty\}$.  The point $\infty$
induces a compatible connection $\nabla$ defined over $M\backslash Z$ by
setting $\nabla\varphi=0$.  This connection is Willmore if and only if $N$ is
harmonic.  By Lemma~\ref{lem:harmonic} this is equivalent to $f$ being
constrained Willmore with $\im( Q_\circ)=\infty$ for some Lagrange multiplier
$\eta$.

The following lemma shows that if $M=T^2$ is a torus and $N$ is
non--conformal, the set $Z$ is empty and the Willmore connection $\nabla$ is
globally defined.

\begin{Lem}\label{lem:no_ends}
  Let $f\colon T^2 \rightarrow S^4=\HP^1$ be a conformal immersion and denote
  $N$ and $R$ the left and right normals of $f$ seen as an immersion into
  $\H=\HP^1\backslash\{\infty\}$ for a point $\infty\in \HP^1$ at infinity.
  In case $N$ or $R$ is harmonic, it smoothly extends through the points of
  $T^2$ at which $f$ goes through $\infty$ to a harmonic map $T^2\rightarrow
  S^2$.  If this harmonic map is non--conformal, its degree is zero and $f$
  does not go through $\infty$.
\end{Lem}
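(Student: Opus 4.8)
Since passing to the dual immersion $f^\perp$ exchanges the left and right normals, it is enough to treat the case that the left normal $N$ is harmonic. Recall from the discussion preceding the lemma that, for $\infty=[(1,0)]$, the left normal is exactly the complex structure $\tilde J$ on $V/L$ read off through the holomorphic section $\varphi=\pi(1,0)\in H^0(V/L)$ via $\tilde J\varphi=\varphi N$, and that the set $Z$ where $f$ meets $\infty$ is precisely the zero set of $\varphi$. As $f$ is an immersion we have $\varphi\not\equiv 0$, so $Z$ is finite and $\varphi$ vanishes to finite order at each of its points. To obtain the smooth extension of $N$ across $Z$ I would use that at a zero of the holomorphic section $\varphi$ the leading coefficient lies in the $(+i)$--eigenbundle of $\tilde J$; hence $N=\varphi^{-1}\tilde J\varphi$ has a smooth limit (this is the standard regularity of the sphere--valued map attached to a holomorphic section, cf.\ \cite{FLPP01,BFLPP02}). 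The extension is harmonic by continuity, since its tension field vanishes on the dense set $T^2\setminus Z$ and depends continuously on the now smooth map.

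Assume from now on that the extended harmonic map $N\colon T^2\rightarrow S^2$ is non--conformal. To see that $\deg(N)=0$ I would apply Lemma~\ref{lem:rank_1_willmore_over_t2} to the trivial quaternionic line bundle $W=T^2\times\H$ carrying the trivial connection and the complex structure $J$ given by right multiplication with $N$. Harmonicity of $N$ makes this connection Willmore, and non--conformality makes the Hopf fields $A$ and $Q$ nowhere vanishing holomorphic sections of $K\End_-(W)$ and $\End_-(W)K$; since $\deg(K)=0$ on the torus this forces $\deg(\End_-(W))=2\deg(N)=0$, i.e.\ $\deg(N)=0$.

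It remains to deduce that $Z=\emptyset$. The plan is a degree count relating three quantities: the degree $\deg(N)=0$ of the extended Gauss map, the degree $\deg(V/L)=\tfrac12\deg(\perp_f)$ of the quaternionic holomorphic line bundle $V/L$, and the total vanishing order $\#Z$ of $\varphi$ at the points where $f$ meets $\infty$. On the one hand, comparing the complex line subbundle spanned by $\varphi$ (extended across $Z$, where it agrees with an eigenbundle of $\tilde J$) with the eigenbundles of $\tilde J$ inside $(V/L,i)$ expresses $\deg(N)$ in terms of $\deg(V/L)$ and $\#Z$; on the other hand the quaternionic Pl\"ucker formula \cite{FLPP01} relates $\#Z$, $\deg(V/L)$ and the (nonnegative) Willmore energy of the section $\varphi$. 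The main obstacle is precisely to carry out this bookkeeping with the correct signs and quaternionic conventions and to combine it with $\deg(N)=0$ so as to conclude simultaneously that $\deg(\perp_f)=0$ and $\#Z=0$; the delicate point, which I expect to be decisive, is that $\varphi$ is not merely holomorphic but parallel for the flat Willmore connection induced by $\infty$ on $T^2\setminus Z$, a rigidity that pins down the behaviour at $Z$ and should remove the spurious Willmore correction and force $\varphi$ to be zero--free. Once $\#Z=0$ the Willmore connection is globally defined, as asserted.
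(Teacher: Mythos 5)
There are two genuine gaps here, one in each half of the argument. First, your extension step is circular and rests on a claim that is false in general. You assert that $N=\varphi^{-1}\tilde J\varphi$ has a \emph{smooth} limit at a zero of $\varphi$ purely from the local structure of the holomorphic section, and then deduce harmonicity of the extension ``by continuity''. But the appendix Lemma~\ref{lem:singularity} delivers only a \emph{continuous} extension: in the local normal form $\varphi=\psi\big((x+yR)^n\lambda_n+O(n+1)\big)$ one gets $N=\lambda^{-1}R\lambda$ with $\lambda$ merely continuous at the zero. Worse, the second half of that same lemma shows that $C^1$ regularity at a zero is exceptional---it forces the Hopf field $Q$ of $V/L$ to vanish there---so for a general holomorphic section the extension is not even $C^1$, and no ``standard regularity'' of the attached sphere-valued map can give smoothness before harmonicity is used. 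The correct order is the reverse of yours: continuity from Lemma~\ref{lem:singularity}, harmonicity on the complement of the finite set $Z$ by hypothesis, and then smoothness on all of $T^2$ from elliptic regularity, i.e.\ removability of point singularities of continuous harmonic maps \cite{He04}.

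Second, and decisively, you never prove $Z=\emptyset$: your last paragraph is a plan for a degree count via the Pl\"ucker formula whose ``bookkeeping'' you explicitly leave open, and it is not clear it can close, since $\deg(N)=0$ only ties the total vanishing order of $\varphi$ to $\deg(V/L)$ (compare the remark after the lemma, where for minimal tori with planar ends $\deg(N)=d/2-e$ with $e>0$ ends); a global count by itself does not exclude zeros. The mechanism that actually works is pointwise and short. Once $N$ is smooth, $dN''$ is continuous, and the computation in the proof of Lemma~\ref{lem:singularity} forces $dN''_p=0$ at every $p\in Z$. On the other hand, your own (correct) application of Lemma~\ref{lem:rank_1_willmore_over_t2} to the trivial bundle $W=T^2\times\H$ shows that in the non-conformal case the holomorphic quadratic differential $AQ$ is nowhere vanishing, and since $Q\psi=\psi\tfrac12 N\,dN''$ this makes $dN''$ nowhere zero---a contradiction unless $Z=\emptyset$. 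So your middle step, $\deg(N)=0$ via the Willmore connection on the trivial bundle, agrees with the paper and is fine, but both the regularity of the extension and the emptiness of $Z$ require the arguments above rather than the ones you propose.
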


In case $N$ or $R$ is conformal and non--constant, its extension
$T^2\rightarrow S^2$ has non--zero degree and $f$ can go through $\infty$.  In
the anti--holomorphic case when $f$ is minimal with planar ends in $\R^4\cong
S^4\backslash\{\infty\}$ it goes through~$\infty$ at the ends of the surface.
The degree of the extended left and right normals is then $\deg(N)=d/2-e$ and
$\deg(R)= -e- d/2$, where $d=\deg(\perp_f)$ denotes the degree of the normal
bundle and $e$ the number of ends.

\begin{proof}
  It is sufficient to prove the statement in the case that $N$ is harmonic
  (the case that $R$ is harmonic immediately follows by passing to the dual
  surface).  As above, let $\varphi= \pi(v)$ be the holomorphic section of
  $V/L$ obtained by projection of $v=(1,0)$. The left normal vector $N$ of
  $f\colon T^2\backslash Z \rightarrow \H=\HP^1\backslash\{\infty\}$ then
  satisfies $\tilde J\varphi=\varphi N$ and therefore, by
  Lemma~\ref{lem:singularity}, continuously extends through the set $Z$ of
  points at which $f$ goes through $\infty=[(1,0)]$. Because $N$ is continuous
  on $T^2$ and, by assumption, harmonic on $T^2\backslash Z$, it is a
  continuous solution to an elliptic equation and therefore smooth on all of
  $T^2$, see e.g.~\cite{He04}.  In particular, the second part of
  Lemma~\ref{lem:singularity} implies that $dN''_p=0$ for all~$p\in Z$.
  
  It remains to check that $Z=\emptyset$ if $N$ is non--conformal.  For this
  we equip the trivial bundle $E=M\times \H$ with a Willmore connection by
  setting $\nabla\psi=0$ and $J\psi=\psi N$ for $\psi(p)=(p,1)$.  Since $N$ is
  non--conformal, Lemma~\ref{lem:rank_1_willmore_over_t2} implies that $AQ$ is
  nowhere vanishing and $\deg(N)=0$.  In particular, $Q\psi=\psi\frac12 N
  dN''$ implies the set $Z$ is empty, because $dN''_p=0$ for all~$p\in Z$ but
  $Q$ is nowhere vanishing.
\end{proof}

The following lemma shows that, if $\im(Q_\circ)$ is constant, the
$\SL(2,\C)$--connections in the associated family
\eqref{eq:associated_willmoreconnection} of the induced Willmore connection on
$V/L$ are gauge equivalent to an invariant subbundle of the
$\SL(4,\C)$--connections in the constrained Willmore associated family
\eqref{eq:nabla_mu} on $V$.

\begin{Lem} \label{lem:2by2_to_4by4}Let $f\colon M \rightarrow S^4$ be a
  constrained Willmore immersion with constant $\im(Q_\circ)$.  For every
  spectral parameter $\mu\in \C_*$, the prolongation of a parallel section of
  the connection \eqref{eq:associated_willmoreconnection} in the associated
  family of the Willmore connection on $V/L$ induced by $\infty=\im(Q_\circ)$
  is parallel with respect to the connection~\eqref{eq:nabla_mu} on $V$.
\end{Lem}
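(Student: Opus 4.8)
The plan is to check $\nabla^\mu\psi=0$ for the family \eqref{eq:nabla_mu} directly on the open dense set $U\subset M$ where $V=L\oplus\hat L$, with $\hat L=\im(Q_\circ)$ the constant line representing $\infty$, and then to propagate the identity to all of $M$. Before this can be set up one must know the prolongation $\psi$ exists, i.e.\ that a parallel section $\tilde\psi$ of \eqref{eq:associated_willmoreconnection} is $D$--holomorphic, and this comes for free: the Hopf field $A$ of the Willmore connection on $V/L$ satisfies $*A=\tilde JA$, and this alone forces both $A^{(1,0)}=\tfrac{1-i\tilde J}2A$ and $A^{(0,1)}=\tfrac{1+i\tilde J}2A$ to be of type $K$ with respect to $\tilde J$. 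Hence the $\bar K$--part of \eqref{eq:associated_willmoreconnection} is independent of $\mu$ and equals $D=\nabla''$, so $\tilde\psi$ is holomorphic and its prolongation $\psi\in\Gamma(V)$ with $\nabla\psi\in\Gamma(KL)$ is defined.

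On $U$ I would argue exactly as in the proofs of Lemmas~\ref{lem:2step} and~\ref{lem:fundamental_constant}. Since $\hat L$ is constant and $\nabla$ trivial, $\nabla=\left(\begin{smallmatrix}\nabla^L&0\\\delta& d\end{smallmatrix}\right)$ and $S=\left(\begin{smallmatrix}J&\hat B\\0&\tilde J\end{smallmatrix}\right)$ with respect to $L\oplus\hat L$, where $\delta$ is the nowhere vanishing derivative under $V/L\cong\hat L$. The touching condition $\delta J=\tilde J\delta$ annihilates the lower--left block of $\nabla S$, and reading off $\nabla S=2{*}Q_\circ-2{*}A_\circ$ with $\im(A_\circ)\subset L$, $\im(Q_\circ)=\hat L$ and $L\subset\ker(Q_\circ)$ yields
\[ 2{*}A_\circ=\begin{pmatrix}-\nabla^LJ+\hat B\delta & -\nabla\hat B\\ 0 & 0\end{pmatrix},\qquad 2{*}Q_\circ=\begin{pmatrix}0&0\\ 0 & d\tilde J+\delta\hat B\end{pmatrix}. \]
Under $V/L\cong\hat L$ the Willmore connection becomes the trivial connection on $\hat L$ with complex structure $\tilde J$, so its Hopf field is $\hat A=\tfrac14(\tilde J\,d\tilde J+{*}d\tilde J)$, built from $d\tilde J$ and hence expressible through the block $d\tilde J+\delta\hat B$ above.

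Writing the prolongation as $\psi=\ell\oplus\hat\psi$ with $\hat\psi\in\Gamma(\hat L)$ the section corresponding to $\tilde\psi$, the defining condition $\nabla\psi\in\Omega^1(L)$ becomes $\delta\ell=-d\hat\psi$ and leaves $\nabla\psi=\nabla^L\ell$. Since $S|_L=J$, the assertion $\nabla^\mu\psi=0$ then reduces to the single $L$--valued identity
\[ \nabla^L\ell=\tfrac12\big[(2-\mu-\mu^{-1})+iJ(\mu-\mu^{-1})\big]\,A_\circ\psi, \]
into which I would insert the parallelity of $\tilde\psi$, namely $d\hat\psi=\tfrac12\big[(2-\mu-\mu^{-1})+i\tilde J(\mu-\mu^{-1})\big]\hat A\hat\psi$, together with $\delta\ell=-d\hat\psi$ and the mean curvature sphere relation $(\nabla^LJ)''=\hat B\delta$ from Lemma~\ref{lem:2step}. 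The crux — and the step I expect to be the real obstacle — is this matching: one must show that the derivative $\nabla^L\ell$ obtained by differentiating $\delta\ell=-d\hat\psi$ reassembles, via the structure equations, into the purely algebraic right--hand side carried by the two Hopf blocks $-\nabla^LJ+\hat B\delta$ and $A_\circ|_{\hat L}=-\nabla\hat B$; the intertwining $\delta J=\tilde J\delta$ and the identification of $\hat A$ with the $d\tilde J$--part are precisely what make the $\mu$--dependence cancel. Once the identity holds on $U$, both $\psi$ and $\nabla^\mu\psi$ extend smoothly across the isolated points of $M\setminus U$, so $\nabla^\mu\psi=0$ on all of $M$; as a cross--check one can instead pass through the gauge equivalence \eqref{eq:gauge_trafo}, which rewrites \eqref{eq:nabla_mu} in terms of $Q_\circ$, whose image is the constant line $\hat L$.
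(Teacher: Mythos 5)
Your setup is sound, and in fact it is a gauge--invariant reformulation of what the paper does: the block formulas you derive for $\nabla$, $S$, $2{*}A_\circ$ and $2{*}Q_\circ$ in the splitting $V=L\oplus\hat L$ are correct, your observation that $*\hat A=\tilde J\hat A$ forces $(\nabla^\mu)''=D$ independently of $\mu$ (so that the prolongation exists) is right, and your reduction of $\nabla^\mu\psi=0$ to the single $L$--valued identity
\[ \nabla^L\ell=\tfrac12\big[(2-\mu-\mu^{-1})+(\mu-\mu^{-1})\,iJ\big]\,A_\circ\psi \]
is exactly the correct target, matching the analogous reduction of $\hat\nabla^\mu\tilde\psi=0$. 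The problem is that you stop precisely where the lemma's content begins. You yourself flag the ``matching'' step as ``the real obstacle'' and then assert, without computation, that ``the intertwining $\delta J=\tilde J\delta$ and the identification of $\hat A$ with the $d\tilde J$--part are precisely what make the $\mu$--dependence cancel.'' That cancellation is not a formal consequence of those two facts. The paper's proof consists of carrying it out: in the chart $\infty=[(1,0)]$ the prolongation is $\psi=\tvector{1\\0}g+\tvector{f\\1}\chi$ with $dg+df\chi=0$, the parallelity of $\tilde\psi$ gives $\chi$ as the explicit $\mu$--dependent algebraic expression \eqref{eq:chi} in $RHg$, the constancy of $\im(Q_\circ)$ pins down $A_\circ$ as in \eqref{eq:A_2by2_to_4by4_proof}, and differentiating \eqref{eq:chi} produces, besides the terms needed for $\nabla^\mu\psi=0$, the extra second line of \eqref{eq:differential_chi}. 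That line vanishes only because of the quadratic--to--linear coefficient identity $\frac{\mu-1}2\cdot\frac{\mu^{-1}-1}2=\frac14(2-\mu-\mu^{-1})$ --- the same identity that makes $\nabla^\mu$ flat --- applied after re--expanding $RdR\chi$ through \eqref{eq:chi}. In your language: when you differentiate $\delta\ell=-\hat\nabla\hat\psi$ and substitute the $\mu$--dependent formula for $\hat\nabla\hat\psi$, the derivative hits the $\mu$--dependent projectors $\frac{1\mp i\tilde J}2$ and produces cross terms quadratic in $(\mu-1)$ and $(\mu^{-1}-1)$; showing these recombine into the linear expression on the right--hand side (using $(\nabla^LJ)''=B\delta$ and the $-\nabla B$ block of $A_\circ$) is the entire proof, and it is absent.

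Two smaller points. First, your closing extension argument is moot: the Willmore connection on $V/L$, and hence its parallel sections, are only defined away from the set $Z$ where $f$ meets $\infty$, so there is nothing to extend across $M\setminus U$ --- the statement is local on $U$ (and in the intended application $Z=\emptyset$ by Lemma~\ref{lem:no_ends}). Second, the cross--check via the gauge equivalence \eqref{eq:gauge_trafo} would again require an honest computation, since \eqref{eq:gauge_trafo} relates $\nabla^\mu$ to $\tilde\nabla^\mu$ on $V$, not directly to the rank--1 family \eqref{eq:associated_willmoreconnection} on $V/L$; it does not substitute for the missing verification.
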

\begin{proof} 
  As above, denote by $\varphi$ the holomorphic section of $V/L$ defined by
  projection $\pi(v)$ of $v=(1,0)$ with $\infty=[(1,0)]=\im(Q_\circ)$.  The
  prolongation $\psi$ of a holomorphic section $\tilde \psi=\varphi g$ of
  $V/L$ is then
  \[ \psi =
  \begin{pmatrix}
    1 \\ 0 
  \end{pmatrix} g + 
  \begin{pmatrix}
    f \\ 1 
  \end{pmatrix} \chi\] with $\chi$ defined by $dg+df\chi=0$.
  
  A section $\tilde \psi=\varphi g$ of $V/L$ is $\nabla^\mu$--parallel with
  respect to the connection \eqref{eq:associated_willmoreconnection} if
  \[ dg + \pi^{(1,0)}_N(N dN' g) \frac{\mu-1}2 + \pi^{(0,1)}_N(N dN' g)
  \frac{\mu^{-1}-1}2=0,\] where $\pi^{(1,0)}_N(v) = \tfrac12(v-Nv i)$ and
  $\pi^{(0,1)}_N(v) = \tfrac12(v+Nv i)$. Now $dN'=-df H$ yields $NdN'=df R H$
  and, because $dg+df \chi=0$, the function $\chi$ is given by
  \begin{equation}
    \label{eq:chi}
    \chi = \pi^{(0,1)}_R(RH g)\frac{\mu-1}2+ \pi^{(1,0)}_R(RH
    g)\frac{\mu^{-1}-1}2.
  \end{equation}

  By \eqref{eq:AQ_in_Euclidean_chart} and \eqref{eq:eta}, the fact that
  $Q_\circ$ takes values in the line corresponding to $\infty$ implies
  \[ 2{*}Q_\circ = \Ad
  \begin{pmatrix}
    1 & f \\ 0 & 1 
  \end{pmatrix}
  \begin{pmatrix}
    dN'' & 0 \\ 0 & 0 
  \end{pmatrix}.  \quad \textrm{ Therefore } \quad 2{*}A_\circ = \Ad
  \begin{pmatrix}
    1 & f \\ 0 & 1 
  \end{pmatrix}
  \begin{pmatrix}
    0 & 0 \\ -dH & dR'' 
  \end{pmatrix}
  \]
  and 
  \begin{equation}
  \label{eq:A_2by2_to_4by4_proof}
   A_\circ = \frac 12 \Ad
  \begin{pmatrix}
    1 & f \\ 0 & 1 
  \end{pmatrix}
  \begin{pmatrix}
    0 & 0 \\ -R dH & R dR'' 
  \end{pmatrix}.
  \end{equation}
  The derivative of the prolongation $\psi$ of a holomorphic section $\tilde
  \psi=\varphi g$ with respect to the connection \eqref{eq:nabla_mu} on $V$ is
  \begin{multline}
    \nabla^\mu \psi =
  \begin{pmatrix}
    f \\ 1 
  \end{pmatrix}d\chi +  
  \begin{pmatrix}
    f \\ 1 
  \end{pmatrix} \Big(\pi^{(0,1)}_R(-R dH g + R dR'' \chi)\frac{\mu-1}2+\\
  \pi^{(1,0)}_R(-R dH g + R dR''\chi)\frac{\mu^{-1}-1}2\Big).
  \end{multline}
  In case that $\tilde \psi$ is parallel with respect to the connection
  \eqref{eq:associated_willmoreconnection}, using $RH dg = -R H df \chi = R
  dR'\chi= R dR \chi - R dR''\chi$, differentiation of \eqref{eq:chi} yields
  \begin{multline}\label{eq:differential_chi}
    d\chi = \pi^{(0,1)}_R(R dH g - R dR'' \chi)\frac{\mu-1}2+
  \pi^{(1,0)}_R(R dH g - R dR''\chi)\frac{\mu^{-1}-1}2 + \\
  \pi^{(0,1)}_R(R dR \chi)\frac{\mu-1}2+
  \pi^{(1,0)}_R(R dR \chi)\frac{\mu^{-1}-1}2 + 
  (dR H g)\frac{\mu-1}4+
  (dR H g)\frac{\mu^{-1}-1}2.  
  \end{multline}
  For proving that the prolongation $\psi$ is a parallel section of $V$ with
  respect to \eqref{eq:nabla_mu} if the section $\tilde \psi$ of $V/L$ is
  parallel with respect to \eqref{eq:associated_willmoreconnection}, it
  therefore remains to check that the second line of
  \eqref{eq:differential_chi} vanishes.  By \eqref{eq:chi}
  \[ R dR \chi = \pi^{(1,0)}_R(dR H g)\frac{\mu-1}2+ \pi^{(0,1)}_R(dR H
  g)\frac{\mu^{-1}-1}2 \] we have
  \[  \pi^{(0,1)}_R(R dR \chi)\frac{\mu-1}2 = \pi^{(0,1)}_R(dR H
  g)\frac{\mu-1}2 \frac{\mu^{-1}-1}2 \] 
  \[ 
  \pi^{(1,0)}_R(R dR \chi)\frac{\mu^{-1}-1}2 = \pi^{(1,0)}_R(dR H
  g)\frac{\mu-1}2 \frac{\mu^{-1}-1}2 \] such that indeed, by $\frac{\mu-1}2
  \frac{\mu^{-1}-1}2 = \frac14(2-\mu-\mu^{-1})$, the second line of
  \eqref{eq:differential_chi} vanishes.
\end{proof}

\begin{Pro} \label{pro:cases_harmonic} Let $f\colon T^2 \rightarrow S^4$ be a
  constrained Willmore torus which is neither super conformal nor Euclidean
  minimal with planar ends and has the property that $\check L=\ker(A_\circ)$
  or $\hat L=\im(Q_\circ)$ is a constant point $\infty \in S^4$. Then the
  immersion $f$ belongs to Case~II of Theorem~\ref{the:mainA} and the harmonic
  map spectral curve of the harmonic left or right normal of $f\colon T^2
  \rightarrow \R^4=S^4\backslash \{\infty\}$ coincides with the constrained
  Willmore spectral curve of $f$.
\end{Pro}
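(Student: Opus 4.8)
The plan is to exploit the harmonic normal of $f$ together with the correspondence of Lemma~\ref{lem:2by2_to_4by4} between the two associated families. By duality ($f\leftrightarrow f^\perp$, cf.\ \eqref{eq:associated_perp}) the hypotheses ``$\check L=\ker(A_\circ)$ constant'' and ``$\hat L=\im(Q_\circ)$ constant'' are interchanged --- since $2{*}A_\circ^\perp=-(2{*}Q_\circ)^*$ gives $\ker(A_\circ^\perp)=\im(Q_\circ)^\perp$ --- and this swaps the roles of the left and right normals. As membership in Case~II and the identification of the spectral curves are invariant under dualization (via \eqref{eq:gauge_trafo}), I would assume without loss of generality that $\hat L=\im(Q_\circ)=\infty$ is constant; by Lemma~\ref{lem:harmonic}(b) the left normal $N$ of $f\colon T^2\to\R^4=S^4\backslash\{\infty\}$ is then harmonic and $V/L$ carries the Willmore connection $\nabla$ induced by $\infty$. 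The assumption that $f$ is neither super conformal nor Euclidean minimal with planar ends forces $N$ to be \emph{non--conformal}: by Lemma~\ref{lem:rank_1_willmore_over_t2} a conformal $N$ is holomorphic or anti--holomorphic, the former being super conformality of $f$ and the latter, since $dN'=-df\,H$ by \eqref{eq:mcvector_and_left_right_normals}, forcing $\mathcal H\equiv 0$ and hence $f$ Euclidean minimal. By Lemma~\ref{lem:no_ends} the points where $f$ meets $\infty$ then form the empty set, so $\nabla$ is globally defined and the harmonic map spectral curve $\Sigma_{harm}$ of Section~\ref{sec:harmonic_spec} is well defined.

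Next I would place $f$ in Case~II. Applying the remark preceding Lemma~\ref{lem:fundamental_constant} to the dual immersion $f^\perp$, whose $\ker(A_\circ^\perp)=\im(Q_\circ)^\perp$ is constant, shows that $1$ is a holonomy eigenvalue of $(\nabla^\perp)^\mu$ for every $\mu$, so $f^\perp$ --- and hence $f$ --- lies in Case~II or~III of Proposition~\ref{prop:cases}. Case~III is ruled out, because by Lemmas~\ref{lem:caseIIIa} and~\ref{lem:caseIIIb} it would make $f$ super conformal or Euclidean minimal with planar ends. Thus $f$ is in Case~II, and Theorem~\ref{the:biholomorphicity} together with the Case~II statement of Theorem~\ref{the:mainA} gives $\Sigma_{hol}=\Sigma_{mult}$.

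It then remains to identify $\Sigma_{harm}$ with $\Sigma_{hol}$. Both are built by Lemma~\ref{lem:holonomy_curve}: $\Sigma_{harm}$ from the holonomy of the $\SL(2,\C)$--family \eqref{eq:associated_willmoreconnection} of $\nabla$ on $V/L$, which by Lemma~\ref{lem:holonomy_not_trivial} has two distinct non--trivial eigenvalues for generic $\mu$, and $\Sigma_{hol}$ from the non--trivial eigenlines of $H^\mu_p(\gamma)$ on $V$. By Lemma~\ref{lem:2by2_to_4by4} the prolongation of a parallel section of \eqref{eq:associated_willmoreconnection} with monodromy $h$ is $\nabla^\mu$--parallel for \eqref{eq:nabla_mu} with the same monodromy; being non--trivial it spans a non--trivial eigenline of $H^\mu_p(\gamma)$ with eigenvalue $h(\gamma)$. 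This injects the two eigenlines of the $V/L$--holonomy into the non--trivial eigenlines of the $V$--holonomy, which in Case~II number exactly two and are generically simple; a count then shows the map is a bijection. Hence the eigenvalue function carried by $\Sigma_{harm}$ and the non--constant eigenvalue function carried by $\Sigma_{hol}$ coincide, and the uniqueness clause of Lemma~\ref{lem:holonomy_curve} yields $\Sigma_{harm}=\Sigma_{hol}=\Sigma_{mult}$.

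The step I expect to be the main obstacle is this final bijection of eigenlines. One must know not merely that each $\SL(2,\C)$--eigenline prolongs to \emph{some} eigenline of $H^\mu_p(\gamma)$, but that the two distinct non--trivial eigenvalues it produces exhaust the non--trivial spectrum of the $\SL(4,\C)$--holonomy. This relies on the precise shape of Case~II --- a fixed two--dimensional eigenspace for the eigenvalue $1$ and exactly two simple, non--constant eigenvalues --- together with the generic simplicity of non--trivial eigenvalues established in the proof of Proposition~\ref{prop:cases}; these are what upgrade the a priori injection to a bijection and let the uniqueness in Lemma~\ref{lem:holonomy_curve} close the identification.
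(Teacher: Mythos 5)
Your overall architecture is the paper's own: reduce by dualization (using $2{*}A_\circ^\perp=-(2{*}Q_\circ)^*$, so $\ker(A_\circ^\perp)=\im(Q_\circ)^\perp$) to the case that $\hat L=\im(Q_\circ)=\infty$ is constant, deduce via Lemma~\ref{lem:harmonic} that the left normal $N$ is harmonic and via Lemma~\ref{lem:rank_1_willmore_over_t2} that it is non--conformal, use Lemma~\ref{lem:no_ends} to get a globally defined Willmore connection on $V/L$, and identify $\Sigma_{harm}$ with $\Sigma_{hol}$ through Lemma~\ref{lem:2by2_to_4by4} and the uniqueness clause of Lemma~\ref{lem:holonomy_curve}. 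The one genuine flaw is how you exclude Case~III: you invoke Lemma~\ref{lem:caseIIIb}, but the paper's proof of that lemma is not independent of the present proposition --- after its polynomial Killing field argument forces $\check L=\ker(A_\circ)$ to be constant, it cites Proposition~\ref{pro:cases_harmonic} itself to conclude that such a Case~IIIb torus is Euclidean minimal with planar ends (the parenthetical remark at the end of that proof sketches a direct alternative precisely because of this forward reference). So within the paper's deductive structure your appeal to Lemma~\ref{lem:caseIIIb} is circular. Lemma~\ref{lem:caseIIIa} is unproblematic, since its proof rests only on Proposition~\ref{pro:asymptotic}; the circularity sits entirely in the Case~IIIb exclusion.

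The repair is already contained in your final paragraph, and it is exactly the paper's route: since $N$ is non--conformal, both Hopf fields of the induced Willmore connection on $V/L$ are non--trivial, so by Lemma~\ref{lem:holonomy_not_trivial} for generic $\mu$ some holonomy of the $\SL(2,\C)$--family \eqref{eq:associated_willmoreconnection} has an eigenvalue different from $1$; Lemma~\ref{lem:2by2_to_4by4} prolongs the corresponding parallel section to a $\nabla^\mu$--parallel section of $V$ with the same non--trivial monodromy, which is incompatible with Case~III of Proposition~\ref{prop:cases}, where all holonomy eigenvalues equal $1$. Combined with your correct observation that constancy of $\ker(A_\circ^\perp)=\im(Q_\circ)^\perp$ excludes Case~I (a two--dimensional space of sections parallel for all $(\nabla^\perp)^\mu$, and dual holonomies are equivalent by \eqref{eq:gauge_trafo}), this places $f$ in Case~II without circularity --- so you should delete the sentence invoking Lemmas~\ref{lem:caseIIIa} and~\ref{lem:caseIIIb} and promote the prolongation argument from your third paragraph to do this job. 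Your closing eigenline count --- the two prolonged non--trivial eigenlines must exhaust the two simple non--trivial eigenvalues present in Case~II --- is a slightly more explicit rendering of the paper's remark that $\Sigma_{harm}$ and $\Sigma_{hol}$ arise as normalizations of the same algebraic sets describing the non--trivial eigenvalues, and is correct as stated.
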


\begin{proof}
  Passing to the dual immersion $f^\perp$ interchanges the property that
  $\ker(A_\circ)$ is constant with the property that $\im(Q_\circ)$ is
  constant. Moreover, $f$ and $f^\perp$ belong to the same case in the list of
  Theorem~\ref{the:mainA}, because the holonomy representations of the
  associated family $(\nabla^\perp)^\mu$ belonging to the dual constrained
  Willmore immersion $f^\perp$ are equivalent to the dual representations of
  the holonomy representations of $\nabla^\mu$, see \eqref{eq:gauge_trafo}.

  Assuming that $\ker(A_\circ)$ is constant we obtain that we are not in
  Case~I of Theorem~\ref{the:mainA}, because a constant kernel $\ker(A_\circ)$
  gives rise to a complex 2--dimensional space of sections with trivial
  monodromy of $V$ which are parallel for all $\nabla^\mu$.

  On the other hand, assuming that $\im(Q_\circ)$ is constant shows that we
  are in Case~II of Theorem~\ref{the:mainA}: because the immersion $f$ is
  neither super conformal nor Euclidean minimal with planar ends, its left
  normal with respect to $\infty=\im(Q_\circ)$ is harmonic and non--conformal
  (see Lemma~\ref{lem:harmonic} and Section~\ref{sec:euclidean}).
  Lemma~\ref{lem:no_ends} then implies that $f$ does not go through
  $\infty=\im(Q_\circ)$ such that $\infty$ induces a compatible Willmore
  connection on the bundle $V/L$. By Lemma~\ref{lem:holonomy_not_trivial}, for
  generic $\mu\in \C_*$ the holonomy of the associated family
  \eqref{eq:associated_willmoreconnection} of this Willmore connection has
  non--trivial eigenvalues. Together with Lemma~\ref{lem:2by2_to_4by4} this
  shows that we are in Case~II, that is, over generic $\mu$ the holonomy of
  the constrained Willmore associated family \eqref{eq:nabla_mu} has two
  non--trivial eigenvalues in addition to the two trivial eigenvalues.

  In particular, all three spectral curves arising in our situation are
  canonically isomorphic
  \[ \Sigma_{harm} = \Sigma_{hol} = \Sigma_{mult}. \] The second equality
  always holds for constrained Willmore tori of Case~II. The first equality
  holds, because by Lemma~\ref{lem:2by2_to_4by4} the Riemann surfaces
  $\Sigma_{harm}$ and $\Sigma_{hol}$ describing the non--trivial eigenlines of
  the $\SL(2,\C)$-- and $\SL(4,\C)$--holonomies, respectively, can be defined
  as normalizations of the same algebraic sets which describe the non--trivial
  eigenvalues of the holonomies (cf.\ the proof of
  Lemma~\ref{lem:holonomy_curve}).
\end{proof}

\subsection{CMC surfaces in Euclidean 3--space $\R^3$}\label{sec:cmc_r3}
The left and right normal vectors of a conformal immersion $f\colon M
\rightarrow \Im \H=\R^3$ coincide and $*df = N df = - df N$.

\begin{Lem} \label{lem:cmc_in_r3_iff_gauss_harmonic} A conformal immersion
  $f\colon M \rightarrow \Im \H=\R^3$ has constant mean curvature if and only
  if its Gauss map $N\colon M \rightarrow S^2$ is harmonic.
\end{Lem}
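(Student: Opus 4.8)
The plan is to reduce harmonicity of the Gauss map $N$ to constancy of the scalar mean curvature, using the quaternionic formulas of Section~\ref{sec:euclidean} together with the harmonic map criterion of Lemma~\ref{lem:harmonic_map_into_2sphere}. The whole argument rests on one identity from \eqref{eq:mcvector_and_left_right_normals} and one elementary linear-algebra observation.

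First I would exploit what the $\R^3 = \Im\H$ subgeometry buys us. Here $N = R$, and the mean curvature vector $\mathcal{H} = \tfrac12\Tr(\sff)$ is normal to the surface, hence a real multiple of the unit normal $N$; write $\mathcal{H} = H_0\, N$ with $H_0\colon M \to \R$ the scalar mean curvature. Then the quaternion $H = \bar{\mathcal{H}}N$ of \eqref{eq:mcvector_and_left_right_normals} satisfies $H = H_0\,\bar N N = H_0$, using $\bar N N = 1$ for $N \in \S^2\subset\Im\H$. In particular $H$ is real-valued and coincides with the scalar mean curvature, so $f$ has constant mean curvature if and only if $H$ is constant.

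Next I would differentiate the identity $dN' = -df\,H$ from \eqref{eq:mcvector_and_left_right_normals}. Since $d(df) = 0$ and $H$ is a real scalar function (so it commutes and obeys the ordinary Leibniz rule), this gives
\[ d(dN') = -\,d(df\,H) = -\,dH \wedge df. \]
By Lemma~\ref{lem:harmonic_map_into_2sphere}, $N$ is harmonic if and only if $dN'$ is closed, i.e.\ if and only if $dH \wedge df = 0$. It therefore only remains to show that, for an immersion $f$, the condition $dH\wedge df = 0$ is equivalent to $dH = 0$. The implication $dH = 0 \Rightarrow dH\wedge df = 0$ is immediate and yields ``CMC $\Rightarrow$ harmonic''. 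For the converse, suppose $dH_p \neq 0$ at some $p \in M$, choose $X \in T_pM$ with $dH(X) \neq 0$, and let $Y \neq 0$ span $\ker(dH_p)$, so that $\{X,Y\}$ is a basis of $T_pM$. Then $(dH\wedge df)(X,Y) = dH(X)\,df(Y) - dH(Y)\,df(X) = dH(X)\,df(Y) \neq 0$, because $dH(X) \neq 0$ and $df(Y) \neq 0$ by injectivity of $df$; this contradicts $dH\wedge df = 0$. Hence $dH \equiv 0$ and $H$ is (locally) constant, so harmonicity of $N$ forces $f$ to be CMC.

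I do not expect a genuine obstacle: the substance is entirely contained in the two inputs already available, the Euclidean formula $dN' = -df\,H$ and the harmonic-map criterion. The only points needing care are the \emph{reality} of $H$ in the $\R^3$ subgeometry, which is what makes the Leibniz computation collapse to $-dH\wedge df$, and the elementary step $dH\wedge df = 0 \Rightarrow dH = 0$, where the immersion hypothesis on $f$ (injectivity of $df$) is precisely what is used.
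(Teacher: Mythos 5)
Your proposal is correct and takes essentially the same route as the paper: the paper's one--line proof invokes Lemma~\ref{lem:harmonic_map_into_2sphere} together with $dN'=-df\,H$ from \eqref{eq:mcvector_and_left_right_normals} to equate harmonicity of $N$ with constancy of the real function $H$, which is exactly your chain of equivalences. You merely write out the details the paper leaves implicit, namely the reality of $H$ in the $\Im\H$ subgeometry, the Leibniz computation $d(dN')=-dH\wedge df$, and the pointwise linear--algebra step using injectivity of $df$ to pass from $dH\wedge df=0$ to $dH=0$.
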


\begin{proof}
  By Lemma~\ref{lem:harmonic_map_into_2sphere}, harmonicity of $N$ is
  equivalent to closedness of $dN'$ which, by
  \eqref{eq:mcvector_and_left_right_normals}, is equivalent to $H$ being
  constant (note that $H$ for surfaces in $\R^3$ is the real function $H =
  -\frac12 \Tr \<df,dN\>$).
\end{proof}

Because CMC immersions are constrained Willmore and isothermic, the form
$\eta$ for which $d^\nabla(2{*}A + \eta)=0$ is not unique: for all
$\rho\in \R$, the forms $2{*}A_\circ^\rho = 2{*} A + \eta_0 + \rho \omega$ and
$2{*} Q^\rho_\circ = 2{*} Q + \eta_0 + \rho \omega$ are closed, where by
\eqref{eq:AQ_in_Euclidean_chart}
\[ 2{*} A = 
\Ad
\begin{pmatrix}
  1 & f \\
  0 & 1 
\end{pmatrix}
\begin{pmatrix}
  0 & 0 \\
  \frac12 H {*}dN'' & dN''
\end{pmatrix} \qquad\textrm{ and } \qquad \omega := \Ad
\begin{pmatrix}
  1 & f \\
  0 & 1 
\end{pmatrix}
\begin{pmatrix}
  0 & 0 \\
   dN'' & 0
\end{pmatrix}
\]
and $\eta_0:=-\frac12 H{*}\omega$. Because
\begin{equation*}
  2{*}A_\circ^\rho  =  \Ad
  \begin{pmatrix}
    1 & f \\
    0 & 1 
  \end{pmatrix}
  \begin{pmatrix}
    0 & 0 \\
    \rho dN'' & dN''
  \end{pmatrix} \quad \textrm{ and } \quad 
  2{*}Q_\circ^\rho  = \Ad
  \begin{pmatrix}
    1 & f \\
    0 & 1 
  \end{pmatrix}
  \begin{pmatrix}
    dN'' & 0 \\
    \rho dN'' & 0
  \end{pmatrix},
\end{equation*}
the 2--step forward and backward B\"acklund transforms for different
parameters $\rho$ are
\[ \check L_\rho = \ker(A_\circ^\rho) =
\begin{pmatrix}
  f \rho -1 \\ \rho
\end{pmatrix} \qquad \textrm{ and } \qquad \hat L_\rho = \im(Q_\circ^\rho) =
\begin{pmatrix}
  1 + f \rho \\ \rho
\end{pmatrix}
\]
and the only parameter for which one of these B\"acklund transforms is
constant is $\rho=0$ when $\check L_0=\hat L_0 = \infty$.

\subsection{CMC surfaces in the metrical 3--sphere $S^3$} \label{sec:cmc_s3}
Let $f\colon M \rightarrow S^3\subset \H$ be a conformal immersion with $*df=
N df = - df R$.  The imaginary 1--forms
\begin{align}
  \label{eq:s3_forms}
  \alpha= f^{-1} df\qquad \textrm{ and }\qquad \beta= df f^{-1}
\end{align}
satisfy $*\alpha = R \alpha = - \alpha R$ and $\beta = N \beta = -\beta N$.
Their Maurer--Cartan equations are
\[ d\alpha + \alpha\wedge \alpha = 0\qquad \textrm{ and }\qquad d\beta = \beta
\wedge \beta. \] Denote by $n$ the positive oriented normal vector of $f$ as a
surface in $S^3$, that is, $f$, $n$ is a positive orthonormal basis of the
normal bundle of $f$ seen as an immersion into $\H$. Because the complex
structure on the normal bundle is given by left multiplication by $N$ and
right multiplication by $R$ we have $n = N f=fR$.  The second fundamental form
of $f$ as an immersion into $\H$ is $\sff = -\<df,df\>f -\< df,dn\>n$ and its
mean curvature vector is $\mathcal{H} = \frac 12 \Tr\sff = \HS n -f$, where
$\HS$ denotes the scalar mean curvature of $f$ as an immersion into $S^3$.
Hence $H=\bar{\mathcal{H}} N = R \bar{\mathcal{H}}$ satisfies
\begin{align}
  \label{eq:s3_H}
  H = (\HS-R)f^{-1} = f^{-1} (\HS -N).  
\end{align} A straightforward computation (using \eqref{eq:s3_forms} together
with \eqref{eq:mcvector_and_left_right_normals} and \eqref{eq:s3_H}) shows
\begin{equation}
  \label{eq:mc_cmc_s3}
  d{*}\alpha + \HS \alpha\wedge \alpha = 0\qquad \textrm{ and }\qquad
  d{*}\beta + \HS \beta \wedge \beta=0.
\end{equation}

\begin{Lem} \label{lem:cmc_in_s3_iff_gauss_harmonic} A conformal immersion
  $f\colon M \rightarrow S^3\subset \H$ with $*df = N df = -df R$ has
  constant mean curvature in $S^3$ if and only if $N\colon M \rightarrow
  S^2$ is harmonic or, equivalently, $R\colon M \rightarrow S^2$ is
  harmonic.

\end{Lem}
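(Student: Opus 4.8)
The plan is to show that each of the two harmonicity conditions is equivalent to the single scalar condition that $\HS$ be constant, exploiting the structure equations \eqref{eq:mc_cmc_s3} already derived for $\alpha=f^{-1}df$ and $\beta=df\,f^{-1}$. By Lemma~\ref{lem:harmonic_map_into_2sphere}, $N$ is harmonic if and only if $dN'$ is closed and $R$ is harmonic if and only if $dR'$ is closed, so the whole lemma reduces to computing $d(dN')$ and $d(dR')$.

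First I would express $dN'$ through $\beta$. Substituting $H=f^{-1}(\HS-N)$ from \eqref{eq:s3_H} into $dN'=-df\,H$ from \eqref{eq:mcvector_and_left_right_normals} gives $dN'=-\beta(\HS-N)=-\HS\beta+\beta N$, and since ${*}\beta=-\beta N$ this is $dN'=-\HS\beta-{*}\beta$. Differentiating, $d(dN')=-d\HS\wedge\beta-\HS\,d\beta-d{*}\beta$; inserting the Maurer--Cartan relation $d\beta=\beta\wedge\beta$ and $d{*}\beta=-\HS\,\beta\wedge\beta$ from \eqref{eq:mc_cmc_s3} makes the two $\beta\wedge\beta$ terms cancel, leaving
\[ d(dN')=-\,d\HS\wedge\beta. \]
The symmetric calculation for $R$, starting from $dR'=-H\,df=-\HS\alpha+{*}\alpha$ (using $H=(\HS-R)f^{-1}$ and ${*}\alpha=R\alpha$) and using $d\alpha=-\alpha\wedge\alpha$ together with $d{*}\alpha=-\HS\,\alpha\wedge\alpha$, yields in the same way
\[ d(dR')=-\,d\HS\wedge\alpha. \]

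It then remains to argue that $d\HS\wedge\beta=0$ forces $d\HS=0$, and likewise for $\alpha$. Here the immersion hypothesis is essential: because $f$ is a conformal immersion, $\beta$ and ${*}\beta=-\beta N$ are pointwise linearly independent $\Im\H$--valued $1$--forms spanning the (two--dimensional) image plane of $df$, so wedging the real--valued form $d\HS$ against $\beta$ vanishes exactly when $d\HS$ vanishes. Concretely, in a local conformal coordinate $z=x+iy$ one has $\beta=\beta_x\,dx+\beta_y\,dy$ with $\beta_y=-N\beta_x$ and $\beta_x\neq 0$; writing $d\HS=a\,dx+b\,dy$, the equation $d\HS\wedge\beta=(a\beta_y-b\beta_x)\,dx\wedge dy=0$ forces $a=b=0$. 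Consequently $d(dN')=0$ if and only if $\HS$ is locally constant, and identically $d(dR')=0$ if and only if $\HS$ is locally constant. On a connected surface this establishes the full chain: $N$ harmonic $\iff$ $\HS$ constant $\iff$ $R$ harmonic, which is the assertion.

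I expect no serious obstacle: once $dN'$ and $dR'$ are written in terms of $\beta$ and $\alpha$, the vanishing of the cross terms is forced by \eqref{eq:mc_cmc_s3}, so the lemma is essentially a corollary of the $S^3$ structure equations. The only point requiring a little care is the final linear--independence step, where it matters that $d\HS$ is scalar--valued while $\beta$ (resp.\ $\alpha$) takes values in the two--dimensional tangent plane of the immersion.
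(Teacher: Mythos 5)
Your proof is correct and takes essentially the same route as the paper: the paper likewise invokes Lemma~\ref{lem:harmonic_map_into_2sphere} to reduce harmonicity of $R$ to closedness of $dR'=-\HS\alpha+{*}\alpha$, computes $d(dR')=-d\HS\wedge\alpha$ using the structure equations \eqref{eq:mc_cmc_s3}, and declares the case of $N$ analogous. Your only addition is to spell out explicitly the final step, that $d\HS\wedge\alpha=0$ forces $d\HS=0$ for an immersion, which the paper leaves implicit.
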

\begin{proof}
  By Lemma \ref{lem:harmonic_map_into_2sphere}, the map $R$ is harmonic if and
  only if the form $dR'$ is closed. Because $dR'= - (\HS -R)f^{-1} df$, this
  is equivalent to
  \[ 0=d(dR') = -d\HS\wedge \alpha - \HS d\alpha + d{*}\alpha = -d\HS\wedge
  \alpha\] such that $f$ is CMC in $S^3$ if and only if $R$ is harmonic. The
  proof for $N$ is analogous.
\end{proof}

As for CMC surfaces in $\R^3$, the form $\eta$ with $d^\nabla(2{*}A+ \eta)=0$
is not unique: for all $\rho\in \R$, the forms $2{*}A_\circ^\rho = 2{*} A +
\eta_0 + \rho \omega$ and $2{*}Q_\circ^\rho = 2{*} Q + \eta_0 + \rho \omega$
are closed, where
\[ 2{*}A= \Ad
\begin{pmatrix}
    1 & f \\
    0 & 1
  \end{pmatrix}
  \begin{pmatrix}
    0 & 0 \\
    \frac12(1-\HS R) dR'' f^{-1} & dR''
  \end{pmatrix}, \qquad \omega := \Ad
  \begin{pmatrix}
    1 & f \\
    0 & 1
  \end{pmatrix}
  \begin{pmatrix}
    0 & 0 \\
    dH & 0
  \end{pmatrix}
\]
and $\eta_0 := \frac12 \HS S \omega$, because by
\begin{multline}
  \label{H_is_dual_for_CMC_in_sphere}
  dH = -dR f^{-1} - (\HS -R) f^{-1} df f^{-1} =\\
  -(-H df+ dR'')f^{-1} - H df f^{-1} = - dR'' f^{-1}
\end{multline}
and $N=fRf^{-1}$ which implies $dN= \Ad(f)(dR+2\alpha R)$ and therefore
$dN''=\Ad(f)(dR'')$, the form $w= - \frac12(dH+R{*}dH + H NdN'')$ occurring in
\eqref{eq:AQ_in_Euclidean_chart} is $w = \frac12(1-\HS R) dR'' f^{-1}.$ Thus
\[
2{*}A_\circ^\rho = \Ad
\begin{pmatrix}
  1 & f \\
  0 & 1
\end{pmatrix}
\begin{pmatrix}
  0 & 0 \\
  (\rho -\frac12) dH & dR''
\end{pmatrix} \quad \textrm{ and } \quad 2{*}Q_\circ^\rho = \Ad
\begin{pmatrix}
  1 & f \\
  0 & 1
\end{pmatrix}
\begin{pmatrix}
  dN'' & 0 \\
  (\rho +\frac12) dH & 0
\end{pmatrix}
\]
and the 2--step forward and backward B\"acklund transforms for different
$\rho$ are
\[
\check L_\rho = \ker(2{*}A_\circ^\rho) =
\begin{pmatrix}
  (\rho+\tfrac12)f \\ (\rho-\tfrac12)
\end{pmatrix}\H \quad \textrm{ and } \quad
\hat L_\rho = \im(2{*}Q_\circ^\rho) =
\begin{pmatrix}
  (\rho-\tfrac12)f \\ (\rho+\tfrac12)
\end{pmatrix}\H,
\]
because $dH = -dR'' f^{-1}= -f^{-1} dN''$.  In particularly, for $\rho=0$ and
$\rho = \pm 1/2$ we obtain
\begin{align}
  \label{eq:cmc_S3_B_constant}
  & \check L_0 = \hat L_0 =
  \begin{pmatrix}
    -f \\ 1
  \end{pmatrix}\H  \\
  & \check L_{1/2} =
  \begin{pmatrix}
    1 \\ 0
  \end{pmatrix}\H \qquad \textrm{ and } \qquad
  \hat L_{1/2} =
  \begin{pmatrix}
    0 \\ 1
  \end{pmatrix}\H \\
  & \check L_{-1/2} =
  \begin{pmatrix}
    0 \\ 1
  \end{pmatrix}\H \qquad \textrm{ and } \qquad
  \hat L_{-1/2} =
  \begin{pmatrix}
    1 \\ 0
  \end{pmatrix}\H.
\end{align}
The parameters $\rho=\pm \tfrac 12$ are the only parameters for which the
2--step B\"acklund transforms $\check L_\rho$ and $\hat L_\rho$ are constant.

Minimal surfaces in $S^3$ are examples of isothermic surfaces that, for
different choices of $\eta\in \Omega^1(\mathcal{R})$ in the Euler--Lagrange
equation \eqref{eq:def_constrained-willmore}, belong to different cases of
Theorem~\ref{the:mainA}: for $\rho=0$ their holonomy spectral curve belongs to
Case~I, see Corollary~\ref{cor:willmore}, while for $\rho = \pm \frac12$ it
belongs to Case~II, because then $\ker(A_\circ^\rho)$ and $\im( Q^\rho_\circ)$
are constant, see Proposition~\ref{pro:cases_harmonic}.

\subsection{M\"obius geometric characterization of CMC surfaces in $\R^3$ and
  $S^3$}
As we have seen in Sections~\ref{sec:cmc_r3} and \ref{sec:cmc_s3}, CMC
surfaces in $\R^3$ and $S^3$ are examples of constrained Willmore surfaces for
which the 1--form $\eta\in \Omega^1(\mathcal{R})$ in
\eqref{eq:def_constrained-willmore} can be chosen such that both
$\ker(A_\circ)$ and $\im(Q_\circ)$ are constant.  The following
characterization of constrained Willmore surfaces with this property is
readily verified by combining equations \eqref{eq:AQ_in_Euclidean_chart},
\eqref{eq:eta} and \eqref{eq:mcvector_and_left_right_normals}.

\begin{Lem}\label{lem:both_constant}
  Let $f\colon M \rightarrow S^4=\HP^1$ be a constrained Willmore immersion.
  Then:
  \begin{itemize}
  \item[a)] The immersion $f$ admits $\eta\in \Omega^1(\mathcal{R})$ such that
    $\ker(A_\circ) =\im(Q_\circ)=\infty$ if and only if $f\colon M \rightarrow
    \H=\HP^1\backslash\{\infty\}$ has the property that $H$ is constant, that
    is, $f$ is CMC in a 3--dimensional plane in $\H$ of minimal in Euclidean
    4--space $\H$.
  \item[b)] The immersion $f$ admits $\eta\in \Omega^1(\mathcal{R})$ thus that
    $\ker(A_\circ)=0$ and $\im(Q_\circ)=\infty$ if and only if $f\colon M
    \rightarrow \H=\HP^1\backslash\{\infty\}$ satisfies $Hf+R=c$ for some
    constant $c\in \H$. Similarly, $\ker(A_\circ)=\infty$ and $\im(Q_\circ)=0$
    is equivalent to $fH+N = c$ for $c\in \H$. The constant $c\in \H$ is real
    if and only if the surface is contained in a concentric 3--sphere in $\H$.
    In particular, $f$ is then CMC in that 3--sphere.
  \end{itemize}
\end{Lem}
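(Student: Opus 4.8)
The plan is to run the whole argument in the affine chart $\H=\HP^1\backslash\{\infty\}$, reading off $\check L=\ker(A_\circ)$ and $\hat L=\im(Q_\circ)$ directly from the matrix presentations \eqref{eq:AQ_in_Euclidean_chart} of the Hopf fields and \eqref{eq:eta} of $\eta$. Writing $\eta$ as in \eqref{eq:eta} with lower-left entry $\hat\eta$ and adding, the forms $2{*}A_\circ=2{*}A+\eta$ and $2{*}Q_\circ=2{*}Q+\eta$ become
\[ 2{*}A_\circ=\Ad\begin{pmatrix}1 & f\\0 & 1\end{pmatrix}\begin{pmatrix}0 & 0\\ w+\hat\eta & dR''\end{pmatrix},\qquad 2{*}Q_\circ=\Ad\begin{pmatrix}1 & f\\0 & 1\end{pmatrix}\begin{pmatrix}dN'' & 0\\ w+dH+\hat\eta & 0\end{pmatrix}. \]
Conjugating by $\Ad\begin{pmatrix}1 & f\\0 & 1\end{pmatrix}$ and using $\im(A_\circ)\subset L$, $L\subset\ker(Q_\circ)$, the only data governing the two lines are the off-diagonal entries $w+\hat\eta$ and $-(w+\hat\eta)f+dR''$ for $A_\circ$, and $dN''+f(w+dH+\hat\eta)$ and $w+dH+\hat\eta$ for $Q_\circ$. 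First I would record the elementary dictionary (valid wherever $dR''\neq0$, resp. $dN''\neq0$, i.e. away from the super conformal locus): $\check L=\infty=[(1,0)]$ iff $w+\hat\eta=0$, while $\check L=0=[(0,1)]$ iff $(w+\hat\eta)f=dR''$; and $\hat L=\infty$ iff $w+dH+\hat\eta=0$, while $\hat L=0$ iff $dN''+f(w+dH+\hat\eta)=0$.

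For part a), imposing $\check L=\hat L=\infty$ forces simultaneously $w+\hat\eta=0$ and $w+dH+\hat\eta=0$; subtracting gives $dH=0$, so $H$ is constant. Conversely, if $H$ is constant then by \eqref{eq:mcvector_and_left_right_normals} both $dR'=-H\,df$ and $dN'=-df\,H$ are closed, so $N$ and $R$ are harmonic by Lemma~\ref{lem:harmonic_map_into_2sphere}; choosing $\hat\eta=-w$ then yields $\eta\in\Omega^1(\mathcal R)$ with $d^\nabla\eta\in\Omega^2(\mathcal R)$, since the required symmetry $*\hat\eta=-R\hat\eta=\hat\eta N$ follows from $*w=-Rw$ (which always holds) together with $*w=wN$ (equivalent to harmonicity of $R$, cf. the proof of Lemma~\ref{lem:harmonic}), and it makes $2{*}A_\circ$ closed with $\check L=\hat L=\infty$. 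The identification of $dH=0$ with $f$ being CMC in an affine $3$-plane of $\H$ (constant real $H$) or minimal in $\H$ (the case $H\equiv0$) is then read off from \eqref{eq:mcvector_and_left_right_normals} and the discussion of Section~\ref{sec:moebius_char}.

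For part b), imposing $\check L=0$ and $\hat L=\infty$ gives $(w+\hat\eta)f=dR''$ and $w+dH+\hat\eta=0$, whence $dH\,f=-dR''$. Using $dR=dR'+dR''=-H\,df+dR''$ one computes $d(Hf+R)=dH\,f+H\,df+dR=dH\,f+dR''=0$, so $Hf+R=:c$ is constant; the converse runs backwards along the same identity, the symmetry of $\hat\eta=-dH-w$ being guaranteed because $Hf+R$ constant again forces $R$ harmonic. The mirror case $\check L=\infty$, $\hat L=0$ is obtained verbatim after exchanging $N$ and $R$, using $dN=-df\,H+dN''$ to get $d(fH+N)=f\,dH+dN''=0$. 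Finally, to see that $c$ is real exactly when $f$ lies in a concentric $3$-sphere, I would invoke \eqref{eq:s3_H}: for an immersion into the unit sphere $S^3\subset\H$ one has $Hf+R=\HS$ and $fH+N=\HS$ with $\HS\in\R$ the scalar mean curvature, so realness of $c$ recovers the sphere equation (up to the evident scaling for arbitrary radius) and $c=\HS$ constant is precisely CMC in that sphere.

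The genuinely computational heart is the matrix bookkeeping of the first paragraph together with the two derivative identities $d(Hf+R)=dH\,f+dR''$ and $d(fH+N)=f\,dH+dN''$; these are routine once \eqref{eq:AQ_in_Euclidean_chart}, \eqref{eq:eta} and \eqref{eq:mcvector_and_left_right_normals} are in hand. The step I expect to need the most care is the \emph{geometric} translation of the algebraic conditions: that $dH=0$ means CMC in a $3$-plane or minimal, and that realness of the integration constant $c$ pins the surface to a concentric round sphere rather than to a general sphere or plane. This is where one must track the distinguished point $\infty$, the scaling behaviour of $H$ under homotheties, and the precise meaning of the constant in \eqref{eq:s3_H}, and it is the only place where more than formal manipulation of \eqref{eq:AQ_in_Euclidean_chart}–\eqref{eq:eta} is required.
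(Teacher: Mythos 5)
Your proposal is correct in substance and follows exactly the route the paper intends: the paper's entire proof of this lemma is the remark that it ``is readily verified by combining'' \eqref{eq:AQ_in_Euclidean_chart}, \eqref{eq:eta} and \eqref{eq:mcvector_and_left_right_normals}, and your affine--chart dictionary for $\check L=\ker(A_\circ)$ and $\hat L=\im(Q_\circ)$ together with the identities $d(Hf+R)=dH\,f+dR''$ and $d(fH+N)=f\,dH+dN''$ is precisely that verification, carried out correctly.

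One labelling slip should be repaired. In the first case of b) you justify the symmetry of $\hat\eta=-w-dH$ by claiming that $Hf+R=c$ ``forces $R$ harmonic''. By Lemma~\ref{lem:harmonic}~b) the harmonicity attached to $\im(Q_\circ)=\infty$ is that of the \emph{left} normal $N$; harmonicity of $R$ corresponds instead to $\infty\subset\ker(A_\circ)$ (part a) of that lemma), and in the configuration $\check L=0$, $\hat L=\infty$ the right normal need not be harmonic. The repair is a one--line check: using $RH=HN$ and $N{*}dN''=dN''$, the required symmetry ${*}\hat\eta=-R\hat\eta=\hat\eta N$ for $\hat\eta=-(w+dH)$ reduces to ${*}dH=-R\,dH$, which follows from $dH=-dR''f^{-1}$ together with ${*}dR''=-R\,dR''$; equivalently $df\wedge dH=0$, i.e.\ $N$ is harmonic, and this holds automatically by the type identity $df\wedge dR''=0$. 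In the mirror case $fH+N=c$ it is indeed $R$ that is harmonic, so your ``exchange $N$ and $R$'' remark is right once the labels are swapped. The two geometric translations you flag as delicate also close quickly within your framework: if $H$ is a nonzero constant, then $\bar H=-N\mathcal{H}$ is a \emph{fixed} quaternion lying pointwise in the normal plane (left multiplication by $N$ preserves tangent and normal planes), so $\langle f,\bar H\rangle$ is constant and $f$ lies in an affine 3--plane, where $H$ becomes the real mean curvature; and if $Hf+R=c$ with $c$ real, then $f=H^{-1}(c-R)=|H|^{-2}\bigl(c\bar H-\mathcal{H}\bigr)$ by $\bar HR=\mathcal{H}=N\bar H$, so $f$ is normal--valued, $|f|^2$ is constant, and $f$ lies in a concentric 3--sphere in which $c=\HS$ is the constant mean curvature by \eqref{eq:s3_H}.
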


This lemma directly implies the following characterization of CMC surfaces in
$\R^3$ and $S^3$.

\begin{Cor}
  A constrained Willmore immersion $f\colon M \rightarrow S^4=\HP^1$ is CMC
  with respect to a 3--dimensional Euclidean or spherical subgeometry if and
  only if it is contained in a totally umbilic 3--sphere and admits a 1--form
  $\eta\in \Omega^1(\mathcal{R})$ satisfying
  \eqref{eq:def_constrained-willmore} such that the 2--step B\"acklund
  transformation $\check L=\ker(A_\circ)$ is constant. The Euclidean case is
  then characterized by the fact that the point $\check L$ is contained in the
  totally umbilic 3--sphere.
\end{Cor}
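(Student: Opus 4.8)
The plan is to deduce the corollary from Lemma~\ref{lem:both_constant} by translating its two cases into M\"obius--geometric language and tracking the position of the point $\check L=\ker(A_\circ)$. After fixing a point $\infty\in S^4$, a totally umbilic $3$--sphere $\Sigma^3\subset S^4$ becomes a $3$--plane in $\R^4=S^4\backslash\{\infty\}$ when $\infty\in\Sigma^3$, and a round $3$--sphere in $\R^4$ when $\infty\notin\Sigma^3$; accordingly, CMC in a $3$--dimensional Euclidean subgeometry means CMC in such a $3$--plane and CMC in a spherical subgeometry means CMC in such a round sphere (hyperbolic subgeometries being excluded by hypothesis). Lemma~\ref{lem:both_constant}a) describes the first situation by $\ker(A_\circ)=\im(Q_\circ)=\infty$ and part~b) the second by $\ker(A_\circ)=\infty$, $\im(Q_\circ)=0$ with the sphere concentric about the origin. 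In both cases $\check L=\ker(A_\circ)$ equals the point chosen as $\infty$, and $\check L\in\Sigma^3$ holds precisely in the Euclidean case while $\check L\notin\Sigma^3$ in the spherical case; this already accounts for the dichotomy asserted in the last sentence.

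For the forward implication, suppose $f$ is CMC in a Euclidean $3$--subgeometry and realize that subgeometry as a $3$--plane through $\infty$. Then $f\subset\Sigma^3$, and Lemma~\ref{lem:both_constant}a) provides $\eta\in\Omega^1(\mathcal R)$ with $\check L=\ker(A_\circ)=\infty$ constant and $\check L\in\Sigma^3$. If instead $f$ is CMC in a spherical $3$--subgeometry, normalize so that the round sphere is concentric about the origin with $\infty$ off it; since then $fH+N$ equals the (constant) scalar mean curvature, Lemma~\ref{lem:both_constant}b) supplies $\eta$ with $\check L=\ker(A_\circ)=\infty$ constant and $\check L\notin\Sigma^3$. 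In either case hypotheses (i) and (ii) of the corollary hold with the asserted location of $\check L$.

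For the converse, assume $f\subset\Sigma^3$ and that some Lagrange multiplier $\eta$ satisfying \eqref{eq:def_constrained-willmore} makes $\check L=\ker(A_\circ)$ constant, and choose $\infty=\check L$. The decisive step is that containment in the totally umbilic $3$--sphere forces the backward B\"acklund transform $\im(Q_\circ)$ to be constant as well, equal to the reflection $\sigma_\Sigma(\check L)$ of $\check L$ across $\Sigma^3$: reflection in a $3$--plane through $\infty$ fixes $\infty$, giving $\im(Q_\circ)=\infty=\ker(A_\circ)$, whereas inversion in a round sphere interchanges its center with $\infty$, giving $\im(Q_\circ)=0$. Feeding these into Lemma~\ref{lem:both_constant}a) respectively~b) yields that $f$ is CMC in the $3$--plane (Euclidean, $\check L\in\Sigma^3$) respectively in the round sphere (spherical, $\check L\notin\Sigma^3$); here containment in $\Sigma^3$ selects the CMC alternative of part~a) rather than a genuinely $4$--dimensional minimal surface.

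I expect the reflection identity $\im(Q_\circ)=\sigma_\Sigma(\check L)$ to be the only real obstacle. I would verify it through the reality structure that $\Sigma^3$ imposes on $f$: in the normalized picture $f$ maps into $\R^3=\Im\H$, so that $N=R$, or into a concentric $3$--sphere, so that the relations \eqref{eq:s3_H} hold, and these relations tie the Hopf fields $A$ and $Q$ together. Concretely, $\ker(A_\circ)=\infty$ forces the normalization $\hat\eta=-w$ of \eqref{eq:eta}, and the requirement $\eta\in\Omega^1(\mathcal R)$, i.e.\ $*\hat\eta=-R\hat\eta=\hat\eta N$, is by the computation in the proof of Lemma~\ref{lem:harmonic} exactly harmonicity of the relevant Euclidean normal; Lemmas~\ref{lem:cmc_in_r3_iff_gauss_harmonic} and~\ref{lem:cmc_in_s3_iff_gauss_harmonic} then identify this with constant mean curvature and thereby pin down $\im(Q_\circ)$. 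This reality bookkeeping, rather than any genuinely new idea, is where the actual work lies.
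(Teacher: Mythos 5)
Your proposal is correct, but your converse runs along a genuinely different track from the paper's. The paper disposes of the converse in one line by a fact you did not find: representing the totally umbilic $3$--sphere as the null lines of an indefinite quaternionic hermitian form, one has $Q_\circ=-A_\circ^*$ with $*$ the adjoint with respect to that form, so that $\hat L=\im(Q_\circ)$ is \emph{a priori} the polar point of $\check L=\ker(A_\circ)$ --- exactly your reflection identity $\im(Q_\circ)=\sigma_\Sigma(\check L)$, obtained for free --- and constancy of both points then feeds uniformly into Lemma~\ref{lem:both_constant}, whose part b) also supplies the concentric normalization through the reality of the constant $c$. You instead set $\infty=\check L$, read off from Lemma~\ref{lem:harmonic}a) that the right normal $R$ is harmonic, and conclude with Lemmas~\ref{lem:cmc_in_r3_iff_gauss_harmonic} and~\ref{lem:cmc_in_s3_iff_gauss_harmonic} after normalizing the umbilic sphere to $\Im\H$ (where $N=R$) respectively to a concentric $3$--sphere; since Euclidean similarities do not change harmonicity of $R$, this chain is valid, and it is arguably more elementary in that it never leaves the Euclidean computations of Sections~\ref{sec:moebius_char}--\ref{sec:cmc_s3}. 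One caveat about your write--up: it is mildly circular in presentation. You bill the reflection identity as ``the decisive step'', yet your own verification establishes CMC first (via harmonicity) and pins down $\im(Q_\circ)$ only afterwards from the explicit $\check L_\rho$, $\hat L_\rho$ computations of Sections~\ref{sec:cmc_r3} and~\ref{sec:cmc_s3}; so in your argument the identity is a by--product, not an input, and Lemma~\ref{lem:both_constant} is dispensable in the converse altogether. You should either restructure so the harmonicity route carries the converse on its own, or prove the identity a priori via the hermitian form as the paper does --- the latter is what makes the paper's treatment uniform, with containment in the sphere excluding the genuinely $4$--dimensional minimal alternative of Lemma~\ref{lem:both_constant}a). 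Finally, note that both your argument and the paper's tacitly assume $A_\circ\not\equiv 0$ (so that $\check L$ is a point) and gloss over isolated points where $f$ passes through $\infty=\check L$ in the Euclidean case; your use of an affine chart costs nothing beyond what the original already concedes there.
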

\begin{proof}
  We have already seen in Lemma~\ref{lem:both_constant} that CMC surfaces in
  $\R^3$ and $S^3$ have the given properties.  To prove the converse, assume
  that $f$ is contained in a totally umbilic 3--sphere. This can be
  represented as the null lines of an indefinite quaternionic hermitian form
  $\<\, . \,\>$, see e.g.\ Section~10.1 of \cite{BFLPP02}. Then $Q_\circ = -
  A_\circ^*$ with $*$ denoting the adjoint with respect to $\<\, . \,\>$. In
  particular, for surfaces contained in a totally umbilic 3--sphere,
  $\ker(A_\circ)$ being constant is equivalent to $\im(Q_\circ)$ being
  constant and the corollary follows from Lemma~\ref{lem:both_constant}.
\end{proof}

The following lemma gives another characterization of CMC surfaces in $\R^3$
or $S^3$.

\begin{Lem}
  A conformal immersion $f\colon M \rightarrow S^4$ admits a point $\infty \in
  S^4$ at infinity such that both the left and right normal vectors $N$ and
  $R$ of $f$ seen as an immersion into $\R^4= S^4\backslash\{\infty\}$ are
  harmonic if and only if $f\colon M \rightarrow \R^4\cong
  S^4\backslash\{\infty\}$ is CMC in a 3--plane or a round 3--sphere in $\R^4$
  or minimal in Euclidean 4--space $\R^4$.
\end{Lem}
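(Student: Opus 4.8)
The plan is to collapse the two--sided harmonicity hypothesis to the single statement that the mean curvature vector $\mathcal{H}$ of $f$ is parallel in the normal bundle, and then to read off the three geometric alternatives. First I would invoke the computation of Section~\ref{sec:euclidean}: there it is shown that, relative to a fixed $\infty$, the left normal $N$ is harmonic if and only if $\mathcal{H}$ is a holomorphic section of the normal bundle, $*\nabla^\perp\mathcal{H}=N\nabla^\perp\mathcal{H}$, and the right normal $R$ is harmonic if and only if $\mathcal{H}$ is anti--holomorphic, $*\nabla^\perp\mathcal{H}=-N\nabla^\perp\mathcal{H}$. If both hold, comparing the two identities gives $N\nabla^\perp\mathcal{H}=-N\nabla^\perp\mathcal{H}$, hence $\nabla^\perp\mathcal{H}=0$ since $N$ (left multiplication by a unit imaginary quaternion) acts invertibly; conversely a parallel $\mathcal{H}$ satisfies both equations trivially. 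Thus the hypothesis ``$N$ and $R$ are both harmonic with respect to $\infty$'' is equivalent to ``$f$ has parallel mean curvature vector in $\R^4=S^4\backslash\{\infty\}$''.

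With this reduction the converse direction of the lemma is immediate. A minimal immersion has $\mathcal{H}\equiv 0$; for an immersion that is CMC in an affine $3$--plane $\R^3\subset\R^4$ the vector $\mathcal{H}$ is a constant multiple of the (rank one, hence parallel) $\R^3$--unit normal and stays $\nabla^\perp$--parallel in the larger normal bundle because the complementary direction is constant; and for an immersion that is CMC in a round $S^3\subset\R^4$ the vector $\mathcal{H}$ splits as the parallel $S^3$--mean curvature vector plus the radial term coming from the umbilicity of $S^3$ in $\R^4$, both summands being $\nabla^\perp$--parallel. So $\nabla^\perp\mathcal{H}=0$ holds in each listed case.

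For the forward direction I would match $\nabla^\perp\mathcal{H}=0$ with the M\"obius--geometric conditions already isolated in the paper. By Lemma~\ref{lem:harmonic}, harmonicity of $R$ (resp.\ of $N$) with respect to $\infty$ yields a Lagrange multiplier $\eta$ (resp.\ $\eta'$) for which $2{*}A_\circ=2{*}A+\eta$ is closed with $\ker(A_\circ)=\infty$ (resp.\ $2{*}Q_\circ=2{*}Q+\eta'$ is closed with $\im(Q_\circ)=\infty$). If $f$ is non--isothermic the multiplier is unique, so $\eta=\eta'$ and $\ker(A_\circ)=\im(Q_\circ)=\infty$; then Lemma~\ref{lem:both_constant}a) forces $H$ constant, so $f$ is minimal in $\R^4$ or CMC in a $3$--plane. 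If $f$ is isothermic it carries a real one--parameter family of multipliers, and I would reproduce the pencil of B\"acklund transforms computed for CMC tori in $S^3$ in Section~\ref{sec:cmc_s3}: along this pencil $\check L=\ker(A_\circ)$ and $\hat L=\im(Q_\circ)$ run through projectively linear families related by $\check L_\rho=\hat L_{-\rho}$, so the multiplier $\eta'$ realizing $\im(Q_\circ)=\infty$ then has $\ker(A_\circ)$ equal to the \emph{other} distinguished constant point. This is exactly the configuration of Lemma~\ref{lem:both_constant}b) giving $Hf+R=c$; once $c$ is seen to be real, the Corollary following that lemma places $f$ on a concentric round $3$--sphere as a CMC surface.

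The step I expect to be the main obstacle is this last dichotomy, namely disentangling the two a priori different multipliers $\eta,\eta'$ and deciding between the hyperplane and the hypersphere. Geometrically it is the classical split, for a surface of parallel mean curvature vector in $\R^4$, between an osculating sphere of infinite radius (a plane, with $H$ constant) and one of finite radius (a round $S^3$, with $c\in\R$), and the crucial point is to show that the reality of $c$ is forced by $N$ and $R$ being harmonic relative to the \emph{same} real point $\infty$. In the quaternionic language this amounts to integrating $\nabla^\perp\mathcal{H}=0$, written via \eqref{eq:mcvector_and_left_right_normals} as the two closedness conditions $d(H\,df)=0$ and $d(df\,H)=0$, to the closed--form alternative that either $H$ is constant or $Hf+R$ (equivalently $fH+N$) equals a real constant, exactly as in \eqref{H_is_dual_for_CMC_in_sphere}; carrying out this integration completes the proof.
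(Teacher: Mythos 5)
Your opening reduction is exactly the paper's own first step: combining the two type conditions of Section~\ref{sec:euclidean} into $*dH=-R\,dH=dH\,N$, i.e.\ $\nabla^\perp\mathcal{H}=0$, and your verification that the three listed classes have parallel mean curvature vector is fine. The gap is in the forward direction. The paper disposes of it in one sentence by citing the classical theorem (Chen--Yau, Hoffman) that a surface in $\R^4$ with parallel mean curvature vector is Euclidean minimal, CMC in a 3--plane, or CMC in a round $S^3$; you instead attempt to re--derive this inside the paper's formalism, and the attempt is incomplete. In the isothermic branch you ``reproduce the pencil'' $\check L_\rho$, $\hat L_\rho$ of Section~\ref{sec:cmc_s3}, but those formulas were computed \emph{under the hypothesis} that $f$ is CMC in $S^3$ --- precisely the conclusion you are after --- so invoking them here is circular: you would first have to show, independently, that for the multiplier $\eta'$ realizing $\im(Q_\circ)=\infty$ the bundle $\ker(A_\circ)$ is a \emph{constant} point distinct from $\infty$, which nothing in the hypotheses hands you directly (note also that Lemma~\ref{lem:both_constant}b) requires a single $\eta$ realizing $\ker(A_\circ)=0$ and $\im(Q_\circ)=\infty$ simultaneously, whereas your two harmonicity conditions a priori produce two different multipliers).

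The ``integration'' you defer to at the end is not a routine manipulation of the closed forms $H\,df$ and $df\,H$; it \emph{is} the classical theorem. From $*dH=-R\,dH=dH\,N$ with $dH\not\equiv0$ one does get, since $dH$ and $dR''$ both satisfy $*\omega=-R\,\omega$ and such forms at a point form one quaternionic line, a unique function $g$ with $dR''=dH\,g$, and closedness of $dR''$ (Lemma~\ref{lem:harmonic_map_into_2sphere}) then gives $dH\wedge dg=0$; but nothing in your sketch shows that $g=-(f-m)$ for a \emph{constant} $m$, nor that the resulting constant $H(f-m)+R$ is \emph{real} --- and these are exactly the points where the Hopf--differential/pseudo--umbilic analysis of the classical proof does its work. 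Indeed your stated alternative ``either $H$ is constant or $Hf+R$ equals a real constant'' is false as written: a CMC surface in a round $S^3$ centered at $m\neq0$ has both normals harmonic with respect to $\infty$, yet $Hf+R=H^{S^3}+Hm$ is non--constant; locating the center, i.e.\ the translation, is part of what must be proven. So either cite the parallel--mean--curvature classification, as the paper does, or actually carry out the quaternionic integration with the existence of $m$ and the reality of the constant established; as it stands the forward direction is unproven.
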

\begin{proof}
  By Lemma~\ref{lem:harmonic_map_into_2sphere} and
  \eqref{eq:mcvector_and_left_right_normals}, harmonicity of $N$ and $R$ is
  equivalent to $*dH=-R dH=dH N$ which again is equivalent to the mean
  curvature vector $\mathcal{H}$ of $f$ being a parallel section of the normal
  bundle of $f$. It is well know that a surface in $\H=\R^4$ with parallel
  mean curvature vector is either Euclidean minimal or CMC in a 3--plane or
  3--sphere.
\end{proof}

\subsection{Hamiltonian stationary Lagrangian tori and Lagrangian tori with
  conformal Maslov form in $\C^2$}
\label{sec:lagrangian}
We discuss two classes of examples of constrained Willmore surfaces which are
related to harmonic maps to $S^2$ but in general not CMC in $\R^3$ or $S^3$.

We identify $\C^2$ with $\H$ via $(z_1,z_2)\mapsto z_1+ j z_2$ and equip it
with the standard symplectic form $\omega$ defined by $\omega(x,y)= \< x
i,y\>$, where $\<x,y\> =\Re(\bar x y)$ is the usual Euclidean product on
$\H=\R^4$. An immersion $f\colon M \rightarrow \H$ is Lagrangian with respect
to $\omega$ if and only if its tangent and normal bundles $T_fM$ and
$\perp_f\!M$ are related via $\perp_f\!M=(T_f M) i$ or, equivalently, if its
right normal vector is of the form $R= j\exp(i \beta)$ for a $\R/2\pi
\Z$--valued function $\beta$ called the Lagrangian angle.

It is shown in \cite{HR1} that $f$ is Hamiltonian stationary Lagrangian (i.e.,
a Lagrangian immersion that is stationary for the area functional under all
Hamiltonian variations) if and only if $\beta$ is harmonic.  Because $2dR'=j
\exp(i \beta)i d\beta + i {*} d\beta$, harmonicity of $\beta$ is equivalent to
harmonicity of $R$, see Lemma~\ref{lem:harmonic_map_into_2sphere}. In
particular, by Lemma~\ref{lem:harmonic}, Hamiltonian stationary Lagrangian
surfaces are constrained Willmore and admit a form $\eta$ such that $\im(
Q_\circ)=\infty$ for $2{*}Q_\circ=2{*}Q+\eta$. The above formula for $dR'$
together with \eqref{eq:mcvector_and_left_right_normals} immediately implies
that the special case of Lagrangian surfaces that are minimal is characterized
by the property that $R$ is constant, that is, such surfaces are complex
holomorphic with respect to the complex structure given by right
multiplication with $-R$.

While Lagrangian minimal surfaces can never be compact, there are compact
Hamiltonian stationary Lagrangian surfaces. All Hamiltonian stationary
Lagrangian tori are explicitly described in \cite{HR2}. They are examples of
constrained Willmore tori with trivial normal bundle and spectral genus $g=0$
(the latter, because $R$ is a harmonic map into $S^2$ which takes values in a
great circle).

It is shown in \cite{CU} that Lagrangian surfaces with conformal Maslov form
are characterized by the property that the left normal vector $N$ is harmonic
while the right normal vector $R$ takes values in the great circle
perpendicular to $i$. All Lagrangian tori with conformal Maslov form are
explicitly described in \cite{CU}. They are examples of constrained Willmore
tori with trivial normal bundle and spectral genus $g\leq 1$ (the latter,
because the harmonic map $N$ into $S^2$ is equivariant).

\appendix

\section{}

The following lemma is needed in the proof of Lemma~\ref{lem:no_ends}.

\begin{Lem} \label{lem:singularity} Let $\varphi\in H^0(L)$ be a holomorphic
  section of a quaternionic holomorphic line bundle $L$ and denote by $N$ the
  $C^{\infty}$--map with values in $S^2$ that is defined away from the zeros
  of $\varphi$ by $J\varphi=\varphi N$. This map $N$ continuously extends
  through the zeros of $\varphi$. Moreover, in case $N$ is $C^1$ at a zero $p$
  of $\varphi$ the Hopf field $Q$ of $L$ has to vanish at that point $p$.
\end{Lem}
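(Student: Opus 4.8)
The plan is to pass to a local model near the zero $p$, turn holomorphicity into a Dirac-type elliptic system, and read the behaviour of $N$ off the local expansion of $\varphi$.

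First I would choose a holomorphic coordinate $z$ centred at $p$ together with a local $\bar\partial$-holomorphic frame of $(L,J)$ adapted to the eigenbundle splitting $L=E_+\oplus E_-$ of the complex structure $J$. Writing a section as $\varphi=\psi\,a+\psi j\,b$ with $\C$-valued functions $a,b$, the holomorphicity equation $D\varphi=(\bar\partial+Q)\varphi=0$ becomes a first order elliptic (Dirac type) system of the form $a_{\bar z}=\bar q\,b$ and $b_z=-q\,a$, in which the Hopf field enters through a single local coefficient $q$, so that $Q(p)=0$ if and only if $q(0)=0$. The only structural input I need about $N$ is that it depends on $\varphi$ through its fibre direction alone: in this frame the defining relation $J\varphi=\varphi N$ reads $N=\lambda^{-1}i\lambda$ with $\lambda=a+jb$, which is unchanged under $\lambda\mapsto\lambda c$ for any nowhere vanishing complex function $c$.

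For the continuity statement I would invoke the local analysis of holomorphic sections of quaternionic holomorphic line bundles, i.e. the similarity principle for the above system as in~\cite{FLPP01}: a nontrivial $\varphi$ vanishes at $p$ to some finite order $n\ge1$ and has a \emph{holomorphic} leading term, $\lambda(z)=z^{n}(\lambda_{0}+o(1))$ with a nonzero constant $\lambda_{0}$. (Both parts of $\lambda$ contribute a holomorphic leading term: $a$ is holomorphic to leading order because $a_{\bar z}=\bar q\,b$ is of higher order, while left multiplication by $j$ turns the antiholomorphic leading behaviour of $b$ into a holomorphic power.) Since the scalar factor $z^{n}$ cancels in $N=\lambda^{-1}i\lambda$, the direction of $\varphi$ converges as $z\to0$ and $N$ extends continuously through $p$ with $N(p)=\lambda_{0}^{-1}i\lambda_{0}$.

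For the second statement the idea is that $Q$ governs the first correction of the \emph{wrong} type. If $q(0)\neq0$, the Dirac equations force, relative to the holomorphic leading term $z^{n}\lambda_{0}$, a correction of order $n+1$ and of the opposite (antiholomorphic) type whose coefficient is proportional to $q(0)$: for instance $b_z=-q\,a$ produces a $z^{\,n+1}$ term in $b$, hence a $\bar z^{\,n+1}$ term in $jb$. Substituting into $N=\lambda^{-1}i\lambda$ contributes to $N-N(p)$ a term of the shape $\mathrm{const}\cdot\bar z^{\,n+1}/z^{n}=\mathrm{const}\cdot|z|\,e^{-i(2n+1)\theta}$. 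For $n\ge1$ this is continuous and vanishes at $p$ but is not differentiable there, since its angular frequency $2n+1\ge3$ cannot occur in the differential of a map that is $C^{1}$ at $p$. Hence $N$ can be $C^{1}$ at $p$ only if this coefficient, and therefore $q(0)=Q(p)$, vanishes. I expect the main obstacle to be the rigorous justification of this local expansion: extracting the holomorphic leading term and, above all, the sub-leading correction with its coefficient correctly identified as a nonzero multiple of $Q(p)$ requires the generalized analytic function / similarity principle machinery rather than a naive power series, and one must keep the complex structure of the domain carefully separate from the quaternion $i$ acting on the target.
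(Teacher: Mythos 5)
Your proposal is correct, and for the first assertion it follows essentially the paper's route: both arguments rest on the local normal form for holomorphic sections from \cite{PP} (used in the paper as $\varphi=\psi\big((x+yR)^n\lambda_n+O(n+1)\big)$, in your notation $\lambda=z^n(\lambda_0+o(1))$), after which the complex factor $z^n$ commutes past $i$ and cancels in $N=\lambda^{-1}i\lambda$, giving the continuous extension. For the second assertion the two proofs genuinely diverge. The paper never expands $\varphi$ beyond leading order: it invokes the pointwise identity $Q\varphi=\varphi\,\tfrac12 N\,dN''$, valid away from the zeros of $\varphi$, substitutes the normal form to obtain $Q\psi=\psi\,\tfrac12\,(x+Ry)^n(x-Ry)^{-n}\,\lambda N\,dN''\lambda^{-1}$, and concludes from the continuity of the left--hand side and the bounded but discontinuous factor $(z/\bar z)^n$ that continuity of $dN''$ at $p$ (guaranteed by $N$ being $C^1$) forces $dN''_p=0$ and hence $Q_p=0$. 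You instead compute the sub--leading correction forced by the Dirac system and detect the obstruction in $N$ itself through the term $|z|e^{-i(2n+1)\theta}$. Both mechanisms hinge on the same incompatible phase, but the paper's use of the identity $Q\varphi=\varphi\tfrac12 N\,dN''$ buys precisely what you flag as your main obstacle: since $Q$ enters the equation directly rather than through a sub--leading coefficient, there is nothing whose nonvanishing must be verified, and no expansion to order $n+1$ is needed. Your route, once that expansion is justified in the similarity--principle framework, in fact yields slightly more, namely that $N$ fails to be differentiable at $p$ (not merely non--$C^1$) when $Q_p\neq 0$; and the cancellation you leave implicit ("for instance") is genuinely harmless: the two forced corrections combine, with $a\sim a_0z^n$ and $b\sim b_0\bar z^n$, into $\bar q(0)\,(b_0-\bar a_0 j)\,\bar z^{n+1}/(n+1)$, and the bad term of $N-N(p)$ vanishes only if $b_0-\bar a_0 j\in\C\lambda_0$ with $\lambda_0=a_0+\bar b_0 j$, which forces $|a_0|^2+|b_0|^2=0$, contradicting $\lambda_0\neq 0$; so the coefficient is automatically nonzero whenever $q(0)\neq 0$.
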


\begin{proof}[Proof of Lemma~\ref{lem:singularity}]
  Let $z=x+iy$ be a chart centered at a zero $p$ of $\varphi$. Then locally
  there is a nowhere vanishing section $\psi$ such that
  \[ \varphi = \psi((x+yR)^n \lambda_n + O(n+1)),\] where $J\psi=\psi R$ and
  $\lambda_n\in \H\backslash\{0\}$, cf.\ \cite{PP}. In particular,
  $\varphi=\psi (x+yR)^n \lambda$ for a continuous function $\lambda$ with
  $\lambda(p)\neq 0$. Now $N=\lambda^{-1} R \lambda$ implies that $N$
  continuously extends through the zero $p$. Moreover, away from $p$ the Hopf
  field is given by $Q\varphi = \varphi \tfrac12 N dN''$, hence
  \[ Q\psi = \psi \tfrac12 (x+Ry)^n (x-Ry)^{-n} \lambda N dN'' \lambda^{-1}.\]
  Because the left hand side is well defined and continuous at $p$ while
  $(z/\bar z)^n$ is bounded but not continuous at zero, the form $dN''$ has to
  vanish at $p$ in case it is continuous. This implies that $Q_p=0$.
\end{proof}

\subsection*{Acknowledgments}

The author thanks Franz Pedit and Ulrich Pinkall for helpful discussions.



\begin{thebibliography}{10}

\bibitem{Bob} A.~Bobenko, All constant mean curvature tori in $R^3$,
  $S^3$, $H^3$ in terms of theta functions. \emph{Math.\ Ann.} \textbf{290}
  (1991), 209--245.

\bibitem{Boh03} C.~Bohle, \emph{M{\"o}bius Invariant Flows of Tori in $S^4$}.
  Thesis, TU--Berlin, 2003.

\bibitem{BLPP} C.~Bohle, K.~Leschke, F.~Pedit, and U.~Pinkall, Conformal maps
  from a 2--torus to the 4--sphere.  \url{http://xxx.arxiv.org/abs/0712.2311}.

\bibitem{BPP2} C.~Bohle, F.~Pedit, and U.~Pinkall, Spectral curves of
  quaternionic holomorphic line bundles over 2--tori.  Preprint.

\bibitem{BPP1} C.~Bohle, G.~P.~Peters, and U.~Pinkall, Constrained Willmore
  surfaces. To appear in \emph{Calc.\ Var.\ Partial Differential Equations}.


\bibitem{BFPP} F.~Burstall, D.~Ferus, F.~Pedit, and U.~Pinkall,
  Harmonic tori in symmetric spaces and commuting Hamiltonian systems on loop
  algebras.  \emph{Ann.\ of Math.}  \textbf{138} (1993), 173--212.

\bibitem{BFLPP02} F.~Burstall, D.~Ferus, K.~Leschke, F.~Pedit, and U.~Pinkall,
  \emph{Conformal Geometry of Surfaces in $S\sp 4$ and Quaternions}. Lecture
  Notes in Mathematics, 1772, Springer--Verlag, Berlin, 2002.
    

\bibitem{BPP02} F.~Burstall, F.~Pedit, and U.~Pinkall, Schwarzian
  derivatives and flows of surfaces. \emph{Contemp.\ Math.}  \textbf{308}
  (2002), 39--61. 

\bibitem{Bu07} F.~Burstall, private communication. (See also: Twistors,
  4--symmetric spaces and integrable systems, in \emph{Progress in Surface
    Theory}, Oberwolfach, April 29--May 5, 2007.  To appear in
  \emph{Oberwolfach Rep.})

\bibitem{CU} I.~Castro and F.~Urbano, Lagrangian surfaces in the complex
  Euclidean plane with conformal Maslov form.  \emph{Tohoku Math. J.}
  \textbf{45} (1993), 565--582.

\bibitem{EW83} J.~Eells and J.~C.~Wood, Harmonic maps from surfaces into
  projective spaces. \emph{Adv.\ Math.} \textbf{49} (1983), 217--263.

\bibitem{FPPS} D.~Ferus, F.~Pedit, U.~Pinkall, and I.~Sterling,
  Minimal tori in $S\sp 4$.  \emph{J.\ Reine Angew.\ Math.} \textbf{429}
  (1992), 1--47.

\bibitem{FLPP01} D.~Ferus, K.~Leschke, F.~Pedit, and U.~Pinkall, Quaternionic
  holomorphic geometry: Pl{\"u}cker formula, Dirac eigenvalue estimates and
  energy estimates of harmonic $2$--tori.  \emph{Invent.\ Math.}  {\bf 146}
  (2001), 507--59.
  

\bibitem{GS98} P.~G.~Grinevich and M.\ U.\ Schmidt, Conformal invariant
  functionals of immersions of tori into~$R\sp 3$.  \emph{J.\ Geom.\ Phys.}
  \textbf{26} (1998), 51--78.

\bibitem{He04} 
F.~Helein, 
Removability of singularities of harmonic maps into pseudo--Riemannian
manifolds. \emph{Ann.~Fac.~Sci.~Toulouse Math.} \textbf{XIII} (2004), 45--71.

%
\bibitem{HR1}
F.~Helein and P.~Romon,
Hamiltonian stationary Lagrangian surfaces in $\C^2$.
\emph{Comm.~Anal.~Geom.} \textbf{10} (2002), 79--126. 

\bibitem{HR2}
F.~Helein and P.~Romon,
Weierstrass representation of Lagrangian surfaces in four--dimensional space
using spinors and quaternions. 
\emph{Comment.~Math.~Helv.} \textbf{75} (2000), 668--680.

\bibitem{HJ03}
U.~Hertrich--Jeromin,
\emph{Introduction to M\"obius Differential Geometry}.
Cambridge University Press, 2003.

\bibitem{Hi} N.~Hitchin, Harmonic maps from a $2$--torus to the $3$--sphere.
  \emph{J.\ Differential Geom.}  \textbf{31} (1990), 627--710.

\bibitem{LPP05} K.~Leschke, F.~Pedit, and U.~Pinkall, Willmore tori
  in the 4--sphere with nontrivial normal bundle.  \emph{Math.~Ann.}
  \textbf{332} (2005), 381--394.

\bibitem{PP} F.~Pedit and U.~Pinkall, Quaternionic analysis on Riemann
  surfaces and differential geometry.  \emph{Doc.\ Math.\ J.\ DMV}, Extra
  Volume ICM Berlin 1998, Vol.\ II, 389--400.

\bibitem{Pi} U.~Pinkall, Hopf tori in $S\sp 3$.  \emph{Invent. Math.}
  \textbf{81} (1985), 379--386.

\bibitem{PS87} U.~Pinkall and I.~Sterling, Willmore surfaces.
  \emph{Math.\  Intelligencer} \textbf{9} (1987), 38--43.

\bibitem{PS89}
U.~Pinkall and I.~Sterling,
On the classification of constant mean curvature tori.
\emph{Ann.\ of Math.} \textbf{130} (1989), 407--451.


\bibitem{S02} M.~U.~Schmidt, A proof of the Willmore conjecture.
  \eprint{arxiv.org/math.DG/0203224}.

\bibitem{Ta98} I.~A.~Taimanov, The Weierstrass representation of
  closed surfaces in $R^3$.  \emph{Funct.\ Anal.\ Appl.}  \textbf{32}
  (1998), 49--62.

\bibitem{W} T.~J.~Willmore, \emph{Riemannian Geometry}. Oxford University
  Press, Oxford, New York, 1993.

\end{thebibliography}
\end{document}